\theoremstyle{plain}
\newtheorem{definition}{Definition}[section]
\newtheorem{lemma}[definition]{Lemma}
\newtheorem{prop}[definition]{Proposition}
\newtheorem{theorem}[definition]{Theorem}
\newtheorem{corol}[definition]{Corollary}
\newtheorem{constr}[definition]{Construction}
\newtheorem{conj}[definition]{Conjecture}
\theoremstyle{remark}
\newtheorem{ex}[definition]{Example}
\newtheorem{remark}[definition]{Remark}
\newcommand{\ind}{\mathds{1}}
\newcommand{\N}{\mathbf{N}}
\newcommand{\Z}{\mathbf{Z}}
\newcommand{\E}{\mathds{E}}
\newcommand{\Real}{\mathbb{R}}
\newcommand{\ee}{\mathrm{e}}
\newcommand{\Sym}{\mathfrak{S}}
\newcommand{\Part}{\mathcal{P}}
\newcommand{\Inv}{\mathcal{I}}
\newcommand{\motz}{\mathrm{Motz}}
\newcommand{\map}{\mathfrak{m}}
\newcommand{\Maps}{\mathfrak{M}}
\newcommand{\hypermap}{\mathfrak{h}}
\newcommand{\Comb}{\mathfrak{C}}
\newcommand{\Cat}{{\rm Cat}}
\newcommand{\eq}{{\rm eq}}
\newcommand{\Suitably}{\mathcal{S}}
\newcommand{\welllab}{\mathcal{H}}
\newcommand{\RP}{\Real{\rm P}}
\newcommand{\vertex}{{\rm vert}}
\newcommand{\inv}{{\rm inv}}
\newcommand{\Flag}{{\rm Fl}}
\newcommand{\flag}{\mathfrak{f}}
\newcommand{\class}{\mathcal{C}}
\newcommand{\Orbit}{\mathcal{O}}
\newcommand{\He}{{\rm He}}
\newcommand\cycle[2][\,]{%
  \readlist\thecycle{#2}%
  (\foreachitem\i\in\thecycle{\ifnum\icnt=1\else#1\fi\i})%
}
\DeclareMathOperator{\Cyc}{Cycles}
\DeclareMathOperator{\Id}{Id}
\DeclareMathOperator{\Supp}{Supp}
\author[1]{Thomas Buc--d'Alch\'e \thanks{bucdalche(at)unistra.fr}}
\affil[1]{IRMA UMR 7501, Université de Strasbourg, France}
\title{Enumeration of maps with the Dumitriu-Edelman model}
\date{}
\begin{document}

\maketitle

\begin{abstract}
  We give an expansion in $1/N$ and $\beta$ of the cumulants of power sums
  of the particles of the $\beta$-ensemble. This new expansion is obtained
  using the tridiagonal model of Dumitriu and Edelman. The
  coefficients of the expansion are expressed in terms of suitably
  labelled maps introduced by Bouttier, Fusy, and Guitter. Our
  expansion is of a different nature than the one obtained by LaCroix
  in is study of the $b$-conjecture of Goulden and Jackson, and
  involves only orientable maps. We are able to relate bijectively the
  first two orders of our expansion to the one of LaCroix using a
  novel many-to-one mapping that relates suitably labelled maps with
  two minima and maps on the projective plane $\RP^{2}$.
\end{abstract}

\section{Introduction}
\label{sec:introduction}

Since the seminal work of Brézin, Itzykson, Parisi and Zuber \cite{brezin_planar_1978},
random matrix techniques have been a powerful tool for enumerating
maps. Informally, a map is a graph embedded in a compact surface. An
important problem in combinatorics is to count the number of maps with
some constraints: on the number of edges, the genus of the surface the
graph is embedded in, the number and degrees of the faces or vertices.
In the sequel the data of these degrees will be called the vertex or
face \emph{profile} respectively. It is a partition of twice the number of
edges. This problem was studied first by Tutte, who gave several
important results concerning the number of planar maps \cite{tutte_census_1963, tutte_enumeration_1968}.
Random matrix theory allows to tackle this problem using tools from
analysis and probability. This technique allowed to address many
different questions, for instance concerning the moduli space of
curves \cite{harer_euler_1986} or 2d quantum gravity \cite{difrancesco_2d_1995}.

Arguably the most famous random matrix models are the Gaussian
orthogonal, unitary, and symplectic ensembles (GOE, GUE, and GSE
respectively). These are respectively real symmetric, complex
Hermitian, and quaternionic self-adjoint matrices whose coefficients
are (up to symmetry) independent real, complex, or quaternionic
Gaussian variables. Moments of the GUE are related to the enumeration
of orientable maps, while moments of the GOE or the GSE are related to
possibly non-orientable maps, see \cite{cicuta_topological_1982} or
\cite{mulase_duality_2003}.

Let $X^{N}$ be a $N \times N$ matrix sampled according to one of these
three matrix ensembles. Denote by $\bm{\lambda} = (\lambda_{1}, \ldots, \lambda_{N})$ its
eigenvalues. The eigenvalues $\bm{\lambda}$ are distributed according to
\begin{equation}\label{eq:beta-ensemble}
  \dd \mu^{N}_{\beta}(\bm{\lambda}) = \frac{1}{Z^{N}_{\beta}} \left| \Delta(\bm{\lambda}) \right|^{\beta}\ee^{-\frac{\beta}{4}\sum_{i=1}^{N}\lambda_{i}^{2}}\dd \bm{\lambda},
\end{equation}
where $Z^{N}_{\beta}$ is a normalization constant usually called the
partition function,
$\Delta(\bm{\lambda}) = \prod_{1 \leq i < j \leq N}(\lambda_{i} - \lambda_{j})$ is the
Vandermonde determinant, $\dd \bm{\lambda} = \dd \lambda_{1}\cdots \dd \lambda_{N}$ is
the Lebesgue measure on $\Real^{N}$, and $\beta > 0$ is one of $\beta=1$,
$\beta =2$, or $\beta= 4$ if $X^{N}$ is sampled according to the GOE, the GUE,
or the GSE respectively.

The probability measure $\mu^{N}_{\beta}$ is called the $\beta$-ensemble. It is
a well-defined probability measure for all $\beta > 0$. We may ask whether
the moments of the $\beta$-ensemble are related to the enumeration of maps
when $\beta \notin \{1, 2, 4\}$.

A relation between enumeration of maps, and the $\beta$-ensemble for
$\beta \notin \{1, 2, 4\}$ appeared first in the context of the (marginal)
$b$-conjecture, which we now describe. In a series of articles Goulden
and Jackson \cite{goulden_connection_1996} studied what they called the \emph{map series}:
\begin{equation*}
  M_{\alpha}(\bm{y}, \bm{x}, z) = 2 \alpha z \frac{\partial}{\partial z} \ln \sum_{\theta}z^{|\theta|/2} \frac{J_{\theta}(\bm{y}, \alpha)J_{\theta}(\bm{x}, \alpha)}{ \left< J_{\theta}, J_{\theta} \right>_{\alpha}} \left[ p_{2}(\bm{z})^{|\theta|/2} \right]J_{\theta}(\bm{z}, \alpha),
\end{equation*}
where the sum is on all partitions of integers $\theta$ (including the
empty partition), $|\theta|$ is the size of the partition $\theta$,
$J$ is a Jack polynomial, $\left< \cdot \right>_{\alpha}$ is an inner product
between symmetric polynomials, and
$\left[ p_{2}(\bm{z})^{|\theta|/2} \right]J_{\theta}(\bm{z}, \alpha)$ denotes the
coefficient of
$p_{2}^{|\theta|/2}(\bm{z}) = \left( \sum_{i} z_{i}^{2} \right)^{|\theta|/2}$
in the expansion of $J_{\theta}(\bm{z}, \alpha)$ in terms of power sum symmetric
polynomials. The Jack polynomials, defined in \cite{jack_class_1970}, are symmetric
polynomials which constitute a continuous deformation between the
Schur functions at $\alpha = 1$, and the zonal polynomials at $\alpha = 2$.

It has been shown by Goulden, Jackson, and Harer
\cite{goulden_geometric_2001} that the map series is related to the
measure $\mu^{N}_{\beta}$ through the formal relation
\begin{equation*}
  M_{2/\beta}(\bm{p}(\bm{y}), N, z) = \beta z \frac{\partial}{\partial z} \ln \mu_{\beta}^{N} \left[ \ee^{\frac{\beta}{2}\sum_{k \geq 1}\frac{z^{k/2}}{k}p_{k}(y)p_{k}(\lambda)} \right],
\end{equation*}
where $\bm{p}(\bm{y}) = (p_{k}(\bm{y}))_{k \geq 1}$. Note that in
general, this identity holds only as an equality between formal
series: differentiating any number of times and taking the parameters
$\bm{p}(\bm{y})$ and $z$ to zero on both sides yield the same result.

The maps series can be expressed in the basis of symmetric polynomials as
\begin{equation*}
  M_{2/\beta}(\bm{p}(\bm{x}), \bm{p}(\bm{y}), z) = \sum_{n \geq 0}\sum_{\mu, \nu}c_{\mu, \nu, n}(2/\beta-1)p_{\mu}(\bm{x})p_{\nu}(\bm{y})z^{n},
\end{equation*}
where the coefficients $c_{\mu, \nu, n}(b)$ are related to number of maps
with $n$ edges and profile of vertices and faces specified by $\mu$ and
$\nu$. The maps enumerated are orientable when $b = 0$ and possibly
non-orientable when $b = 1$. A more general series could be
considered, the hypermap series where maps are replaced by bipartite
maps and the profiles of the two types of vertices are specified
independently. Goulden and Jackson conjectured \cite{goulden_connection_1996} that the
coefficients of the hypermap series are polynomials in
$b = \frac{2}{\beta} - 1$ encoding a measure of how non-orientable a map
is. In our case, this becomes:
\begin{conj}[Marginal $b$-conjecture]\label{conj:marginal-b-conj}
  For all $n, f \geq 1$, $\nu \vdash 2n$,
  \begin{equation*}
    \sum_{\substack{\mu \vdash 2n\\l(\mu) = f}}c_{\mu, \nu, n}(b) = \sum_{\map} b^{\vartheta(\map)},
  \end{equation*}
  where the sum is on maps (possibly non-orientable) with $n$ edges and
  profile of vertices and faces prescribed by $\mu$ and $\nu$. The exponent
  $\vartheta(\map)$ is a measure of how non-orientable $\map$ is, with
  $\vartheta(\map) = 0$ if and only if $\map$ is orientable.
\end{conj}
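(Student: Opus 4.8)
The plan is to attack Conjecture~\ref{conj:marginal-b-conj} by feeding the expansion in $1/N$ and $\beta$ of the cumulants of power sums announced above into the Goulden--Jackson--Harer identity, and matching the result, order by order in $b$, against the conjectured count of non-orientable maps. The first step is purely formal: one specialises the $\bm x$-alphabet of the map series to $N$ copies of $1$, so that $p_\mu(\bm x)=N^{\ell(\mu)}$ and the marginal sum $\sum_{\mu\vdash 2n,\ \ell(\mu)=f}c_{\mu,\nu,n}(b)$ becomes the coefficient of $N^{f}p_\nu(\bm y)z^{n}$ in $M_{2/\beta}(\bm p(\bm y),N,z)$. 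Taking the logarithm turns moments of products of power sums into joint cumulants, so reading off the coefficients of $p_\nu(\bm y)$ and of $z^{n}$, and clearing the explicit prefactor produced by $\beta z\,\partial_z\ln$ together with the symmetry factors attached to $\nu$, identifies the marginal sum with the coefficient of $N^{f}$ in a joint cumulant $\kappa_{\ell(\nu)}\bigl(p_{\nu_1}(\bm\lambda),\dots,p_{\nu_{\ell(\nu)}}(\bm\lambda)\bigr)$ of the power sums of the $\beta$-ensemble particles --- precisely the quantity for which the tridiagonal model produces an expansion.

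Second, I would substitute that expansion. Each such cumulant becomes a finite sum over suitably labelled (orientable) Bouttier--Fusy--Guitter maps with $n$ edges whose face structure is prescribed by $\nu$, each map $\map$ carrying a weight $w_\beta(\map)$ built from the moments of the $\chi$- and Gaussian-distributed entries of the tridiagonal matrix, and extracting the coefficient of $N^{f}$ leaves a finite weighted count. One must then check that, after the substitution $\beta=2/(b+1)$, this count is a \emph{polynomial} in $b$ of degree at most $n+2-\ell(\nu)-f$. The individual moments are polynomials in $N$ and in $1/\beta$ (since $\E[\chi_k^{2m}]=2^m\Gamma(k/2+m)/\Gamma(k/2)$ is a polynomial in $k$), so polynomiality in $b$ follows once one verifies that the normalisation coming from the identity does not reintroduce a pole at $b=-1$; the delicate point is that the positive powers of $\beta$ in the prefactor must be absorbed by the $1/\beta$-valuation of the cumulant, which one proves by a combinatorial analysis of how the labelling interacts with the $\beta$-dependence of the entry weights --- each unit shift of $\beta$ in an entry weight corresponding to a contraction of a labelled edge.

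Third --- and this is the combinatorial heart --- one must identify the coefficient of $b^{\vartheta}$ with an honest count of non-orientable maps and exhibit the topological meaning of $\vartheta$. The mechanism is a generalisation of the many-to-one mapping that the present paper uses for the first two orders: a suitably labelled orientable map contributing a factor $b^{\vartheta}$ has $\vartheta+1$ distinguished local minima of its labelling, and one wants a many-to-one correspondence that, peeling off one minimum at a time, sends such a map to a non-orientable map obtained by gluing $\vartheta$ crosscaps, with the cardinality of each fibre reproducing the analytic weight $w_\beta(\map)$. For $\vartheta=1$ this is the two-minima $\leftrightarrow\RP^{2}$ correspondence already established; for general $\vartheta$ one iterates it, and must show that the outcome is independent of the order in which minima are peeled and that the $b$-exponent it produces coincides, on the image side, with LaCroix's measure of non-orientability.

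The step I expect to be the genuine obstacle --- indeed the reason the present paper proves only the first two orders --- is controlling this many-to-one mapping beyond two minima. With two minima the target surface $\RP^{2}$ is essentially forced and the fibres are transparent; with three or more minima a whole family of non-orientable surfaces can occur, the independence of the peeling order is not automatic, and verifying that the fibre cardinalities match the $\Gamma$-function weights of the second step is, in effect, equivalent to the full marginal $b$-conjecture. A complete proof along these lines therefore seems to require an extra structural input that makes the induction on $\vartheta$ close --- for instance a Tutte-type/loop-equation recursion on the tridiagonal side compatible with the peeling, or an identification of the weighted labelled-map count with a $b$-deformed Hurwitz-type number whose polynomiality and positivity are already known. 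Absent such an input, the argument yields the conjecture only in the regime where at most two minima occur, which is the first two orders in $1/N$.
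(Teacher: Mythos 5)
Your proposal is not a proof of the statement, and you say as much yourself in the final paragraph: the entire argument hinges on extending the two-minima $\leftrightarrow\RP^{2}$ correspondence to suitably labelled maps with an arbitrary number of local minima, with fibre cardinalities matching the analytic weights and with the resulting exponent shown to coincide with LaCroix's statistic $\vartheta$, and you explicitly concede that you cannot close this induction. That missing step is not a technical verification but the whole content of the conjecture at orders $b^{2}$ and beyond; what you do supply (specialising the Goulden--Jackson--Harer identity, substituting the tridiagonal cumulant expansion, checking polynomiality in $b$) only reduces the conjecture to exactly that open combinatorial problem. Note also that extracting the coefficient of $N^{f}$ from the tridiagonal expansion mixes, at a fixed power of $1/N$, suitably labelled maps of different genera and different numbers of minima together with Bernoulli and binomial factors and powers of $(1+b)$, so even the claim that the coefficient of $b^{\vartheta}$ is a nonnegative count of maps of the correct topology is part of what must be proved, not a starting point.

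For context: the paper does not prove this statement either --- it is stated as a conjecture and attributed to LaCroix, who proved it by a completely different route (an inductive, Tutte-type decomposition on the Jack-polynomial side, with $\vartheta$ defined through that recursion), not through the Dumitriu--Edelman model. The paper's own contribution is precisely the part your plan can reach: the tridiagonal expansion (Theorem \ref{thm:exp-cumulants}) and the many-to-one mapping of Theorem \ref{thm:bijection-1/2}, which together identify only the leading and sub-leading orders with planar maps and maps on $\RP^{2}$ (Corollary \ref{corol:leading-orders}); the paper itself remarks that beyond the first sub-leading order the mappings $\phi_{1},\phi_{2}$ would have to be redefined. So your programme is essentially the paper's programme, and it stalls at the same place; to turn it into a proof you would need the additional structural input you gesture at (a peeling-compatible recursion or an independent identification with a $b$-deformed Hurwitz-type quantity), none of which is available in the present framework.
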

This marginal $b$-conjecture has been solved by LaCroix \cite[Theorem
4.16]{lacroix_combinatorics_2009}. He described the exponent $\vartheta$ using an inductive
procedure, similar to Tutte's decomposition of maps. A generalization
of the conjecture of Goulden and Jackson has been studied by Chapuy
and Dołęga \cite{chapuy_nonorientable_2022}. They studied a
$b$-deformation of a tau function of the 2-Toda integrable hierarchy,
and showed that it is a generating function of generalized branched
covering of the sphere, with $b$-weight depending on a measure of
non-orientability. 

We study the cumulants of the $\beta$-ensemble, related to the marginal
$b$-conjecture, and propose a different answer than the one of
LaCroix. It is based on the tridiagonal matrix model for the
$\beta$-ensemble, introduced by Dumitriu and Edelman \cite{dumitriu_matrix_2002}. This
tridiagonal ensemble was used by Abdesselam, Anderson, and Miller \cite{abdesselam_tridiagonalized_2014}
to recover that the number of planar maps corresponds to the leading
order of the cumulants of the GUE ($\beta = 2$). A remarkable fact was
that the natural combinatorial objects they obtained were \emph{mobiles}, a
family of labelled trees shown by Bouttier, di Francesco, and Guitter
\cite{bouttier_planar_2004} to be in bijection with pointed, rooted, planar
maps. The proof of Abdesselam and al. relied on the
Brydges-Kennedy-Abdesselam-Rivasseau formula, a complicated identity
coming from cluster expansion theory. We simplify and generalize their
work. We show that the cumulants of the symmetric power sum
polynomials in the eigenvalues of the $\beta$-ensemble admit a large $N$
expansion whose coefficients are expressed using \emph{suitably labelled
  maps}, a family of maps with labelled vertices
introduced by Bouttier, Fusy and Guitter \cite{bouttier_twopoint_2014} which are in
bijection with a family of maps generalizing the mobiles. For the two
leading orders, we are able to reinterpret the coefficients as being
sums of maps on the sphere or on the projective plane $\RP^{2}$
respectively. This is done using a novel many-to-one mapping that
relate some suitably labelled maps and maps on $\RP^{2}$.

Another approach based on the Virasoro (or Dyson-Schwinger) equations
is proposed by Cassia et al. \cite{cassia_betaensembles_2024}. They express their result not in
terms of maps but in terms of \emph{generalized Catalan numbers}.

We prove the following theorem in Section \ref{sec:expr-terms-dist}. We denote by $\N$ the
set of non-negative integers $\left\{ 0, 1, 2, \ldots \right\}$ and
given a partition $\bm{n}$ of an integer $n$ by $\class_{\bm{n}}$ the
conjugacy class in the set of permutation of
$\left\{ 1, \ldots, n \right\}$ given by $\bm{n}$.
\begin{theorem}\label{thm:exp-cumulants}
  Let $\bm{n} = (n_{1}, \ldots, n_{l}) \in \N^{l}$ be a partition of $n \geq 2$
  with $l$ parts, i.e.\ $n_{1} + \cdots + n_{l} = n$, and
  $\theta \in \class_{\bm{n}}$. We have the following expansion for the
  cumulants $\kappa_{l}$ of the $\beta$-ensemble:
  \begin{equation}\label{eq:exp-main-thm-intro}
    \kappa_{l}(\bm{n}) = \sum_{p+q+r=n/2}\left( \frac{2}{\beta}\right)^{p} (-1)^{q}P_{r}(N)
    \left\langle e_{q}\right \rangle_{\theta, p}
  \end{equation}
  where $P_{u}(k) = \sum_{i = 1}^{k}i^{u}$ is the $u$-th Newton sum
  and $\left< e_{q}\right>_{\theta, p}$ denotes a sum over suitably
  labelled maps of elementary symmetric polynomials evaluated at the
  labels of the map. The suitably labelled maps have face profile
  $\theta$ and $n/2 - p$ vertices that are not local minima. Suitably
  labelled maps are defined in Definition \ref{def:suitably-labelled}
  and this term will be described in Section
  \ref{sec:expr-terms-dist}.
\end{theorem}
A remarkable fact is that the expression \eqref{eq:exp-main-thm-intro}
involves quantities related to distances in maps whose face profile is
given by $\theta$. An interesting particular case is that when
$p = l-1 = 0$, $\langle e_{q} \rangle_{\theta, l-1}$ is the following sum
over pointed planar maps:
\begin{equation*}
  \langle e_{q} \rangle_{\theta, l-1} = \sum_{\map}e_{q}(d_{v}; v \text{ vertex of }\map),
\end{equation*}
where the sum is on half-edge labelled pointed planar maps, and $d_{v}$
is the graph distance from the pointed vertex to $v$. For instance,
$\frac{\langle e_{1} \rangle_{\theta, l-1}}{\langle e_{0} \rangle_{\theta, l-1}}$ is the average sum of
distances from a pointed vertex in a random planar map with face
profile $\theta$ chosen uniformly. Other values of $q$ give us different
statistics on those distances. Hence, provided we could obtain an
expansion in $1/N$ and $\beta$ of the cumulant $\kappa_{l}(\bm{n})$
analytically, we would be able to compute statistics of the distances
of a uniformly chosen planar (or higher genus) map. A simple case
related to the asymptotics of the power sums of roots of Hermite
polynomials is discussed in Appendix \ref{sec:limit-beta-inf}. In general, this question is
left for future investigation. A possible avenue for studying such
cumulant is provided by the work of Popescu \cite{popescu_general_2009}, and Babet and Popescu
\cite{babet_tridiagonal_2025} on the leading order of such tridiagonal matrices.

There is an apparent mismatch between the expansion obtained by
LaCroix, in terms of orientable and non-orientable maps, and the
expansion of Theorem \ref{thm:exp-cumulants}, in which the main combinatorial objects are
orientable, vertex-labelled maps. That the leading order of both
expansion coincide is a direct consequence of the bijection of
Bouttier, Fusy, and Guitter discussed in Section \ref{sec:maps-mobiles}. We are able to
relate the sub-leading order of both expansions through a novel
many-to-one mapping described in Section \ref{sec:many-betw-suit}. Given a permutation $\theta$
with $c(\theta)$ cycles, the construction described in Section \ref{sec:many-betw-suit} gives a
$2^{c(\theta) - 1}$-to-$1$ mapping between the set of pointed labelled maps
on the projective plane with face determined by $\theta\bar{\theta}$, and the
set of suitably labelled maps with two local minima and face profile
$\theta$. The precise result is stated in Theorem \ref{thm:bijection-1/2}. Thanks to Theorem
\ref{thm:bijection-1/2}, the two leading orders of the expansion of Theorem \ref{thm:exp-cumulants} can be
interpreted in the following way.
\begin{corol}\label{corol:leading-orders}
  Let $\Maps_{0}(\theta)$ be the number of edge-labelled planar maps
  with face profile $\theta$, and $\Maps_{1/2}(\theta)$ be the number
  of edge-labelled maps on $\RP^{2}$ with face profile $\theta$. We
  have
  \begin{equation*}
    \kappa_{l}(\bm{n}) = N^{n/2-l+2} \left( \frac{2}{\beta} \right)^{l-1} \left( \#\Maps_{0}(\theta) + \frac{1}{2^{1 - l}N}(\frac{2}{\beta} - 1)\# \Maps_{1/2}(\theta) + \order{\frac{1}{N^{2}}}\right).
  \end{equation*}
\end{corol}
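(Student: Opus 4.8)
The plan is to derive Corollary~\ref{corol:leading-orders} by specializing the expansion of Theorem~\ref{thm:exp-cumulants} to the two lowest powers of $1/N$, namely $v=0$ and $v=1$, and then translating the resulting combinatorial sums into counts of edge-labelled maps using the Bouttier--Fusy--Guitter bijection and the many-to-one mapping of Theorem~\ref{thm:bijection-1/2}. First I would extract the $v=0$ term. In that case the only admissible triple is $u=q=r=0$, so the inner sum collapses to $\frac{B_{0}}{n/2-l+2}\binom{n/2-l+1}{0}\langle e_{0}\rangle_{\theta,l-1} = \frac{1}{n/2-l+2}\langle e_{0}\rangle_{\theta,l-1}$. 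Here $\langle e_{0}\rangle_{\theta,l-1}$ is, by the particular-case discussion following the theorem, a sum of $1$'s over pointed planar maps with face profile $\theta$ (the ``$u=0$'' specialization with $p=l-1$, i.e.\ $n/2-l+1$ non-minimal vertices, which for a planar map with the right Euler characteristic means a single local minimum, i.e.\ a genuine pointing). So $\langle e_{0}\rangle_{\theta,l-1}$ counts pointed planar maps with face profile $\theta$, and the combinatorial factor $\frac{1}{n/2-l+2}$ divides out the choice of pointed vertex (a planar map with face profile $\theta$ has $n/2-l+2$ vertices by Euler's formula), leaving the number $\#\Maps_{0}(\theta)$ of \emph{un}pointed maps with face profile $\theta$; passing from unlabelled-edge to edge-labelled simply multiplies by $n!$ but this normalization is absorbed into the definition of $\kappa_{l}(\bm n)$ via the conjugacy-class convention, so we land exactly on $\#\Maps_{0}(\theta)$ as defined in the corollary. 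This identification is precisely the one announced in the paragraph preceding the corollary as ``a direct consequence of the bijection of Bouttier, Fusy, and Guitter,'' so I would cite Section~\ref{sec:maps-mobiles} for it rather than reprove it.

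Next I would treat $v=1$. The admissible triples $u+q+r=1$ are $(u,q,r)\in\{(1,0,0),(0,1,0),(0,0,1)\}$, and in each the binomial and the $\frac{1}{n/2-l+2-v}=\frac{1}{n/2-l+1}$ prefactor are easy to evaluate: with $r=0$ the binomial is $1$, and with $r=1$ it is $\binom{1+n/2-l}{1}=n/2-l+1$, which cancels the prefactor. So the coefficient of $1/N$ is
\begin{equation*}
  \frac{1}{n/2-l+1}\left[\left(\tfrac{2}{\beta}\right)\langle e_{0}\rangle_{\theta,l} - \langle e_{1}\rangle_{\theta,l-1}\right] + B_{1}\,\langle e_{0}\rangle_{\theta,l-1} ,
\end{equation*}
with $B_{1}=-1/2$. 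The term $\langle e_{0}\rangle_{\theta,l}$ is a sum of $1$'s over suitably labelled maps with $n/2-l$ non-minimal vertices, i.e.\ with \emph{two} local minima, and face profile $\theta$; this is exactly the source set of the $2^{c(\theta)-1}$-to-$1$ map of Theorem~\ref{thm:bijection-1/2}, whose target counts pointed labelled maps on $\RP^{2}$ with face $\theta\bar\theta$. The remaining two terms, $\langle e_{1}\rangle_{\theta,l-1}$ and $\langle e_{0}\rangle_{\theta,l-1}$, are both sums over pointed planar maps (one-minimum objects) and involve only planar distance data; I would argue — this is the genuinely delicate bookkeeping — that their combination reproduces, after dividing out the pointing and after using Euler's formula relating $n/2-l+1$, $n/2-l+2$ and the number of vertices, precisely $\#\Maps_{1/2}(\theta)$ up to the explicit constant $\frac{1}{2^{1-l}}$ and the sign $(\frac{2}{\beta}-1)$ appearing in the corollary. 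Concretely, one expects the $u=1$ piece to carry the factor $\frac{2}{\beta}$ and the $u=0$ pieces to carry the ``$-1$,'' so that after collecting, the $1/N$ coefficient factors as $(\frac{2}{\beta}-1)$ times a purely combinatorial quantity, which is then identified with $2^{l-1}\#\Maps_{1/2}(\theta)$ via Theorem~\ref{thm:bijection-1/2}: the $2^{c(\theta)-1}$ multiplicity of that theorem, combined with the number $2^{l-1-(c(\theta)-1)}$ of something like orientations or label-shifts, should collapse to the stated $2^{l-1}$.

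The main obstacle I anticipate is this last identification: showing that the three contributions at order $1/N$ — two from pointed planar maps ($\langle e_{1}\rangle$ and $\langle e_{0}\rangle$ at level $l-1$) and one from two-minima suitably labelled maps ($\langle e_{0}\rangle$ at level $l$) — conspire to produce a single clean count of $\RP^{2}$ maps with the correct constant $2^{l-1}$ and the correct sign. The planar pieces a priori encode distance statistics (via $e_{1}$) rather than plain cardinalities, so one must show that the Bernoulli term $B_{1}\langle e_{0}\rangle_{\theta,l-1}=-\tfrac12\langle e_{0}\rangle_{\theta,l-1}$ exactly cancels the ``diagonal'' part of $\frac{1}{n/2-l+1}\langle e_{1}\rangle_{\theta,l-1}$ that does not correspond to genuine $\RP^{2}$ geometry — presumably the contribution of the pointed vertex itself, at distance $0$ — leaving a remainder that matches the many-to-one count. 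I would organize the proof so that all of this sign and power-of-$2$ accounting is isolated in a single lemma invoking Theorem~\ref{thm:bijection-1/2} and the BFG bijection, and I would verify the bookkeeping on the smallest nontrivial example (say $l=2$, $\theta$ a single transposition, $n=2$) as a sanity check before asserting the general identity.
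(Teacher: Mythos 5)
Your proposal gets the architecture right — reading off the $v=0$ and $v=1$ terms of Theorem~\ref{thm:exp-cumulants}, recognizing $\langle e_0\rangle_{\theta,l}$ as a count of suitably labelled maps with two local minima, and invoking Theorem~\ref{thm:bijection-1/2} to convert that count into $\RP^2$ maps. But you stop exactly where the real work begins: showing that the combination $\frac{2}{\beta}\langle e_0\rangle_{\theta,l} - \langle e_1\rangle_{\theta,l-1} + B_1(n/2-l+1)\langle e_0\rangle_{\theta,l-1}$ factors as $\left(\frac{2}{\beta}-1\right)\#\Suitably_2(\theta)$. You flag this as ``the genuinely delicate bookkeeping'' and then offer a speculation — that $B_1\langle e_0\rangle_{\theta,l-1}$ cancels the ``contribution of the pointed vertex at distance $0$'' in $\langle e_1\rangle_{\theta,l-1}$ — which is not how the paper does it and does not appear to lead anywhere: the pointed vertex belongs to $V_{\map}^{\min}$, hence is not counted in $\langle e_1\rangle_{\theta,l-1}$ at all.

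The missing idea is the paper's Lemma~\ref{lem:bij-S2-S11}, the heart of the intermediate Proposition~\ref{prop:sub-S2}. One introduces two auxiliary sets of one-minimum maps carrying a marked non-minimum vertex with an auxiliary label: $\Suitably_{1,+}(\theta)$ (label $k$ with $1\le k<d_v$) and $\Suitably_{1,0}(\theta)$ (label $0$), so that $\langle e_1\rangle_{\theta,l-1}=\#\Suitably_{1,0}(\theta)+\#\Suitably_{1,+}(\theta)$ and $(n/2-l+1)\langle e_0\rangle_{\theta,l-1}=\#\Suitably_{1,0}(\theta)$. The lemma then constructs a \emph{re-labelling} map that replaces $\ell$ by $\ell'(u)=\min(d(v^{*},u),\,k+d(v,u))$, turning the marked vertex into a second local minimum; this is a bijection $\Suitably_{1,+}(\theta)\to\Suitably_2(\theta,0)$ and a two-to-one map $\Suitably_{1,0}(\theta)\to\Suitably_2(\theta,1)$. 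Substituting these identities makes the three summands collapse to $\left(\frac{2}{\beta}-1\right)\#\Suitably_2(\theta)$, after which Theorem~\ref{thm:bijection-1/2} (together with Euler's formula on $\RP^2$, $V=1+n/2-l$, to account for the pointing) finishes the proof. Without this re-labelling construction, the factorization you need has no mechanism, so the proof has a genuine gap at the decisive step. (Two minor typographical notes: with the recursion \eqref{eq:Bernoulli} the paper actually uses $B_1=+1/2$, not $-1/2$ as listed in the introduction and as you copied; and the factor should be $(n/2-l+1)$, the size of $V_{\map}^{\star}$, in the identity $\#\Suitably_{1,0}(\theta)=(n/2-l+1)\langle e_0\rangle_{\theta,l-1}$.)
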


In Section \ref{sec:moments-beta-ensembl}, we give first expressions for the cumulants of power
sums of the ``eigenvalues'' of the $\beta$-ensemble. We then describe in
Section \ref{sec:maps-mobiles} the main combinatorial objects involved, labelled maps. We
recall known facts and bijections, and give a combinatorial way to
describe them in terms of Motzkin paths and permutations. We use these
objects to re-express the cumulants of the beta ensemble in terms of
sum of combinatorial objects in Section \ref{sec:combinatorial-cumulants}. Finally, in Section \ref{sec:many-betw-suit},
we propose a novel many-to-one mapping that bridges the gap between
our expansion and expansion in terms of non-orientable maps on the
projective plane. This result, Theorem \ref{thm:bijection-1/2} is one
of the main result of our article.

\paragraph{Acknowledgment}

The author is grateful to Grégory Miermont for numerous discussion
regarding the result presented here and more. The author was supported
by the ERC Project LDRAM 884584 during the main part of this project,
and by the ERC Project InSpecGMos 101096550 in the late stages of this
project.

\section{The moments of the $\beta$-ensemble}
\label{sec:moments-beta-ensembl}
We now compute a formula for the moments of the $\beta$-ensemble, which we
now define.
\begin{definition}\label{def:moment}
  Let $l \geq 1$ and $k_{1}, \ldots, k_{l} \geq 0$ be integers. The moment
  of order $\bm{k} = (k_{1}, \ldots, k_{l})$ is
  \begin{equation*}
    m_{l}(\bm{k}) \coloneq \mu_{\beta}^{N} \Bigl( p_{k_{1}}(\bm{\lambda}) \cdots p_{k_{l}}(\bm{\lambda})\Bigr),
  \end{equation*}
  where $p_{k}$ is the power sum symmetric polynomial
  \begin{equation*}
    p_{k}(\bm{\lambda}) = \sum_{i = 1}^{N}\lambda_{i}^{k}.
  \end{equation*}
\end{definition}

\subsection{The tridiagonal model}
\label{sec:tridiagonal-model}

For some time, it was an open question whether there was a matrix
model for $\mu^{N}_{\beta}$, that is, whether there existed a simple random
matrix whose eigenvalues are distributed according to $\mu^{N}_{\beta}$. The
celebrated paper of Dumitriu and Edelman \cite{dumitriu_matrix_2002} gave a positive answer to
this question by exhibiting a symmetric tridiagonal real random matrix
with independent (up to symmetry) entries. Recall that the chi
distribution with parameter $\alpha > 0$, $\chi_{\alpha}$, is the measure on
$\Real^{+}$ whose density with respect to the Lebesgue measure is
\begin{equation}\label{eq:density-chi}
  \rho_{\chi_{\alpha}}(x) = \frac{x^{\alpha-1}\ee^{-x^{2}/2}}{2^{(\alpha/2) - 1}\Gamma(\alpha/2)}.
\end{equation}
\begin{theorem}[\cite{dumitriu_matrix_2002}]
  Let $\beta > 0$ be real and let $(a_{i}, b_{j})_{1 \leq i \leq N, 1 \leq j < N}$
  be a family of real independent random variables such that for all
  $i \in [N]$, $a_{i}$ is a standard Gaussian, and for all
  $i \in [N-1]$, $\sqrt{2}b_{i}$ is distributed according to the chi
  distribution with parameter $(N-i)\beta$. The eigenvalues of the random
  tridiagonal matrix
  \begin{equation}\label{eq:tridiag-matrix}
    \begin{split}
      T^{N}_{\beta} = \sqrt{\frac{2}{\beta}}\begin{pmatrix}
        a_{1} & b_{1} & 0 & 0 & 0 & \dots \\
        b_{1} & a_{2} & b_{2} & 0 & 0 & \dots  \\
        0 & b_{2} & a_{3} & b_{3} & \ddots & \dots  \\
        \vdots & \ddots & \ddots & \ddots & \ddots & 0 \\
        0 & \cdots & 0 &  b_{N-2} & a_{N-1} & b_{N-1}\\
        0 & \cdots & 0 & 0 & b_{N-1} & a_{N}\\
      \end{pmatrix}\,,
    \end{split}
  \end{equation}
  are distributed according to \(\mu_{\beta}^{N}\), the $\beta$-ensemble
  distribution \eqref{eq:beta-ensemble}.
\end{theorem}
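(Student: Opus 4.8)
The statement to prove is that the eigenvalues of the tridiagonal matrix $T^N_\beta$ are distributed according to $\mu^N_\beta$. Let me sketch how I would prove this.

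The strategy: compute the density of $T^N_\beta$ in terms of its own entries, then perform a change of variables from the matrix entries to spectral data (eigenvalues + extra coordinates), track the Jacobian, and integrate out the extra coordinates.

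Key steps:
1. The matrix $T^N_\beta$ has $2N-1$ independent entries: the diagonal $a_1,\ldots,a_N$ and subdiagonal $b_1,\ldots,b_{N-1}$. Write down the joint density of these entries using the given distributions (Gaussian for $a_i$, rescaled chi for $b_i$), obtaining a density proportional to $\left(\prod_{i=1}^{N-1} b_i^{(N-i)\beta-1}\right) \ee^{-\frac{1}{2}\sum a_i^2 - \sum b_i^2}$ times indicator that all $b_i > 0$ (absorbing the $\sqrt{2/\beta}$ rescaling into constants).

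2. Introduce the spectral parametrization. A real symmetric tridiagonal matrix with strictly positive subdiagonal entries is determined by its eigenvalues $\lambda_1 < \cdots < \lambda_N$ together with the first entries $q_1,\ldots,q_N$ (with $q_i > 0$, $\sum q_i^2 = 1$) of the associated normalized eigenvectors. This is a diffeomorphism between the open set of such tridiagonal matrices and (ordered eigenvalues) $\times$ (positive orthant of the sphere).

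3. Compute the Jacobian of this change of variables. This is the classical computation: one shows $\prod_i b_i^{N-i} = \frac{\prod_{i<j}(\lambda_j-\lambda_i)}{\prod_i q_i}$ (or a close variant) — i.e. the product of subdiagonal entries is essentially the Vandermonde divided by the product of the $q_i$ — and the Jacobian of $(a,b) \mapsto (\lambda, q)$ works out to $\prod_i b_i \big/ \prod_i q_i$, up to constants.

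4. Substitute everything. The factor $\prod_i b_i^{(N-i)\beta-1}\,\ee^{-\sum b_i^2}$ together with $\ee^{-\frac12 \sum a_i^2}$ becomes, after the substitution, proportional to $|\Delta(\lambda)|^\beta \ee^{-\frac14 \sum \lambda_i^2} \times (\text{a factor depending only on } q)$, using that $\mathrm{tr}(T^2) = \sum b_i^2 \cdot(\ldots) + \sum a_i^2$ transforms to $\sum \lambda_i^2$ under the rescaling.

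5. Integrate out the $q$-variables: since the $q$-dependent factor is a fixed function on the sphere, integrating it contributes only to the normalization constant, leaving exactly $\mu^N_\beta$.

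The main obstacle is step 3 — the explicit computation of the Jacobian $\partial(\lambda,q)/\partial(a,b)$ and the identity relating $\prod b_i$ to the Vandermonde. This is where the power $\beta$ in the Vandermonde is actually produced, and it requires a careful orthogonal-polynomial / continued-fraction argument (the $b_i$ are recurrence coefficients, the $q_i^2$ are the weights of the spectral measure, and the Vandermonde arises from the resultant structure of the characteristic polynomial). I would follow the approach via the spectral measure $\sum q_i^2 \delta_{\lambda_i}$ and the three-term recurrence, where $b_i > 0$ forces uniqueness and the Jacobian falls out of differentiating the moment map. Everything else is bookkeeping of constants.
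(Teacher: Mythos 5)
The paper states this theorem with a citation to \cite{dumitriu_matrix_2002} and offers no proof of its own, so there is no in-paper argument to compare against; your proposal must be judged on its own merits as a reconstruction of the Dumitriu--Edelman argument. Judged that way, it is the correct strategy and essentially the right outline: write the joint density of the $2N-1$ independent entries; parametrize the open set of symmetric tridiagonal matrices with positive subdiagonal by $(\bm\lambda, q)$ where $q$ is the vector of first components of the normalized eigenvectors (equivalently, the weights of the spectral measure at $a_1$, living on the positive orthant of the unit sphere); compute the Jacobian; substitute and integrate out $q$.

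One detail to watch in step 3: you wrote $\prod_i b_i^{N-i} = \Delta(\bm\lambda)\big/\prod_i q_i$, but in the convention you set up (with $q_i$ the first eigenvector components), the product $\prod_i q_i$ appears in the \emph{numerator}, not the denominator. A quick check with $N=2$ gives $b_1^2 = (\lambda_1-\lambda_2)^2\, q_1^2 q_2^2$ directly, which matches the identity $\prod_{k=1}^{N-1} b_k^{2(N-k)} = \Delta(\bm\lambda)^2 \prod_i q_i^2$. You flag the possibility of a variant, and the Jacobian $\dd a\,\dd b = \bigl(\prod_i b_i / \prod_i q_i\bigr)\,\dd\bm\lambda\,\dd q$ (up to constants) then combines with the corrected identity to produce the clean $|\Delta(\bm\lambda)|^\beta$ factor when you raise the $b$-product to the appropriate power coming from the chi densities, so the sign of the $q$-dependence does not propagate into an error downstream; it only changes which power of $\prod q_i$ gets integrated over the sphere in step 5. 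With that correction noted, your sketch is an accurate account of the original proof; the genuinely technical content is, as you say, the orthogonal-polynomial/three-term-recurrence computation of the Jacobian and the Vandermonde identity, which you have correctly identified as the crux without fully carrying it out.
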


\begin{remark}
  Notice that the factor $\beta/2$ is not present in the result of
  Dumitriu and Edelman. Here, we use the convention of the book
  \cite{anderson_introduction_2010}. The difference is that the
  density of eigenvalues is proportional to
  $\exp(-\sum_{i}\lambda_{i}^{2}/2)$ in \cite{dumitriu_matrix_2002},
  and $\exp(-\beta\sum_{i}\lambda_{i}^{2}/4)$ in
  \cite{anderson_introduction_2010}.
\end{remark}

\subsection{Combinatorial interpretation of the $\chi$ distribution}
\label{sec:comb-interpr-chi}

The moments of the standard Gaussian distribution
$\mu^{\mathcal{N}}_{k}$ are
\begin{equation*}
  \mu^{\mathcal{N}}_{k} =
  \begin{cases}
    (k-1)!!\, &\text{ if $k$ is even,}\\
    0\, &\text{ if $k$ is odd.}
  \end{cases}
\end{equation*}
They can be directly interpreted combinatorially as follows. Denote by
$\Sym(I)$ the group of permutation of the elements of a finite subset
$I$ of $\N$, and by $\Id$ its neutral element. For convenience, we
write for $n \geq 1$, $[n] = \{1, 2, \ldots, n\}$ and
$\Sym_{n}\coloneq \Sym([n])$.
\begin{definition}\label{def:matching}
  The set of matchings of a finite set $I \subset \N$ is
  \begin{equation*}
    \Inv^{*}(I) = \{\alpha \in \Sym(I)\colon \alpha^{2} = \Id, \forall i \in I, \alpha(i) \neq i\},
  \end{equation*}
  i.e.\ the set of involution without fixed point. Let $n \geq 1$, we
  write $\Inv^{*}_{n} = \Inv^{*}([n])$.
\end{definition}
The set of matchings could also be defined as the set of partitions of
$[n]$ whose blocks are of size 2. The following lemma is well-known.
\begin{lemma}
  Let $n \geq 1$, we have
  \begin{equation*}
    \# \Inv^{*}_{2n} = (2n - 1)!! = \mu^{\mathcal{N}}_{2n}.
  \end{equation*}
\end{lemma}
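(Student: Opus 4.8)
The plan is to count fixed-point-free involutions of $[2n]$ directly by a recursive pairing argument, and then to invoke the value of the Gaussian moment $\mu^{\mathcal{N}}_{2n} = (2n-1)!!$ already recalled at the beginning of this subsection, so that the second equality is immediate.

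First I would use the reformulation noted just before the statement: an element $\alpha \in \Inv^{*}_{2n}$ is the same data as a partition of $[2n]$ into $n$ blocks of size $2$, namely its set of orbits. To count such partitions I would argue by induction on $n$. The base case $n = 1$ produces the single matching $\{\{1,2\}\}$, in agreement with $(2\cdot 1 - 1)!! = 1$. For the inductive step, look at the block containing the element $1$: it is $\{1,j\}$ for exactly one $j \in \{2, \ldots, 2n\}$, which gives $2n-1$ choices. Deleting this block leaves a fixed-point-free involution of a $(2n-2)$-element set; since the notion of matching does not depend on the particular labels, relabelling identifies this set of involutions with $\Inv^{*}_{2n-2}$, of cardinality $(2n-3)!!$ by the induction hypothesis. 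Hence $\#\Inv^{*}_{2n} = (2n-1)\cdot (2n-3)!! = (2n-1)!!$.

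Finally, combining this with the formula for the even moments of the standard Gaussian recalled above yields $\#\Inv^{*}_{2n} = (2n-1)!! = \mu^{\mathcal{N}}_{2n}$, as claimed. The only point needing a little care is the bookkeeping in the recursion, namely that removing the block $\{1,j\}$ and relabelling the remaining $2n-2$ elements is a genuine bijection onto $\Inv^{*}_{2n-2}$; this is routine. There is no substantial obstacle here, the statement being elementary and this being the standard proof; one could equally give a one-line generating-function argument or read it off Wick's theorem, but the direct count is the most transparent.
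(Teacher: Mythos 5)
Your proof is correct. The paper in fact gives no proof at all — it simply states the lemma as well-known and moves on — so there is nothing to compare against; your recursive pairing argument (single out the block containing $1$, giving $2n-1$ choices, then recurse on the remaining $2n-2$ labels) is the standard one and fills the gap cleanly, and the second equality is indeed just the Gaussian moment formula recalled immediately above.
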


In Lemma \ref{lem:comb-chi} below, we give a combinatorial interpretation of the
moments of a chi variable in terms of permutations. Before stating it,
we introduce more notation related to permutations. Fix a finite set
$I \subset \N$. Denote by $\Part(I)$ the set of partitions of $I$. The group
of permutations $\Sym(I)$ acts naturally on $I$. Given $G$ be a
subgroup of $\Sym(I)$, we denote by $\Orbit(G)$ the set of orbits of
the action of $G$ on $I$. It is a partition of $I$. We denote the
subgroup generated by permutations $\sigma_{1}, \ldots, \sigma_{d}\in\Sym(I)$ by
$\left< \sigma_{1}, \ldots, \sigma_{d} \right>$. For convenience, we shall abuse
notation and write $\Orbit(\sigma_{1}, \ldots, \sigma_{d})$ to mean
$\Orbit(\left< \sigma_{1}, \ldots, \sigma_{d} \right>)$. Let $\sigma \in \Sym(I)$. Each
block $B \in \Orbit( \sigma )$ defines a cyclic permutation
$\sigma\vert_{B} \in \Sym(B)$. We write
\begin{equation*}
  \Cyc(\sigma) = \left\{ \sigma\vert_{B} \colon B \in \Orbit(\sigma) \right\},
\end{equation*}
and set
\begin{equation*}
  \#\sigma = \# \Cyc(\sigma) = \# \Orbit(\sigma).
\end{equation*}
The support of $\sigma$ is
\begin{equation*}
  \Supp \sigma = \{i \in I \colon \sigma(i) \neq i\}.
\end{equation*}
We denote by $|\sigma|$ the length of $\sigma$, i.e.\ the minimal number $l$ such
that $\sigma$ can be written as a product of $l$ transpositions. The length
satisfies $|\sigma| = \# I - \# \sigma$.
\begin{lemma}\label{lem:comb-chi}
  Let $n \geq 1$ an integer, $\alpha > 0$, and $X$ be random variable
  distributed as $\chi_{\alpha}$. We have
  \begin{equation*}
    \E \left[ \left( \frac{X}{\sqrt{2}} \right)^{2n} \right] = \sum_{i = 0}^{n-1} \left( \frac{\alpha}{2} \right)^{n-i}\#\{\sigma \in \Sym_{n}\colon |\sigma| = i\} = \sum_{\sigma \in \Sym_{n}} \left( \frac{\alpha}{2} \right)^{\# \sigma}.
  \end{equation*}
\end{lemma}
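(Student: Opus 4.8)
The plan is to compute the moment $\E[(X/\sqrt2)^{2n}]$ directly from the density \eqref{eq:density-chi} and recognize the resulting polynomial in $\alpha/2$ as a sum over permutations weighted by number of cycles. First I would write $\E[(X/\sqrt2)^{2n}] = 2^{-n}\E[X^{2n}]$ and evaluate $\E[X^{2n}] = \int_0^\infty x^{2n}\rho_{\chi_\alpha}(x)\,\dd x$ using \eqref{eq:density-chi}. Substituting $t = x^2/2$ turns this into a Gamma integral, giving $\E[X^{2n}] = 2^n\,\Gamma(\alpha/2+n)/\Gamma(\alpha/2)$, hence
\begin{equation*}
  \E\!\left[\left(\frac{X}{\sqrt2}\right)^{2n}\right] = \frac{\Gamma(\alpha/2+n)}{\Gamma(\alpha/2)} = \frac{\alpha}{2}\left(\frac{\alpha}{2}+1\right)\cdots\left(\frac{\alpha}{2}+n-1\right) = \prod_{j=0}^{n-1}\left(\frac{\alpha}{2}+j\right).
\end{equation*}

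Next I would identify this rising factorial combinatorially. Writing $a = \alpha/2$, the product $\prod_{j=0}^{n-1}(a+j) = a^{\overline n}$ is the classical generating polynomial for permutations by number of cycles: the (unsigned) Stirling numbers of the first kind satisfy $a^{\overline n} = \sum_{k=1}^n \stirling{n}{k} a^k$ where $\stirling{n}{k} = \#\{\sigma\in\Sym_n : \#\sigma = k\}$. Equivalently $a^{\overline n} = \sum_{\sigma\in\Sym_n} a^{\#\sigma}$, which is exactly the rightmost expression in the statement. To make the excerpt self-contained I would prove $a^{\overline n} = \sum_{\sigma\in\Sym_n} a^{\#\sigma}$ by induction on $n$: inserting the element $n$ into a permutation of $[n-1]$ either creates a new fixed-point cycle (contributing the factor $a$) or is spliced into one of the $n-1$ existing positions within a cycle (contributing the factor $n-1$), which matches the recursion $a^{\overline n} = a^{\overline{n-1}}\cdot(a + (n-1))$. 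Finally, reindexing $k = \#\sigma = n - |\sigma|$ and writing $i = |\sigma|$ gives $\sum_{\sigma\in\Sym_n} a^{\#\sigma} = \sum_{i=0}^{n-1} a^{n-i}\,\#\{\sigma\in\Sym_n : |\sigma| = i\}$, using the relation $|\sigma| = \#I - \#\sigma$ recalled just above the lemma; this is the middle expression, completing the chain of equalities.

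The computation is entirely routine; the only mild obstacle is bookkeeping — making sure the normalization of $\chi_\alpha$ in \eqref{eq:density-chi}, with its $2^{\alpha/2 - 1}\Gamma(\alpha/2)$ denominator, matches the Gamma integral after the substitution so that the constants cancel cleanly, and making sure the reindexing between $\#\sigma$ and $|\sigma|$ respects the range $0 \le i \le n-1$ (equivalently $1 \le \#\sigma \le n$). Neither is a real difficulty, so I expect the proof to be short.
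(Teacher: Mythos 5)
Your proof is correct, and it takes the same analytic first step (reducing to the rising factorial $\prod_{j=0}^{n-1}(\alpha/2+j)$ via the Gamma function) but a different combinatorial second step. The paper expands the product directly, obtaining coefficients $\sum_{J\subset[n],\#J=i}\prod_{j\in J}(j-1)$, and then identifies these with $\#\{\sigma:|\sigma|=i\}$ via the classical (and somewhat less widely quoted) fact that every permutation of length $i$ admits a unique factorization into $i$ strictly increasing transpositions. You instead invoke the unsigned Stirling numbers of the first kind and prove $a^{\overline n}=\sum_{\sigma\in\Sym_n}a^{\#\sigma}$ by the standard cycle-insertion induction. Both arguments are well-known and equally rigorous; the paper's is a single bijective observation with no induction, while yours relies on a recursion that most readers will recognize immediately. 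The final reindexing step via $|\sigma|=n-\#\sigma$ is identical in both. No gaps.
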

\begin{proof}
  We have
  \begin{equation*}
    \E \left[ \left( \frac{X}{\sqrt{2}} \right)^{2n} \right] = \frac{\Gamma(n + \frac{\alpha}{2})}{\Gamma(\frac{\alpha}{2})} = \prod_{i=1}^{n} \left( \frac{\alpha}{2} + i - 1\right),
  \end{equation*}
  where we used that for $x > 0$, $\Gamma(x+1) = x\Gamma(x)$. We expand the
  product to obtain
  \begin{equation*}
    \E \left[ \left( \frac{X}{\sqrt{2}} \right)^{2n} \right] = \sum_{i=0}^{n} \left( \frac{\alpha}{2} \right)^{n-i}\sum_{\substack{J \subset [n]\\\# J = i}}\prod_{j \in J}(j - 1).
  \end{equation*}
  The number of transpositions of the form $\cycle{i, j}$ with $i < j$
  is $j - 1$. A product of strictly increasing transpositions is a product
  \begin{equation*}
     \tau_{r}\tau_{r-1}\cdots \tau_{1}
  \end{equation*}
  of transpositions $\tau_{i} = \cycle{a_{i}, b_{i}}$ such that
  $a_{i} < b_{i}$ for all $i$ and $b_{i} < b_{j}$ for all $i < j$. A
  permutation of length $i$ admits a unique decomposition as a product
  of $i$ strictly increasing transpositions. Thus,
  \begin{equation*}
    \sum_{\substack{J \subset [k]\\ \# J = i}}\prod_{j \in J}(j - 1) = \# \{\sigma \in \Sym_{n}\colon |\sigma| = i\}.
  \end{equation*}

  The second equality is a consequence of the fact that
  $|\sigma| = n - \#\sigma$ .
\end{proof}

\subsection{Moments and Motzkin paths}
\label{sec:moments-motzk-paths}
The computation of powers of a tridiagonal matrix naturally involves
the notion of Motzkin paths.

\begin{definition}
  A Motzkin bridge of size $k \geq 1$ and with profile
  $\theta \in \Sym_{k}$ is a function
  $\gamma \colon [k] \to \N$ such that for all $i \in [k]$,
  \begin{equation*}
    |\gamma(i) - \gamma(\theta(i))| \leq 1\,.
  \end{equation*}

  We define the two following sets of Motzkin bridges
  \begin{equation*}
    \begin{split}
      \motz_{k, 0}(\theta) &= \{\gamma \colon [k] \to \N\colon \min\gamma = 0, |\gamma(i) - \gamma(\theta(i))| \leq 1\}\,,\\
      \motz^{[N]}_{k}(\theta) &= \{\gamma \colon [k] \to [N]\colon |\gamma(i) - \gamma(\theta(i))| \leq 1\}\,.\\
    \end{split}
  \end{equation*}
\end{definition}

We observe that for $k \geq 1$,
\begin{equation*}
  \Tr \left( (T_{\beta}^{N})^{k} \right) = \sum_{i_{1}, \ldots, i_{k}=1}^{N}(T^{N}_{\beta})_{i_{1}i_{2}}\cdots (T^{N}_{\beta})_{i_{k-1}i_{k}}(T^{N}_{\beta})_{i_{k}i_{1}} = \sum_{\gamma\in\motz^{N}_{k}(\cycle{1, 2, \cdots, k})}\prod_{i=1}^{k}(T_{\beta}^{N})_{\gamma(i)\gamma(i+1)},
\end{equation*}
since $(T_{\beta}^{N})_{ij} = 0$ if $|i - j| > 1$. We use the convention
that $\gamma(k+1) = \gamma(1)$. We proceed similarly for a product of such
traces. For $k_{1}, \ldots, k_{l} \geq 1$ and $k = \sum_{i=1}^{l}k_{i}$, define
the permutation with $l$ cycles
\begin{equation}\label{eq:theta-n}
  \theta(\bm{k}) = \cycle{1, \cdots, k_{1}}\cdots \cycle{\sum_{i=1}^{l-1}k_{i}+1, \cdots, \sum_{i=1}^{l}k_{i}}.
\end{equation}
We then have
\begin{equation*}
  \prod_{i=1}^{l}\Tr \left( (T_{\beta}^{N})^{k_{i}} \right) = \sum_{\gamma\in\motz^{N}_{k}(\theta(\bm{k}))}\prod_{i=1}^{k}(T_{\beta}^{N})_{\gamma(i)(\gamma\theta(\bm{k}))(i))}.
\end{equation*}

We are ready to compute the moments $m_{l}(\bm{k})$ (recall Definition
\ref{def:moment}).

\begin{definition}\label{def:compatible-permutation}
  Let $\gamma$ be a Motzkin bridge of size $k$ and profile $\theta$. We
  define for $\epsilon \in \{+1, 0, -1\}$, the set
  \begin{equation*}
    \Delta\gamma_{\epsilon} = \{i \in [k]\colon \gamma(\theta(i)) - \gamma(i) = \epsilon\}\,.
  \end{equation*}
  We write $\Delta\gamma_{+1} = \Delta\gamma_{+}$ and $\Delta\gamma_{-1} = \Delta\gamma_{-}$ for convenience.

  A permutation $\sigma \in \Sym_{k}$ is said to be compatible with $\gamma$ if
  $\gamma\circ \sigma = \gamma$, and its restrictions to
  $\Delta\gamma_{+}, \Delta\gamma_{-},$ and $\Delta\gamma_{0}$ satisfy the following conditions:
  \begin{itemize}
    \item $\sigma_{-} \coloneq \sigma\vert_{\Delta\gamma_{-}}$ is a permutation of $\Delta\gamma_{-}$,
    \item $\sigma_{+} \coloneq \sigma\vert_{\Delta\gamma_{+}}$ is the identity on $\Delta\gamma_{+}$,
    \item $\sigma_{0}\coloneq \sigma\vert_{\Delta\gamma_{0}}$ is a matching (recall Definition
          \ref{def:matching}).
  \end{itemize}
  The set of permutations compatible with $\gamma$ is denoted by
  $\Sym^{\gamma}$.
\end{definition}
Note that no permutation can be compatible with $\gamma$ if $\# \Delta\gamma_{0}$ is
odd: in that case $\sigma_{0}$ cannot be a matching.

\begin{prop}\label{prop:compute-moments}
  Let $l \geq 1$, $\bm{k} \in (\N^{*})^{l}$, and
  $k = \sum_{i=1}^{l}k_{i}$. The moments can be expressed as
  \begin{equation}\label{eq:moments-combin}
    m_{l}(\bm{k}) = (\frac{2}{\beta})^{k/2}\sum_{\gamma \in \motz_{k}^{[N]}(\theta(\bm{k}))}\sum_{\sigma \in \Sym^{\gamma}}\left( \frac{\beta}{2} \right)^{\# \sigma_{-}}\prod_{\pi\in\Cyc(\sigma_{-})}\left( \gamma(\pi) - 1 \right)\,,
  \end{equation}
  where $\gamma(\pi)$ denotes the value of $\gamma$ on the support of the cycle
  $\pi$.
\end{prop}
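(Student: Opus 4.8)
The plan is to start from the expansion of $\prod_{i=1}^{l}\Tr((T^{N}_{\beta})^{k_{i}})$ over Motzkin bridges $\gamma \in \motz^{[N]}_{k}(\theta(\bm{k}))$ established just above, and then to integrate term by term against the product measure governing the independent entries $(a_i, b_j)$ of the Dumitriu–Edelman matrix. The overall scalar factor $(2/\beta)^{k/2}$ pulls out immediately from the $k$ factors of $\sqrt{2/\beta}$ appearing in each monomial $\prod_{i=1}^{k}(T^{N}_{\beta})_{\gamma(i)(\gamma\theta(\bm{k}))(i)}$, so it remains to compute the expectation of $\prod_{i=1}^{k} M_{\gamma(i)\,\gamma(\theta(i))}$ where $M$ is the un-normalised tridiagonal matrix with diagonal entries $a_j$ and off-diagonal entries $b_j$.

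The key observation is that for a fixed bridge $\gamma$, each index $i \in [k]$ contributes a factor depending only on which of the sets $\Delta\gamma_{0}$, $\Delta\gamma_{+}$, $\Delta\gamma_{-}$ it lies in: indices in $\Delta\gamma_{0}$ contribute a diagonal entry $a_{\gamma(i)}$, while indices in $\Delta\gamma_{+} \cup \Delta\gamma_{-}$ contribute an off-diagonal entry $b_{\min(\gamma(i),\gamma(\theta(i)))}$. Moreover the steps in $\Delta\gamma_{+}$ and $\Delta\gamma_{-}$ are paired along each edge $\{n, n+1\}$ of $\N$ that $\gamma$ traverses, in the sense that $\# (\Delta\gamma_+ \cap \gamma^{-1}\{n\}\text{-type edges})$ equals the corresponding count in $\Delta\gamma_-$ — so the total power of each $b_n$ appearing is even. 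First I would group the product of $a$'s by their common value: for each level $m$, the variable $a_m$ appears with exponent $\#(\Delta\gamma_0 \cap \gamma^{-1}(m))$, and since the $a_m$ are independent standard Gaussians, the expectation factorises as a product of Gaussian moments $\mu^{\mathcal N}_{|\cdot|}$, which by the first lemma counts fixed-point-free involutions; these are exactly the matchings $\sigma_0$ of $\Delta\gamma_0$ (block by block on each level, hence on all of $\Delta\gamma_0$ since $\gamma\circ\sigma = \gamma$ forces $\sigma_0$ to preserve levels). Similarly, for each edge of $\N$ at height $m$ (between levels $m$ and $m+1$), the variable $b_m$ appears with exponent equal to twice the number $\#(\Delta\gamma_- \cap \{\text{steps down across that edge}\})$; by Lemma \ref{lem:comb-chi} applied with $\alpha = (N-m)\beta$, the expectation $\E[(b_m/1)^{2t}] = \E[(X/\sqrt2)^{2t}]$ with $X \sim \chi_{(N-m)\beta}$ equals $\sum_{\sigma \in \Sym_t}(\tfrac{(N-m)\beta}{4})^{\#\sigma}$ — wait, more precisely $\sum_{\sigma\in\Sym_t}((N-m)\beta/2 \cdot 1/2)^{\#\sigma}$; here I must track that $\sqrt 2 b_m \sim \chi_{(N-m)\beta}$, so $\E[b_m^{2t}] = \E[(\chi_{(N-m)\beta}/\sqrt2)^{2t}] = \sum_{\sigma\in\Sym_t}((N-m)\beta/4)^{\#\sigma}$. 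Running the bijection of Lemma \ref{lem:comb-chi} in reverse, each such permutation on the $t$ down-steps across a fixed edge encodes exactly the "product of $(j-1)$" weights, which reassemble into $\prod_{\pi\in\Cyc(\sigma_-)}(\gamma(\pi)-1)$ once one checks that the level $\gamma(\pi) = m+1$ of a down-step across the edge at height $m$ gives the factor $\big((N-m)\beta/4\big)$ in $\E[b_m^{2t}]$ combined correctly — I need the normalisation so that the $(2/\beta)^{k/2}$ out front together with the $\beta/4$ per $b$-pair telescopes to $(\beta/2)^{\#\sigma_-}\prod(\gamma(\pi)-1)$ with the remaining $(2/\beta)^{k/2}$ as stated.

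So the steps are: (1) expand the product of traces over Motzkin bridges; (2) take expectations, using independence of the $a$'s and $b$'s across all indices and levels to factorise; (3) for the diagonal (Gaussian) contributions, invoke the matching count $\#\Inv^*_{2r} = (2r-1)!!$ and repackage all level-wise matchings into a single matching $\sigma_0$ of $\Delta\gamma_0$ — the condition $\gamma\circ\sigma = \gamma$ is precisely what ensures a matching of $\Delta\gamma_0$ decomposes into level-wise matchings, and conversely; (4) for the off-diagonal ($\chi$) contributions, invoke Lemma \ref{lem:comb-chi} edge by edge and reassemble the resulting permutations into $\sigma_-$ on $\Delta\gamma_-$ (with $\sigma_+$ forced to be the identity since the $b$-moments are even and each "up" step is slaved to a "down" step), getting the weight $(\beta/2)^{\#\sigma_-}\prod_{\pi\in\Cyc(\sigma_-)}(\gamma(\pi)-1)$ after carefully matching the powers of $\beta$ and $2$; (5) recognise the resulting sum over $(\gamma, \sigma_0, \sigma_-)$ as a sum over $\Sym^\gamma$, which is exactly the statement. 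I expect the main obstacle to be bookkeeping step (4): verifying that the level-dependent parameter $(N-m)\beta$ in the $\chi$ distribution, run through the bijection of Lemma \ref{lem:comb-chi}, produces precisely the factor $\prod_{\pi\in\Cyc(\sigma_-)}(\gamma(\pi)-1)$ with the correct power $(\beta/2)^{\#\sigma_-}$ — in particular getting the shift $(\gamma(\pi)-1)$ versus $(N - \gamma(\pi))$ or similar right, and confirming that the overall power of $2/\beta$ collapses to $(2/\beta)^{k/2}$ as claimed. A secondary subtlety is justifying the passage from a formal product of Gaussian/chi moments indexed by levels to a single global sum over compatible permutations, i.e.\ that nothing is double-counted when the bridge revisits a level.
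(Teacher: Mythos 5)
Your proposal follows the same route as the paper's proof of Proposition \ref{prop:compute-moments}: expand the product of traces over Motzkin bridges, factorize the expectation over levels using independence, interpret the Gaussian and chi moments combinatorially via Lemma \ref{lem:comb-chi}, and recombine the level-wise matchings and permutations into a single sum over $\Sym^{\gamma}$. No idea is missing beyond what you sketch, and the two bookkeeping concerns you flag are both real; here is how they resolve. For the normalisation, since $\sqrt2\,b_m\sim\chi_{(N-m)\beta}$, the variable $b_m$ has the law of $X/\sqrt2$ with $X\sim\chi_{(N-m)\beta}$, so Lemma \ref{lem:comb-chi} directly gives $\E[b_m^{2t}]=\sum_{\sigma\in\Sym_t}\bigl((N-m)\beta/2\bigr)^{\#\sigma}$; the extra factor of $1/2$ in your scratch computation (yielding $\beta/4$) arises from dividing by $\sqrt2$ a second time, when the $/\sqrt2$ is already absorbed into the lemma's statement. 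For the shift, following the Dumitriu--Edelman parametrisation stated in the paper literally yields the cycle weight $(N-m)\beta/2 = (N+1-\gamma(\pi))\beta/2$, not $(\gamma(\pi)-1)\beta/2$; the paper's proof in fact works with the index-reversed (and spectrally identical) tridiagonal matrix in which $\sqrt2\,b_n\sim\chi_{n\beta}$, under which a down-step at height $\gamma(\pi)=n+1$ contributes $n\beta/2=(\gamma(\pi)-1)\beta/2$, giving \eqref{eq:moments-combin} as written. Since the reversed matrix has the same eigenvalue law, the moments are unchanged, and once you adopt that indexing your outline closes the proof.
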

\begin{proof}
  We introduce the local times at height $n$ and $n + 1/2$:
  \begin{equation*}
    \begin{aligned}
      t_{n} &= \# L_{n} &\text{ with } ~&L_{n} = \{i \in \Delta\gamma_{0}\colon \gamma(i) = n\}\,,\\
      t_{n+1/2} &= \# L_{n+1/2} &\text{ with } ~&L_{n+1/2} = \{i \in \Delta\gamma_{-}\colon \gamma(i) = n+1\}.
    \end{aligned}
  \end{equation*}
  They allow us to write the moments as
  \begin{equation*}
    m_{l}(\bm{k}) = \sum_{\gamma\in\motz^{[N]}_{k}(\theta)} \E \left(  \prod_{i=1}^{k}(T_{\beta}^{N})_{\gamma(i)\gamma(\theta(i))}\right) = \sum_{\gamma\in\motz^{N}_{k}(\theta)}\E \left( \prod_{n=1}^{N}a_{n}^{t_{n}}b_{n}^{2t_{n+1/2}} \right).
  \end{equation*}
  with $\theta = \theta(\bm{k})$. Notice that there is a factor $2$ in
  front of $t_{n+1/2}$ as we have to account for indices in
  $\Delta\gamma_{-}$. By independence and Lemma \ref{lem:comb-chi}, we have
  \begin{equation*}
    \begin{split}
      m_{l}(\bm{k}) &= \sum_{\gamma\in\motz^{[N]}_{k}(\theta)} \prod_{n=1}^{N}\E \left(a_{n}^{t_{n}}\right)\mu_{\beta}^{N} \left( b_{n}^{2t_{n+1/2}} \right)\\
                    &= \sum_{\gamma \in \motz^{[N]}_{n}(\theta)}\prod_{n=1}^{N} \left( \#\Inv^{*}(L_{n}) \right) \left( \sum_{\sigma \in \Sym(L_{n+1/2})}\left( \frac{\beta}{2}n \right)^{\# \sigma} \right).
    \end{split}
  \end{equation*}

  Notice that
  \begin{equation*}
    \prod_{n=1}^{N} \left( \#\Inv^{*}(L_{n}) \right) \left( \sum_{\sigma \in \Sym(L_{n+1/2})}\left( \frac{\beta}{2}n \right)^{\# \sigma} \right)
    = \sum_{\sigma \in \Sym^{\gamma}} \left( \frac{\beta}{2} \right)^{\# \sigma_{-}}\prod_{n=1}^{N}n^{\# \sigma\vert_{L_{n+1/2}}}.
  \end{equation*}
  Indeed, $\#\Inv^{*}(L_{n})$ is the number of matching on $L_{n}$
  (corresponding to the permutation \(\sigma_{0}\) in Definition
  \ref{def:compatible-permutation}), and the condition that $\gamma\sigma = \gamma$
  in Definition \ref{def:compatible-permutation} corresponds to each
  cycle of $\sigma\vert_{\Delta\gamma_{-}}$ having support in one of the $L_{n+1/2}$.

  Finally, we have for each $\sigma \in \Sym^{\gamma}$ that
  \begin{equation*}
    \prod_{n=1}^{N}n^{\# \sigma\vert_{L_{n+1/2}}} = \prod_{\pi \in \Cyc(\sigma_{-})} (\gamma(\pi) - 1).
  \end{equation*}
  We obtain
  \begin{equation*}
     m_{l}(\bm{k}) = \sum_{\gamma \in \motz^{N}_{n}(\theta)}\sum_{\sigma \in \Sym^{\gamma}}\left( \frac{\beta}{2} \right)^{\# \sigma_{-}}\prod_{\pi \in \Cyc(\sigma_{-})} \left(\gamma(\pi) - 1  \right).
  \end{equation*}
\end{proof}

It will prove more convenient to consider cumulants rather than
moments, so as to have connected rather than disconnected objects. Let
us first recall the definition of a cumulant.
\begin{definition}\label{def:cumulant}
  Let $X_{1}, \ldots, X_{n}$ be $n$ real random variables. The joint
  cumulants $(\kappa_{l})_{l \geq 1}$ of these random variables are
  $l$-multilinear symmetric maps defined inductively by
  \begin{equation*}
    \E \left[ X_{i_{1}}\cdots X_{i_{l}} \right] = \sum_{\Pi \in\Part([l])}\prod_{V \in \Pi}\kappa_{|V|}(X_{i_{k}}, k \in V)\,.
  \end{equation*}

  We denote by $\kappa_{l}(\bm{k})$ the joint cumulant of
  $p_{k_{1}}(\bm{\lambda}), p_{k_{2}}(\bm{\lambda}), \ldots, p_{k_{l}}(\bm{\lambda})$ under
  $\mu^{N}_{\beta}$.
\end{definition}
Proposition \ref{prop:compute-moments} then translates into the
following result.
\begin{corol}\label{corol:compute-cumulants}
  Let $l \geq 1$, $\bm{k} \in (\N^{*})^{l}$, and
  $k = \sum_{i=1}^{l}k_{i}$. The cumulants can be expressed as
  \begin{equation}\label{eq:cumulants-combin}
    \kappa_{l}(\bm{k}) = (\frac{2}{\beta})^{k/2}\sum_{\gamma \in \motz_{k}^{[N]}(\theta(\bm{k}))}\sum_{\substack{\sigma \in \Sym^{\gamma}\\ \Orbit(\theta(\bm{k}), \sigma ) = 1}}\prod_{\pi\in\Cyc(\sigma_{-})}\frac{\beta}{2} \left( \gamma(\pi) - 1\right).
  \end{equation}
\end{corol}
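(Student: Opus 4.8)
The plan is to pass from the moment formula of Proposition~\ref{prop:compute-moments} to the cumulant formula by a standard Möbius-inversion argument on the partition lattice, and to show that the only surviving term is the "connected" one, captured by the condition $\#\Orbit(\theta(\bm{k}), \sigma) = 1$. First I would observe that the right-hand side of \eqref{eq:moments-combin}, for a tuple $\bm{k}$ with $l$ parts, is naturally a sum over pairs $(\gamma, \sigma)$ with $\gamma$ a Motzkin bridge of profile $\theta(\bm{k})$ and $\sigma \in \Sym^{\gamma}$; the key structural fact is that such a pair decomposes according to the orbits of the group $\langle \theta(\bm{k}), \sigma \rangle$ acting on $[k]$. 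Indeed, since $\gamma \circ \sigma = \gamma$ and $\gamma$ has profile $\theta(\bm{k})$, the function $\gamma$ together with the combinatorial data $\sigma_{-}, \sigma_{0}, \sigma_{+}$ restricted to each such orbit is itself a valid (Motzkin bridge, compatible permutation) pair for the sub-tuple of $\bm{k}$ indexed by the cycles of $\theta(\bm{k})$ contained in that orbit. Moreover the weight $\prod_{\pi \in \Cyc(\sigma_{-})}(\tfrac{\beta}{2})(\gamma(\pi) - 1)$ is multiplicative over orbits. This gives a bijection between the data summed in \eqref{eq:moments-combin} and: a partition $\Pi$ of the $l$ cycles of $\theta(\bm{k})$, together with, for each block, a connected datum on the corresponding sub-tuple.

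From this decomposition, the weighted count underlying $m_{l}(\bm{k})$ factorizes as
\begin{equation*}
  \left(\tfrac{2}{\beta}\right)^{-k/2} m_{l}(\bm{k}) = \sum_{\Pi \in \Part([l])} \prod_{V \in \Pi} C(\bm{k}_{V}),
\end{equation*}
where $C(\bm{k}_{V})$ is the sum over pairs $(\gamma, \sigma)$ with $\langle \theta(\bm{k}_{V}), \sigma \rangle$ transitive, weighted as in \eqref{eq:cumulants-combin}. Comparing with the defining recursion for cumulants in Definition~\ref{def:cumulant} — noting that $m_{l}(\bm{k}) = \E[p_{k_{1}}\cdots p_{k_{l}}]$ and that both sides are indexed by set partitions of $[l]$ — Möbius inversion on $\Part([l])$ forces $\kappa_{l}(\bm{k}) = \left(\tfrac{2}{\beta}\right)^{k/2} C(\bm{k})$, which is exactly \eqref{eq:cumulants-combin} once one checks that transitivity of $\langle \theta(\bm{k}), \sigma\rangle$ on $[k]$ is the same as $\#\Orbit(\theta(\bm{k}), \sigma) = 1$ in the paper's notation. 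One small bookkeeping point: the $(\tfrac{2}{\beta})^{k/2}$ prefactor and the $(\tfrac{\beta}{2})^{\#\sigma_{-}}$ weight are both multiplicative in $k = \sum k_{i}$ and in $\#\sigma_{-} = \sum_{V}\#(\sigma_{-})\vert_{V}$, so they distribute correctly over blocks and do not interfere with the inversion.

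I expect the main obstacle to be verifying cleanly that the pair $(\gamma, \sigma)$ genuinely restricts to each orbit of $\langle \theta(\bm{k}), \sigma\rangle$ as an \emph{independent} and \emph{unconstrained} datum of the same type — i.e.\ that there is no global constraint (coming, say, from the requirement $\min \gamma$ ranges over all of $[N]$, or from the three-way split $\Delta\gamma_{+}, \Delta\gamma_{-}, \Delta\gamma_{0}$ interacting across orbits) that would obstruct the product factorization. Concretely one must check that $i$ and $\theta(\bm{k})(i)$ always lie in the same orbit (immediate, since $\theta(\bm{k})$ is one of the generators), that $i$ and $\sigma(i)$ do too, and hence that $\Delta\gamma_{\epsilon}$, being defined purely in terms of $\gamma$ and $\theta(\bm{k})$, splits orbit-by-orbit, as does the matching/permutation/identity structure of $\sigma$ on the three pieces; and that the ambient $[N]$ (rather than a per-orbit height window) is shared but imposes only the pointwise bound $\gamma(i) \in [N]$, which is orbit-local. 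Once this independence is established the rest is the routine exponential-formula/Möbius computation.
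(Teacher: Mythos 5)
Your proposal is correct and follows essentially the same route as the paper: decompose the sum in Proposition~\ref{prop:compute-moments} according to $\Orbit(\theta(\bm{k}),\sigma)$ viewed as a partition of $[l]$, observe that both the data $(\gamma,\sigma)$ and the weight factor over blocks, and then identify the resulting expression with the moment–cumulant relation of Definition~\ref{def:cumulant}, from which the claim follows by uniqueness of cumulants (Möbius inversion on $\Part([l])$). The independence check you flag as the main obstacle is exactly the implicit step the paper glosses over, and your verification — that $\theta(\bm{k})$ and $\sigma$ both preserve each orbit, that $\Delta\gamma_{\pm},\Delta\gamma_{0}$ split orbit-wise, and that the $[N]$ range is a pointwise rather than a global constraint — is the right one and is needed to justify the product factorization across blocks.
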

\begin{proof}
  We decompose the formula of Proposition \ref{prop:compute-moments} depending on the number of orbits of $\left< \theta(\bm{k}), \sigma \right>$ and get
  \begin{equation*}
    \begin{split}
      m_{l}(\bm{k})
      &= (\frac{2}{\beta})^{k/2}\sum_{\gamma \in \motz_{k}^{[N]}(\theta(\bm{k}))}\sum_{\sigma \in \Sym^{\gamma}}\prod_{\pi\in\Cyc(\sigma_{-})}\frac{\beta}{2} \left( \gamma(\pi) - 1 \right)\\
      &= \sum_{\Pi \in \Part([l])}(\frac{2}{\beta})^{k/2}\sum_{\gamma \in \motz_{k}^{[N]}(\theta(\bm{k}))}\sum_{\substack{\sigma \in \Sym^{\gamma}\\\Orbit( \theta(\bm{k}), \sigma ) = \Pi}}\prod_{\pi\in\Cyc(\sigma_{-})}\frac{\beta}{2} \left( \gamma(\pi) - 1 \right)\\
      &= \sum_{\Pi \in \Part([l])}\prod_{B \in \Pi} \left[(\frac{2}{\beta})^{k_{B}/2}\sum_{\gamma \in \motz_{k_{B}}^{[N]}(\theta(\bm{k}_{B}))}\sum_{\substack{\sigma \in \Sym^{\gamma}\\\Orbit(\theta(\bm{k}_{B}), \sigma) = 1}}\prod_{\pi\in\Cyc(\sigma_{-})}\frac{\beta}{2} \left( \gamma(\pi) - 1 \right)  \right],
    \end{split}
  \end{equation*}
  where we introduced the notation $\bm{k}_{B} = (k_{i})_{i \in B}$ and $k_{B} = \sum_{i \in B}k_{i}$ for
  $B \subset [l]$.

  On the other hand, the moments are related to the cumulant through
  \begin{equation*}
    m_{l}(\bm{k}) = \sum_{\Pi\in \Part([l])}\prod_{B \in \Pi}\kappa_{|B|}(k_{i}, i \in B)\,.
  \end{equation*}
  This implies that $\kappa_{l}(\bm{k})$ coincides with the cumulant of
  $(p_{k_{i}})_{1 \leq i \leq l}$ under $\mu_{\beta}^{N}$.
\end{proof}

\subsection{Large $N$ expansion}
\label{sec:large-n-expansion}

We now consider the large $N$ asymptotics of the moments and cumulants
computed in Proposition \ref{prop:compute-moments} and Corollary
\ref{corol:compute-cumulants}. We prove the following large $N$
expansion.
\begin{prop}\label{prop:N-exp-cumulants}
  Let $l \geq 1$ and $\bm{n} = (n_{1}, \ldots, n_{l}) \in (\N^{*})^{l}$
  with $n = \sum_{i=1}^{l}n_{i}$,
    \begin{equation*}
    \kappa_{l}(\bm{n}) = \sum_{p+q+u=n/2}\left( \frac{2}{\beta}\right)^{p} (-1)^{q}P_{u}(N)\sum_{\gamma \in \motz_{n, 0}(\theta(\bm{n}))}\sum_{\substack{\sigma \in \Sym^{\gamma}\\ \Orbit( \theta(\bm{n}), \sigma) = 1\\|\sigma|  = p}}e_{q}\Bigl( \gamma(\pi); \pi \in \Cyc(\sigma_{-}) \Bigr)
  \end{equation*}
  where
  $e_{q}(x_{1}, \ldots, x_{m}) = \sum_{1 \leq i_{1} < i_{2} < \cdots < i_{q} \leq m}\prod_{j=1}^{q}x_{i_{j}}$
  is the $q$-th elementary symmetric polynomial and
  $P_{u}(k) = \sum_{i = 1}^{k}i^{u}$ is the $u$-th Newton sum.

  Equivalently, we have the an expansion in powers of $1/N$,
  \begin{equation*}
    \kappa_{l}(\bm{n}) = \sum_{p+q+r+s = n/2}\sum_{\substack{\gamma\in\motz_{n, 0}(\theta(\bm{n}))\\\sigma \in \Sym_{\gamma}, |\sigma| = p\\ \Orbit(\theta(\bm{n}), \sigma ) = 1}} \left( \frac{2}{\beta} \right)^{p}\frac{(-1)^{q}B_{r}}{s+1}\binom{r+s}{r} N^{s+1}e_{q}\Bigl(\gamma(\pi); \pi \in \Cyc(\sigma_{-})\Bigr),
  \end{equation*}
  where $(B_{r})_{r \geq 0} = (1, -1/2, 1/6, \ldots)$ is the sequence
  of Bernoulli numbers, defined inductively by
  \begin{equation}\label{eq:Bernoulli}
    \sum_{k=0}^{n}\binom{n+1}{k}(-1)^{k}B_{k} = \delta_{n, 0} \text{ for all } n \geq 0.
  \end{equation}
\end{prop}
Notice that the permutation $\theta(\bm{n})$ has $l$ cycles and thus if
$|\sigma| = p < l-1$ the group $\left< \theta(\bm{n}), \sigma \right>$ cannot act
transitively on $[n]$, i.e.\
$\Orbit(\theta(\bm{n}), \sigma)$ cannot be $1$. Thus,
the leading order of $\kappa_{l}(\bm{n})$, obtained when $s$ is maximal in
the sum above under the constraint $p \geq l - 1$. The leading order is
given by taking $p = l-1, q = 0, r=0, s = n/2 - l +1$, which gives
\begin{equation*}
  \kappa_{l}(\bm{n}) = \left( \frac{2}{\beta} \right)^{l-1}N^{n/2-l+2}\frac{\# \left\{ (\gamma, \sigma) \colon \begin{aligned}
    &\gamma \in \motz_{n, 0}(\theta(\bm{n})),\sigma \in \Sym_{\gamma},\\ &\Orbit(\theta(\bm{n}), \sigma) = 1\\  &|\sigma| = l-1
  \end{aligned} \right\}}{n/2-l+2} + \order{N^{n/2 - l +1}}.
\end{equation*}
We shall see in Section \ref{sec:combinatorial-cumulants} a combinatorial description of the
terms of the expansion.
\begin{proof}
  We start by noticing that in \eqref{eq:cumulants-combin}, we can
  make the bijective change of variable
  \begin{equation*}
    \begin{cases}
      \motz_{k}^{[N]}(\theta(\bm{k})) &\to \left\{ (h, \gamma') \in \N^{*} \times \motz_{k, 0}(\theta(\bm{k})) \colon h \geq \max \gamma' + 1 \right\}\\
      \gamma &\mapsto (h, \gamma') = (\max \gamma, \max \gamma - \gamma).
    \end{cases}
  \end{equation*}
  We have $\Delta\gamma_{+} = \Delta\gamma'_{-}$, $\Delta\gamma_{-} = \Delta\gamma'_{+}$,
  and $\Delta\gamma_{0} = \Delta\gamma'_{0}$. We get
  \begin{equation*}
    \kappa_{l}(\bm{k}) = (\frac{2}{\beta})^{k/2}\sum_{\gamma \in \motz_{k, 0}(\theta(\bm{k}))}\sum_{h \geq \max \gamma + 1}\sum_{\substack{\sigma \in \Sym^{h-\gamma}\\ \Orbit( \theta(\bm{k}), \sigma) = 1}}\prod_{\pi\in\Cyc(\sigma\vert_{\Delta\gamma_{+}})}\frac{\beta}{2} \left( h - 1 - \gamma(\pi)\right).
  \end{equation*}
  Given $\gamma \in \motz_{k, 0}(\theta(\bm{k}))$, we choose any bijection
  $\tilde{\phi} \colon \Delta\gamma_{+} \to \Delta\gamma_{-}$ satisfying for all
  $i \in \Delta\gamma_{+}$: $i$ and $\gamma(i)$ are part of the same cycle of
  $\theta(\bm{k})$ and
  \begin{equation*}
    \gamma(\tilde{\phi}(i)) = \gamma(i) + 1.
  \end{equation*}
  Such a bijection exists since $\gamma$ is a Motzkin bridge: for any level
  $n$, there are as many up-steps between $n$ and $n+1$ as down-steps
  between $n+1$ and $n$. We extends the definition of $\tilde{\phi}$ to a
  bijection (actually, an involution) $\phi \colon [k] \to [k]$ by
  \begin{equation*}
    \phi(i) = \begin{cases}
      \tilde{\phi}(i) &\text{ if } i \in \Delta\gamma_{+}\\
      \tilde{\phi}^{-1}(i) &\text{ if } i \in \Delta\gamma_{-}\\
      i &\text{ if } i \in \Delta\gamma_{0}.
    \end{cases}
  \end{equation*}
  This bijection allows us to define the change of variable
  \begin{equation*}
    \sigma \in \Sym_{h-\gamma} \mapsto \phi^{-1}\circ\sigma\circ\phi \in \Sym_{\gamma}.
  \end{equation*}
  We thus have
  \begin{equation*}
    \begin{split}
      \sum_{\substack{\sigma \in \Sym_{h-\gamma}\\ \Orbit( \theta(\bm{k}), \sigma ) = 1}}\prod_{\pi\in\Cyc(\sigma\vert_{\Delta\gamma_{+}})}\frac{\beta}{2} \left( h - 1 - \gamma(\pi)\right)
      &= \sum_{\substack{\sigma \in \Sym_{\gamma}\\ \Orbit( \theta(\bm{k}), \sigma ) = 1}}\prod_{\pi\in\Cyc(\phi\circ\sigma\circ\phi\vert_{\Delta\gamma_{+}})}\frac{\beta}{2} \left( h - 1 - \gamma(\pi)\right)\\
      &= \sum_{\substack{\sigma \in \Sym_{\gamma}\\ \Orbit( \theta(\bm{k}), \sigma ) = 1}}\prod_{\pi\in\Cyc(\sigma\vert_{\Delta\gamma_{-}})}\frac{\beta}{2} \left( h - \gamma(\pi)\right).
    \end{split}
  \end{equation*}
  Note that the transitivity condition is not changed because of our
  constraints that $i$ and $\phi(i)$ must be part of the same cycle of
  $\theta(\bm{k})$.

  The cumulant can be rewritten as
  \begin{equation*}
    \kappa_{l}(\bm{n}) = (\frac{2}{\beta})^{n/2}\sum_{\gamma \in \motz_{n, 0}(\theta(\bm{n}))}\sum_{h=\max\gamma+1}^{N}\sum_{\substack{\sigma \in \Sym^{\gamma}\\ \Orbit( \theta(\bm{n}), \sigma ) = 1}}\prod_{\pi\in\Cyc(\sigma_{-})}\frac{\beta}{2} \left( h - \gamma(\pi)\right).
  \end{equation*}
  Notice that when $1 \leq h \leq \max \gamma$, the product is 0 so that
  \begin{equation}\label{eq:substitution-gamma}
    \kappa_{l}(\bm{n}) = (\frac{2}{\beta})^{n/2}\sum_{\gamma \in \motz_{n, 0}(\theta(\bm{n}))}\sum_{h=1}^{N}\sum_{\substack{\sigma \in \Sym^{\gamma}\\ \Orbit( \theta(\bm{n}), \sigma) = 1}}\prod_{\pi\in\Cyc(\sigma_{-})}\frac{\beta}{2} \left( h - \gamma(\pi)\right).
  \end{equation}

  Set $p = |\sigma|$, and notice that $c(\sigma_{-}) = n/2 - p$. We expand the
  product -- reminiscent of a characteristic polynomial -- as
  \begin{equation*}
    \prod_{\pi\in\Cyc(\sigma_{-})}\frac{\beta}{2} \left( h - \gamma(\pi)\right)
    = \left( \frac{\beta}{2} \right)^{n/2-p}\sum_{q+u=n/2 - p}(-1)^{q} e_{q}\left( \gamma(\pi); \pi \in \Cyc(\sigma_{-}) \right)h^{u}.
  \end{equation*}

  Hence, we get
  \begin{equation*}
    \kappa_{l}(\bm{n}) = (\frac{2}{\beta})^{n/2}\sum_{\gamma \in \motz_{n, 0}(\theta(\bm{n}))}\sum_{\substack{\sigma \in \Sym^{\gamma}\\ \Orbit( \theta(\bm{n}), \sigma) = 1}}\left( \frac{\beta}{2} \right)^{n/2-p}\sum_{q+u=n/2 - p}(-1)^{q} e_{q}\left( \gamma(\pi); \pi \in \Cyc(\sigma_{-}) \right)\sum_{h=1}^{N}h^{u}.
  \end{equation*}
  Upon introducing the Newton sum $P_{u}(k) = \sum_{h = 1}^{k}h^{u}$ and rearranging the terms, we get
  \begin{equation}\label{eq:cumulant-Newton}
    \kappa_{l}(\bm{n}) = \sum_{p+q+u=n/2}\left( \frac{2}{\beta}\right)^{p} (-1)^{q}P_{u}(N)\sum_{\gamma \in \motz_{n, 0}(\theta(\bm{n}))}\sum_{\substack{\sigma \in \Sym^{\gamma}\\ \Orbit( \theta(\bm{n}), \sigma) = 1\\|\sigma|  = p}}e_{q}\left( \gamma(\pi); \pi \in \Cyc(\sigma_{-}) \right).
  \end{equation}
  This gives the first claim.

  To get the second one, we use Faulhaber's formula
  \begin{equation}\label{eq:Faulhaber}
    P_{u}(N) = \sum_{h=1}^{N}h^{u} = \sum_{r+s=u}\binom{r+s}{r}\frac{B_{r}}{s+1}N^{s+1}.
  \end{equation}
  Using \eqref{eq:cumulant-Newton} and \eqref{eq:Faulhaber}, we get
  \begin{equation*}
    \kappa_{l}(\bm{n}) = \sum_{p+q+r+s = n/2}\sum_{\substack{\gamma\in\motz_{n, 0}(\theta(\bm{n}))\\\sigma \in \Sym_{\gamma}, |\sigma| = p\\ \Orbit( \theta(\bm{n}), \sigma) = 1}} \left( \frac{2}{\beta} \right)^{p}\frac{(-1)^{q}B_{r}}{s+1}\binom{r+s}{r} N^{s+1}e_{q}\left( \gamma(\pi); \pi \in \Cyc(\sigma_{-}) \right),
  \end{equation*}
  as wanted.
\end{proof}

\section{Maps and labelled hypermaps}
\label{sec:maps-mobiles}

We introduce the notions needed to reinterpret
\eqref{eq:cumulants-combin} in terms of maps. We first recall the
definition of a map, and then discuss the bijection between suitably
labelled maps and labelled hypermaps introduced by Bouttier, Fusy, and
Guitter \cite{bouttier_twopoint_2014}. It is a generalization of
the bijection between pointed planar maps and labelled trees called
mobiles introduced in \cite{bouttier_planar_2004}. In the process, we give a
combinatorial description of these objects in terms of permutations
and Motzkin paths.

\subsection{Maps and permutations}
\label{sec:maps-permutations}

We recall some notions pertaining to maps. For more details, see
\cite{lando_graphs_2004} and \cite{mohar_graphs_2001}.

\begin{definition}\label{def:embedded-graph}
  Let $\Gamma$ be a graph (with possibly multi-edges and loops), seen as a
  1-dimensional cell complex, and $S$ be a connected compact surface
  without boundaries. A \textbf{cellular embedding} of  $\,\Gamma$ into $S$ is an
  embedding $\iota$ of $\Gamma$ into $S$, such that $S \setminus \iota(\Gamma)$ is a disjoint
  union of simply connected open sets of $S$. The corresponding \textbf{embedded graph}
  is the tuple $(\Gamma, S, \iota)$.
\end{definition}

\begin{definition}\label{def:map}
  Two embedded graphs $(\Gamma, S, \iota)$ and $(\Gamma', S', \iota')$ are \textbf{isomorphic} if
  there exists an orientation-preserving homeomorphism $\varphi\colon S \to S'$
  such that $\varphi\circ\iota(\Gamma) = \iota'(\Gamma')$ and $\iota'^{-1}\circ\varphi\circ\iota\vert_{\Gamma}$ is a graph
  isomorphism $\Gamma \to \Gamma'$. A \textbf{map} is a class of connected embedded graphs
  taken up to isomorphism.
\end{definition}
\begin{remark}\label{rem:map-add-structure}
  In the sequel, we will consider maps with additional structure that
  depends on the underlying graph $\Gamma$ or the embedding $\iota$. In
  these cases, the homeomorphism $\varphi$ is taken to furthermore
  preserve this additional structure.
\end{remark}
\begin{definition}\label{def:bipartite-labelled}
  A \textbf{hypermap} is a map whose vertices are colored in white or black,
  and such that every edge connects a white vertex to a black vertex.
  An \textbf{edge-labelled map} is a map whose edges are labelled in a
  bijective way from 1 to $n$, where $n$ is the number of edges in the
  map.
\end{definition}
Until the end of the Section, we work exclusively with maps on
orientable surfaces. We will consider possibly non-orientable maps in
Section \ref{sec:many-betw-suit}.

We orient each edge from its white vertex to its black vertex, and
thus define a left and right side of the edge. Let $e$ and $f$ be
respectively an edge and a face of a hypermap. We say that $e$ is
incident to $f$ or that $f$ is incident to $e$ if $f$ is at the left
of $e$.

\begin{figure}[ht]
  \centering
  \includegraphics[width=0.5\textwidth]{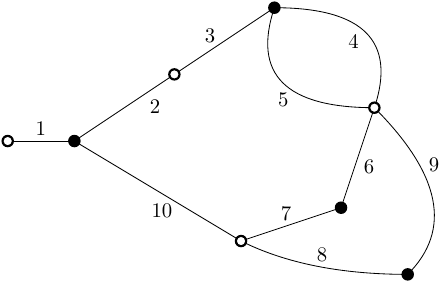}
  \caption{\label{fig:hypermap} A hypermap with labelled edges.}
\end{figure}

Finally, we will use the notion of corners and rooted maps.
\begin{definition}\label{def:corner}
  Consider a map $\map$. A \textbf{corner} of $\map$ is a vertex together with
  an angular sector comprised between two consecutive edges incident
  to the same vertex and the same face. A \textbf{rooted map} is a map with the
  choice of a distinguished oriented corner.
\end{definition}

In the orientable case, a result of Edmonds \cite{edmonds_combinatorial_1960} (see for instance \cite{lando_graphs_2004}
for a modern account) shows that edge-labelled hypermaps with $n$
edges are in bijection with pairs of permutations
$(\theta, \sigma) \in \Sym_{n}^{2}$. We now recall this construction.
\begin{constr}\label{constr:hypermap-to-permutations}
  Consider a hypermap $\hypermap$. We construct a pair of permutations
  $(\theta_{\hypermap}, \sigma_{\hypermap})$. Each black vertex of
  $\hypermap$ corresponds to a cycle of $\theta$ and each white vertex
  to a cycle of $\sigma$. Let $w$ be a white vertex. Assume that when
  going around $w$ in the clockwise direction, we encounter the edges
  labelled $u_{1}, \ldots, u_{k}$. We associate to $w$ the cycle
  $\rho = \cycle{u_{1}, u_{2}, \cdots, u_{k}}$ in $\sigma$. We do this
  for all the white vertices of the map, and proceed similarly for the
  black vertices, which corresponds to cycles of $\theta$.
\end{constr}
This construction defines an injective function from the set of
bipartite labelled maps with $n$ edges to $\Sym_{n}^{2}$. This map can
be shown to be surjective (see
\cite{edmonds_combinatorial_1960}).

It is convenient to define the permutation
$\varphi_{\hypermap} = \theta_{\hypermap}^{-1}\sigma_{\hypermap}^{-1}$. Each
cycle of $\varphi_{\hypermap}$ corresponds to a face of $\hypermap$. Assume
a face $f$ of $\hypermap$ is incident to edges labelled
$u_{1}, \ldots, u_{k}$, and that these labels are encountered in that order
when going around the boundary of the face in clockwise order. Then,
$\cycle{u_{1}, \ldots, u_{k}}$ is a cycle of $\varphi_{\hypermap}$. This result
is proved in \cite[Proposition 1.3.16]{lando_graphs_2004}.

\begin{ex}\label{ex:permutations-hm}
  The hypermap $\hypermap$ depicted in Figure \ref{fig:hypermap} is encoded by the
  permutations
  \begin{equation*}
    \begin{split}
      \theta_{\hypermap} &= \cycle{1, 2, 10}\cycle{3, 4, 5}\cycle{6, 7}\cycle{8, 9}\\
      \sigma_{\hypermap} &= \cycle{1}\cycle{2, 3}\cycle{4, 9, 6, 5}\cycle{7, 8, 10}\\
      \varphi_{\hypermap} &= \cycle{1, 10, 9, 3}\cycle{2, 5, 7}\cycle{4}\cycle{6, 8}.
    \end{split}
  \end{equation*}
\end{ex}

\begin{remark}\label{rem:faces-permutations}
  In particular, the hypermap has $c(\varphi_{\hypermap})$ faces. This
  number of faces is related to the genus $g_{\hypermap}$ of
  $\hypermap$ according to Euler's formula:
  \begin{equation}\label{eq:euler-hypermap}
    \left( c(\theta_{\hypermap}) + c(\sigma_{\hypermap}) \right) - n + c(\varphi_{\hypermap}) = 2 - 2g_{\hypermap}.
  \end{equation}
\end{remark}

\begin{remark}\label{rem:maps-half-edges}
  Maps $\map$ with non-colored vertices can be seen as hypermaps by
  coloring the vertices of $\map$ black and adding a white vertex in
  the middle of each edge. We obtain a hypermap $\hypermap(\map)$ with
  all its white vertices of degree $2$. A hypermap obtained in such a
  way can have its edge labelled and be described by a pair
  $(\theta_{\hypermap(\map)},\sigma_{\hypermap(\map)})$ with $\sigma_{\hypermap}$ a
  matching (recall Definition \ref{def:matching}).

  We call the edges of $\hypermap(\map)$ the \emph{half-edges} of $\map$. To
  each half-edge $h$, we denote the vertex to which it is attached by
  $\vertex(h)$. Furthermore, there is a unique distinct half-edge $h'$
  such that $h$ and $h'$ form an edge. We say that $h'$ is the \textbf{counterpart}
  of $h$. We will say that we label the half-edges of $\map$ to mean
  that we label the edges of $\hypermap(\map)$. We can then set
  \begin{equation*}
    \theta_{\map} \coloneq \theta_{\hypermap(\map)}, \quad \sigma_{\map} \coloneq \sigma_{\hypermap(\map)}, \text{ and } \varphi_{\map} \coloneq \varphi_{\hypermap(\map)}.
  \end{equation*}
\end{remark}

\begin{remark}\label{rem:act-on-H}
  Let $I$ be a finite subset of $\N^{*}$, $\hypermap$ be a hypermap,
  and $E_{\hypermap}$ be the set of edges of the hypermap. When a
  hypermap $\hypermap$ is edge-labelled with labels in $I$, it is
  equipped with a bijection $\lambda\colon E_{\hypermap} \to I$. For any
  permutation $\pi \in \Sym(I)$, we can construct naturally the
  permutation of the edges
  \begin{equation*}
    \pi^{\lambda} = \lambda^{-1} \circ \pi \circ \lambda \in \Sym(E_{\hypermap}).
  \end{equation*}
  In particular, we define naturally the permutations of the edges
  $\theta_{\hypermap}^{\lambda}$, $\sigma_{\hypermap}^{\lambda}$, and
  $\varphi_{\hypermap}^{\lambda}$. We abuse notation in the sequel and omit
  the superscript $\lambda$ when it is not ambiguous.

  This construction also applies to maps: in this case we replace
  $\hypermap$ by a maps $\map$, the set $E_{\hypermap}$ by the set
  $H_{\map}$ of half-edges of $\map$, and
  $\lambda\colon E_{\hypermap} \to I$ by a bijection $H_{\map} \to I$.
\end{remark}

\subsection{Well-labelled hypermaps and suitably labelled maps}
\label{sec:mobil-suit-labell}

In \cite{bouttier_planar_2004}, Bouttier, Di Francesco, and Guitter introduced a celebrated
bijection between maps and a family of trees with labelled vertices
called \emph{mobiles}. In the investigation of the 2-point functions,
Bouttier, Fusy, and Guitter \cite{bouttier_twopoint_2014} introduced a generalization of the
bijection. It allowed them to relate \emph{suitably labelled maps} of any
genus and \emph{well-labelled hypermaps}. We are going to use this bijection
in the sequel.

\begin{definition}\label{def:suitably-labelled}
  A suitably labelled map is a map $\map$ such that each vertex $v$ of
  $\map$ carries a label $l(v) \in \N$ satisfying:
  \begin{itemize}
    \item $\min_{v} l(v) = 0$,
    \item for each edge $e$ between vertices $v$ and $w$ we have
          $|l(v) - l(w)| \leq 1$.
  \end{itemize}

  An edge between two vertices $v$ and $v'$ in a suitably labelled map
  is said to be frustrated if $l(v) = l(v')$. We denote by
  $\Suitably_{n}$ the set of suitably labelled maps with $n$
  half-edges, and $\hat{\Suitably}_{n}$ the set of such suitably
  labelled maps with no frustrated edges.
\end{definition}
Note that the definition given here is slightly different from the one
in \cite{bouttier_twopoint_2014}, where the authors allowed $l(v) \in \Z$ and no constraint on the
minimum of the labels. We similarly give a modified version of a
well-labelled hypermap that mirrors these changes.
\begin{definition}
  A well-labelled hypermap is a hypermap $\hypermap$ such that each
  white vertex $w$ carries a label $l(w) \in \N^{*}$ satisfying:
  \begin{itemize}
    \item $\min_{w}l(w) = 1$,
    \item Let $b$ be a black vertex and $w, w'$ be two white vertices
          adjacent to $b$, with $(b, w)$ and $(b, w')$ consecutive
          edges, in that order in the clockwise orientation. Then,
          $l(w') \geq l(w)-1$.
  \end{itemize}
  Furthermore, if a white vertex $w$ of degree $2$ that is connected to black
  vertices $b_{1}$ and $b_{2}$ is preceded (in the clockwise
  direction) around both of $b_{1}$ and $b_{2}$ by white vertices
  $w_{1}$ and $w_{2}$ of label $l(w_{1}), l(w_{2}) \leq l(w)$, then it
  may be marked. We call these marked vertices \emph{frustrated
    vertices}. We denote by $\welllab_{n}$ the set of well-labelled
  hypermaps with $n$ edges, and $\hat{\welllab}_{n}$ the set of such
  maps with no frustrated vertices.
\end{definition}
Note that the definition of well-labelled hypermap is given in \cite{bouttier_twopoint_2014} in
terms of face-bicolored Eulerian map. The definition we give, in terms
of star-representation of a hypermap, is equivalent.

\paragraph{The cw-type of a face}
We now recall a notation introduced in \cite{bouttier_twopoint_2014} to describe the faces of
suitably labelled maps, and black vertices of hypermaps (corresponding
in to dark faces of hypermaps in the conventions of \cite{bouttier_twopoint_2014}).
\begin{definition}
  The cw-type of a face $f$ of a suitably labelled map is the cyclic
  list of the labels of the vertices adjacent to $f$.

  The cw-type $\tau$ of a black vertex $b$ of a well-labelled hypermap is the
  cyclic list of the the labels of the white vertices adjacent to $b$,
  in clockwise order.

  The lower completion of $\tau$, denoted by $c^{\downarrow}(\tau)$,
  is the cyclic list obtained by inserting $i-1, \ldots, j-1$ between
  two consecutive elements $i \leq j$ of $\tau$.
\end{definition}

\paragraph{From well-labelled hypermap to suitably labelled maps}
Let us recall briefly the construction of \cite{bouttier_twopoint_2014} to go from a
well-labelled hypermap to a suitably labelled map.

\begin{constr}\label{constr:bfg}
  Start from the well-labelled hypermap $(\hat{\hypermap}, l)$. Denote
  by $\min f$ the minimum label of a white vertex incident to a face
  $f$. For each face $f$ we proceed as follows.
  \begin{enumerate}
    \item\label{item:add-vertex-face} Add a new white vertex $w_{f}$
          labelled by $\min f - 1$ in the interior of $f$.
    \item\label{item:insert-he} For each corner $c$ incident to $f$
          and at a white vertex $w$, we attach a half-edge $h_{c}$.
    \item\label{item:edges-face-BDG} For each added half-edge $h_{c}$
          attached to a white vertex $w$ we consider the label $l(w)$.
          If $l(w) = \min f$, we attach this half-edge $h_{c}$ to
          $w_{f}$. Otherwise, we connect $h_{c}$ to the next corner in
          the counterclockwise order which is at a vertex labelled by
          $l(w) - 1$. We call this second corner the \emph{successor}
          of $c$.
  \end{enumerate}
  We do this for all faces of $\hat{\hypermap}$. Finally, we remove
  all the edges and the black vertices of $\hat{\hypermap}$. We obtain
  a map $\hat{\map}$.
\end{constr}

The inverse construction is as follows.
\begin{definition}\label{def:decreasing-he}
  Let $h$ be a half-edge in a suitably labelled map $\hat{\map}$,
  incident to a face $f$. Let $h'$ be the counterpart of $h$. Let $v$ be
  the vertex incident to $h$ and $v'$ be the vertex incident to $h'$.
  We say that $h$ is a \textbf{decreasing half-edge along} $f$ if
  \begin{equation*}
    l(v') = l(v)-1.
  \end{equation*}
  We say $h$ is an \textbf{increasing half-edge along} $f$ if
  \begin{equation*}
    l(v') = l(v)+1.
  \end{equation*}
\end{definition}

\begin{constr}\label{constr:inv-bfg}
  Start from a suitably labelled map $(\hat{\map}, \ell)$.
  \begin{enumerate}
    \item Color all the vertices of $\hat{\map}$ white.
    \item For each face $f$ in $\hat{\map}$, add a new black vertex $b$ in
          its interior. For each decreasing half-edge incident to $f$
          and connected to a white vertex $w$, add an edge between $w$
          and $b$.
    \item Erase all the edges of the original map, and the isolated white vertices.
  \end{enumerate}
\end{constr}

\begin{theorem}{\cite[Theorem
    1]{bouttier_twopoint_2014}}\label{thm:bfg}
  Constructions \ref{constr:bfg} and \ref{constr:inv-bfg} give a bijection between
  $\hat{\Suitably}_{n}$ and $\hat{\welllab}_{n}$. For a well-labelled
  hypermap $\hat{\hypermap}$ corresponding to a suitably labelled map
  $\hat{\map}$,
  \begin{itemize}
    \item each white vertex $w$ of $\hat{\hypermap}$ corresponds to a
          non local minimum vertex $v$ of $\hat{\map}$ of the same
          label;
    \item this vertex $w$ is a local maximum if and only if $v$ is a local
          maximum in $\hat{\map}$;
    \item each face of $\hat{\hypermap}$ corresponds to a local minimum
          vertex of $\hat{\map}$, of label $\min f - 1$;
    \item each black vertex of $\hat{\hypermap}$ of cw-type $\tau$
          corresponds to a face of $\hat{\map}$ of cw-type
          $c^{\downarrow}(\tau)$.
  \end{itemize}
\end{theorem}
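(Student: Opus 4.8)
The plan is to verify that Constructions~\ref{constr:bfg} and~\ref{constr:inv-bfg} are mutually inverse and that each preserves the stated structural data, following the strategy of~\cite{bouttier_twopoint_2014} but adapted to the present (modified) definitions of $\hat{\Suitably}_{n}$ and $\hat{\welllab}_{n}$. First I would check that Construction~\ref{constr:bfg} applied to a well-labelled hypermap $\hat{\hypermap} \in \hat{\welllab}_{n}$ actually produces an element of $\hat{\Suitably}_{n}$: one must show that every added half-edge $h_c$ finds a successor corner (i.e.\ that when $l(w) \neq \min f$ there exists a corner around $f$, counterclockwise from $c$, at a vertex of label $l(w)-1$), that the resulting object is connected, that every original white vertex becomes a vertex of $\hat{\map}$ (no white vertex is isolated), that the new minimum of the labels is $0$ (since each $w_f$ gets label $\min f - 1 \geq 0$ and the global minimum over faces meeting a minimum-label white vertex hits $0$), and that no edge is frustrated. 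The absence of frustrated edges is exactly where the ``hat'' hypotheses are used: an edge of $\hat{\map}$ connects $c$ to its successor, whose label is $l(w)-1$, or connects a white vertex to $w_f$ with labels differing by exactly $1$; the marking convention for frustrated vertices in $\hat{\welllab}_n$ is precisely what is discarded to get $\hat{\welllab}_n$ from $\welllab_n$, and correspondingly frustrated edges are discarded to pass to $\hat{\Suitably}_n$, so the combinatorial bookkeeping of these markings must be matched on both sides.

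Next I would run the reverse check: starting from $(\hat{\map}, \ell) \in \hat{\Suitably}_{n}$, Construction~\ref{constr:inv-bfg} adds a black vertex $b$ in each face $f$ and connects it to the white vertices at the decreasing half-edges incident to $f$; I must verify that the black vertex $b$ together with its incident white vertices, read in clockwise order around $f$, forms a legal star of a well-labelled hypermap, i.e.\ that consecutive white neighbours $w,w'$ of $b$ satisfy $l(w') \geq l(w) - 1$. This is where the key local lemma enters: around a face $f$ of a suitably labelled map, between two consecutive decreasing half-edges the labels cannot drop by more than one in a single step, so after contracting the non-decreasing portions the cyclic label sequence at $b$ has exactly the required property, and in fact its cw-type $\tau$ has lower completion $c^{\downarrow}(\tau)$ equal to the cw-type of $f$. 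One also checks that the minimum white label becomes $1$ (the local minima of $\hat{\map}$, of label $0$, become faces, not white vertices, and every remaining white vertex has label $\geq 1$ by minimality), and that isolated white vertices — precisely the local minima — are correctly erased.

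Then I would show the two constructions compose to the identity in both directions. In one direction: starting from $\hat{\hypermap}$, building $\hat{\map}$, and reapplying Construction~\ref{constr:inv-bfg}, the black vertex placed in a face $f'$ of $\hat{\map}$ must recover an original black vertex of $\hat{\hypermap}$; this uses the identification of decreasing half-edges of $\hat{\map}$ with the ``successor'' edges created in step~\ref{item:edges-face-BDG}, together with the standard fact that following successors is a well-founded process that terminates at a corner of label $\min f - 1$. The other direction is symmetric: the faces of $\hat{\hypermap}(\hat{\map})$ are in bijection with the black vertices just erased, and re-running Construction~\ref{constr:bfg} rebuilds the original edges because each decreasing half-edge of $\hat{\map}$ is recovered as the half-edge $h_c$ attached at the corner $c$ whose successor lies across that edge. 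Once the bijection is established, the four bulleted correspondences follow by reading off the construction: white vertices $\leftrightarrow$ non-minimum vertices with the same label (step~\ref{item:add-vertex-face} never touches them), local maxima are preserved since being a local maximum is a condition on the labels of neighbours which are unchanged, faces of $\hat{\hypermap}$ $\leftrightarrow$ local minima of label $\min f - 1$ (by step~\ref{item:add-vertex-face} and the inverse's step~2), and black vertices $\leftrightarrow$ faces with cw-type transformed by $c^{\downarrow}$ (by the local lemma above). The main obstacle I anticipate is the successor/termination argument — proving cleanly that the counterclockwise ``successor'' map on corners of a given face is acyclic and that following it always reaches the freshly added vertex $w_f$ — since this is the crux of why Construction~\ref{constr:bfg} is well-defined and why it inverts Construction~\ref{constr:inv-bfg}; everything else is careful but routine local case analysis, and for the details of that argument I would cite~\cite{bouttier_twopoint_2014} after indicating the modifications forced by working with $\N$-valued labels and an explicit global minimum.
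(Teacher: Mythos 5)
The paper does not prove this theorem: it is imported verbatim from Bouttier, Fusy, and Guitter, and the theorem environment cites \cite[Theorem 1]{bouttier_twopoint_2014} directly (the only original content in this part of the paper is the subsequent discussion of passing from $\hat{\welllab}_{n}$, $\hat{\Suitably}_{n}$ to the unhatted sets via edge duplication, which is not part of the statement you were given). Your sketch is a faithful outline of the standard BFG inversion argument — well-definedness of Construction \ref{constr:bfg} via existence and termination of the successor map, well-definedness of Construction \ref{constr:inv-bfg} via the local step-size lemma around a face, the two composition checks, and reading off the four correspondences — and you correctly flag the only real deviations from the cited version, namely $\N$-valued labels normalized so the global minimum is $0$ (versus $\Z$-valued labels with no normalization) and the explicit treatment of frustration, both of which are purely cosmetic for the hatted sets appearing in the statement. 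Since there is no proof in the paper to compare against, the most I can say is that your proposal is a sound summary of the argument one would find by following the citation, with the right emphasis on the successor/termination step as the crux.
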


Actually, \cite[Theorem 1]{bouttier_twopoint_2014} is stated only as
a bijection between $\hat{\welllab}_{n}$ and $\hat{\Suitably}_{n}$.
Generalizing this to the general case is straightforward using the
duplication of edges trick, used in particular in
\cite{bouttier_planar_2004}. Given a well-labelled hypermap
$(\hypermap, l)$ with frustrated vertices, we produce a suitably
labelled map $(\hat{\map}, \ell)$ with some of its white vertices
marked. Indeed, the non-local minimum vertices of the map $\map$
constructed in Construction \ref{constr:bfg} are the white vertices of
$\hypermap$: the possible marking of white vertices in $\hypermap$
induce a marking of vertices in $\map$. We call those marked vertices
the frustrated vertices of $\hat{\map}$.
\begin{lemma}\label{lem:deg-2-fr-map}
  The frustrated vertices of $\hat{\map}$ are of degree $2$.
\end{lemma}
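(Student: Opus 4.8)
The plan is to follow a frustrated vertex of $\hat\map$ back through Construction \ref{constr:bfg} and read off its degree from the local picture in the well-labelled hypermap it comes from. Recall that a frustrated vertex $v$ of $\hat\map$ is, by construction, the image under Construction \ref{constr:bfg} of a \emph{marked} white vertex $w$ of a well-labelled hypermap $(\hypermap,l)$; since only white vertices of degree $2$ may be marked, $w$ has exactly two incident edges, to black vertices $b_1$ and $b_2$, and around each $b_i$ the white vertex immediately preceding $w$ in the clockwise direction has label $\le l(w)$. I want to show that $v$ has degree $2$ in $\hat\map$.

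First I would list the half-edges of $\hat\map$ incident to $v$. Every edge of $\hat\map$ is one of the half-edges $h_c$ created in step~\ref{item:insert-he}, attached at a corner $c$ of $\hypermap$ sitting at a white vertex; by step~\ref{item:edges-face-BDG} its other endpoint is either a vertex $w_f$ added in step~\ref{item:add-vertex-face} (of label $\min f - 1$) or the successor corner of $c$, which sits at a vertex of label $l(\vertex(c)) - 1$. Hence $h_c$ touches $v$ in exactly two situations: (a) $c$ is a corner at $w$, or (b) the successor corner of $c$ is a corner at $w$; the possibility ``other endpoint $=w_f$'' cannot contribute, because $v$ is an old white vertex and never one of the added vertices, which in particular also rules out loops at $v$. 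Situation (a) occurs once for each corner of $\hypermap$ at $w$, that is $\deg_\hypermap(w) = 2$ times, and in situation (b) the vertex $\vertex(c)$ has label $l(w)+1$ since the successor lowers the label by one. Writing $N$ for the number of corners $c$ at vertices of label $l(w)+1$ whose successor corner lies at $w$, we get
\begin{equation*}
  \deg_{\hat\map}(v) = \deg_\hypermap(w) + N = 2 + N,
\end{equation*}
so it remains to prove $N = 0$.

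The crux is to exclude situation (b) for a marked $w$. Suppose $N \ge 1$ and pick a corner $c$ at a vertex $u$ with $l(u) = l(w)+1$, incident to a face $f$ of $\hypermap$, whose successor corner $c^{\ast}$ lies at $w$ and hence is also incident to $f$. Traverse the boundary of $f$ counterclockwise from $c$ to $c^{\ast}$ and record the white vertices met. Two white vertices consecutive along a face share a black vertex, so, once the counterclockwise orientation of faces used in Construction \ref{constr:bfg} is matched with the clockwise orientation of black vertices used in the definition of a well-labelled hypermap, the defining inequality of a well-labelled hypermap says that along this traversal the label of a white vertex is at least the previous one minus $1$. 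Starting from $l(u) = l(w)+1$, one therefore cannot reach a label $< l(w)$ without first hitting the label $l(w)$; since $c^{\ast}$ is by definition the first corner met at a vertex of label $l(w)$, the white vertex $w^{\flat}$ immediately before $w$ in the traversal has label neither equal to nor below $l(w)$, while the same inequality applied to $w^{\flat}$ and $w$ gives $l(w) \ge l(w^{\flat}) - 1$; hence $l(w^{\flat}) = l(w)+1$. But $w^{\flat}$ and $w$ are consecutive around a black vertex, which must be $b_1$ or $b_2$, and with the matched conventions $w^{\flat}$ is exactly the clockwise predecessor of $w$ around it, contradicting the marking condition $l(w^{\flat}) \le l(w)$. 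Therefore $N = 0$ and $\deg_{\hat\map}(v) = 2$.

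The main obstacle is not any single computation but the bookkeeping concentrated in the last two paragraphs: one must be sure the list of half-edges at $v$ is exhaustive and non-redundant (no loops, $v$ never an added vertex, the situation-(b) half-edges counted once each), and, more delicately, one must line up the ``counterclockwise around a face'' of Construction \ref{constr:bfg} with the ``clockwise around a black vertex'' in the definition of a well-labelled hypermap, since this is exactly what makes the labels along $\partial f$ drop by at most one at a time and turns ``predecessor along the face'' into ``clockwise predecessor around $b_i$''. A cleaner alternative would be to apply the inverse Construction \ref{constr:inv-bfg} to $\hat\map$ and observe that a degree-$2$ marked white vertex can only arise from a vertex of $\hat\map$ all of whose half-edges are decreasing, of which there are exactly two; I expect the two routes to be equivalent.
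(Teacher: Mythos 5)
Your proof is correct and follows essentially the same route as the paper: count the two half-edges attached at $v$ coming from the two corners of the degree-$2$ white vertex $w$, and rule out any further half-edge by showing that a corner at a label-$(l(w)+1)$ vertex whose successor is at $w$ would force the immediate counterclockwise predecessor of $w$ along the face to have label $l(w)+1$, contradicting the marking condition. The paper's version is slightly terser (it does not separately name $w^\flat$ nor spell out the face-versus-black-vertex orientation matching), but the underlying argument is the same.
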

For each frustrated vertex $v$ in $\hat{\map}$ incident to two edges
$e_{1}$ and $e_{2}$, we remove $v$ from $\hat{\map}$ and glue $e_{1}$
and $e_{2}$ together. We obtain a suitably labelled map $(\map, \ell)$
with one frustrated edge for each removed frustrated vertex.

The construction works in the converse direction: consider a suitably
labelled map $(\map, \ell)$. For each frustrated edge between vertices
$v_{1}$ and $v_{2}$, we add a new vertex $v$ in the middle of it which
we label by $\ell(v) = \ell(v_{1}) + 1 = \ell(v_{2}) + 1$. We obtain
in this way a suitably labelled map without frustrated edge. The
well-labelled hypermap produced by Construction \ref{constr:inv-bfg}
has naturally frustrated vertices. Consider such a vertex $w$,
corresponding to a frustrated vertex $v$ in $\map$. The choice of
labelling for $v$ ensures that $w$ satisfies the condition to be a
frustrated vertex.

In remains to prove Lemma \ref{lem:deg-2-fr-map}.
\begin{proof}[Proof of Lemma \ref{lem:deg-2-fr-map}]
  Let $v$ be a frustrated vertex in $\map$, coming from a frustrated
  vertex in $\hypermap$. The vertex $v$ is at least of degree $2$:
  since $w$ is of degree $2$, it has two corners and thus at step
  \ref{item:insert-he} two half-edges get connected to it. For the
  degree to be at least $3$, there must be another white vertex $w'$
  incident to the same face $f$ as $w$, with $l(w') = l(w)+1$.
  However, when going around $f$ in the counterclockwise orientation,
  the label between two consecutive white vertices decrease at most by
  $1$. this means that $w'$ is the white vertex preceding $w$ when
  going around $f$ in the counterclockwise orientation. By definition
  of a marked vertex, we would have $l(w') \leq l(w)$, a contradiction.
\end{proof}

\subsection{Encoding the labelled hypermaps}
\label{sec:encod-labell-hyperm}

In Section \ref{sec:large-n-expansion}, we expressed the cumulants of
the $\beta$-ensemble in terms of a sum over a Motzkin path
$\gamma \in \motz_{n, 0}(\theta)$ and a permutation.
$\sigma \in \Sym^{\gamma}$. In this Section, we explain how this data
allow us to define a labelled hypermap, and thus a suitably labelled
map by the Bouttier-Fusy-Guitter construction \ref{constr:bfg}. To
introduce the main result of this Section, we define the notion of
restriction of a permutation.
\begin{definition}\label{def:restriction}
  Let $I \subset I'$ two finite sets and $\pi \in \Sym(I')$. We define the
  jump in $\pi$ with respect to $J$ by
  \begin{equation*}
    J_{\theta, I}(j) = \min \left\{ p \in \N^{*} \colon \pi^{p}(j) \in I\right\} \text{ for all } j \in I.
  \end{equation*}
  We define the restriction of $\pi$ to $J$ by
  \begin{equation*}
    \pi\vert_{J \to I}(j) = \pi^{J_{\theta, I}(j)}(j) \text{ for } j \in I.
  \end{equation*}.
\end{definition}
Note that in general $\pi \vert_{J \to J} \in \Sym(J)$ differs from
$\pi\vert_{J} \colon J \to \pi(J)$.

In the following Proposition, we make use of the following abuse of
notation. Let $(\hypermap, l)$ be a well-labelled hypermap, whose
edges are labelled by $I \subset \N^{*}$. For each $i \in I$, there is
a unique white vertex $w_{i}$ incident to the edge labelled $i$. We
write
\begin{equation*}
  l(i) = l(w_{i}).
\end{equation*}
The main result of this Section is the
following.
\begin{prop}\label{prop:gamma-sigma-to-hypermap}
  Let $n \in \N^{*}$, $\theta \in \Sym_{n}$, and $I \subset [n]$. For all $\pi \in \Sym(I)$ we
  define
  \begin{equation*}
    \welllab(\theta) \coloneq \left\{ (\hat{\hypermap}, l) \in \welllab_{n} \colon \exists I \subset [n],\quad \begin{aligned}
      \bullet\, & \hat{\hypermap} \text{ is edge-labelled by }I\\
      \bullet\, & \theta_{\hat{\hypermap}} = \theta_{I \to I}\\
      \bullet\, & l \circ \theta_{\hat{\hypermap}} = l + J_{\theta, I} - 2
    \end{aligned} \right\}.
  \end{equation*}
  and
  \begin{equation*}
    \Comb(\theta) = \left\{ (\gamma, \sigma) \in \motz_{n,0}(\theta)\times \Sym_{n} \colon \quad\begin{aligned}
        \bullet \,& \sigma \in \Sym_{\gamma}\\
        \bullet \,& \Orbit( \theta(\bm{n}), \sigma) = 1\\
      \end{aligned} \right\}.
  \end{equation*}

  Construction \ref{constr:gamma-sigma-to-hypermap} below gives a
  bijection
  \begin{equation*}
    \Comb(\theta) \to \welllab(\theta).
  \end{equation*}
\end{prop}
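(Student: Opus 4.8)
The statement is of the form ``Construction \ref{constr:gamma-sigma-to-hypermap} defines a bijection $A\to B$'', so the plan is threefold: (i) check that the construction is well defined, i.e.\ that the output of Construction \ref{constr:gamma-sigma-to-hypermap} applied to an element of $\Comb(\theta)$ is an honest well-labelled hypermap that moreover lies in $\welllab(\theta)$; (ii) write down an inverse construction $\welllab(\theta)\to\Comb(\theta)$; and (iii) verify that the two constructions are mutually inverse. Throughout I would work with the permutation encoding of hypermaps from Construction \ref{constr:hypermap-to-permutations}: a hypermap in $\welllab(\theta)$ is the same datum as a subset $I\subset[n]$, a permutation $\sigma_{\hat\hypermap}\in\Sym(I)$ whose cycles are the white vertices, together with the labelling $l$, the black-vertex permutation being forced to be $\theta_{\hat\hypermap}=\theta\vert_{I\to I}$ and the face permutation being $\varphi_{\hat\hypermap}=\theta_{\hat\hypermap}^{-1}\sigma_{\hat\hypermap}^{-1}$. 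With this dictionary the conditions defining $\welllab(\theta)$, as well as the well-labelledness conditions, become explicit relations between $I$, $\sigma_{\hat\hypermap}$, $l$ and $\theta$, and the whole proposition reduces to a delicate but elementary verification about permutations and Motzkin bridges.

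For the forward map, starting from $(\gamma,\sigma)\in\Comb(\theta)$, the natural choice is to take the edge-set $I$ to be the set $\Delta\gamma_{-}$ of descending indices of $\gamma$ (possibly enlarged by a transversal of the matching $\sigma_{0}$ on $\Delta\gamma_{0}$, which is what records the frustrated edges), to set $\theta_{\hat\hypermap}=\theta\vert_{I\to I}$, and to build $\sigma_{\hat\hypermap}$ out of $\sigma_{-}$ (and $\sigma_{0}$), using the involution $\phi$ matching $\Delta\gamma_{+}$ with $\Delta\gamma_{-}$ that already appeared in the proof of Proposition \ref{prop:N-exp-cumulants} to move indices to the correct side. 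The label of a white vertex, i.e.\ of a cycle of $\sigma_{\hat\hypermap}$, is read off from $\gamma$; this is legitimate because the compatibility relation $\gamma\circ\sigma=\gamma$ forces $\gamma$ to be constant on the cycles of $\sigma$. I would then check three things. First, connectedness of the hypermap, equivalently transitivity of $\langle\theta_{\hat\hypermap},\sigma_{\hat\hypermap}\rangle$ on $I$: this I would deduce from transitivity of $\langle\theta,\sigma\rangle$ on $[n]$, using that every index outside $I$ lies on a $\theta$-arc joining two elements of $I$ and is linked to $I$ through $\sigma_{0}$, so that passing from $[n]$ to $I$ loses no orbit. Second, that the labelling is well-labelled, namely $\min l=1$ and the clockwise inequality $l(w')\geq l(w)-1$ for consecutive white neighbours of a black vertex; both follow from $\min\gamma=0$, the Motzkin inequality $|\gamma(i)-\gamma(\theta(i))|\leq 1$, and the bridge property. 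Third, the key identity $l\circ\theta_{\hat\hypermap}=l+J_{\theta,I}-2$.

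This last identity is where the real work lies, and it is the step I expect to be the main obstacle. For $i\in I$ put $p=J_{\theta,I}(i)$, so that $\theta_{\hat\hypermap}(i)=\theta^{p}(i)$ and the intermediate indices $\theta(i),\dots,\theta^{p-1}(i)$ all lie outside $I$; I would track how $\gamma$ evolves along this $\theta$-arc --- one compulsory move dictated by the type of $i$, followed only by ascending and level moves until the next element of $I$ is reached --- and show that the net displacement of $\gamma$, hence of $l$, equals $p-2$. Getting this bookkeeping right (the ``$-2$'' being one forced descent together with the closing of a matched level step) is the point most sensitive to pinning down the precise definition of $I$ and of the white-vertex labels, and I would argue it either by induction on $p$ or by a global count around each cycle of $\theta$, using equality of the numbers of ascending and descending steps along a cycle and the fact that level indices are $\sigma_{0}$-paired at equal height.

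For the inverse, given $(\hat\hypermap,l)\in\welllab(\theta)$ with edge-set $I$, one recovers $\gamma$ on $I$ from $l$, then extends $\gamma$ to all of $[n]$ by filling each $\theta$-arc between consecutive elements of $I$ with the unique descending-then-ascending (respectively level) profile compatible with the two endpoint values and the prescribed net change $J_{\theta,I}-2$; uniqueness of this filling is where the matched level steps --- equivalently the marked degree-$2$ white vertices of $\hat\hypermap$ of Lemma \ref{lem:deg-2-fr-map} --- supply the missing data. One recovers $\sigma$ by setting $\sigma\vert_{\Delta\gamma_{-}}$ to be $\sigma_{\hat\hypermap}$ conjugated back by $\phi$, $\sigma\vert_{\Delta\gamma_{0}}$ to be the matching recorded by those degree-$2$ white vertices, and $\sigma$ to be the identity on $\Delta\gamma_{+}$, and then checks $\gamma\circ\sigma=\gamma$, the compatibility conditions of Definition \ref{def:compatible-permutation}, and that transitivity is inherited in this direction as well. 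Once the label identity and the uniqueness of the arc-filling are established, chasing the definitions through the two constructions to see that they are mutually inverse is routine.
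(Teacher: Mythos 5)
Your overall architecture --- verify the forward construction lands in the target set, build an explicit inverse by arc-filling, and confirm the two are mutually inverse --- matches the paper's proof. Two details as you have written them are off, however. First, the edge-label set of $\hat{\hypermap}$ in Construction \ref{constr:gamma-sigma-to-hypermap} is the full $I = \Delta\gamma_{-} \sqcup \Delta\gamma_{0}$, not $\Delta\gamma_{-}$ enlarged by a transversal of $\sigma_{0}$: each frustrated white vertex has degree two and both of its incident edges survive the removal of the univalent $\Delta\gamma_{+}$-vertices, so both labels of every $\sigma_{0}$-pair remain. Second, the involution $\phi$ matching $\Delta\gamma_{+}$ with $\Delta\gamma_{-}$ plays no role in the forward construction: one simply takes $\sigma_{\hat{\hypermap}} = \sigma\vert_{\Delta\gamma_{-}\sqcup\Delta\gamma_{0}}$, as $\sigma_{-}$ and $\sigma_{0}$ already act on the indices that survive. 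The $\phi$ you are recalling appears only in the proof of Proposition \ref{prop:N-exp-cumulants}, where it implements a change of variable after re-centring the Motzkin bridge; if you tried to conjugate $\sigma_{-}$ by $\phi$ here you would obtain a permutation of $\Delta\gamma_{+}$, which cannot be the white-vertex permutation of $\hat{\hypermap}$.

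With these corrected, your sketch coincides with the paper's: $\min l = 1$, the black-vertex inequality, and the displacement formula along $\theta$-arcs all follow from the Motzkin condition on $\gamma$ and the constancy of $\gamma$ on $\sigma$-cycles, while the inverse is built exactly as you describe --- extend $l$ to a Motzkin path $\tilde{l}$ on $[n]$ with one descent then ascents along each $\theta$-arc, set $\gamma = \tilde{l} - \ind_{F}$ with $F$ the set of frustrated labels, and extend $\sigma_{\hat{\hypermap}}$ by the identity on $[n]\setminus I$. You are right to flag $l\circ\theta_{\hat{\hypermap}} = l + J_{\theta,I} - 2$ as the delicate step: the $+1$ shift in the definition of $l$ on $\sigma_{0}$-cycles and the nature of the first step along each $\theta$-arc (down for $\Delta\gamma_{-}$, flat for $\Delta\gamma_{0}$) must be tracked together, and this is exactly what your ``forced descent plus closing of a matched level step'' heuristic has to encode; write it out case by case rather than leaving it as a heuristic.
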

The set $\Comb(\theta, I)$ appear naturally in the expression of the
cumulants. The last condition is a technical assumption needed for a
proper labelling of the edges of the corresponding hypermap.

\begin{constr}\label{constr:gamma-sigma-to-hypermap}
  The inverse of Construction \ref{constr:hypermap-to-permutations}
  defines a edge-labelled hypermap $\hypermap$. Notice that each cycle
  of $\sigma_{+}$ -- corresponding to an element of $\Delta\gamma_{+}$
  -- corresponds to a white vertex of degree 1 in $\hypermap$. We
  remove these vertices to obtain a new hypermap $\hat{\hypermap}$.
  The frustrated vertices of $\hat{\hypermap}$ are the vertices
  corresponding to the cycles of $\sigma_{0}$. We have
  \begin{equation*}
    \theta_{\hat{\hypermap}} = \theta\vert_{\Delta\gamma_{-}\sqcup\Delta\gamma_{0} \to \Delta\gamma_{-}\sqcup\Delta\gamma_{0}} \text{ and } \sigma_{\hat{\hypermap}} = \sigma\vert_{\Delta\gamma_{-}\sqcup\Delta\gamma_{0}}.
  \end{equation*}

  We now explain how
  $\gamma$ induces a labelling of the white vertices of
  $\hat{\hypermap}$. Let $w$ be a white vertex of $\hat{\hypermap}$
  that corresponds to
  $\pi \in \Cyc(\sigma_{-}) \cup \Cyc(\sigma_{0})$. We set
  \begin{equation*}
    l(w) =
    \begin{cases}
      \gamma(\pi) &\text{ if } \pi \in \Cyc(\sigma_{-})\\
      \gamma(\pi) + 1 &\text{ if } \pi \in \Cyc(\sigma_{0}).
    \end{cases}
  \end{equation*}
  We write in the sequel
  $(\hat{\hypermap}, l) = \Psi(\gamma, \theta, \sigma)$.
\end{constr}

\begin{figure}[ht]
  \centering
  \includegraphics[width=0.8\textwidth]{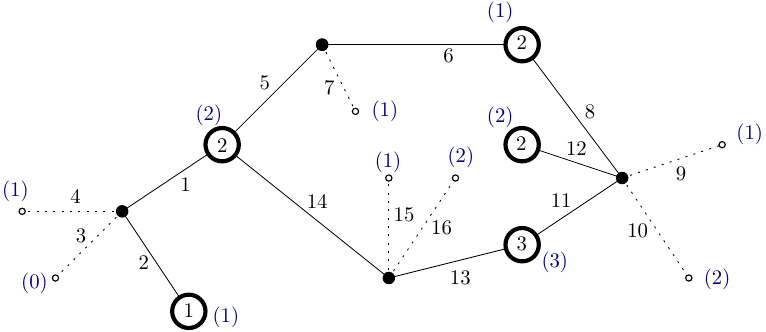}
  \caption{\label{fig:hypermap-from-path} A labelled hypermap
    $(\hat{\hypermap}, l) = \Psi(\gamma, \theta, \sigma)$. We wrote in parenthesis the
    value of the path $\gamma$ at each vertex. We displayed in dotted edges
    the edges to white vertices corresponding to elements of $\Delta\gamma_{+}$.
    They belong to $\hypermap$ but not to $\hat{\hypermap}$.}
\end{figure}

\begin{ex}\label{ex:labelled-hypermap}
  The labelled hypermap displayed in Figure
  \ref{fig:hypermap-from-path} is obtained from
  \begin{equation*}
    \begin{split}
      (\gamma(i))_{i \in [16]} &= (2, 1, 0, 1, 2, 1, 1, 1, 1, 2, 3, 2, 3, 2, 1, 2)\\
      \theta &= \cycle{1, 2, 3, 4}\cycle{5, 6, 7}\cycle{8, 9, 10, 11, 12}\cycle{13, 14, 15, 16}\\
      \sigma &= \cycle{1, 5, 14}\cycle{6, 8}\cycle{11, 13}.
    \end{split}
  \end{equation*}
  Note that
  \begin{equation*}
    \begin{split}
      \Delta\gamma_{+} &= \left\{ 3, 4, 7, 9, 10, 15, 16 \right\}\\
      \Delta\gamma_{0} &= \left\{ 6, 8 \right\}\\
      \Delta\gamma_{-} &= \left\{ 1, 2, 5, 11, 12, 13, 14 \right\}.
    \end{split}
  \end{equation*}
\end{ex}

\begin{proof}[Proof of Proposition \ref{prop:gamma-sigma-to-hypermap}]
  We start by showing that
  $(\hat{\hypermap}, l) = \Psi(\gamma, \theta, \sigma)$ is a
  well-labelled hypermap. Recall that $\min \gamma = 0$. Thus,
  $\min_{w} l(w) \geq 1$, and for $i$ such that $\gamma(i) = 0$ we
  have $\gamma(\theta(i)) \in \{0, 1\}$ so that $i$ is the label of an
  edge connected to a white vertex $w$. The label of $w$ is
  $l(w) = 1$. We have shown that $\min_{w} l(w) = 1$.

  Consider a black vertex $b$, and two white vertices $w$ and $w'$
  such that $(b, w)$ and $(b, w')$ are consecutive edges around $b$,
  in that order in the clockwise direction. Let $i, j$ be the labels
  of these edges. We have
  $i, j \in \Delta\gamma_{0}\cup \Delta\gamma_{-}$. Thus, as $\gamma$
  is a Motzkin path, we have that $\gamma(i) \leq \gamma(j) + 1$. It
  implies that $l(w)-1 \leq l(w')$. The hypermap is thus well-labelled.

  We remark that by construction its edges are labelled by elements of
  $I = \Delta\gamma_{-}\sqcup\Delta\gamma_{0}$, that
  $\theta_{\hat{\hypermap}} = \theta\vert_{I \to I}$, and that since
  $\gamma$ is a Motzkin path, the difference of label between two
  consecutive white vertices incident to a black vertex is given by
  the number of removed univalent white vertices minus one. It gives
  \begin{equation*}
    \gamma_{\hat{\hypermap}} \circ \theta_{\hat{\hypermap}} = \gamma_{\hat{\hypermap}} + J_{\theta, I} - 2.
  \end{equation*}

  We now show that the mapping is a bijection by constructing the
  inverse map. Let $(\hat{\hypermap}, l) \in \welllab(\theta)$, and
  let $I$ be a edge-labelling set. We will define a function $\tilde{l}$ on
  $[n]$. On $I$, it is defined by $\tilde{l}\vert_{I} = l$.
  We construct a new hypermap $\hypermap$ from $\hat{\hypermap}$. We
  add degree one white vertices, whose incident edges are labelled by
  elements of $[n] \setminus I$. Fix a black vertex $b$ and let
  $\pi_{b} = \cycle{u_{1}, \ldots, u_{k}}$ be the corresponding cycle in
  $\theta_{\hat{\hypermap}} = \theta_{I \to I}$. For each $j \in [k]$,
  let $p_{j}$ be the jump as in Definition \ref{def:restriction}:
  \begin{equation*}
    p_{j} = J_{\theta, I}(u_{j}) = \min \left\{ p \in \N^{*} \colon \theta^{p}(u_{j}) \in I \right\}.
  \end{equation*}
  We add after the edge labelled $u_{j}$ in the clockwise orientation
  $p_{j} - 1 = J_{\theta, I}(u_{j})$ edges connected to univalent white vertices,
  labelled by
  \begin{equation*}
    \theta^{1}(u_{j}), \theta^{2}(u_{j}), \ldots, \theta^{p_{j} - 1}(u_{j}).
  \end{equation*}
  We set for all $1 \leq i \leq p_{j}-1$,
  \begin{equation*}
    \tilde{l}(\theta^{i}(u_{j})) = \tilde{l}(u_{j}) - 2 + i.
  \end{equation*}
  Since we have (with the convention $u_{k+1} = u_{1}$)
  \begin{equation*}
    l \circ \tilde{\theta}^{p_{j}}(u_{j}) = l(u_{j+1}) = l \circ \theta\vert_{I \to I}(u_{j}) = \tilde{l}(u_{j}) + p_{j} - 2,
  \end{equation*}
  we see that $\tilde{l}$ is a Motzkin path without flat steps. Let
  $F\subset I$ be the set of labels of edges connected to frustrated
  vertices. We then set for all $i \in [n]$
  \begin{equation*}
    \gamma(i) = \tilde{l} - \ind_{F},
  \end{equation*}
  where $\ind_{F}$ denote the indicator function of $F$. The function
  \(\gamma\) is a Motzkin path with $\Delta\gamma_{0} = F$. We extend $\sigma_{\hypermap}$
  by the identity to a permutation of $[n]$. By construction, we then
  have $\sigma \in \Sym_{\gamma}$, and the connectedness of $\hypermap$ ensures
  that $\left< \theta, \sigma \right>$ acts transitively on $[n]$, i.e.\ that
  $\Orbit(\theta, \sigma) = 1$. We thus have that
  \begin{equation*}
    (\gamma, \sigma) \in \Comb(\theta).
  \end{equation*}
  The mapping just constructed is the required inverse: it is clear
  that the mapping between $(\gamma, \sigma)$ and $(\hypermap, \ell)$ is 1-to-1, and the
  inverse just define allow to reconstruct the data erased when going
  from $\hypermap$ to $\hat{\hypermap}$.
\end{proof}

\begin{remark}\label{rem:comb-hypermap-cw}
  In the case of the hypermap
  $(\hat{\hypermap}, l) = \Phi(\gamma, \theta, \sigma)$ with
  $\Delta\gamma_{0} = \emptyset$, the clockwise cyclic type of a black
  vertex $b$ corresponding to a cycle $\pi \in \Cyc(\theta)$ is the
  cyclic list with entries
  \begin{equation}\label{eq:cwtype-b}
    \tau = \left(\gamma\pi^{p}(j)\right)_{p \in I_{\pi}}\,, \text{ with } I_{\pi} = \{1 \leq p \leq \#\Supp \pi\colon \pi^{p}(j) \in  \Delta\gamma_{-}\},
  \end{equation}
  for $j \in \Supp \pi$. Otherwise said, it is the sublist of
  $(\gamma\pi^{p}(j))_{1 \leq p \leq \#\Supp \pi}$ obtained by keeping
  only the down steps in the original Motzkin bridge. The lower
  completion of $\tau$ is the cyclic list with entries
  \begin{equation*}
    c^{\downarrow}(\tau) = (\gamma\pi^{p}(j))_{1 \leq p \leq \#\Supp \pi}\,, \text{ for some }j \in \Supp \pi\,.
  \end{equation*}
\end{remark}

\subsection{Labelling the half-edges in the Bouttier-Fusy-Guitter bijection}
\label{sec:labell-he-BFG}

The bijection of Theorem \ref{thm:bfg} is between sets of hypermap and
maps that are not half-edge or edge-labelled. We now explain how the
edge labels of a well-labelled hypermap $(\hypermap, l)$ get
transported to a half-edge-labelling of a suitably labelled map
$(\map, \ell)$.

Fix a well-labelled hypermap $(\hypermap, l)$. In Construction
\ref{constr:bfg}, we add one edge for each corner of
$\hat{\hypermap}$. We now explain how to label the two half-edges
making up each of those edges, see Figure \ref{fig:label-mobile}.

In step \ref{item:edges-face-BDG} of Construction \ref{constr:bfg} applied to $\hat{\hypermap}$, we added
a half-edge $h_{c}$ for each corner $c$ in $\hat{\hypermap}$. The
corner $c$ is based at a white vertex $w$, and is delimited by two
edges: $e_{1}$ to $e_{2}$ in the clockwise direction around $w$. Let
$i$ be the label of $e_{2}$. In $\hat{\map}$, let $h_{c}'$ be the next half-edge around
$w$ after $h_{c}$ in the clockwise orientation . We label $h_{c}'$ by $i$

With the previous procedure, we labelled only half of the total number
of half-edges in $\hat{\map}$: exactly the half-edges following the
decreasing half-edges around white vertices.

\begin{figure}[ht]
  \centering
  \includegraphics[width=0.3\textwidth]{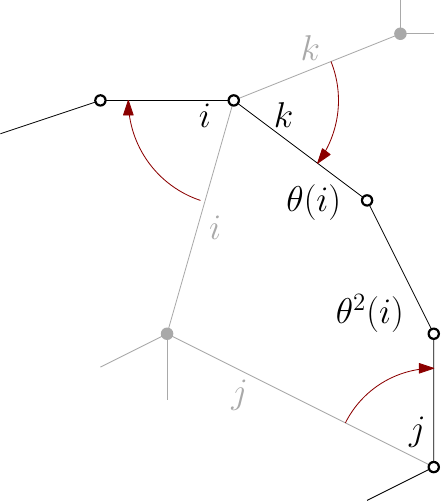}
  \caption{\label{fig:label-mobile} The labelling procedure. The vertices and edges of
    $\hat{\map}$ are in black and the ones of $\hat{\hypermap}$ are in
    grey.}
\end{figure}

To label the other half-edges we proceed as follows. Let $h$ be an
increasing half-edge along a face $f$ in $\hat{\map}$. Assume that it
is not the counterpart of an half-edge incident to a frustrated
vertex. We explore the face $f$ in the clockwise direction starting
from $h$, and stop once we encounter a decreasing half-edge $h'$. Let
$h_{1} = h, h_{2}, \ldots, h_{k}$ be the increasing half-edge we
encounter during this exploration. Let $i$ be the label of $h'$. We
label $h$ by $\theta^{k}(i)$.

Note that we do not label the counterparts of half-edges incident to
frustrated vertices in the resulting suitable map with frustrated
vertices. To finish the procedure, we remove the frustrated half-edges
and construct a map $\map$ from $\hat{\map}$.
\begin{constr}\label{constr:remove-frustrated}
  Consider a frustrated vertex $v$ in $\hat{\map}$. Let $h_{1}, h_{2}$
  be the two half-edges attached to $v$, and
  $\tilde{h}_{1}, \tilde{h}_{2}$ be two half-edges such that $h_{i}$
  and $\tilde{h}_{i}$ are counterparts of one another, for $i = 1, 2$.

  Assume that $i_{1}$ and $i_{2}$ are respectively the labels of
  $h_{1}$ and $h_{2}$. We remove $v, h_{1},$ and $h_{2}$. We connect
  $\tilde{h}_{1}$ and $\tilde{h}_{2}$ together. Finally, label
  $\tilde{h}_{1}$ by $i_{2}$ and $\tilde{h}_{2}$ by $i_{1}$. We denote
  the map we obtain after treating all frustrated vertices in this way
  by $\map$.
\end{constr}

\begin{figure}[ht]
  \centering
  \includegraphics[width=0.6\textwidth]{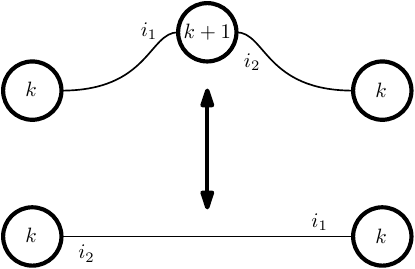}
  \caption{\label{fig:cut-frustrated} Constructions \ref{constr:remove-frustrated} and \ref{constr:duplicate-edge}.}
\end{figure}

The inverse construction is then as follows.
\begin{constr}\label{constr:duplicate-edge}
  Consider a suitably labelled map $(\map, \ell)$. For each frustrated
  edge between vertices $v_{1}$ and $v_{2}$, made of half-edges
  $\tilde{h}_{1}$ and $\tilde{h}_{2}$ (with
  $\tilde{h}_{1}, \tilde{h}_{2}$ respectively attached to
  $v_{1}, v_{2}$), we proceed as follows. We add a vertex $v$ with 2
  half-edges $h_{1}, h_{2}$ attached to it. We label $v$ by
  $\ell(v_{1}) + 1$. Assume that $i_{1}, i_{2}$ are the labels of
  $\tilde{h}_{1}, \tilde{h}_{2}$. We erase the labels of
  $\tilde{h}_{1}$ and $\tilde{h}_{2}$, and we label $h_{1}$ by $i_{2}$
  and $h_{2}$ by $i_{1}$. The resulting labelled map,
  $(\hat{\map}, \ell)$, is a suitably labelled map with no frustrated
  edge.
\end{constr}

\begin{lemma}\label{lem:hatmap-labelling}
  This labelling is well-defined and no two half-edges receive the
  same label. Furthermore, $\varphi_{\map} = \theta$.
\end{lemma}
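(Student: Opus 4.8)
The plan is to verify the two assertions separately, reducing both to careful bookkeeping in the Bouttier--Fusy--Guitter bijection (Theorem~\ref{thm:bfg}) and in Constructions~\ref{constr:bfg}, \ref{constr:inv-bfg}, \ref{constr:remove-frustrated}, and \ref{constr:duplicate-edge}. For well-definedness of the labelling, I would first argue that the ``first pass'' — labelling, for each corner $c$ of $\hat{\hypermap}$ at a white vertex $w$, the half-edge $h_c'$ that follows $h_c$ clockwise around $w$ — is an injection from the set of corners of $\hat{\hypermap}$ into the half-edges of $\hat{\map}$, and that its image is exactly the set of half-edges immediately following a \emph{decreasing} half-edge around a white vertex. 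This is because in Construction~\ref{constr:bfg} the new edges attached at step~\ref{item:insert-he} are precisely in bijection with corners of $\hat{\hypermap}$, and by the successor rule of step~\ref{item:edges-face-BDG} the half-edge $h_c$ itself is a decreasing half-edge of $\hat{\map}$ (its other end is at a vertex of label $l(w)-1$, or at the face vertex $w_f$ of label $\min f - 1 = l(w)-1$). Since distinct corners of $\hat{\hypermap}$ give distinct vertices-or-corners in $\hat{\map}$, the labels assigned in the first pass are pairwise distinct; they account for exactly half of the $n$ half-edges (there being $n$ edges of $\hat{\hypermap}$, hence $n$ corners, hence $n$ decreasing half-edges along faces, matched with $n$ successors).

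For the ``second pass'', I would check that the clockwise face-exploration procedure assigns to each increasing half-edge $h$ (that is not a counterpart of a frustrated-vertex half-edge) a well-defined label. Fix a face $f$ of $\hat{\map}$; walking clockwise along $\partial f$ one alternates along runs of increasing half-edges followed by a decreasing half-edge. If $h_1,\dots,h_k$ are the increasing half-edges of such a run and $h'$ the decreasing half-edge ending it, with label $i$, then $h_j$ is assigned $\theta^{k-j+1}(i)$ — I need to confirm indices match the statement ``$h$ gets $\theta^k(i)$'' for $h=h_1$, which is the convention in the excerpt. The key point is that this is consistent: each increasing half-edge belongs to exactly one such run (the clockwise successor structure around $f$ is well-defined by the embedding), so no half-edge gets two labels. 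Next, I would verify that no label is used twice between the two passes, by identifying the second-pass labels with the edge-labels of $\hat{\hypermap}$ that were \emph{not} used as $e_2$-labels of a corner at the white end of the corresponding decreasing half-edge — in other words, the first pass uses the label $i$ of $e_2$ at the \emph{tail} side while the second pass uses the ``pushed-forward by $\theta$'' labels, and together these partition $\{1,\dots,n\}$ exactly because $\theta$ permutes the $n$ half-edge labels. The cleanest way to see this is via the permutation encoding of Remark~\ref{rem:maps-half-edges} and Remark~\ref{rem:act-on-H}: exhibit $\sigma_{\map}$ (a matching pairing a decreasing half-edge's successor with the relevant increasing half-edge) and $\theta_{\map}$ from the construction, and observe they are genuine permutations of $[n]$. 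Finally, for Construction~\ref{constr:remove-frustrated} I check that swapping the two labels onto $\tilde h_1,\tilde h_2$ when collapsing a frustrated vertex keeps the labelling bijective, since we delete two labels $i_1,i_2$ from $h_1,h_2$ and reintroduce the same two on $\tilde h_1,\tilde h_2$.

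For the identity $\varphi_{\map}=\theta$, recall $\varphi_{\map}=\theta_{\map}^{-1}\sigma_{\map}^{-1}$ encodes the face-corner structure (clockwise boundary walk) of $\map$ via Edmonds's correspondence (Construction~\ref{constr:hypermap-to-permutations} and the discussion after it). The plan is to show that, under the labelling just defined, walking clockwise around a face of $\map$ visits half-edges whose labels form exactly a cycle of $\theta$. Concretely: inside a face $f$ of $\hat{\map}$, consecutive half-edges along $\partial f$ are related either by ``next increasing half-edge in the run'' or by ``decreasing half-edge to its successor-run''; by construction the label transitions along these moves are exactly applications of $\theta$, because the second-pass labels were defined precisely as iterated $\theta$-images of the first-pass label, and the first-pass label of the successor half-edge was the $e_2$-label which — in the hypermap star around the originating black vertex — is $\theta$ applied to the previous edge-label. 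So one clockwise tour of $\partial f$ reads off one $\theta$-cycle. Translating via Remark~\ref{rem:faces-permutations} this says $\varphi_{\map}$ and $\theta$ have the same cycles on $[n]$, hence are equal as permutations. The collapsing step (Construction~\ref{constr:remove-frustrated}) preserves this because gluing $\tilde h_1$ to $\tilde h_2$ and swapping their labels merges the two boundary walks across the former frustrated vertex without altering which labels are cyclically adjacent along faces.

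The main obstacle I anticipate is the second pass: proving rigorously that the clockwise face-exploration terminates at a decreasing half-edge and that the run structure is globally consistent — i.e., that every increasing half-edge lies in exactly one maximal clockwise run ending at a unique decreasing half-edge, with no ``wrap-around'' ambiguity on a face that is all-increasing. One must use that along a clockwise boundary walk of any face of $\hat{\map}$ the label cannot strictly increase forever (the labels are bounded and a face is a closed walk, and by the successor construction each increasing step is eventually ``paid back'' by the corresponding decreasing half-edge of $\hat{\hypermap}$'s corner that generated it). Making the index bookkeeping ($\theta^k$ vs.\ $\theta^{k-j+1}$, clockwise vs.\ counterclockwise) exactly match the conventions of Construction~\ref{constr:bfg} and of Remark~\ref{rem:comb-hypermap-cw} is the delicate part; the rest is routine verification that the pieces assemble into genuine permutations of $[n]$.
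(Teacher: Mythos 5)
Your proposal takes essentially the same route as the paper's proof: first verify that the two-pass labelling procedure is well-defined (decreasing half-edges correspond bijectively to corners of $\hat{\hypermap}$, and the increasing half-edges are determined by the face exploration), then establish $\varphi_{\map} = \theta$ by checking that walking clockwise around a face of $\hat{\map}$ applies $\theta$ at each step. The one notable difference is that you argue distinctness of the labels directly via a ``partition of $\{1,\dots,n\}$'' claim (which as written is imprecise, since $\hat{\hypermap}$ is edge-labelled by a subset $I \subset [n]$ and $\hat\map$ has $2\#I$ rather than $n$ half-edges), whereas the paper obtains it more economically as a consequence of $\varphi_{\map} = \theta$; the index-bookkeeping you flag as an obstacle is the same delicate point the paper's proof also treats only briefly.
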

\begin{proof}
  Each half-edge in $\hat{\map}$ is either decreasing or increasing.
  If it is decreasing, it corresponds to a unique corner of a white
  vertex (and hence a unique edge) in $\hat{\hypermap}$. We see this
  in Construction \ref{constr:inv-bfg}. If it is increasing, it is
  incident to a unique face in $\hat{\map}$ and the well-defined
  labelling of decreasing half-edges determines the labelling of the
  increasing half-edges.

  Let us show that $\varphi_{\hat{\map}} = \theta$. This will imply in
  particular that no two edges receive the same label. Let $f$ be a
  face of $\hat{\map}$ and $h, h'$ be two half-edges incident to $f$,
  consecutive when going around the face in the clockwise direction.
  Let $i$ and $j$ be their labels: $j = \varphi(i)$. If $h'$ is increasing,
  we immediately see that we have $j = \theta(i)$. If $h'$ is decreasing,
  let $h''$ be the first decreasing half-edge along $f$ encountered
  when exploring $f$ in the counterclockwise direction starting from
  $h$. Let $j'$ be its label. Considering Construction \ref{constr:inv-bfg}, we see
  that $\theta^{p}(j') = j$ for some $p \geq 1$ (for $i \in [p-1]$,
  $\theta^{i}(j')$ is the label of an increasing half-edge). We thus have
  $\theta^{p-1}(j') = i$ and $j = \theta(i)$.
\end{proof}

Using this labelling, we can describe the vertices of $\map$ in terms
of permutations. Let $v$ be a white vertex in $\map$. Let
$u_{1}, \ldots, u_{d}$ be the labels of the half-edges at $v$ which are
part of an edge connecting $v$ to a vertex of strictly smaller label,
encountered in this order when going in the clockwise direction around
$v$. Define
\begin{equation}\label{eq:def-pi-v}
  \pi_{v} = \cycle{u_{1}, \ldots, u_{d}}.
\end{equation}
Let $e$ be a frustrated edge in $\map$ made of half-edges labelled by
$i$ and $j$. Define
\begin{equation*}
  \pi_{e} = \cycle{i, j}.
\end{equation*}
\begin{lemma}\label{lem:degree-hyper-map}
  Let $e$ be a frustrated edge in $\map$, $w$ be
  the corresponding frustrated white vertex in $\hat{\hypermap}$, and
  $\pi$ be the cycle corresponding to $w$. Then, $\pi_{e} = \pi$.

  Let $v$ be a vertex in $\map$, $w$ be the corresponding white vertex
  in $\hat{\hypermap}$, and $\pi$ be the cycle corresponding to $w$.
  Then, $\pi_{v} = \pi$.

  In particular, if $w$ is of degree $d$, then $v$ has exactly $d$
  neighbors of label $l(v) - 1$.
\end{lemma}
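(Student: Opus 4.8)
The plan is to trace the half-edge labelling of Section~\ref{sec:labell-he-BFG} through Constructions~\ref{constr:bfg}, \ref{constr:inv-bfg} and \ref{constr:remove-frustrated}, and to compare it vertex by vertex with Construction~\ref{constr:hypermap-to-permutations} applied to $\hat{\hypermap}$. It is cleanest to establish both identities at the level of the intermediate pair $(\hat{\map},\hat{\hypermap})$ --- the suitably labelled map with no frustrated edge and the corresponding well-labelled hypermap --- and then to transfer them to $\map$. The transfer is harmless: an ordinary vertex $v$ of $\map$ is already a vertex of $\hat{\map}$, and the only half-edges that Construction~\ref{constr:remove-frustrated} relabels, or that are re-routed when a neighbouring frustrated vertex is removed, join their endpoints to vertices of equal or of strictly larger label; hence they never occur among the half-edges counted in $\pi_v$, so $\pi_v$ is the same whether computed in $\map$ or in $\hat{\map}$, and likewise for the cycle of $\sigma_{\hat{\hypermap}}$ attached to $w$. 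With this reduction the last assertion of the Lemma is immediate from the second one: if $w$ has degree $d$ then $\pi_v=\pi$ is a cycle of length $d$, so $v$ has exactly $d$ half-edges --- hence $d$ edges counted with multiplicity --- joining it to a vertex of label $l(v)-1$.

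The frustrated-edge statement is a short computation. By Lemma~\ref{lem:deg-2-fr-map} a frustrated white vertex $w$ has degree $2$ in $\hat{\hypermap}$, so the cycle $\pi$ of $\sigma_{\hat{\hypermap}}$ attached to it is the transposition $\cycle{a, b}$, where $a,b$ are the labels of the two edges incident to $w$. Construction~\ref{constr:bfg} turns $w$ into a degree-$2$ frustrated vertex $v_0$ of $\hat{\map}$ whose two half-edges are the half-edges $h_{c_1},h_{c_2}$ attached at the two corners $c_1,c_2$ of $w$, with no incoming half-edges (as in the proof of Lemma~\ref{lem:deg-2-fr-map}). Since $v_0$ has only these two half-edges, the first labelling rule applied to $c_1$ and to $c_2$ gives $h_{c_1}'=h_{c_2}$, $h_{c_2}'=h_{c_1}$, and assigns to them the labels of the two edges following $c_1$, resp.\ $c_2$, clockwise around $w$ --- cyclically, the two edges at $w$ --- so the labels of $h_{c_1},h_{c_2}$ are $a$ and $b$ in some order. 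Construction~\ref{constr:remove-frustrated} then deletes $v_0$ and forms the frustrated edge $e$, whose two half-edges again carry the labels $a$ and $b$ (merely swapped); hence $\pi_e=\cycle{a, b}=\pi$, a transposition being invariant under the swap of its two entries.

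For an ordinary vertex $v$, corresponding to a non-frustrated white vertex $w$, I would proceed via two claims. (i) The half-edges of $v$ in $\hat{\map}$ joining $v$ to a vertex of label $l(v)-1$ are exactly the half-edges $h_c$ attached by step~\ref{item:edges-face-BDG} of Construction~\ref{constr:bfg} at the corners $c$ of $w$ in $\hat{\hypermap}$, one per corner, and their clockwise cyclic order around $v$ equals the clockwise cyclic order of the corners $c$ around $w$; this is read off from Construction~\ref{constr:bfg} (each $h_c$ lies in the angular sector of its corner $c$, the remaining half-edges at $v$ being the ``upward'' ones inserted into those sectors), or equivalently from Construction~\ref{constr:inv-bfg}, which recovers from each $h_c$ exactly one edge of $\hat{\hypermap}$ incident to $w$. (ii) Under this correspondence, the composite effect of the two labelling rules of Section~\ref{sec:labell-he-BFG} is to attach to $h_c$ the label of the edge of $\hat{\hypermap}$ bordering $c$ (on the side prescribed by the rule), consistently with the cyclic orders in (i). Granting (i) and (ii), reading $\pi_v$ off clockwise around $v$ returns the clockwise list of edge-labels around $w$, which is precisely the cycle $\pi$ of $\sigma_{\hat{\hypermap}}$ attached to $w$ by Construction~\ref{constr:hypermap-to-permutations}; hence $\pi_v=\pi$.

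The heart of the matter --- and the main obstacle --- is claim (ii). It requires a local, corner-by-corner analysis, in the same spirit as the proof of Lemma~\ref{lem:hatmap-labelling}: one must pin down the exact clockwise position around $v$ of every half-edge produced by Construction~\ref{constr:bfg}, in particular of the ``upward'' half-edges inserted into the angular sectors of the corners of $w$, and then verify that, after the first rule labels the half-edges ``following'' the downward ones and the second rule labels the remaining ``upward'' ones via its $\theta^{k}$-shift, the downward half-edge at each corner has indeed inherited the label of the bordering edge, with matching orientation. Once this verification is carried out the two cyclic lists coincide, and $\pi_v=\pi$ follows with no further input.
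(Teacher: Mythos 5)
Your reduction to the intermediate pair $(\hat{\map},\hat{\hypermap})$, the transfer argument back to $\map$, and the frustrated-edge case are all correct and follow the same route as the paper: the transfer works precisely because the relabelled/rerouted half-edges connect to vertices of equal or larger label and so never enter $\pi_v$, and the degree-$2$ computation for a frustrated vertex does pin down $\pi_e = \cycle{a,b} = \pi$. Your framing of the ordinary-vertex case via claims (i) and (ii) is also the right skeleton, and claim (i) is adequately justified from Constructions~\ref{constr:bfg} and \ref{constr:inv-bfg}.

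However, you explicitly leave claim (ii) unproved. You write that it ``requires a local, corner-by-corner analysis'' and ``Once this verification is carried out \ldots\ $\pi_v = \pi$ follows,'' but you never carry out the verification. This is precisely the content of the lemma for an ordinary vertex, and it is the step the paper's proof does supply: it observes that the half-edges attached at step~\ref{item:insert-he} of Construction~\ref{constr:bfg} to the corners of $w$ receive, via the labelling procedure of Section~\ref{sec:labell-he-BFG}, the labels $u_1,\ldots,u_k$ in clockwise cyclic order around $v$, and that these are exactly the half-edges going to vertices of label $\ell(v)-1$ (the incoming ones, including those rerouted through removed frustrated vertices, go to labels $\geq \ell(v)$). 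Without establishing that the first labelling rule does assign $(u_1,\ldots,u_k)$ in cyclic order to the decreasing half-edges at $v$, the identification of the two cyclic lists --- and hence $\pi_v = \pi$ --- is not proved. So the proposal correctly isolates where the work lies but does not do it; that is a genuine gap.
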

\begin{proof}
  A frustrated edge $e$ in $\map$ is constructed from a frustrated
  vertex $v$ in $\hat{\map}$ of degree $2$. This latter vertex is
  constructed from a frustrated vertex $w$ in $\hat{\hypermap}$. If
  $w$ is represented by $\cycle{i, j}$, the two labels of the incident
  half-edges get transported by the labelling procedure: the two
  half-edges connected to $v$ are decreasing half-edges. This shows
  the first claim.

  Let $w$ be a white vertex in
  $\hat{\hypermap}$, $\pi= \cycle{u_{1}, \ldots, u_{k}}$ be the corresponding
  cycle in $\sigma_{-}$, and $v$ be the corresponding vertex in
  $\hat{\map}$. Consider the half-edges added at step \ref{item:insert-he} of
  Construction \ref{constr:bfg}. If $w$ is of degree $d$, $d$ such half-edges are
  attached to the corners of $w$. These half-edges are labelled by
  $u_{1}, \ldots, u_{k}$ in that order when going around the vertex in the
  clockwise direction. At step \ref{item:edges-face-BDG}, these half-edge are connected to
  vertices with label $l(v) - 1$. Now, consider a half-edge $h'$
  attached at step \ref{item:insert-he} to another white vertex $w'$. If at step \ref{item:edges-face-BDG},
  $h'$ gets connected to $w$, then $l(w') = l(w) + 1$. It may be that
  $w'$ is a frustrated vertex, and gets removed in Construction \ref{constr:remove-frustrated}:
  then through the half-edge $h'$, $w$ is connected to a vertex of
  label $l(w)$. Hence, through this procedure we created exactly $d$
  edges connecting $w$ to a vertex of degree $l(w) - 1 = l(v) - 1$,
  and the half-edges connected to $v$ that are part of these $d$ edges
  are labelled by $u_{1}, \ldots, u_{k}$ in that order.
\end{proof}

\begin{theorem}\label{thm:bijection-labelled-mobiles}
  Fix $n \geq 1$ and $\theta \in \Sym_{n}$. Define
  \begin{equation*}
    \Suitably(\theta) = \left\{ (\map, \ell) \in \Suitably_{n} \colon \map \text{ is half-edge-labelled by } [n],  \varphi_{\map} = \theta \right\}.
  \end{equation*}

  The previous construction
  gives a bijection
  \begin{equation*}
    \Psi\colon \Comb(\theta) \to \Suitably(\theta).
  \end{equation*}

  Furthermore, if $(\map, \ell) = \Psi(\gamma, \sigma)$,
  \begin{enumerate}
    \item\label{item:vertex-cycle} each vertex $v$ of of $\map$ that is not a local minimum
          corresponds to a cycle $\pi_{v}$ as defined by
          \eqref{eq:def-pi-v}, with $\pi_{v} \in \Cyc(\sigma_{-})$, and has
          label $\ell(v) = \gamma(\pi_{v})$;
    \item\label{item:frustrated-cycle} each frustrated edge $e$ of $\map$ corresponds to a cycle
          $\pi_{e} \in \Cyc{\sigma_{0}}$ of length 2;
    \item\label{item:product-cycle} $\sigma = \prod_{v}\pi_{v} \prod_{e}\pi_{e}$ where the products are on the
          vertices that are not local minima and on the frustrated
          edges.
  \end{enumerate}
\end{theorem}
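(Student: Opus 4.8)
The plan is to assemble the bijection $\Psi$ as a composition of the three bijections already established in the excerpt, and then read off the three itemized properties by tracking how the combinatorial data is transported at each stage. Concretely, $\Psi$ is the composite
\[
  \Comb(\theta) \xrightarrow{\ \Psi'\ } \welllab(\theta) \xrightarrow{\ \text{Thm.~\ref{thm:bfg}}\ } \hat{\Suitably} \xrightarrow{\ \text{half-edge labelling}\ } \Suitably(\theta),
\]
where $\Psi'$ is Construction~\ref{constr:gamma-sigma-to-hypermap} (a bijection onto $\welllab(\theta)$ by Proposition~\ref{prop:gamma-sigma-to-hypermap}), the middle arrow is the Bouttier--Fusy--Guitter bijection of Theorem~\ref{thm:bfg} together with the duplication-of-edges extension to the case with frustrated vertices/edges discussed after that theorem, and the last arrow is the half-edge labelling procedure of Section~\ref{sec:labell-he-BFG}, shown to be well-defined and to satisfy $\varphi_{\map} = \theta$ in Lemma~\ref{lem:hatmap-labelling}. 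Each arrow is a bijection, so $\Psi$ is; the only thing to check for the codomain is that the image lands in $\Suitably(\theta)$, i.e.\ that the resulting suitably labelled map has $n$ half-edges and $\varphi_{\map} = \theta$, which is exactly Lemma~\ref{lem:hatmap-labelling} (the number of half-edges of $\map$ equals $\#I + \#(\text{frustrated half-edge pairs})$ bookkeeping, which I would state as a short count matching $n$).

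Next I would establish items~\ref{item:vertex-cycle} and~\ref{item:frustrated-cycle}. For~\ref{item:vertex-cycle}: a vertex $v$ of $\map$ that is not a local minimum corresponds, under Theorem~\ref{thm:bfg}, to a white vertex $w$ of $\hat\hypermap$ of the same label; under $\Psi'$ this white vertex is one of the cycles of $\sigma_-$ (the degree-$1$ white vertices coming from $\sigma_+$ having been removed, and the frustrated ones coming from $\sigma_0$), and Construction~\ref{constr:gamma-sigma-to-hypermap} assigns it the label $\gamma(\pi)$ for the corresponding cycle $\pi \in \Cyc(\sigma_-)$. Lemma~\ref{lem:degree-hyper-map} identifies this cycle with $\pi_v$ as defined by~\eqref{eq:def-pi-v}: the half-edges at $v$ lying on edges to strictly smaller-labelled vertices, read clockwise, form exactly the cycle attached to $w$. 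Hence $\pi_v \in \Cyc(\sigma_-)$ and $\ell(v) = \gamma(\pi_v)$. For~\ref{item:frustrated-cycle}: a frustrated edge $e$ of $\map$ comes (Constructions~\ref{constr:remove-frustrated}/\ref{constr:duplicate-edge}) from a degree-$2$ frustrated vertex in $\hat\map$, which under Theorem~\ref{thm:bfg} and its extension corresponds to a frustrated white vertex $w$ of $\hat\hypermap$; under $\Psi'$ the frustrated vertices of $\hat\hypermap$ are precisely the cycles of $\sigma_0$, and these are matchings, hence $2$-cycles, and by the first part of Lemma~\ref{lem:degree-hyper-map} this cycle equals $\pi_e = \cycle{i,j}$ where $i,j$ are the labels of the half-edges making up $e$.

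Finally, item~\ref{item:product-cycle}: I need $\sigma = \prod_v \pi_v \prod_e \pi_e$. By Construction~\ref{constr:gamma-sigma-to-hypermap} we have $\sigma_{\hat\hypermap} = \sigma|_{\Delta\gamma_-\sqcup\Delta\gamma_0} = \sigma_- \sigma_0$ as a permutation of $\Delta\gamma_-\sqcup\Delta\gamma_0$, while $\sigma_+$ is the identity; since $\sigma$ restricted to $\Delta\gamma_+$ is the identity (Definition~\ref{def:compatible-permutation}), $\sigma = \sigma_-\sigma_0$ on all of $[n]$. Now $\Cyc(\sigma_-)$ is in bijection with the non-local-minimum vertices of $\map$ via $v \mapsto \pi_v$ (part~\ref{item:vertex-cycle} plus the fact that the white vertices of $\hat\hypermap$ are exactly the non-local-minimum vertices of $\map$, Theorem~\ref{thm:bfg}), and $\Cyc(\sigma_0)$ is in bijection with the frustrated edges via $e \mapsto \pi_e$ (part~\ref{item:frustrated-cycle}); since these cycles have disjoint supports, the product formula follows.

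The main obstacle is not any single hard computation but the careful bookkeeping of labels across the frustrated-vertex/frustrated-edge interface: the half-edge relabelling of Construction~\ref{constr:remove-frustrated} swaps the two labels $i_1,i_2$ when the frustrated vertex is contracted, and one must check this swap is exactly what makes $\pi_e = \cycle{i,j}$ agree with the $2$-cycle of $\sigma_0$ and, simultaneously, keeps $\varphi_{\map} = \theta$ consistent with $\theta_{\hat\hypermap} = \theta|_{I\to I}$ after the restriction to $I = \Delta\gamma_-\sqcup\Delta\gamma_0$ is ``undone'' by reinserting the univalent white vertices. Most of this is already packaged in Lemmas~\ref{lem:hatmap-labelling} and~\ref{lem:degree-hyper-map} and in Proposition~\ref{prop:gamma-sigma-to-hypermap}, so in the write-up I would lean on those and merely verify that the compositions match up, rather than re-proving the transport of labels from scratch.
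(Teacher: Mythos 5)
Your proposal is correct and follows essentially the same route as the paper's proof: decompose $\Psi$ through Proposition~\ref{prop:gamma-sigma-to-hypermap} and Theorem~\ref{thm:bfg} (with the frustrated-vertex extension), then invoke Lemma~\ref{lem:hatmap-labelling} for the half-edge labelling and Lemma~\ref{lem:degree-hyper-map} (plus Constructions~\ref{constr:remove-frustrated}/\ref{constr:duplicate-edge}) for items~\ref{item:vertex-cycle} and~\ref{item:frustrated-cycle}. The only place you go a bit beyond the paper is in spelling out item~\ref{item:product-cycle} explicitly: you observe $\sigma = \sigma_-\sigma_0$ since $\sigma_+$ is the identity on $\Delta\gamma_+$ and the three pieces have disjoint supports, then identify $\Cyc(\sigma_-)$ with the non-local-minimum vertices and $\Cyc(\sigma_0)$ with the frustrated edges via the preceding two items; this is exactly what the paper leaves implicit in its terse final sentence, and your expansion of it is correct.
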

\begin{proof}
  If we forget about the labelling of the half-edges, the map $\Psi$ is
  obtained by composing the bijection of Proposition
  \ref{prop:gamma-sigma-to-hypermap} and the one of Theorem
  \ref{thm:bfg}.

  Fix $(\gamma, \sigma) \in \Comb(\theta)$, $(\hat{\hypermap}, l) = \Phi(\gamma, \sigma)$,
  and $(\map, \ell) = \Psi(\gamma, \sigma)$. The labelling of the edges of
  $\hat{\hypermap}$ determines the labelling of the decreasing
  half-edges of $\map$. The unique determination of the other labels
  follows from the constraint that $\varphi_{\map} = \theta$. Lemma
  \ref{lem:hatmap-labelling} shows that with out choice of labelling
  we do have $\varphi_{\map} = \theta$. Conversely, from a suitable
  map $(\map, \ell)$ with $\varphi_{\map} = \theta$, we can recover
  the labels of the corresponding hypermap by erasing the labels of
  the increasing half-edges.

  The second part of the Theorem is a consequence of Lemma \ref{lem:degree-hyper-map}, and in
  the case of point \ref{item:frustrated-cycle} of Constructions \ref{constr:remove-frustrated} and \ref{constr:duplicate-edge} as well.
\end{proof}

\section{Combinatorial description of the cumulants}
\label{sec:combinatorial-cumulants}

We now re-express the cumulants \eqref{eq:cumulants-combin} in terms of
suitably labelled maps.

\subsection{Expression in terms of the distances and proof of Theorem \ref{thm:exp-cumulants}}
\label{sec:expr-terms-dist}

The cumulants can be rewritten in terms of sums over suitably labelled
maps. Indeed, Theorem \ref{thm:bijection-labelled-mobiles} allows us to replace the sum on Motzkin
paths and permutation in Proposition \ref{prop:N-exp-cumulants} with a sum on suitably
labelled maps. This will be the key element of the proof of Theorem
\ref{thm:exp-cumulants}.

Before rewriting the expansion of cumulants in terms of suitably
labelled maps, we reinterpret the terms
$e_{q} \left( \gamma(\pi); \pi \in \Cyc(\sigma_{-}) \right)$. We now show these terms
correspond to product of distances in a map. Consider a suitably
labelled map $\map$, and denote by $V_{\map}^{\min}$ the set of local
minima of $\map$ and $V_{\map}^{\star} = V_{\map} \setminus V_{\map}^{\min}$. For
a vertex $v \in V_{\map}$, we set
\begin{equation*}
  d_{v} = \min_{v^{*} \in V^{\min}_{\map}} \left( d(v^{*}, v) + \ell(v^{*}) \right),
\end{equation*}
where $d(u, u')$ is the graph distance between two vertices $u$ and
$u'$. By the second part of Theorem \ref{thm:bijection-labelled-mobiles}, if
$v \in V_{\map \setminus V_{\map}^{\min}}$ corresponds to a cycle $\pi$, then the
label of $v$ corresponds to $\gamma(\pi)$. On the other hand, the label of
$v$ is $d_{v}$, as explained in \cite[Remark 1]{bouttier_twopoint_2014}. The argument goes
as follow: consider any geodesic from $v$ to some
$v^{*} \in V^{\min}_{\map}$, the labels along the geodesic are
necessarily weakly decreasing (by steps of $0$ or $1$). There exists a
choice of $v^{*}$ and of a geodesic with strictly decreasing labels to
$v^{*}$. In that case the length of the geodesic is the distance
between $v$ and $v^{*}$ but also the difference of the labels of $v$
and $v^{*}$.

Hence, using Theorem \ref{thm:bijection-labelled-mobiles}, we can rewrite the sum in Proposition
\ref{prop:N-exp-cumulants} as
\begin{equation*}
  \sum_{\substack{\gamma\in\motz_{n, 0}(\theta(\bm{n}))\\\sigma \in \Sym_{\gamma}, |\sigma| = p\\ \Orbit( \theta(\bm{n}), \sigma ) = 1}} e_{q}\left(\gamma(x); c \in \Cyc(\sigma_{-})\right) = \sum_{\substack{\map \in \Suitably(\theta)\\\# V_{\map}^{\star} = n/2 - p}} e_{q} \left( d_{v}; v \in V_{\map}^{\star}\right).
\end{equation*}
We introduce the notation of average over sum of maps of a symmetric polynomial $f$ to be
\begin{equation*}
  \left\langle f \right\rangle_{\theta, p} = \sum_{\substack{\map \in \Suitably(\theta)\\\# V_{\map}^{\star} = n/2 - p}} f \left( d_{v}; v \in V_{\map}^{\star}\right).
\end{equation*}
This allows us to rewrite the expression for the cumulants in compact
form:
\begin{equation}\label{eq:cumulant-distance}
      \kappa_{l}(\bm{n}) = \sum_{p+q+r=n/2}\left( \frac{2}{\beta}\right)^{p} (-1)^{q}P_{r}(N)\left\langle e_{q}\right\rangle_{\theta, p}.
\end{equation}
This proves Theorem \ref{thm:exp-cumulants}. Note that the index $p$
gives non-zero contribution if and only if $p \geq l - 1$. Indeed,
otherwise the maps counted in $\langle e_{q} \rangle_{\theta, p}$ are
necessarily disconnected. This can be seen using Euler's formula (see
Remark \ref{rem:prop-suitably}).

Note that we also have by expanding $P_{r}$:
\begin{equation}\label{eq:cumulant-distance-Bernoulli}
  \kappa_{l}(\bm{n}) = \sum_{p+q+r+s = n/2}\left( \frac{2}{\beta} \right)^{p}\frac{(-1)^{q}B_{r}}{s+1}\binom{r+s}{r} N^{s+1} \left< e_{q} \right>_{\theta, p}.
\end{equation}

\begin{remark}\label{rem:prop-suitably}
  Notice that by Euler's formula:
  \begin{equation*}
    2 - 2g_{\map} = \left( \frac{n}{2} - p + \# V_{\map}^{\min} \right) - \frac{n}{2} + l = l+\# V_{\map}^{\min} - p.
  \end{equation*}
  Hence, when $p$ increases, either the number of minima increase, or
  the genus increases.
\end{remark}

\subsection{Analysis of the first two orders}
\label{sec:analysis-first-two}

We now turn to the first two orders of the cumulants computed in
Proposition \ref{prop:N-exp-cumulants}. This Section will prove part of Corollary
\ref{corol:leading-orders}.

The leading order is obtained by considering the term
$s = n/2 - l +1, p = l-1, q = r = 0$ in \eqref{eq:cumulant-distance-Bernoulli}. It gives
\begin{equation*}
  \kappa_{l}(\bm{n}) = \left( \frac{2}{\beta} \right)^{l-1}N^{n/2-l+2}\frac{\# \left\{ \map \in \Suitably(\theta(\bm{n})) \colon \# V_{\map}^{\star} = n/2 - l + 1 \right\}}{n/2-l+2} (1 + \order{1/N}).
\end{equation*}
Notice that by Remark \ref{rem:prop-suitably}, we are considering maps with
\begin{equation*}
  2 - 2g_{\map} = 1 + \# V_{\map}^{\min}.
\end{equation*}
Since $\# V_{\map}^{\min} \geq 1$, this equation is only satisfied when
$g_{\map} = 0$ and $\# V_{\map}^{\min} = 1$. In this case, suitably
labelled maps correspond exactly to pointed planar maps with face
profile $\theta(\bm{n})$. As $n/2-l+2$ is the total number of vertices in
the map, we get that
\begin{equation*}
  \kappa_{l}(\bm{n}) = \left( \frac{2}{\beta} \right)^{l-1}N^{n/2-l+2} \#\{\text{edge-labelled planar maps with face profile } \theta(\bm{n})\}(1 + \order{1/N}),
\end{equation*}
which is the first order of Corollary \ref{corol:leading-orders}. We have recovered for all
$\beta > 0$ the result of Abdesselam, Anderson, and Miller
\cite{abdesselam_tridiagonalized_2014}.

To treat the sub-leading order, we prove the following Proposition.
\begin{prop}\label{prop:sub-S2}
  Let $n \in \N^{*}$ and $\theta \in \Sym_{n}$. We define the set of
  suitably labelled map with two local minima:
  \begin{equation*}
    \Suitably_{2}(\theta) = \left\{ (\map, \ell) \in \Suitably(\theta)\colon \# V_{\map}^{\min} = 2 \right\}.
  \end{equation*}

  We have
  \begin{equation*}
    N^{l-2-n/2}\kappa_{l}(\bm{n}) = \left( \frac{2}{\beta} \right)^{l-1}\#\Maps_{0}(\theta(\bm{n})) + \left( \frac{2}{\beta} \right)^{l-1}\frac{1}{N}\left(\frac{2}{\beta} - 1\right)\frac{\#\Suitably_{2}(\theta(\bm{n}))}{n/2-l+1} + \order{\frac{1}{N^{2}}},
  \end{equation*}
  with $\Maps_{0}(\theta(\bm{n}))$ the set of half-edge-labelled
  planar maps with face profile $\theta(\bm{n})$.
\end{prop}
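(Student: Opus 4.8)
The plan is to read off, from the polynomial \eqref{eq:cumulant-distance} in $N$, the coefficients of the two top powers $N^{n/2-l+2}$ and $N^{n/2-l+1}$ and match them with the two displayed terms; since \eqref{eq:cumulant-distance} is a finite sum, everything below $N^{n/2-l+1}$ automatically contributes to $\order{1/N^{2}}$ after the global multiplication by $N^{l-2-n/2}$. As recalled after Proposition \ref{prop:N-exp-cumulants}, $\langle e_{q}\rangle_{\theta,p}$ vanishes unless $p\geq l-1$, so $s+1=n/2-p-q-r+1\leq n/2-l+2$, with equality only for $(p,q,r)=(l-1,0,0)$; that term contributes $\left(\frac{2}{\beta}\right)^{l-1}\frac{1}{n/2-l+2}\langle e_{0}\rangle_{\theta,l-1}N^{n/2-l+2}$. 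By the Euler relation of Remark \ref{rem:prop-suitably} the maps counted by $\langle e_{0}\rangle_{\theta,l-1}$ are planar with a single local minimum, hence, using Theorem \ref{thm:bijection-labelled-mobiles} together with the fact that then $\ell(v)=d(v^{*},v)$, are in bijection with pointed half-edge-labelled planar maps $\map$ with $\varphi_{\map}=\theta$; since such a map has $n/2-l+2$ vertices, dividing by $n/2-l+2$ forgets the marked vertex, giving $\frac{1}{n/2-l+2}\langle e_{0}\rangle_{\theta,l-1}=\#\Maps_{0}(\theta)$. This reproduces the leading-order term computed above.

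Next I would collect the three triples $(p,q,r)\in\{(l,0,0),(l-1,1,0),(l-1,0,1)\}$ that give $s+1=n/2-l+1$. Using $B_{0}=1$ and $B_{1}=\tfrac12$ (as determined by \eqref{eq:Bernoulli}), their contributions to the coefficient of $N^{n/2-l+1}$ are, respectively, $\left(\frac{2}{\beta}\right)^{l}\frac{\langle e_{0}\rangle_{\theta,l}}{n/2-l+1}$, $-\left(\frac{2}{\beta}\right)^{l-1}\frac{\langle e_{1}\rangle_{\theta,l-1}}{n/2-l+1}$, and $\tfrac12\left(\frac{2}{\beta}\right)^{l-1}\langle e_{0}\rangle_{\theta,l-1}$. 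Again by Remark \ref{rem:prop-suitably}, the maps counted by $\langle e_{0}\rangle_{\theta,l}$ are the suitably labelled maps with two local minima, which the Euler relation forces to be planar, so $\langle e_{0}\rangle_{\theta,l}=\#\Suitably_{2}(\theta)$. Multiplying $\kappa_{l}(\bm{n})$ by $N^{l-2-n/2}$ and comparing with the target expansion, the $\left(\frac{2}{\beta}\right)^{l}$-parts match at once, and the Proposition reduces to the single identity
\begin{equation*}
  \langle e_{1}\rangle_{\theta,l-1}-\frac{n/2-l+1}{2}\,\langle e_{0}\rangle_{\theta,l-1}=\#\Suitably_{2}(\theta).
\end{equation*}

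To prove this identity, set $k=n/2-l+1$. The bijection of the first paragraph gives $\langle e_{0}\rangle_{\theta,l-1}=(k+1)\#\Maps_{0}(\theta)$ and $\langle e_{1}\rangle_{\theta,l-1}=\sum_{(\map,v^{*})}\sum_{v}d(v^{*},v)=2\sum_{\map}\sum_{\{u,v\}\colon u\neq v}d(u,v)$, so the left-hand side equals $\sum_{\map\in\Maps_{0}(\theta)}\sum_{\{u,v\}\colon u\neq v}\bigl(2d(u,v)-1\bigr)$. For the right-hand side I would use the rigidity of suitable labellings: in any suitably labelled map, $\ell(v)=\min_{m\in V^{\min}_{\map}}\bigl(d(m,v)+\ell(m)\bigr)$ — the bound $\leq$ holds because $\ell$ varies by at most $1$ along each edge, and $\geq$ follows by descending from $v$ along label-decreasing edges until a local minimum is reached. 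Consequently a suitably labelled planar map with exactly two local minima is the same datum as a planar map $\map$ with $\varphi_{\map}=\theta$ together with two distinct vertices bearing labels $0$ and $\delta$, the condition that the $\delta$-vertex be a local minimum being exactly $0\leq\delta\leq d-1$ with $d$ their graph distance; one checks directly that then precisely these two vertices are local minima, and that for $\delta=0$ the pair is unordered. Counting these data gives $\#\Suitably_{2}(\theta)=\sum_{\map}\bigl(\binom{k+1}{2}+\sum_{(m_{1},m_{2})\ \mathrm{ordered}}(d(m_{1},m_{2})-1)\bigr)=\sum_{\map}\sum_{\{u,v\}\colon u\neq v}(2d(u,v)-1)$, which is the left-hand side. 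The step I expect to be the crux is this last one — proving the rigidity of the labelling and pinning down the exact range $0\leq\delta\leq d-1$ (in particular that exactly the two chosen vertices end up being local minima); the remaining manipulations are bookkeeping.
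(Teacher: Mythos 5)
Your proof is correct and, at bottom, follows the same route as the paper's. The paper packages the key combinatorial input as Lemma \ref{lem:bij-S2-S11}, producing a bijection $\phi_{1}\colon\Suitably_{1,+}(\theta)\to\Suitably_{2}(\theta,0)$ and a two-to-one map $\phi_{2}\colon\Suitably_{1,0}(\theta)\to\Suitably_{2}(\theta,1)$, which is exactly your rigidity statement $\ell(v)=\min_{m\in V^{\min}_{\map}}\bigl(d(m,v)+\ell(m)\bigr)$ together with the constraint $0\leq\delta\leq d(m_{1},m_{2})-1$ (the paper's $\phi_{1},\phi_{2}$ are defined by precisely the formula $\ell'(u)=\min_{u^{*}\in\{v^{*},v\}}(\ell'(u^{*})+d(u^{*},u))$, and the ordered/unordered dichotomy corresponds to its $\Suitably_{2}(\theta,0)$ versus $\Suitably_{2}(\theta,1)$). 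You have merely merged the paper's auxiliary sets into the single enumerative identity $\langle e_{1}\rangle_{\theta,l-1}-\tfrac{k}{2}\langle e_{0}\rangle_{\theta,l-1}=\#\Suitably_{2}(\theta)$ and verified it directly; the paper instead rearranges the same data through $\#S_{1,+}+\tfrac12\#S_{1,0}$. Both need, and you correctly supplied, the Euler-formula observation from Remark \ref{rem:prop-suitably} that $\langle e_{0}\rangle_{\theta,l}$ sees only planar maps with exactly two local minima. (Incidentally, you are right to take $B_{1}=\tfrac12$: the recursion \eqref{eq:Bernoulli} and Faulhaber's formula \eqref{eq:Faulhaber} both force this sign, and the list $(1,-1/2,1/6,\ldots)$ printed after Theorem \ref{thm:exp-cumulants} is a misprint.) The only soft spot is that you sketch the rigidity and the range of $\delta$ rather than proving them, but your sketch is accurate and matches the paper's Lemma \ref{lem:bij-S2-S11}, so the argument is sound.
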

The sub-leading order of the cumulant is thus described by the suitably
labelled maps on the sphere with exactly two local minima. In Section
\ref{sec:many-betw-suit}, we give another description of this object
in terms of non-orientable maps on $\RP^{2}$. The full proof of
Corollary \ref{corol:leading-orders} will follow from Proposition
\ref{prop:sub-S2} and Theorem \ref{thm:bijection-1/2} proved in
Section \ref{sec:many-betw-suit}.

The proof is based on two mappings that we now introduce. We define
the set of suitably labelled maps with two local minima and $m+1$
global minima
\begin{equation*}
  \Suitably_{2}(\theta, m) = \left\{ (\map, \ell) \in \Suitably(\theta)\colon \# V_{\map}^{\min} = 2, \# \ell^{-1}({0}) = m+1 \right\}.
\end{equation*}
The integer $m$ is in $\{0, 1\}$. It is $0$ if we consider maps with
exactly one global minimum (vertex with label $0$), and $1$. We define
the sets of suitably labelled maps with one local minimum and a choice
of vertex with an additional label, which is either strictly positive,
or zero:
\begin{equation*}
  \begin{split}
    \Suitably_{1, +}(\theta) &= \left\{ (\map, \ell,  v, k)\colon (\map, \ell) \in \Suitably(\theta), v \in V_{\map}^{\star}, \# V_{\map}^{\min} = 1,  1 \leq k < d(v, V_{\map}^{\min}) \right\}\\
    \Suitably_{1, 0}(\theta) &= \left\{ (\map, \ell,  v)\colon (\map, \ell) \in \Suitably(\theta), v \in V_{\map}^{\star}, \# V_{\map}^{\min} = 1 \right\}.
  \end{split}
\end{equation*}

We construct a bijection
$\phi_{1} \colon \Suitably_{1, +}(\theta) \to \Suitably_{2}(\theta, 0)$ and a two-to-one
mapping $\phi_{2} \colon \Suitably_{1, 0}(\theta) \to \Suitably_{2}(\theta, 1)$. The
mappings are constructed by changing the label of the vertex with
additional label to make it a local minimum. The bijection is as
follows. Let $(\map, \ell, v, k) \in \Suitably_{1, +}(\theta)$. Denote the
unique local minimum of $(\map, \ell)$ by $v^{*}$. We define the
labelling function by $\ell'(v) = k$, $\ell'(v^{*}) = 0$, and by
\begin{equation*}
  \ell'(u) = \min_{u^{*} \in \{v^{*}, v\}} \left( \ell'(u^{*}) + d(u^{*}, u) \right)
\end{equation*}
for any other vertex $u$. The second mapping $\phi_{2}$ is constructed
similarly, by replacing $k$ with $0$.
\begin{lemma}\label{lem:bij-S2-S11}
  The labelled map $(\map, \ell')\coloneq \phi_{1}(\map, \ell, v, k)$ is in
  $\Suitably_{2}(\theta, 0)$ and $\phi_{1}$ is a bijection.

  The labelled map $(\map, \ell')\coloneq \phi_{2}(\map, \ell, v)$ is in
  $\Suitably_{2}(\theta, 1)$ and $\phi_{2}$ is two-to-one.
\end{lemma}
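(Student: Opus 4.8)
The plan is to verify that each of the two maps lands in the claimed target and then exhibit, or count, preimages. The key observation throughout is that once a finite set of vertices is designated to have prescribed labels, the function $\ell'(u) = \min_{u^* \in S}(\ell'(u^*) + d(u^*, u))$ is the unique extension making every designated vertex a local minimum with the prescribed label while keeping labels $1$-Lipschitz along edges; moreover the set of local minima of $(\map, \ell')$ is then exactly $S$ provided the prescribed labels are ``compatible'' (no designated vertex is forced below its prescribed value by a shorter path through another designated vertex). This is essentially the content of \cite[Remark 1]{bouttier_twopoint_2014} already invoked in Section \ref{sec:expr-terms-dist}, so I would cite it rather than reprove it.

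First, for $\phi_1$: given $(\map, \ell, v, k) \in \Suitably_{1,+}(\theta)$, since $1 \le k < d(v, V_{\map}^{\min}) = \ell(v)$, the two designated values $\ell'(v^*) = 0$ and $\ell'(v) = k$ are compatible ($d(v^*, v) \ge \ell(v) > k$ so $v$ is not pulled below $k$; and $k \ge 1 > 0$ so $v^*$ stays at $0$). Hence $\{v^*, v\}$ is exactly the set of local minima of $(\map, \ell')$, giving $\# V_{\map}^{\min} = 2$, and since $k \ge 1$ the only vertex of label $0$ is $v^*$, so $m = 0$. The underlying half-edge-labelled map and $\varphi_{\map} = \theta$ are untouched, so $(\map, \ell') \in \Suitably_2(\theta, 0)$. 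For the inverse: given $(\map, \ell') \in \Suitably_2(\theta, 0)$, it has a unique vertex of label $0$ — call it $v^*$ — and exactly one other local minimum $v$; set $k = \ell'(v)$, which is $\ge 1$ (it is not the global min) and one checks $k < d(v, v^*)$ because any geodesic from $v$ to $v^*$ has strictly decreasing labels somewhere but $v^*$ is the \emph{only} vertex of label $0$, forcing $d(v,v^*) = \ell'(v) \cdot(\text{something} > 1)$; more carefully, $d(v, v^*) \ge \ell'(v)$ with equality only if there is a label-decreasing geodesic, but then $v^*$ would not be the unique label-$0$ vertex unless... — here I need to be a little careful and argue that because $v$ is itself a local minimum, every geodesic from $v$ leaves $v$ by a non-decreasing step, hence strictly increasing, so $d(v, v^*) > \ell'(v) = k$. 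Recovering $(\map, \ell)$ from $(\map, \ell')$ means deleting the extra minimum: set $\ell(v^*) = 0$ and $\ell = d(\cdot, v^*)$, i.e. re-extend from the single minimum $v^*$; that this inverts $\phi_1$ follows because $\ell'$ agrees with $\ell$ away from the ``influence zone'' of $v$ and the data $(v,k)$ records exactly the discrepancy.

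Second, for $\phi_2$: now both designated vertices $v^*$ and $v$ get label $0$. Compatibility still holds (both want $0$, and $0$ is the global minimum so nothing pulls either below), so $\{v^*, v\} = V_{\map}^{\min}$, giving two local minima both of label $0$, hence $\ell'^{-1}(0) \supseteq \{v^*, v\}$ and in fact equals it unless some non-designated vertex also hits $0$ — but a non-designated vertex $u$ has $\ell'(u) = \min(d(v^*,u), d(v,u)) \ge 1$. So $m = 1$ and $(\map, \ell') \in \Suitably_2(\theta, 1)$. The two-to-one claim is the crux: given $(\map, \ell') \in \Suitably_2(\theta, 1)$ there are exactly two vertices of label $0$, say $a$ and $b$, and \emph{either} can play the role of $v^*$ (``the original minimum'') with the other playing $v$; each choice yields a valid preimage $(\map, \ell, v)$ where $\ell$ is re-extended from the chosen $v^*$ alone, and these two preimages are genuinely distinct (they differ in which vertex is $v$) unless the map has an automorphism swapping $a$ and $b$ — but half-edge-labelled maps are rigid, so no such automorphism exists and the fiber has size exactly $2$.

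The main obstacle I expect is the bookkeeping in the inverse direction: showing that re-extending the label function from a single chosen minimum actually reproduces a genuine element of $\Suitably_{1,+}$ or $\Suitably_{1,0}$ (in particular that $v \in V_{\map}^{\star}$ for the new labelling, i.e. $v$ is not a local minimum once the second minimum is erased), and that $\phi_1 \circ \phi_1^{-1}$ and $\phi_1^{-1} \circ \phi_1$ are both the identity rather than merely mutually inverse set maps. This amounts to a careful case analysis of how labels in the ``Voronoi cell'' of $v$ change when $v$ is demoted from a minimum, using that labels are $1$-Lipschitz and that the only vertices whose label changes are those strictly closer (in the new metric sense) to $v$ than to $v^*$. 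I would organize this as: (i) prove the uniqueness-of-extension lemma once; (ii) check well-definedness of $\phi_1, \phi_2$ (target membership); (iii) write down the candidate inverse/fiber and check it is well-defined; (iv) check the round trips, invoking rigidity of half-edge-labelled maps for the two-to-one count. Steps (ii) and (iv) are short; step (iii)'s well-definedness is where the real work sits.
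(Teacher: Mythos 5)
Your plan is correct and follows essentially the same route as the paper: verify that $\ell'$ is $1$-Lipschitz, that $\{v^*,v\}$ is exactly the set of local minima (the paper does both of these by hand rather than by citing the uniqueness-of-extension observation, but the content is the same), and then invert by re-extending from the single label-$0$ vertex for $\phi_1$, respectively by observing that either of the two label-$0$ vertices can be designated as $v^*$ for $\phi_2$. Two small remarks: the ``real work'' you anticipate in step (iii) is actually short — once $\ell = d(v^*,\cdot)$ one sees immediately by the geodesic-descent argument that $v^*$ is the unique local minimum and $v\in V^{\star}_{\map}$, so the round trips follow formally from the uniqueness of the $1$-Lipschitz extension; and your point that the two $\phi_2$-preimages are distinct only because half-edge-labelled maps are rigid is a genuine subtlety that the paper's proof glosses over (it just asserts ``two distinct preimages''), so your instinct to flag it is correct and your resolution is right.
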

\begin{proof}
  We first show that the map $(\map, \ell')$ is suitably labelled. Let
  $u, u'$ be two adjacent vertices in $\map$. Let $u_{*}$ (resp.
  $u_{*}'$) be the vertex in $\{v^{*}, v\}$ closest to $u$ (resp.
  $u'$). Assume that $\ell'(u) \geq 2 + \ell'(u')$. As $u$ and $u'$ are
  adjacent, this means that $u_{*} \neq u'_{*}$. If $u_{*} = v$, then
  we have
  \begin{equation*}
    d(u, v^{*}) \geq d(u, v) + k \geq 2 + d(u', v^{*}),
  \end{equation*}
  which contradicts the triangular inequality as $d(u, u') = 1$. If
  $u'_{*} = v$, we similarly have
  \begin{equation*}
    d(u, v) + k \geq d(u, v^{*}) \geq 2 + d(u', v) + k,
  \end{equation*}
  a contradiction.

  Two minima of $(\map, \ell')$ are then $v^{*}$ and $v$ as any other
  vertex $u$ adjacent to $v$ has label
  \begin{equation*}
    \ell'(u) = \min(d(v^{*}, u), k + d(v, u)) \geq \min(d(v^{*}, v) - 1, k) \geq k.
  \end{equation*}
  They are the only minima. Indeed, for all vertex
  $u \notin \{v, v^{*}\}$, let $u^{*}$ be the vertex in $\{v, v^{*}\}$
  closest to $u$. Let
  $u_{0} = u, u_{1}, \ldots, u_{k-1}, u_{k} = u^{*}$ be the vertices
  on a geodesic from $u$ to $u^{*}$. We then have
  \begin{equation*}
    \ell'(u_{1}) \leq d(u^{*}, u_{1}) = d(u^{*}, u) - 1.
  \end{equation*}
  This proves the first claim for $\phi_{1}$ and $\phi_{2}$.

  Finally, we can invert $\phi_{1}$ as follows: given
  $(\map, \ell) \in \Suitably_{2}(\theta, 0)$ and $v^{*}$ the unique
  vertex with $\ell(v^{*}) = 0$ and $v'$ the other minimum, we can
  construct a new labelling function
  \begin{equation*}
    \ell'(v) = d(v^{*}, v).
  \end{equation*}
  This gives an element
  $(\map, \ell', v', \ell(v')) \in \Suitably_{1, +}(\theta)$.

  For $\phi_{2}$, given an element
  $(\map, \ell) \in \Suitably_{2}(\theta, 1)$, there are two vertices
  with label $0$. We can thus construct two distinct preimages.
\end{proof}

\begin{proof}[Proof of Proposition \ref{prop:sub-S2}]

Lemma \ref{lem:bij-S2-S11} implies
\begin{equation*}
  \# S_{2}(\theta, 0) = \# S_{1, +}(\theta) \text{ and } \# S_{2}(\theta, 1) = \frac{1}{2}\# S_{1, 0}(\theta).
\end{equation*}
The sub-leading order of $\kappa_{l}(\bm{n})$ is then
\begin{equation*}
  \begin{split}
    \sum_{u+q+r=1}&\left( \frac{2}{\beta} \right)^{l-1+u}\frac{(-1)^{q}B_{r}}{n/2-l+1}\binom{r+n/2-l}{r} N^{n/2-l-1} \left< e_{q} \right>_{\theta, l-1+u}\\
    &= \left( \frac{2}{\beta} \right)^{l-1}\frac{N^{n/2-l-1} }{n/2-l+1} \left(\frac{2}{\beta}\left< 1 \right>_{\theta, l} -  \left< e_{1} \right>_{\theta, l-1} + \frac{n/2-l+1}{2}\left< 1 \right>_{\theta, l-1}  \right)\\
    &= \left( \frac{2}{\beta} \right)^{l-1}\frac{N^{n/2-l-1} }{n/2-l+1} \left(\frac{2}{\beta} - 1\right)\left( \#\Suitably_{2}(\theta, 0) + \#\Suitably_{2}(\theta, 1) \right),
  \end{split}
\end{equation*}
as
\begin{equation*}
  \begin{split}
    \left< 1 \right>_{\theta, l} &= \#\Suitably_{2}(\theta, 0) + \#\Suitably_{2}(\theta, 1)\\
    \left< e_{1} \right>_{\theta, l-1} &= \#\Suitably_{1, 0}(\theta) + \#\Suitably_{1, +}(\theta)\\
    (n/2-l-1)\left< 1 \right>_{\theta, l-1} &= \#\Suitably_{1, 0}(\theta).
  \end{split}
\end{equation*}

\end{proof}

We may wonder if a similar proof holds beyond the first sub-leading
order. In theses cases, the mappings $\phi_{1}$ and $\phi_{2}$ must be
defined differently.

\section{A many-to-one map between suitably labelled maps and non-orientable maps on $\RP^{2}$}
\label{sec:many-betw-suit}

We now propose a way to interpret the suitably labelled maps appearing
in the sub-leading order of the expansion of the cumulants of the
$\beta$-ensemble as non-orientable maps on $\RP^{2}$. To do so, we will
interpret them as determining the lift of a map on a non-orientable
surface on its orientable double-covering. Note that we produce a
many-to-one mapping and not a bijection as we consider labelled
non-orientable maps on $\RP^{2}$. Fixing a face profile using a
permutation determines an orientation of the faces in the
non-orientable map, an information that is redundant in an orientable
map.

\begin{figure}[ht]
  \begin{subfigure}[b]{0.33\textwidth}
    \centering
    \includegraphics[width=.5\linewidth]{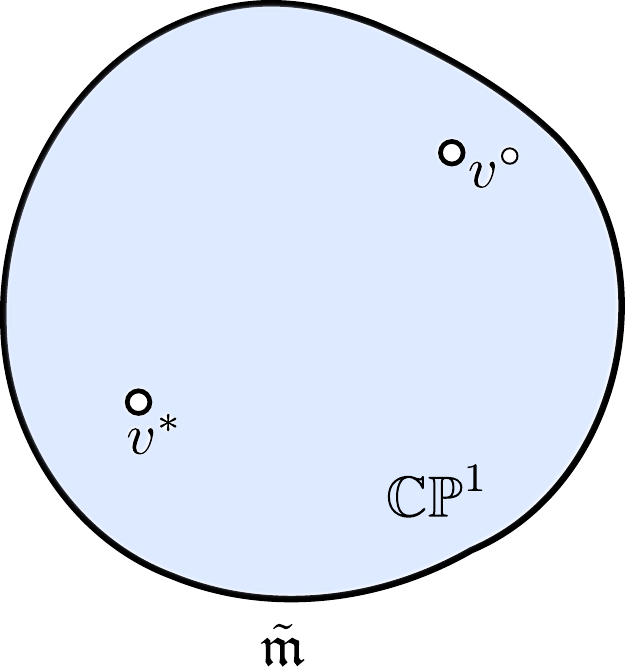}
    \caption{Starting map with two minima}
  \end{subfigure}
  \begin{subfigure}[b]{0.33\linewidth}
    \centering
    \includegraphics[width=.5\linewidth]{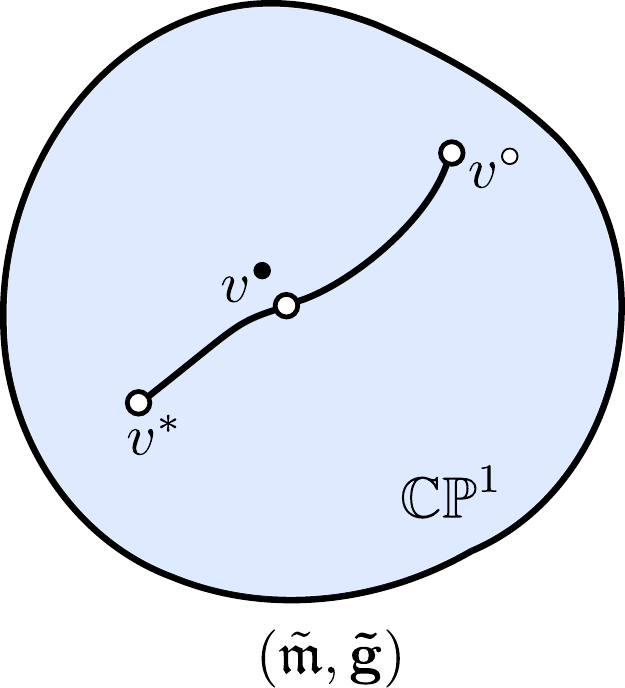}
    \caption{Choice of a path}
  \end{subfigure}
  \begin{subfigure}[b]{0.33\linewidth}
    \centering
    \includegraphics[width=.5\linewidth]{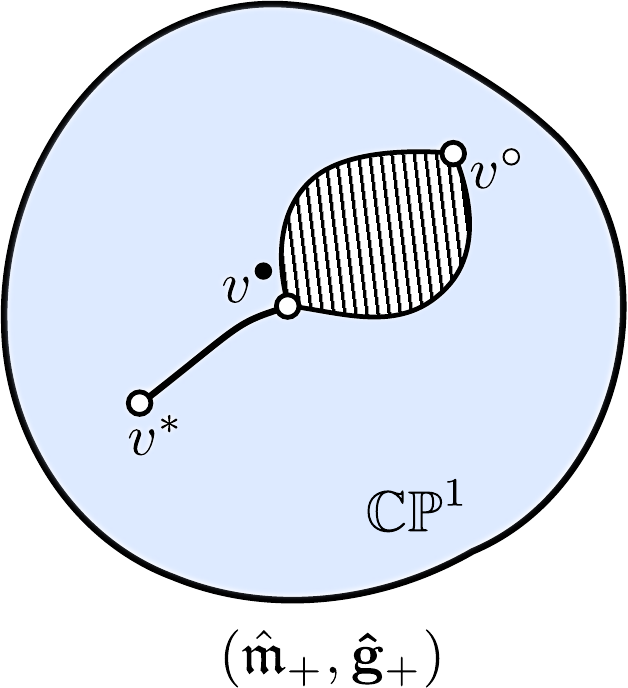}
    \caption{Opening of a face}
  \end{subfigure}
  \begin{subfigure}[b]{0.33\linewidth}
    \centering
    \includegraphics[width=.8\linewidth]{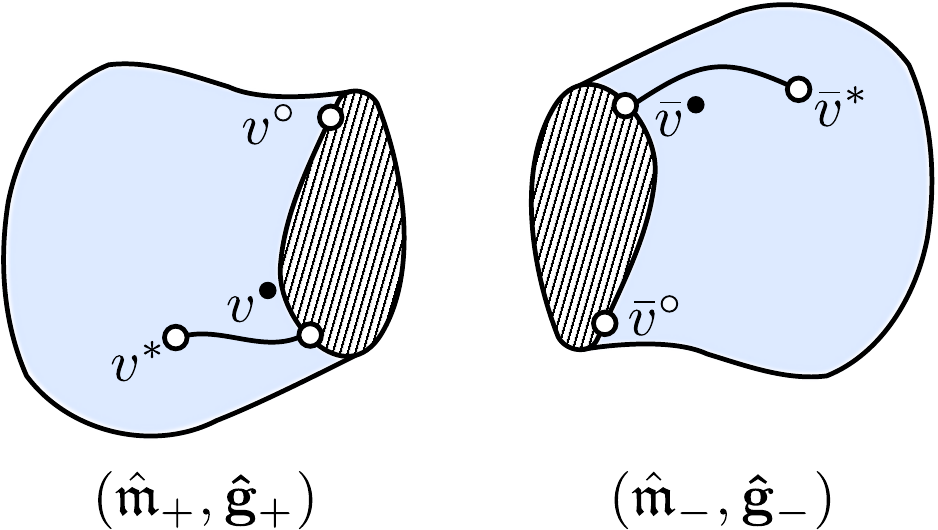}
    \caption{Adding the mirror map}
  \end{subfigure}
  \begin{subfigure}[b]{0.33\linewidth}
    \centering
    \includegraphics[width=.6\linewidth]{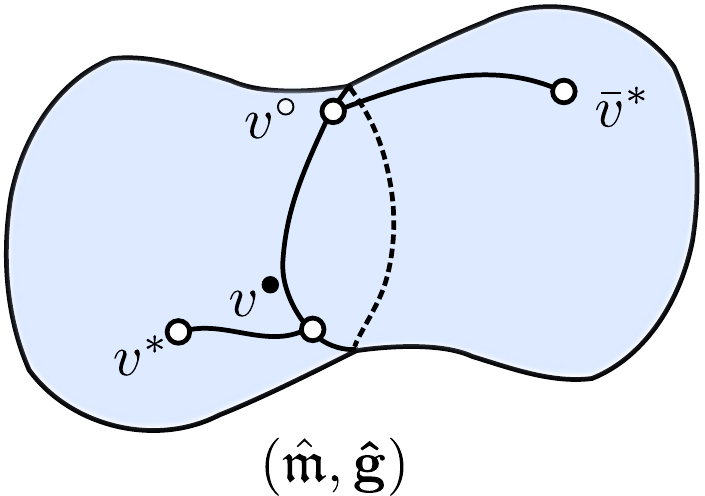}
    \caption{Gluing the two maps}
  \end{subfigure}
  \begin{subfigure}[b]{0.33\linewidth}
    \centering
    \includegraphics[width=.5\linewidth]{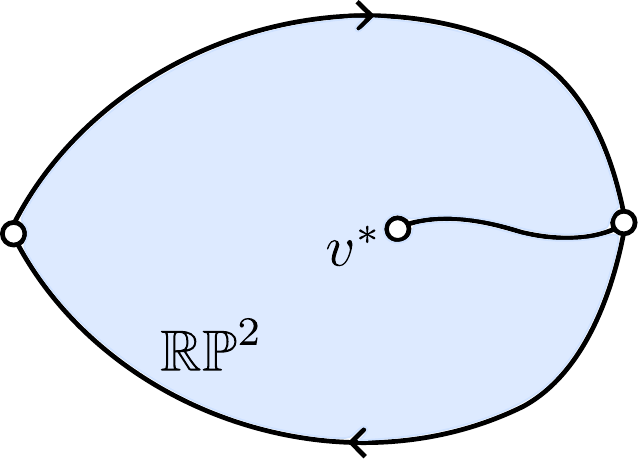}
    \caption{Projection to get a map on $\RP^{2}$}
  \end{subfigure}
  \caption{\label{fig:sequence-explain}The different steps of the construction of a map on $\RP^{2}$.}
\end{figure}

We now give an informal explanation of the construction, relying on
Figure \ref{fig:sequence-explain}. Rather than explaining the many-to-one mapping, giving a
suitably labelled map with two local minima from a map on $\RP^{2}$,
we give a right inverse to this mapping: from a suitably labelled map
with two local minima it gives a map on $\RP^{2}$. At this point,
several notions may not be clear, and will be explained in this
section. The construction is as follows. We start with a suitably
labelled map on the sphere with two minima $v^{*}$ and $v^{\circ}$ (step
(a)). We construct a path by choosing the leftmost geodesic from
$v^{*}$ to $v^{\circ}$ (step (b)). This choice of path determines a third
vertex $v^{\bullet}$, and a way to open a new ``boundary face'' in the map
(step (c)). We then take the mirror image of this map with boundary
(step (d)), and glue the two mirror images together along their
boundary face (step (e)). The resulting map can be seen naturally as a
map on the orientation covering of $\RP^{2}$, and can be projected to
give a map on $\RP^{2}$ (step (f)).

We shall recall first some facts concerning the orientation covering
of a surface in Section \ref{sec:orient-double-cover}. We then explain how we may encode maps on
possibly non-orientation coverings in Section \ref{sec:combin-non-orientable-maps}. We then define maps
on the orientation covering of a surface, maps equipped with an
involution, in Section \ref{sec:maps-orient-cover}. We then explain in details steps (c), (d),
and (e) in Section \ref{sec:cutt-gluing-suit}. Finally, we explain how paths are chosen in
Section \ref{sec:choosing-path-map} and give the full many-to-one mapping in Section \ref{sec:mapping-maps-rp2}.

\subsection{The orientation double covering}
\label{sec:orient-double-cover}

We recall a few facts on the orientable double covering of a
non-orientable surface. See for instance the book of Lee \cite{lee_introduction_2012} for more.

Consider a connected manifold $M$. We can construct an orientable
manifold $\hat{M}$ and a continuous surjective map $\pi\colon \hat{M} \to M$
such that $(\hat{M}, \pi)$ is a double covering of $M$. Informally, the
construction is as follows: there are two choices of orientation
locally around each point $P$ of $M$. These two choices determine two
sheets of the covering above a neighborhood of $P$. A surface is
orientable if and only if we can make a consistent global choice of
orientation. In that case, there are exactly two choices of global
orientation, and $\hat{M}$ is the union of two disconnected copies of
$M$: each copy corresponds to a choice of orientation. The manifold
$\hat{M}$ is equipped with an involution without fixed point, which
inverses the two sheets above $P$, or equivalently change the
orientation around $P$.

This double covering is called the \textbf{orientation covering} of $M$. The
connectedness property alluded to above is summarized in the following
Theorem.
\begin{theorem}[{\cite[Theorem 15.41]{lee_introduction_2012}}]
  Let $\pi \colon \hat{M}' \to M$ be the orientation covering of $M$.
  If $M$ is orientable, then $\hat{M}$ has two connected components
  and the restriction of $\pi$ to any of these component is a
  homeomorphism. If $M$ is not orientable, then $\hat{M}$ is connected.
\end{theorem}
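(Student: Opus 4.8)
The plan is to prove the theorem by pure covering-space theory, using the dictionary between global orientations of $M$ and continuous sections of the orientation covering $\pi\colon \hat{M} \to M$ (I write $\hat{M}$ for what the statement calls $\hat{M}'$). First I would recall the construction: a point of $\hat{M}$ is a pair $(p,o)$ with $p\in M$ and $o$ one of the two orientations of $T_pM$, and $\pi(p,o)=p$; over any connected, orientation-admitting coordinate ball $U\subset M$ one has a canonical identification $\pi^{-1}(U)\cong U\sqcup U$, so $\pi$ is a $2$-sheeted covering map, in particular an open map and a quotient map. The covering carries a fixed-point-free deck involution $\iota$ flipping the orientation in each fibre, and $\{\mathrm{id},\iota\}$ is the whole deck group. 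The observation that drives everything is: a continuous section $s\colon M\to\hat{M}$ of $\pi$ is \emph{exactly} a continuous (hence smooth) global orientation of $M$, so $M$ is orientable if and only if $\pi$ admits a continuous section.

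For the first assertion, suppose $M$ is orientable and fix a continuous section $s$. Since $\pi\circ s=\mathrm{id}_M$ and $\pi$ is a local homeomorphism, $s$ is an open map, so $s(M)$ is open in $\hat{M}$; and since each fibre is a pair $\{x,\iota x\}$ with exactly one element in $s(M)$, we get $\hat{M}=s(M)\sqcup\iota(s(M))$, whence $s(M)$ is also closed. As $M$ is connected, $s(M)$ and $\iota(s(M))$ are precisely the two connected components of $\hat{M}$. Finally $\pi|_{s(M)}$ is a continuous bijection onto $M$ with continuous inverse $s$, hence a homeomorphism, and the same holds for the other component after applying $\iota$.

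For the second assertion I would argue by contraposition: assume $\hat{M}$ is disconnected and produce a section. Write $\hat{M}=A\sqcup B$ with $A,B$ nonempty and clopen. Because $\iota$ is a homeomorphism, $A\cap\iota(A)$ is $\iota$-invariant and clopen, so $\pi\bigl(A\cap\iota(A)\bigr)$ is clopen in the connected manifold $M$ (here one uses that $\pi$ is an open quotient map and that $\pi^{-1}(\pi(S))=S$ for $\iota$-invariant $S$), hence empty or all of $M$. If it is all of $M$ then $A\cap\iota(A)=\hat{M}$, forcing $B=\varnothing$, a contradiction; so $A\cap\iota(A)=\varnothing$, i.e.\ $\iota(A)\subseteq B$, and symmetrically $\iota(B)\subseteq A$, so $\iota$ interchanges $A$ and $B$. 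Thus $A$ meets every fibre in exactly one point, $\pi|_A\colon A\to M$ is a continuous bijective local homeomorphism, hence a homeomorphism, and its inverse is a continuous section of $\pi$; therefore $M$ is orientable.

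The only genuinely delicate point is the dictionary between global orientations of $M$ and continuous sections of $\pi$ (together with the bookkeeping that a connected double cover admitting a section must be the trivial one); once that is granted, both cases are formal. I expect this identification — and the care needed to topologize $\hat{M}$ and verify $\pi$ is a covering — to be the main obstacle, everything after being routine point-set manipulation. In the setting of the present paper $M$ is a compact surface, so $\pi$ is automatically proper and therefore closed, which slightly streamlines the clopen arguments above, but this is not needed for the general statement.
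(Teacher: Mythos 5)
The paper does not prove this statement itself; it is quoted verbatim with a citation to \cite[Theorem~15.41]{lee_introduction_2012}, so there is no in-paper proof to compare against. Your argument is correct and is essentially the textbook covering-space proof of this result: the identification of continuous sections of $\pi$ with global orientations of $M$, the observation that $s(M)$ and $\iota(s(M))$ are complementary clopen sets when a section $s$ exists, and, for the converse, the clopen-intersection argument showing that any disconnection $\hat M = A \sqcup B$ must be interchanged by the deck involution so that $\pi|_A$ is a homeomorphism furnishing a section. One minor remark: the statement as printed uses $\hat M'$ in the hypothesis but $\hat M$ in the conclusion, a typographical slip in the paper that you correctly resolved by treating them as the same object.
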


The orientation covering is unique is the sense of the following
Theorem.
\begin{theorem}[See for instance {\cite[Theorem 15.42]{lee_introduction_2012}}]\label{thm:unique-orientable}
  Let $\pi' \colon \hat{M}' \to M$ be an orientable double covering of a
  non-orientable manifold $M$. Then, this covering is isomorphic to
  the orientation covering.
\end{theorem}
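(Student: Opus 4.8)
The plan is to recognize both $\pi'$ and the orientation covering $\pi\colon\hat M\to M$ as connected double covers of $M$ classified by the same subgroup of $\pi_{1}(M)$, so that they are isomorphic as coverings. The first step is connectedness: if $\hat M'$ were disconnected it would be a disjoint union of coverings of $M$ of total degree $2$, hence two copies of $M$ each mapped homeomorphically onto $M$; but then $\hat M'$ is orientable if and only if $M$ is, contradicting the non-orientability of $M$. So $\hat M'$ is connected, and $\hat M$ is connected because $M$ is non-orientable (the theorem quoted just above).

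Next, connected double covers of the connected manifold $M$ are in bijection with surjective homomorphisms $\pi_{1}(M)\to\Z/2\Z$ --- equivalently with nonzero classes in $H^{1}(M;\Z/2\Z)=\mathrm{Hom}(\pi_{1}(M),\Z/2\Z)$ --- sending $[\gamma]$ to $0$ when $\gamma$ lifts to a loop and to $1$ otherwise, and two such covers are isomorphic exactly when the associated homomorphisms coincide (a double cover is normal, so there is no conjugacy ambiguity). Let $\phi'$ classify $\pi'$, and let $w_{1}\colon\pi_{1}(M)\to\Z/2\Z$ be the orientation character, with $w_{1}([\gamma])=0$ iff parallel transport of a local orientation of $M$ around $\gamma$ is trivial; its kernel classifies the orientation covering, and $w_{1}$ is onto precisely because $M$ is non-orientable. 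Since both $\ker\phi'$ and $\ker w_{1}$ have index two, it suffices to prove $\ker\phi'\subseteq\ker w_{1}$, i.e.\ that any loop lifting to a loop in $\hat M'$ is orientation-preserving in $M$.

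To see this, fix an orientation of $\hat M'$, possible since it is orientable. If $\gamma$ lifts to a loop $\hat\gamma$ in $\hat M'$, then the local homeomorphism $\pi'$ carries the chosen local orientation of $\hat M'$ at $\hat\gamma(0)$ to a local orientation $o$ of $M$ at $\gamma(0)$, compatibly with parallel transport along $\hat\gamma$ and along $\gamma=\pi'\circ\hat\gamma$. Transport of the orientation around the loop $\hat\gamma$ is trivial on the orientable manifold $\hat M'$, hence transport of $o$ around $\gamma$ is trivial, so $w_{1}([\gamma])=0$. Equivalently, an orientation of $\hat M'$ furnishes a section of the pulled-back orientation cover $(\pi')^{*}\hat M\to\hat M'$, so $(\pi')^{*}w_{1}=0$ in $H^{1}(\hat M';\Z/2\Z)$, which is exactly $\ker\phi'\subseteq\ker w_{1}$. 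Therefore $\phi'=w_{1}$, the coverings $\pi'$ and $\pi$ are isomorphic, and the orientation-reversing deck involution --- being the unique nontrivial deck transformation of a double cover --- is matched automatically.

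I expect the main obstacle to be the argument of the third paragraph: making precise that the local homeomorphism $\pi'$ transports local orientations and intertwines parallel transport of orientations along paths, i.e.\ the functoriality of the orientation double cover under $\pi'$. This can be phrased cleanly through the orientation sheaf and the local homology groups $H_{n}(M,M\setminus\{p\})$, or, as sketched, by observing that an orientation of $\hat M'$ trivializes $(\pi')^{*}\hat M$; the remaining steps are routine covering-space theory.
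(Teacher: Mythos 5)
The paper does not give a proof of this theorem; it is quoted from Lee's textbook and used as a black box. Your argument is a correct and standard covering-space proof: connectedness of $\hat M'$ forces a classification by an epimorphism $\phi'\colon\pi_1(M)\to\Z/2\Z$, the orientation covering is classified by the orientation character $w_1$, and the observation that an orientation on $\hat M'$ trivializes $(\pi')^*\hat M$ gives $\ker\phi'\subseteq\ker w_1$, hence equality by the index count. The appeal to normality of index-two subgroups to remove the conjugacy ambiguity is exactly the right point to make explicit, and the connectedness argument (a disconnected degree-two cover of a non-orientable $M$ would be two copies of $M$, hence non-orientable) is clean. In short: the proposal is correct and self-contained, and since the paper merely cites the result, there is nothing in the paper to compare it against beyond the fact that this is the expected textbook argument.
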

In the sequel, we consider the orientation covering of $\RP^{2}$. It
is topologically a sphere.

An important part of the mapping described in this section is that a
non-orientable map canonically defines a map on its orientation
covering. Let us now detail why it is so. Note that starting from now,
and until the end of the section, the notation $\hat{\cdot}$ (as in
$\hat{S}, \hat{\map}, \ldots$) denote objects related to some
orientation covering.
\begin{constr}\label{constr:lift-map}
  Let $\map$ be a non-orientable map. Consider a graph embedding
  $(\Gamma, S, \iota)$ in the class $\map$. The surface $S$ has a
  connected orientation covering $p\colon \hat{S} \to S$. We lift to
  $\hat{S}$ the image of $\Gamma$, $\iota(\Gamma)$. We obtain a graph
  embedding $(\hat{\Gamma}, \hat{S}, \hat{\iota})$ in $\hat{S}$ with
  twice the number of vertices, edges, and faces of $\map$. We thus
  define a map $\hat{\map}$ on the orientation covering of $S$. This
  map is well defined and does not depend on the particular choice of
  graph embedding $(\Gamma, S, \iota)$, since the orientation covering
  is unique up to isomorphism by Theorem \ref{thm:unique-orientable}.
\end{constr}

The orientation covering $\hat{S}$ is equipped with an
orientation-reversing involution without fixed point,
$\inv_{\hat{S}}$. This involution descends to an involution on the set
of vertices, edges and faces of $(\Gamma, S, \iota)$.

\subsection{Combinatorial description of non-orientable maps}
\label{sec:combin-non-orientable-maps}

We now describe a way to encode non-orientable maps as triples of
matchings (recall Definition \ref{def:matching}). The construction we now describe is
due to Tutte \cite{tutte_graph_1984} (see also \cite{godsil_algebraic_2001}). Starting from now, and until
the end of this Section, we abuse notation and define permutational
models that acts either on sets of labels or set of half-edges or
flags, as explained in Remark \ref{rem:act-on-H}.

\begin{definition}\label{def:flags-sides}
  Let $\map$ be a map, orientable or non-orientable. Let $h$ be an
  edge in $\map$. We can distinguish between two sides of $h$. We call
  a side of a half-edge a \textbf{flag}. We denote by $\Flag_{\map}$ the set of
  flags of $\map$. Let $\flag$ be a flag on a half-edge $h$. There is
  a unique face $f$ on this side $\flag$ of $h$. We say that $f$ is
  incident to $\flag.$
\end{definition}

Let $\map$ be a map with $n$ half-edges. To define a flag-labelling
function, we consider an extended set of labels $I$ of size $2n$.
A flag-labelling function is then a bijection
$\lambda\colon \Flag_{\map} \to I$. Let $\lambda$ be such a function.

We then define three matchings $\tau_{\map}, \rho_{\map}, \mu_{\map}$ as
follows. We define their action on the set of flags, but by Remark
\ref{rem:act-on-H}, using the labelling $\lambda$, they equivalently act on the index set
$I$. The cycles of $\tau_{\map}$ are $\cycle{\flag, \flag'}$ where
$\flag$ and $\flag'$ are the two flags associated to a same half-edge.
Consider an edge $e$. Let $(\flag_{1}, \flag_{1}')$ and
$(\flag_{2}, \flag_{2}')$ be two pairs of flags with
$\flag_{i}, \flag_{i}'$ associated to the same half-edge of $e$, and
$\flag_{1}$ and $\flag_{2}$ (resp. $\flag_{1}'$ and $\flag_{2}'$) on
the same side of $e$. Then
$\cycle{\flag_{1}, \flag_{2}}\cycle{\flag_{1}', \flag_{2}'}$ are two
cycles of $\rho_{\map}$. Finally, consider a corner of $\map$. This
corner is made of two flags, $\flag_{1}$ and $\flag_{2}$. Then,
$\cycle{\flag_{1}, \flag_{2}}$ is a cycle of $\mu_{\map}$.

\begin{figure}[ht]
  \centering
  \includegraphics[width=0.5\textwidth]{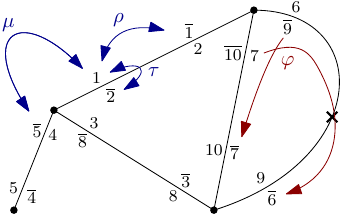}
  \caption{\label{fig:no-map} A non-orientable map with labelled flags. The edge with flags labelled $6, \bar{6}, 9, \bar{9}$ must be twisted to allow for an embedding in the plane. The blue and red arrows show the action of $\mu, \rho, \tau$ and $\varphi$. }
\end{figure}

In the sequel, we will often use the following an extended set of
labels. Let $n \in \N^{*}$. We define the set of ``barred'' integers
$\{\bar{1}, \bar{2}, \ldots, \bar{n}\} = [\bar{n} ]$. The extended set
of label is denoted by
$[n, \bar{n}] = \{1, \bar{1}, \ldots, n, \bar{n}\}$. For a subset
$ I \subset [n, \bar{n}]$, we define
\begin{equation*}
  \bar{I} = \left\{ \bar{i} \colon i \in I \cap [n] \right\} \cup \left\{ i \colon \bar{i} \in I \cap [\bar{n}] \right\}.
\end{equation*}

\begin{ex}\label{ex:no-map}
  The matchings describing the map displayed in Figure
  \ref{fig:no-map} are:
  \begin{equation*}
    \begin{split}
      \tau_{\map} &= \cycle{1, \bar{2}}\cycle{2, \bar{1}}\cycle{3, \bar{8}}\cycle{8, \bar{3}}\cycle{4, \bar{5}}\cycle{5, \bar{4}}\cycle{6, \bar{9}}\cycle{9, \bar{6}}\cycle{7, \overline{10}}\cycle{10, \bar{7}}\\
      \rho_{\map} &= \cycle{1, \bar{1}}\cycle{2, \bar{2}}\cycle{3, \bar{3}}\cycle{4, \bar{4}}\cycle{5, \bar{5}}\cycle{6, 9}\cycle{7, \bar{7}}\cycle{8, \bar{8}}\cycle{\bar{6}, \bar{9}}\cycle{10, \overline{10}}\\
      \mu_{\map} &= \cycle{1, \bar{5}}\cycle{2, \overline{10}}\cycle{3, \bar{2}}\cycle{4, \bar{8}}\cycle{5, \bar{4}}\cycle{6, \bar{1}}\cycle{7, \bar{9}}\cycle{8, \bar{6}}\cycle{9, \bar{7}}\cycle{10, \bar{3}}.
    \end{split}
  \end{equation*}
\end{ex}

As for the orientable maps, we can introduce the permutation
\begin{equation*}
  \varphi_{\map} = \rho_{\map}\mu_{\map},
\end{equation*}
which describes the faces of $\map$. Indeed, each face corresponds to
two cycles: each cycle correspond to an exploration of the face in a
different direction. For instance, in Figure \ref{fig:no-map}, the
same face is described by $\cycle{\bar{2}, \bar{3}, \overline{10}}$
and $\cycle{3, 2, 10}$.

\subsection{Maps on the orientation covering}
\label{sec:maps-orient-cover}

We now explain how an orientable half-edge labelled map that is
equipped with a involution without fixed point that reverses
orientation (in a sense to be defined) can be seen as being a map on
the orientation covering of some non-orientable surface. This will
give the inverse of Construction \ref{constr:lift-map}.
\begin{definition}\label{def:orientation-reversing}
  Let $I \subset \N^{*}$ be finite and $\hat{\map}$ be an orientable
  half-edge labelled map with labels in $I \sqcup \bar{I}$. Let $\inv$
  be a matching of $I \sqcup \bar{I}$. We say that $\inv$ is
  orientation-reversing if
  \begin{enumerate}
    \item\label{item:reversing}\begin{equation*} \varphi_{\map} \circ \inv = \inv \circ \varphi_{\map}^{-1} \text{
                                       and
                                       } \alpha_{\map} \circ \inv = \inv \circ \alpha_{\map}^{-1},
                                     \end{equation*}
                                     \item\label{item:no-fixed-cycle} For each triple of cycles $(\pi, \pi', \pi'') \in \Cyc(\varphi_{\hat{\map}}) \times \Cyc(\alpha_{\hat{\map}}) \times \Cyc(\sigma_{\hat{\map}})$, we have
                                     \begin{equation*}
                                       \left( \inv \circ \pi \circ \inv \right)^{-1} \neq \pi, \quad (\inv\circ \pi' \circ\inv)^{-1} \neq \pi', \, \text{ and } \quad (\inv\circ\pi''\circ\inv)^{-1} \neq \alpha_{\map}\circ\pi''\circ\alpha_{\map}.
                                     \end{equation*}
  \end{enumerate}
\end{definition}
\begin{remark}\label{rem:involution-no-fixed}
  Note that this definition implies that
  \begin{equation*}
    \begin{split}
      \left( \inv\circ\varphi_{\map}\circ\inv \right)^{-1} &= \varphi_{\map}\\
      \left( \inv\circ\alpha_{\map}\circ\inv \right)^{-1} &= \alpha_{\map}\\
      \left( \inv\circ\sigma_{\map}\circ\inv \right)^{-1} &= \alpha_{\map}\circ\sigma_{\map}\circ\alpha_{\map}.
    \end{split}
  \end{equation*}
  In particular, condition \ref{item:no-fixed-cycle} ensures that
  $\inv$ descends to an involution without fixed point on the sets of
  cycles of the three permutations $\sigma_{\map}, \alpha_{\map}$, and
  $\varphi_{\map}$.
\end{remark}

Using this notion, we can construct a bijection between flag-labelled
non-orientable maps and maps on their orientation covering. It allows
us to study non-orientable maps in terms of orientable maps.
\begin{prop}\label{prop:covering-no}
  Let $S$ be a compact non-orientable surface, and $\hat{S}$ be its
  orientation covering. Fix a permutation $\varphi$. The construction
  below gives a bijection between the half-edge labelled maps
  $\hat{\map}$ on $\hat{S}$ with face profile given by $\varphi$ and
  equipped with an orientation-reversing matching $\inv$, and the set
  of flag-labelled non-orientable maps $\map$ on $S$ with face profile
  $\varphi$ and $\rho_{\map} = \inv$.

  Furthermore, $\map$ is described by the matchings defined below in
  \eqref{eq:def-matchings}, and any labelling on the vertices of $\hat{\map}$ that is
  invariant by $\inv$ descends to a labelling of $\map$.
\end{prop}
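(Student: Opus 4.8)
The plan is to first make precise the construction referred to in the statement, and then to check in turn that it outputs a genuine map, that this map lies on $S$, that the assignment is a bijection, and that $\inv$-invariant vertex labellings descend. The construction I have in mind is the following: declare the set of flags of $\map$ to be the set of half-edges of $\hat{\map}$, so that $I\sqcup\bar I$ becomes a flag-labelling set; put $\rho_{\map}\coloneq\inv$; and define the remaining two matchings by \eqref{eq:def-matchings}, the one built from $\alpha_{\hat{\map}}$ and the other from $\varphi_{\hat{\map}}$, arranged so that $\varphi_{\map}\coloneq\rho_{\map}\mu_{\map}=\varphi_{\hat{\map}}$. The candidate inverse is Construction~\ref{constr:lift-map}: a flag-labelled non-orientable map $\map$ on $S$ lifts to a map $\hat{\map}$ on $\hat S$ whose half-edges are canonically the lifts of the half-edges of $\map$, hence canonically the flags of $\map$, so $\hat{\map}$ inherits the labelling by $I\sqcup\bar I$; and the deck involution $\inv_{\hat S}$ acts on its half-edges as a matching, namely the $\inv$ of the proposition.

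First I would check that $(\tau_{\map},\rho_{\map},\mu_{\map})$ encodes a map. Each is a fixed-point-free involution: for $\rho_{\map}=\inv$ by hypothesis, for the other two by the defining formulas together with $\alpha_{\hat{\map}}$ being a matching and condition~\ref{item:reversing} of Definition~\ref{def:orientation-reversing}. The compatibility condition on a flag triple --- that $\tau_{\map}$ and $\rho_{\map}$ commute and $\tau_{\map}\rho_{\map}$ is fixed-point-free --- translates exactly into the commutation $\alpha_{\hat{\map}}\circ\inv=\inv\circ\alpha_{\hat{\map}}$ of condition~\ref{item:reversing} and the nondegeneracy of condition~\ref{item:no-fixed-cycle}. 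Transitivity of $\langle\tau_{\map},\rho_{\map},\mu_{\map}\rangle$, i.e.\ connectedness of $\map$, follows from connectedness of $\hat{\map}$ because the generator $\rho_{\map}=\inv$ interchanges the two sheets. So $(\tau_{\map},\rho_{\map},\mu_{\map})$ defines a connected map, a priori on some closed surface.

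The main work, and the step I expect to be the real obstacle, is to identify this surface as $S$ and to recognise the construction as Construction~\ref{constr:lift-map} read backwards. By condition~\ref{item:reversing}, $\inv$ conjugates $\varphi_{\hat{\map}}$ to $\varphi_{\hat{\map}}^{-1}$, so $\inv$ acts as an orientation-reversing automorphism of the orientable map $\hat{\map}$, and by Remark~\ref{rem:involution-no-fixed} condition~\ref{item:no-fixed-cycle} makes this automorphism free on the vertices, edges and faces. Realising $\hat{\map}$ by a graph embedded in $\hat S$ equivariantly for this involution and passing to the quotient produces a graph cellularly embedded in a closed non-orientable surface of Euler characteristic $\chi(\hat S)/2=\chi(S)$ --- hence in $S$ itself --- whose flag combinatorics is precisely $(\tau_{\map},\rho_{\map},\mu_{\map})$; and by the uniqueness statement of Theorem~\ref{thm:unique-orientable} the covering $\hat S\to S$ obtained in this way is the orientation covering, so $\hat{\map}$ is the lift of $\map$ by Construction~\ref{constr:lift-map}. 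Thus $\map$ is a map on $S$, non-orientable because $\hat{\map}$ is connected, with $\varphi_{\map}=\varphi_{\hat{\map}}=\varphi$ and $\rho_{\map}=\inv$. Conversely, Construction~\ref{constr:lift-map} applied to a flag-labelled map $(\map,\lambda)$ on $S$ returns $(\hat{\map},\inv)$, where $\inv$ is the action of $\inv_{\hat S}$ on the half-edges of $\hat{\map}$, and $\inv$ is orientation-reversing: condition~\ref{item:reversing} is the statement that a deck transformation is a graph automorphism reversing orientation, and condition~\ref{item:no-fixed-cycle} is the fixed-point-freeness of $\inv_{\hat S}$ on $\hat S$. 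That the two assignments are mutually inverse is then the routine check that \eqref{eq:def-matchings} and Construction~\ref{constr:lift-map} describe the same bijection between flags of $\map$ and half-edges of $\hat{\map}$.

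Finally, the descent of labellings is immediate: an $\inv$-invariant labelling of the vertices of $\hat{\map}$ is constant on each $\inv$-orbit of vertices, and by the previous step those orbits are exactly the vertices of $\map$, so the data amounts to a labelling of the vertices of $\map$. The only genuinely topological ingredient is the equivariant realisation and quotient in the third paragraph; everything else is permutation bookkeeping matching the two descriptions through \eqref{eq:def-matchings}.
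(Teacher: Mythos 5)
Your proposal takes essentially the same route as the paper: build the orientation-reversing topological involution $\inv_{\hat S}$ on the surface from the combinatorial matching $\inv$, pass to the quotient, invoke Theorem~\ref{thm:unique-orientable} to identify $\hat S\to S$ as the orientation covering, and match up the two permutational descriptions through the formulas \eqref{eq:def-matchings}. The only organizational difference is that you front-load the purely combinatorial check that $(\tau_{\map},\rho_{\map},\mu_{\map})$ is a valid flag triple before the topological identification, whereas the paper does the topology first and then proves Lemma~\ref{lem:matchings-describe-1/2} to confirm the combinatorics afterwards; you also compress the construction of $\inv_{\hat S}$ (which the paper spells out cell by cell, ending with a Jordan--Schönflies argument on faces) into the phrase ``realising $\hat{\map}$ equivariantly.'' Those are presentational choices, not mathematical differences, and the key ingredients and the places where real work is done coincide with the paper's.
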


For convenience, we assume that the set of labels is $[n, \bar{n}]$
for some $n \in \N^{*}$ and that $\varphi \in \Sym([n, \bar{n}])$. Fix
$\hat{\map}$ with half-edge labelling function $\hat{\lambda}$. Let $\inv$
be an orientation reversing matching. We first explain how the
involution $\inv$ induces an involution of the underlying surface.
Consider any embedded graph $(\hat{\Gamma}, \hat{S}, \hat{\iota})$ representing
$\hat{\map}$. We define $\inv_{\hat{S}}\colon \hat{S} \to \hat{S}$ a
continuous involution of the surface $\hat{S}$. We first define it on
the vertices, then on the edges, and finally on the faces of the
embedded graph $(\hat{\Gamma}, \hat{S}, \hat{\iota})$.

\begin{itemize}
  \item Let $u$ be a vertex of $\hat{\map}$. It corresponds to a point
        $P \in \hat{S}$ and to a cycle $\pi$ of $\sigma_{\hat{\map}}$.
        By Definition \ref{def:orientation-reversing}, the permutation
        $(\alpha_{\hat{\map}}\inv \pi \inv\alpha_{\hat{\map}})^{-1}$
        is a cycle of $\sigma_{\hat{\map}}$. Hence, it corresponds to
        a vertex $\bar{u} \neq u$ of $\hat{\map}$ and a point
        $P' \in \hat{S}$. We set $\inv_{\hat{S}}(P) = P'$. Similarly,
        $\inv_{\hat{S}}(P') = P$.
  \item Consider an edge of $\hat{\map}$, made of two half-edges
        $h_{1}$ and $h_{2}$. It corresponds to a path $e$ on
        $\hat{S}$. There is a unique edge made of the two half-edges
        $\bar{h}_{1}$ and $\bar{h}_{2}$ of labels
        $\inv(\hat{\lambda}(h_{1}))$ and $\inv(\hat{\lambda}(h_{2}))$
        respectively. This second edge corresponds to a path $e'$ on
        $\hat{S}$. We define $\inv_{\hat{S}}$ on $e$ to be any
        homeomorphism from $e$ to $e'$ that makes $\inv_{\hat{S}}$ a
        continuous involution on $\hat{\iota}(\hat{\Gamma})$.
  \item Finally, consider a face $f$ corresponding to a cycle $\pi$ of
        $\varphi_{\hat{\map}}$. There is a distinct face $\bar{f}$
        corresponding to the cycle $(\inv\pi\inv)^{-1}$ in
        $\varphi_{\hat{\map}}$. We define $\inv_{\hat{S}}$ on $f$ to
        be any homeomorphism from $f$ to $\bar{f}$ that makes
        $\inv_{\hat{S}}$ a continuous involution. Such an extension
        exists: by the Jordan-Schönflies theorem (see for instance
        \cite[Section 2.2]{mohar_graphs_2001}) we may construct a
        bijection extension of $\inv_{\hat{S}}$ on half of the
        faces, and define $\inv_{\hat{S}}$ on the other half of the
        faces so that it is an involution.
\end{itemize}

\begin{lemma}\label{lem:inv-reverse-orientation}
  The map $\inv_{\hat{S}}$ is a continuous involution without fixed points
  that reverses the orientation.
\end{lemma}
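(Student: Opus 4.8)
The plan is to verify the three asserted properties of $\inv_{\hat{S}}$ — that it is continuous, that it is an involution, and that it has no fixed points and reverses orientation — by tracking what the combinatorial conditions in Definition \ref{def:orientation-reversing} say about each of the three pieces (vertices, edges, faces) of the cell decomposition of $\hat{S}$ induced by $(\hat{\Gamma}, \hat{S}, \hat{\iota})$. Continuity is the easiest: $\inv_{\hat{S}}$ is defined cell-by-cell by homeomorphisms, and the definitions on edges and faces were explicitly chosen (using the Jordan–Schönflies theorem for the faces, exactly as in the construction preceding the lemma) so that the pieces agree on overlaps; so $\inv_{\hat{S}}$ is a well-defined continuous map by the gluing lemma for continuous functions on a CW-complex. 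That $\inv_{\hat{S}}$ is an involution follows because $\inv$ is a matching: on vertices, the cycle $(\alpha_{\hat{\map}}\inv\pi\inv\alpha_{\hat{\map}})^{-1}$ is sent by the same recipe back to a cycle conjugate-inverse of itself, which by the computation in Remark \ref{rem:involution-no-fixed} ($\left(\inv\circ\sigma_{\map}\circ\inv\right)^{-1} = \alpha_{\map}\circ\sigma_{\map}\circ\alpha_{\map}$) is exactly $\pi$ again; on edges and faces the homeomorphisms were chosen to compose to the identity by construction.

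Next I would establish the absence of fixed points. On $0$-cells this is immediate from condition \ref{item:no-fixed-cycle}: the hypothesis $(\inv\circ\pi''\circ\inv)^{-1} \neq \alpha_{\map}\circ\pi''\circ\alpha_{\map}$ is precisely the statement that the vertex cycle $\pi''$ is not fixed, so $\bar u \neq u$ for every vertex. On $2$-cells, the clause $(\inv\circ\pi\circ\inv)^{-1}\neq\pi$ for $\pi\in\Cyc(\varphi_{\hat{\map}})$ gives $\bar f\neq f$ for every face, and since $\inv_{\hat{S}}$ maps $f$ homeomorphically onto the \emph{disjoint} open cell $\bar f$ it has no fixed point there. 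The only subtlety is the $1$-cells (edges): here condition \ref{item:no-fixed-cycle} on $\pi'\in\Cyc(\alpha_{\hat{\map}})$ rules out an edge being sent to itself orientation-preservingly, but one must also exclude an edge being sent to itself with its two endpoints swapped — this cannot happen because that would force $\bar u = \inv_{\hat{S}}(u)$ to be an endpoint of the same edge $e$ while also $u$ is, contradicting $\bar u\neq u$ unless the edge is a loop, and the loop case is excluded because then $\inv$ would fix the half-edge pair, contradicting that $\inv$ is a fixed-point-free matching on $I\sqcup\bar I$. I would spell this edge case out carefully since it is the one place the argument is not a one-line translation.

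Finally, for orientation-reversal: since $\hat{S}$ is orientable (it is an orientation covering), fix a global orientation. I would argue locally at a face $f$: a cyclic ordering of the flags around the boundary of $f$, read off from the cycle $\pi$ of $\varphi_{\hat{\map}}$, determines the induced boundary orientation; under $\inv_{\hat{S}}$ this face goes to $\bar f$ whose boundary cyclic order is given by $(\inv\pi\inv)^{-1}$, i.e.\ the \emph{reverse} of the order obtained by relabelling via $\inv$. Hence $\inv_{\hat{S}}$ reverses the boundary orientation of every face, and therefore (a homeomorphism that reverses the orientation on one $2$-cell of a connected oriented surface reverses it globally) $\inv_{\hat{S}}$ is orientation-reversing. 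I expect the main obstacle to be not any single step but making the bookkeeping precise — in particular, being careful that the ad hoc homeomorphisms chosen on edges and faces can simultaneously be made to (i) match up continuously across cell boundaries, (ii) square to the identity, and (iii) reverse orientation; the cleanest way is to choose them on a fundamental domain for the to-be-constructed involution and then \emph{define} them on the complementary cells by the involution property, which is exactly what the Jordan–Schönflies argument in the text does for faces, and I would note that the same device handles edges.
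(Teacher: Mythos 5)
Your proposal is correct and follows essentially the same route as the paper: the involution and continuity claims are disposed of by construction, fixed-point-freeness is deduced cell-by-cell from condition \ref{item:no-fixed-cycle} (which the paper invokes via Remark \ref{rem:involution-no-fixed}), and orientation-reversal is proved by the same cyclic-order argument on a face $f$, using that $\bar{f}$ corresponds to the cycle $(\inv\pi\inv)^{-1}$. One small correction: your claim that condition \ref{item:no-fixed-cycle} on $\pi'\in\Cyc(\alpha_{\hat{\map}})$ only rules out an edge being sent to itself \emph{orientation-preservingly} undersells the condition. For a transposition $\pi' = \cycle{h_1,h_2}$ one has $(\pi')^{-1} = \pi'$, so $(\inv\pi'\inv)^{-1}\neq\pi'$ is equivalent to $\{\inv(h_1),\inv(h_2)\}\neq\{h_1,h_2\}$ as sets; this already excludes the ``flipped'' case $\inv(h_1)=h_2$ as well, so the extra argument you flag as the delicate step is in fact redundant (though harmless).
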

\begin{proof}
  We constructed $\inv_{\hat{S}}$ to be an involution. Remark
  \ref{rem:involution-no-fixed} implies that $\inv_{\hat{S}}$ has no fixed
  points.

  To see that $\inv_{\hat{S}}$ is orientation-reversing, we consider any
  face $f$ corresponding to a region $D$ (homeomorphic to a disc) in
  $\hat{S}$ and a cycle $\pi$ of $\varphi_{\map}$. The half-edges around $f$
  are $h_{1}, \ldots, h_{d}$ in the clockwise orientation. The disk
  $\inv_{\hat{S}}(D) \subset \hat{S}$ corresponds to a face $f'$ in $\hat{\map}$
  and to the cycle
  $\pi' = \left( \inv \pi \inv \right)^{-1}\inv \neq \pi$. Let $h_{i}'$ be
  the unique half-edge with label $\inv(\hat{\lambda}(h_{i}))$. We have
  \begin{equation*}
    \begin{split}
      \pi(\hat{\lambda}(h_{i})) &= \hat{\lambda}(h_{i+1})\\
      \pi'(\hat{\lambda}(h_{i}')) &= \inv \pi^{-1} \inv(\hat{\lambda}(h_{i}')) = \inv \pi^{-1}(\hat{\lambda}(h_{i})) = \inv(\hat{\lambda}(h_{i-1})) = \hat{\lambda}(h_{i-1}').
    \end{split}
  \end{equation*}
  Hence, $\inv_{\hat{S}}$ is orientation reversing in $D$. It is then
  orientation reversing globally.
\end{proof}

The quotient space $S = \hat{S}/\inv_{\hat{S}}$ is a surface, and the projection
$p_{\hat{S}}\colon \hat{S} \to S$ is a double covering of $S$. The associated deck
transformation is $\inv_{\hat{S}}$. By Lemma
\ref{lem:inv-reverse-orientation}, $\inv_{\hat{S}}$ is an
orientation-reversing involution. By Theorem \ref{thm:unique-orientable}, $\hat{S}$ is isomorphic to
the orientation covering of $S$.

The projection $p_{S}$ allows us to define a graph embedding in $S$.
Denote by $V_{\hat{\Gamma}}, H_{\hat{\Gamma}}$, and $E_{\hat{\Gamma}}$ the sets of vertices,
half-edges, and edges of $\hat{\Gamma}$. The new graph is $\Gamma$ with sets of
vertices, half-edges, and edges given by
\begin{equation*}
  \begin{split}
    V_{\Gamma} &= \left\{ \{u, \bar{u}\}\colon u \in V_{\hat{\Gamma}} \right\}\,,\\
    H_{\Gamma} &= \left\{ \{h, \bar{h}\} \colon h \in H_{\hat{\Gamma}} \right\}\,,\\
    E_{\Gamma} &= \left\{ \{h_{1}, h_{2}\}\colon h_{1} = \{g_{1}, \bar{g}_{1}\}, h_{2} = \{g_{2}, \bar{g}_{2}\}, g_{1}, g_{2} \in H_{\hat{\Gamma}} \right\}\,.
  \end{split}
\end{equation*}
The graph embedding $\iota\colon \Gamma \to S$ is obtained by taking the image by
$p_{\hat{S}}$ of $\hat{\iota}(\hat{\Gamma})$. Note that two vertices
$\hat{\iota}(u), \hat{\iota}(\bar{u})$ in $\hat{S}$ have the same image by
$p_{\hat{S}}$, and correspond to a unique vertex $\{u, \bar{u}\}$ in
$\Gamma$. Similarly, each half-edge, edge, or face of the graph embedded in
$S$ have two preimages in $\hat{S}$.

Now, by definition, any two choices of $(\hat{\Gamma}, \hat{S}, \hat{\iota})$
and $(\tilde{\Gamma}, \tilde{S}, \tilde{\iota})$ in the class of $\hat{\map}$
are isomorphic. If we denote by $\psi\colon \hat{S} \to \tilde{S}$ the
orientation preserving homeomorphism between the two surfaces we have
that $\inv_{\hat{S}}$ and $\psi^{-1}\inv_{\tilde{S}}\psi$ are homotopic (due
to the possibly different choices to map corresponding edges and faces
together). It implies that $\hat{S}/\inv_{\hat{S}}$ and
$\tilde{S}/\inv_{\tilde{S}}$ are homeomorphic, and that $p_{\hat{S}}$
and $p_{\tilde{S}}$ are isomorphic coverings. This shows that we can
define the map $\map$ to be the isomorphism class of $(\Gamma, S, \iota)$. This
construction is well-defined and does no depend on the choice of
$(\hat{\Gamma}, \hat{S}, \hat{\iota})$. This concludes the construction: we
have constructed from $\hat{\map}$ a new map $\map$, which is
non-orientable (as its orientation covering is connected). We shall
abuse notation and refer to $\hat{\map}$ as the orientation covering
of $\map$.

Let us now describe the permutational model associated to $\map$. We
start by defining a flag-labelling function $\lambda$. Consider a flag
$\flag$ in $\map$, i.e.\ a side of a half-edge $h$ (see Section \ref{sec:combin-non-orientable-maps}).
This flag has two preimages in the orientation covering $\hat{\map}$,
$\hat{\flag}_{1}$ and $\hat{\flag}_{2}$. In $\hat{\map}$, only one of
these two flags is on the left side of a half-edge $h$. This follows
from the fact that $\inv$ reverses the orientation. We label the flag
$\flag$ by $\hat{\lambda}(h)$, i.e.\ we set $\lambda(\flag) = \hat{\lambda}(h)$. The $2n$
flags are thus labelled by the elements of $[n, \bar{n}]$. We now
define three matchings $\tau, \mu$, and $\rho$. We set
\begin{equation}\label{eq:def-matchings}
  \tau_{\map} = \alpha_{\hat{\map}}\inv, \quad \rho_{\map} = \inv, \text{ and } \mu_{\map}= \tau\sigma_{\hat{\map}}^{-1} = \inv\varphi_{\hat{\map}}.
\end{equation}
Definition \ref{def:orientation-reversing} implies that these three permutations are indeed
matchings: the fact that they are involutions follows from the first
part of the definition, the fact that they do not have fixed point
follow from the second part.
\begin{lemma}\label{lem:matchings-describe-1/2}
  The triple of matchings $(\tau, \rho, \mu)$ describes $\map$.
\end{lemma}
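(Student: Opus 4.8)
The plan is to verify, one involution at a time, that the combinatorial recipe of Section~\ref{sec:combin-non-orientable-maps}, applied to the non-orientable map $\map$ constructed above together with the flag-labelling $\lambda$ defined just before the statement, returns precisely the matchings $\tau_\map=\alpha_{\hat\map}\inv$, $\rho_\map=\inv$ and $\mu_\map=\tau\sigma_{\hat\map}^{-1}$ of \eqref{eq:def-matchings}. The only geometric input is that $p_{\hat S}\colon\hat S\to S$ is a two-sheeted covering, hence a local homeomorphism, whose deck transformation is the orientation-reversing fixed-point-free involution $\inv_{\hat S}$ produced in Proposition~\ref{prop:covering-no}, and that $\inv_{\hat S}$ acts on the half-edges of $\hat\map$ by $\hat\lambda^{-1}\circ\inv\circ\hat\lambda$. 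Consequently the half-edges, edges, corners and flags of $\map$ are exactly the $\inv_{\hat S}$-orbits of the corresponding objects of $\hat\map$, and by construction $\lambda$ sends a flag of $\map$ to the $\hat\lambda$-label of the unique half-edge of $\hat\map$ whose left side lies in the two-element $\inv_{\hat S}$-orbit of flags of $\hat\map$ representing that flag. In each case below the computation is the same: take two flags of $\map$ paired by the recipe, lift each to its left-side representative in $\hat\map$, and read off the labels, using repeatedly that $\inv_{\hat S}$ reverses orientation and hence swaps the two sides of a half-edge.

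Consider first the pairing of the two flags incident to a single half-edge $h=\{\hat h,\inv_{\hat S}\hat h\}$ of $\map$: one of them is covered by the left side of $\hat h$, of label $\hat\lambda(\hat h)=:i$, while the other is covered by the right side of $\hat h$ and hence by the left side of $\inv_{\hat S}\hat h$, of label $\inv(i)$; this matching therefore sends $i$ to $\inv(i)$, which is $\rho_\map=\inv$. Next consider the pairing of the two flags of an edge of $\map$ lying on one and the same side, with underlying edge $\{\hat h_1,\hat h_2\}$ of $\hat\map$, so that $\hat\lambda(\hat h_2)=\alpha_{\hat\map}(\hat\lambda(\hat h_1))$: on a fixed side, one flag is covered by a left side of $\hat h_1$, of label $\hat\lambda(\hat h_1)$, and the other by a right side of $\hat h_2$, hence by the left side of $\inv_{\hat S}\hat h_2$, of label $\inv(\alpha_{\hat\map}(\hat\lambda(\hat h_1)))$; using $\alpha_{\hat\map}\inv=\inv\alpha_{\hat\map}$ from Definition~\ref{def:orientation-reversing} this matching is $\alpha_{\hat\map}\inv$, and the analogous computation on the opposite side of the edge gives the same matching since $\alpha_{\hat\map}$ is an involution. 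This is $\tau_\map=\alpha_{\hat\map}\inv$.

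The corner case is where care is needed and where I expect the main obstacle to lie. A corner of $\map$ is an $\inv_{\hat S}$-orbit of corners of $\hat\map$, and a corner of $\hat\map$ sits between two half-edges consecutive for the vertex rotation $\sigma_{\hat\map}$; to pin down its two flags one must fix once and for all the global orientation of $\hat S$ used to speak of ``left side'', and then decide, for each of the two consecutive half-edges, which of its two sides actually bounds the corner. Carrying this bookkeeping through $p_{\hat S}$ exactly as in the two previous paragraphs — each right side being replaced by the left side of its $\inv_{\hat S}$-image — expresses the corner matching of $\map$ in terms of $\inv$ and $\sigma_{\hat\map}$, and it comes out equal to the expression $\tau\sigma_{\hat\map}^{-1}$ of \eqref{eq:def-matchings}; the second form $\inv\varphi_{\hat\map}$ is then the purely algebraic identity already behind the statement of \eqref{eq:def-matchings}, resting on $\alpha_{\hat\map}\inv=\inv\alpha_{\hat\map}$ and the convention relating $\varphi_{\hat\map}$ to $\alpha_{\hat\map}$ and $\sigma_{\hat\map}$. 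Finally, condition~\ref{item:no-fixed-cycle} of Definition~\ref{def:orientation-reversing}, equivalently Remark~\ref{rem:involution-no-fixed}, guarantees that $\inv_{\hat S}$ fixes no half-edge, edge, corner or face of $\hat\map$, so all the orbits above genuinely have two elements, $\map$ carries $2n$ flags, and the recipe of Section~\ref{sec:combin-non-orientable-maps} is indeed being applied to an honest labelled map; together with the three identifications this yields that $(\tau,\rho,\mu)$ describes $\map$.
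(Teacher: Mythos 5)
Your overall strategy is the same as the paper's: verify the three matchings one at a time by lifting flags to left sides of half-edges in $\hat{\map}$ and reading labels through the covering. However, there are two compensating errors in your setup which make the two computations you do carry out only \emph{appear} to give the right formulas, and which would fail on the case you deliberately leave unchecked.

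First, you have $\tau$ and $\rho$ swapped relative to the definitions in Section~\ref{sec:combin-non-orientable-maps}. In the paper, $\tau_\map$ pairs the two flags of a single half-edge, while $\rho_\map$ pairs the two flags of an edge lying on the same side (one per half-edge). Your first paragraph computes the matching for ``two flags incident to a single half-edge'' and calls it $\rho$, and your second computes the matching for flags ``on one and the same side'' of an edge and calls it $\tau$; these attributions are reversed.

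Second, your stated geometric input is wrong: you assert that $\inv_{\hat S}$ acts on half-edges of $\hat\map$ by $\hat\lambda^{-1}\circ\inv\circ\hat\lambda$. In fact, as the construction of $\inv_{\hat S}$ on vertices shows (the cycle $\pi$ of $\sigma_{\hat\map}$ is sent to $(\alpha_{\hat\map}\inv\,\pi\,\inv\alpha_{\hat\map})^{-1}$, with support $\alpha_{\hat\map}\inv(\Supp\pi)$), continuity forces $\inv_{\hat S}$ to act on half-edge labels by $\alpha_{\hat\map}\inv=\inv\alpha_{\hat\map}$, not by $\inv$. The same thing is visible in the inverse direction inside the proof of Proposition~\ref{prop:covering-no}, where $\inv(\hat\lambda(h))$ is defined as $\hat\lambda(\inv_{\hat S}(h'))$ for $h'$ the \emph{counterpart} of $h$, which again gives $\hat\lambda\circ\inv_{\hat S}=\alpha_{\hat\map}\inv\circ\hat\lambda$.

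These two mistakes exactly cancel in your two explicit computations: the matching you compute for a single half-edge really gives $\alpha_{\hat\map}\inv$ (which is $\tau$), and the one for the same side of an edge really gives $\alpha_{\hat\map}\inv\alpha_{\hat\map}=\inv$ (which is $\rho$), so your final formulas $\rho=\inv$ and $\tau=\alpha_{\hat\map}\inv$ happen to be correct even though your derivations of each are not. But they do \emph{not} cancel in the corner case. Carrying out the computation you only sketch, with your (incorrect) formula for $\inv_{\hat S}$, one would find the corner matching equal to $\inv\sigma_{\hat\map}^{-1}$ — indeed you say it will be expressed ``in terms of $\inv$ and $\sigma_{\hat\map}$'' — which is not $\tau\sigma_{\hat\map}^{-1}=\alpha_{\hat\map}\inv\sigma_{\hat\map}^{-1}$. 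So the argument breaks precisely where you chose not to spell it out. With the corrected action of $\inv_{\hat S}$ (and the attributions of $\tau,\rho$ fixed), the flag-chase does yield $\mu=\alpha_{\hat\map}\inv\sigma_{\hat\map}^{-1}$, matching \eqref{eq:def-matchings}, which is the computation the paper performs.
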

\begin{proof}
  Let $\flag$ and $\flag'$ be two flags part of the same side of a
  same edge of $\map$. These two flags are incident to a face $f$.
  Denote by $h_{1}$ (respectively $h'_{1}$) the half-edge whose left
  side is a preimage of $\flag$ (resp. $\flag'$). Let $\flag_{1}$ and
  $\flag_{1}'$ the left sides of $h_{1}$ and $h'_{1}$, and $\flag_{2}$
  and $\flag_{2}'$ the right sides of the counterpart $h_{2}$ and
  $h_{2}'$ of $h_{1}$ and $h'_{1}$. The continuous involution sends
  the pair of flags $\flag_{1}, \flag_{2}$ to
  $\flag_{1}', \flag_{2}'$. Hence, it sends the label of $h$ to the
  label of $h'$, i.e.\ $\inv(\lambda(\flag)) = \lambda(\flag')$. On the other hand,
  we have by definition $\rho_{\map}(\lambda(\flag)) = \lambda(\flag')$. Thus,
  $\inv = \rho_{\map}$.

  Now, notice that $\hat{\lambda}(h_{2}') = \hat{\lambda}(\alpha_{\hat{\map}}(h_{2}))$
  is the label of the flag on the other side of $\flag$, i.e.\
  $\tau_{\map}(\lambda(\flag)) = \alpha_{\hat{\map}}\rho_{\map}(\lambda(\flag))$. This means that
  \begin{equation*}
    \tau_{\map} = \alpha_{\hat{\map}}\inv = \inv\alpha_{\hat{\map}}.
  \end{equation*}

  Finally, let $\flag_{3}$ the other flag in the corner $\flag$ is
  part of. Its preimage at the left of a half-edge is in the same corner as $h_{2}'$. Hence we have
  \begin{equation*}
    \lambda(\flag_{3}) = \mu(\lambda(\flag)) = \sigma_{\hat{\map}}\alpha_{\hat{\map}}\inv = \sigma_{\hat{\map}}\inv\alpha_{\hat{\map}} = \inv \alpha_{\map} \sigma_{\hat{\map}}^{-1} = \inv \varphi_{\hat{\map}}.
  \end{equation*}
\end{proof}

The faces of $\map$ are then described by the permutation
$\varphi_{\map} = \rho\mu$. We have
\begin{equation*}
 \varphi_{\map} = \inv \inv \varphi_{\hat{\map}} = \varphi_{\hat{\map}}.
\end{equation*}

\begin{proof}[Proof of Proposition \ref{prop:covering-no}]
  We explained how to construct $\map$ from $\hat{\map}$, let us now
  give the inverse construction.

  Let $\map$ be a non-orientable map on $S$ which is flag-labelled by
  $\lambda\colon\Flag_{\map} \to [n]$. We explained in Construction \ref{constr:lift-map} how to
  construct a map $\hat{\map}$ on the orientation covering $\hat{S}$
  of $S$. The orientation covering $\hat{S}$ is endowed with an
  orientation-reversing involution $\inv_{\hat{S}}$. The half-edges of
  $\hat{\map}$ are naturally labelled: the flag $\hat{\flag}$ at the
  left side of a half-edge $h$ has one image by $p$, $\flag$. We set
  $\hat{\lambda}(h) = \lambda(\flag)$. The continuous involution $\inv_{\hat{S}}$
  induces an involution $\inv$ on the labels of the half-edges of
  $\hat{\map}$ as follows. Let $h$ be a half-edge in $\hat{\map}$ and
  $h'$ its counterpart. The half-edge $h$ is incident to a face $f$
  and the image by $\inv_{\hat{S}}$ of $h'$, $h''$ is incident to
  $\inv_{\hat{S}}(f)$. We set $\inv(\hat{\lambda}(h)) = \hat{\lambda}(h'')$. This
  coincides with $\rho_{\map}$. We now explain why this involution must
  be an orientation-reversing matching. Indeed, if condition \ref{item:reversing} of
  Definition \ref{def:orientation-reversing} were not satisfied, $\inv_{\hat{S}}$ would not be
  orientation-reversing. If condition \ref{item:no-fixed-cycle} were not satisfied, there
  would be a face, edge, or vertex whose image by $\inv_{\hat{S}}$
  would be itself. By Brouwer fixed point Theorem, $\inv_{\hat{S}}$
  would have a fixed point: it contradicts the fact that
  $\inv_{\hat{S}}$ is a continuous involution without fixed point.

  If a labelling $\ell$ of the vertices of $\hat{\map}$ is invariant
  by $\inv$, then for any vertex $v$ in $\map$, its two preimages in
  $\hat{\map}$ have the same label and we can label $v$ in a
  well-defined way.
\end{proof}

\subsection{Cutting and gluing suitably labelled maps}
\label{sec:cutt-gluing-suit}

The goal of this Section is to define a mapping from the set of
suitably labelled maps with two local minima to the set of maps on
$\RP^{2}$. The procedure starts by choosing a path in a suitably
labelled map. We start by describing the procedure with a quite
general choice of path. We obtain an injective mapping between sets of
suitably labelled maps equipped with a curve.

We give some definition regarding what we mean by a path in a map.
\begin{definition}\label{def:left-right}
  A \textbf{path} of length $l \geq 1$ is a sequence of half-edges
  $\bm{g} = (g_{1}, \ldots, g_{2l})$, with $g_{2i-1}, g_{2i}$ the two
  half-edges of a same edge for all $i = 1, \ldots, l$, and
  $g_{2i}, g_{2i+1}$ incident to the same vertex for $i = 1, \ldots, l-1$.
  The length of the path is $\# \bm{g} = l$. The inverse of $\bm{g}$
  is the path $\bm{g^{-1}}$:
  \begin{equation*}
    \bm{g^{-1}} = (g_{2l}, \ldots, g_{1}).
  \end{equation*}

  We say a path is a \textbf{loop} if $\vertex(g_{1}) = \vertex(g_{2l})$. A
  path is \textbf{simple} if $\vertex(g_{2i}) \neq \vertex(g_{2j})$ and
  $\vertex(g_{2i-1}) \neq \vertex(g_{2j-1})$ for all $i \neq j$.

  The concatenation of two paths $\bm{g}$ and $\bm{h}$ such that
  $\vertex(g_{2\#\bm{g}}) = \vertex(h_{1})$ is
  $\bm{g} \sqcup \bm{h} = (g_{1}, \ldots, g_{2\# \bm{g}}, h_{1}, \ldots, h_{2\# \bm{h}})$.
  Finally, if $\bm{g}$ is simple, and if $u$ and $v$ are two vertices
  such that $u = \vertex(g_{2p+1})$ and $v= \vertex(g_{2q})$, we
  denote the subpath of $\bm{g}$ from $u$ to $v$ by
  \begin{equation*}
    \bm{g}\vert_{u \to v} = (g_{2p+1}, \ldots, g_{2q}).
  \end{equation*}
\end{definition}

An important assumption on some of the paths we consider is that they
are \emph{good} paths.
\begin{definition}\label{def:good-path}
  Let $\bm{g} = (g_{i})_{1 \leq i \leq 2l}$ be a simple path of length $l$
  in a suitably labelled map $(\map, \ell)$. Set
  $v_{0} = \vert{g_{1}}$ and $v_{i} = \vertex(g_{2i})$ for $i \in [l]$. We say
  $\bm{g}$ is a \textbf{good path} if $v_{0} \neq v_{l}$ and the function
  \begin{equation*}
    \ell_{\bm{g}} \colon \begin{cases}
      \{0, 1, \ldots, l\} &\to \N\\
      i &\mapsto \ell(v_{i})
    \end{cases}
  \end{equation*}
  has exactly two local minima, achieved at $i=0$ and $i=l$, with
  $\ell_{\bm{g}}(0) = \ell_{\bm{g}}(l)$, and either one local maximum, or
  two local maxima attained at consecutive values.

  We say a simple loop $\bm{g}$ is a \textbf{good loop} if it can be written as
  the concatenation $\bm{g} = \bm{g}_{1} \sqcup \bm{g}_{2}$ of two
  good paths $\bm{g}_{1}$ and $\bm{g_{2}}$ with
  $\ell_{\bm{g_{1}}} = \ell_{\bm{g_{2}}}$.
\end{definition}
The reason why we define good paths is that they have nice symmetry
properties that will be useful when gluing maps together along good
loops in the sequel.

\begin{ex}\label{ex:good-path}
  A simple path $\bm{g}$ with $\ell_{\bm{g}}$ given by
  \begin{equation*}
    \left( \ell_{\bm{g}}(i) \right)_{i=0, 1, \ldots, l} = (0, 1, 2, 2, 1, 0)
  \end{equation*}
  is good. However, if
  \begin{equation*}
    \left( \ell_{\bm{g}}(i) \right)_{i=0, 1, \ldots, l} = (0, 1, 2, 3, 2, 3, 2, 1, 0),
  \end{equation*}
  it is not good.
\end{ex}

We now describe several transformations that can be applied to a
suitably labelled map $(\map, \ell)$ with a distinguished good path
$\bm{g}$.

\subsubsection{Opening a slit}
\label{sec:opening-slit}

The first transformation corresponds to adding a new face to $\map$.
This new face will be seen as a boundary. In the process, we will add
new faces, with new labels that will be barred integer, for
convenience. The new half-edges we add will also be denoted with a
bar. We assume that if a half-edge $h$ is labelled by $i$, then
$\bar{h}$ is labelled by $\bar{i}$.

\begin{definition}\label{def:map-with-boundary}
  A face is simple if when going around it, each edge is encountered
  exactly once. A \textbf{map with boundary} $(\map, f)$ is a map $\map$ with
  a distinguished simple face $f$. A boundary of $\map$ is any choice
  of simple loop $\bm{g} = (g_{1}, \ldots, g_{2l})$ such that $f$ is
  incident to $g_{2j-1}$ for $j \in [l]$.

  If $\map$ is half-edge labelled with labels in a set $I$, we denote
  by $\varphi_{\map}(f)$ the cycle representing $f$ and we set
  \begin{equation*}
    \varphi_{\map \setminus f} = \varphi_{\map} \left( \varphi_{\map}(f) \right)^{-1}\vert_{I \setminus \Supp\varphi_{\map}(f)}.
  \end{equation*}
\end{definition}

Maps with boundaries can be seen as being embedded in other maps.
\begin{definition}\label{def:embedding-map}
  Let $(\map, f)$ be a half-edge labelled map with boundary and
  $\hat{\map}$ be a half-edge labelled map. We say that $\map$ is embedded in
  $\hat{\map}$ if
  \begin{equation*}
    \Cyc(\varphi_{\map \setminus f}) \subset \Cyc(\varphi_{\hat{\map}}) \text{ and }\Cyc(\alpha_{\map}) \subset \Cyc(\alpha_{\hat{\map}}).
  \end{equation*}
\end{definition}

We now describe the construction. An example is depicted in Figures
\ref{fig:good-path} and \ref{fig:open-slit}.
\begin{constr}\label{constr:open-slit}
  Consider a suitably labelled map $(\map, \bm{g})$ and $\bm{g}$, a good path of length $l \geq 1$.
  Let $v_{0} = \vertex(g_{1})$ and $v_{i} = \vertex(g_{2i})$ for
  $i \in [l]$. Each $\pi \in \Cyc(\sigma_{\map})$ corresponds to a vertex
  of $\map$. For each such cycle, we proceed as follows.
  \begin{itemize}
    \item If $\pi$ corresponds to none of the
          $v_{i}, i = 0, 1, \ldots, l$, we leave it unchanged.
    \item If $\pi$ corresponds to a vertex $v_{i}$ for $i \in [l-1]$, it can
          be written
          $\cycle{g_{2i}, u_{1}, \ldots, u_{d}, g_{2i+1}, v_{1}, \ldots, v_{d'}}$.
          We replace $\pi$ by
          \begin{equation*}
            \pi' = \cycle{u_{1}, \ldots, u_{d}, g_{2i+1}, \overline{g_{2l-2i+2}}}\cycle{v_{1}, \ldots, v_{d}, g_{2i}, \overline{g_{2l-2i-1}}}.
          \end{equation*}
    \item If $\pi$ corresponds to $v_{0}$, it can be written
          $\cycle{g_{1}, u_{1}, \ldots, u_{d}}$. We replace $\pi$ by
          \begin{equation*}
            \pi' = \cycle{g_{1}, u_{1}, \ldots, u_{d}, \overline{g_{2l-1}}}.
          \end{equation*}
    \item If $\pi$ corresponds to $v_{l}$, it can be written
          $\cycle{g_{2l}, v_{1}, \ldots, v_{d}}$. We replace $\pi$ by
          \begin{equation*}
            \pi' = \cycle{\overline{g_{2}}, v_{1}, \ldots, v_{d}, g_{2l}}.
          \end{equation*}
  \end{itemize}
  We obtain a new permutation $\sigma'$. We set
  \begin{equation*}
    \varphi' = \varphi_{\map} \tilde{\varphi}, \text{ with } \tilde{\varphi} = \cycle{\overline{g_{2l-1}}, \overline{g_{2l-3}}, \ldots, \overline{g_{1}}, \overline{g_{2}}, \overline{g_{4}}, \ldots, \overline{g_{2l}}},
  \end{equation*}
  and $\alpha' = (\varphi')^{-1}(\sigma')^{-1}$. The permutations $(\sigma', \alpha')$
  determine a half-edge labelled map $\map'$ with a marked face --
  represented by the cycle $\tilde{\varphi}$. The vertex-labelling $\ell$ of
  $\map$ induces a vertex-labelling $\ell'$ of $\map'$: each vertex $v'$
  of $\map'$ is constructed from a vertex $v$ of $\map$, we set
  $\ell'(v') \coloneq \ell(v)$.
\end{constr}

\begin{figure}
    \centering
    \begin{minipage}{0.48\textwidth}
        \centering
        \includegraphics[width=0.95\textwidth]{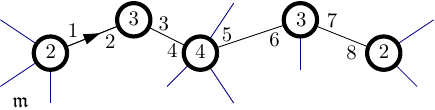} 
        \caption{\label{fig:good-path} Example of a good path.}
    \end{minipage}\hfill
    \begin{minipage}{0.48\textwidth}
        \centering
        \includegraphics[width=0.95\textwidth]{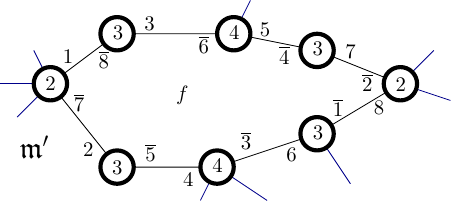} 
        \caption{\label{fig:open-slit} Opening of a new face along the good path.}
    \end{minipage}
\end{figure}

Note that $\alpha'$ is a matching by construction: the edges in
$\bm{g}$, which are represented by a cycle
$\cycle{g_{2i-1}, g_{2i}}$ of $\alpha_{\map}$, become the pair of
edges represented by
$\cycle{g_{2i-1}, \overline{g_{2l-2i+2}}}\cycle{g_{2i}, \overline{g_{2l-2i+1}}}$
for $i \in [l-1]$.

\begin{lemma}\label{lem:open-slit-suitable}
  The map $(\map', \ell')$ constructed in Construction
  \ref{constr:open-slit} is a suitably labelled map with boundary $f$.
  The boundary of $f$ is a good loop.
\end{lemma}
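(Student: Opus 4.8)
The plan is to check the four things packed into the statement: that the pair $(\sigma',\alpha')$ defines a connected map $\map'$ with $\alpha'$ a matching, that $\ell'$ is a suitable labelling of $\map'$, that the marked face $f$ carried by the cycle $\tilde\varphi$ is simple (so that $(\map',f)$ is really a map with boundary in the sense of Definition \ref{def:map-with-boundary}), and that the boundary loop of $f$ is a good loop. Each point comes out of unwinding the explicit formulas of Construction \ref{constr:open-slit}, using throughout that $\bm{g}$ is a good path in the suitably labelled map $(\map,\ell)$.

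First I would record the effect of the construction on vertices and edges. Every vertex of $\map'$ is built from a vertex of $\map$ — the interior path vertices $v_1,\dots,v_{l-1}$ each split into two, while all other vertices (in particular $v_0$ and $v_l$) contribute one — and it receives the label of that vertex, so $\ell'$ is well defined and takes the same set of values as $\ell$; in particular $\min\ell'=0$, since a label-$0$ vertex of $\map$ lying on $\bm{g}$ would have to be $v_0$ or $v_l$ (the minimum of $\ell_{\bm{g}}$ being attained only at the endpoints) and these two are not deleted. On edges, $\alpha'$ keeps the cycle of each non-path edge and replaces the cycle $\cycle{g_{2i-1},g_{2i}}$ of each path edge by the pair $\cycle{g_{2i-1},\overline{g_{2l-2i+2}}}\cycle{g_{2i},\overline{g_{2l-2i+1}}}$, reading this off against the cycles written for $v_0$ and $v_l$ in the two boundary cases $i=1,l$; hence $\alpha'$ is a fixed-point-free involution, each path edge is doubled into two parallel edges joining copies of $v_{i-1}$ and $v_i$, and each new edge carries exactly one barred half-edge. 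Connectedness of $\map'$ is inherited from that of $\map$: opening a slit along an arc, as opposed to along a loop, does not disconnect the surface, and combinatorially each of the two copies of an interior $v_i$ is joined by a doubled path edge to a vertex already connected to the rest.

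Given this, suitability of $\ell'$ is immediate: an edge of $\map'$ is either an edge of $\map$ off $\bm{g}$, whose endpoints keep their labels, or a copy of a path edge $\{v_{i-1},v_i\}$, and then $|\ell'(u)-\ell'(u')|=|\ell(v_{i-1})-\ell(v_i)|\le 1$ since $v_{i-1}$ and $v_i$ are adjacent in $(\map,\ell)$. For $f$: the cycle $\tilde\varphi$ is exactly the list of the $2l$ pairwise distinct half-edges $\overline{g_1},\dots,\overline{g_{2l}}$, and by the previous paragraph each of them lies in a distinct edge of $\map'$, whose other half-edge is some $g_j$, hence not on $f$. So each edge incident to $f$ is incident on one side only, is met exactly once as one goes around $f$, and $f$ is simple; thus $(\map',f)$ is a suitably labelled map with boundary $f$.

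It remains to identify the boundary loop. Tracing $\tilde\varphi$ through $\sigma'$ shows that $\overline{g_{2k-1}}$ sits at a copy of $v_{l-k}$ and $\overline{g_{2k}}$ at a copy of $v_{l+1-k}$, so the boundary loop $\bm{b}$ of $f$ visits cyclically $v_0,v_1,\dots,v_{l-1},v_l,v_{l-1},\dots,v_1$, each interior $v_i$ through a different copy on its two passages; it is a simple loop of length $2l$ with label sequence $\ell_{\bm{g}}(0),\dots,\ell_{\bm{g}}(l-1),\ell_{\bm{g}}(l),\ell_{\bm{g}}(l-1),\dots,\ell_{\bm{g}}(1)$. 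This label sequence has local minima exactly where $\ell_{\bm{g}}$ does, namely at $v_0$ and $v_l$; cutting $\bm{b}$ there writes $\bm{b}=\bm{b}_1\sqcup\bm{b}_2$, with $\bm{b}_1$ from $v_0$ to $v_l$ of label function $\ell_{\bm{g}}$ and $\bm{b}_2$ from $v_l$ to $v_0$ of label function $i\mapsto\ell_{\bm{g}}(l-i)$. Both $\bm{b}_1$ and $\bm{b}_2$ are simple and inherit the good-path conditions (which are invariant under reversal), and $\ell_{\bm{b}_1}=\ell_{\bm{b}_2}$ because a good path is a palindrome: having exactly two local minima, at its endpoints, forces $\ell_{\bm{g}}$ to increase by unit steps to its maximal value, to stay there for one or two steps, and to decrease by unit steps. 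Hence $\bm{b}$ is a good loop. The only delicate point in all of this is the bookkeeping of the second paragraph — pinning down exactly which barred half-edge is attached to which copy of which path vertex, and in particular making sure in the cases $i=1,l$ that no edge incident to $f$ carries two barred half-edges, which is precisely what guarantees that $f$ is simple; everything else is formal once the slit is described at that level of precision.
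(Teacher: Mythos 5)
Your proof is correct and follows the same essential line as the paper's: suitability of $\ell'$ reduces to the observation that every edge of $\map'$ comes from an edge of $\map$, and the good-loop property of the boundary follows from the palindromic symmetry $\ell_{\bm g}(i)=\ell_{\bm g}(l-i)$ enjoyed by any good path. You are somewhat more thorough in also verifying that $\alpha'$ is a fixed-point-free involution and that the new face $f$ is simple (each edge incident to $f$ carrying exactly one barred half-edge); the paper addresses the former only in a remark following the construction and leaves the latter implicit. One small superfluity: you argue that no label-$0$ vertex of $\map$ can sit in the interior of $\bm{g}$, but this is not needed — no vertex is deleted (interior path vertices are duplicated, not removed), so $\ell'$ automatically takes the same set of values as $\ell$ and $\min\ell'=0$ for free, as you already note in the same sentence.
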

\begin{proof}
  We start by showing that the labelling is suitable. Let $v$ and $v'$
  two vertices of $\map'$ which are connected by an edge. These two
  vertices are constructed from vertices $\hat{v}$ and $\hat{v}'$ in
  $\map$. By construction, if $v$ and $v'$ are connected by an edge,
  so are $\hat{v}$ and $\hat{v}'$. The fact that $(\map, \ell)$ is a
  suitably labelled map thus implies that $(\map', \ell')$ is a
  suitably labelled map.

  In Construction \ref{constr:open-slit}, each edge of $\bm{g}$ gets
  duplicated. We can choose a boundary of $f$ to be a loop
  $\bm{g'} = (g_{i}')_{i \in [4l]}$ with $g'_{i} = g_{i}$ and
  $g'_{2l+i} = g_{2l-i}$ for $i \in [2l]$. As $\bm{g}$ is a good loop, we
  have $\ell_{\bm{g}}(i) = \ell_{\bm{g}}(l-i)$ for
  $i = 0, 1, \ldots, l$. It follows that $\bm{g'}$ is a good loop.
\end{proof}

\subsubsection{The mirror map}
\label{sec:mirror-map}

Given a half-edge labelled, suitably labelled map $(\map, \ell)$, we may
construct the ``mirror map'', obtained after changing the orientation of
all the vertices in $\map$.

This reversing of the orientation is encoded at the level of the
permutation by the following transformation.
\begin{definition}\label{def:reverse-permutation}
  Let $n \in \N^{*}$, $I \subset [n, \bar{n}]$, and
  $\sigma \in \Sym(I)$. Each cycle $\pi \in \Cyc(\sigma)$ can be
  written
  \begin{equation*}
    \pi = \cycle{u_{1}, \ldots, u_{d}}.
  \end{equation*}
  We set
  \begin{equation*}
    \bar{\pi} = \cycle{\overline{u_{d}}, \ldots, \overline{u_{1}}},
  \end{equation*}
  and
  \begin{equation*}
    \bar{\sigma} = \prod_{\pi \in \Cyc(\sigma)}\bar{\pi} \in \Sym(\bar{I}).
  \end{equation*}
\end{definition}
We have in particular that for two permutations $\sigma_{1}$ and $\sigma_{2}$,
\begin{equation*}
  \bar{\sigma}_{1}\bar{\sigma}_{2} = \overline{\sigma_{2}\sigma_{1}} \text{ and } \bar{\sigma}_{1}^{-1} = \overline{\sigma_{1}^{-1}}.
\end{equation*}

\begin{figure}[ht]
    \centering
    \begin{minipage}{0.48\textwidth}
        \centering
        \includegraphics[width=0.95\textwidth]{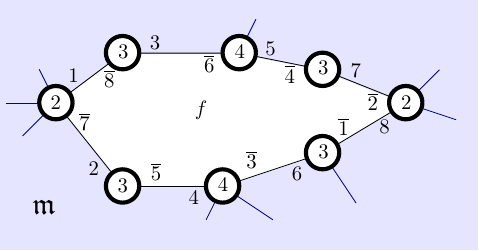}
        \caption*{(a)}
    \end{minipage}\hfill
    \begin{minipage}{0.48\textwidth}
        \centering
        \includegraphics[width=0.95\textwidth]{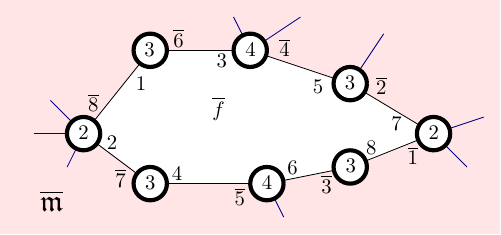}
        \caption*{(b)}
      \end{minipage}
      \caption{\label{fig:mirror-map}(a) The map of Figure \ref{fig:open-slit}, (b) its mirror map.}
\end{figure}

The map $\bar{\map}$ determined by
$(\bar{\alpha}_{\map}^{-1}\bar{\varphi}_{\map}^{-1}, \bar{\alpha}_{\map})$ is called
the mirror of $\map$. We denote by $\inv_{\map}$ the mapping from the
set of half-edges of $\map$ and the set of half-edges of $\bar{\map}$
sending a half-edge to its mirror image. In particular, if a half-edge
$h$ is labelled by $u$, the half-edge $\inv_{\map}(h)$ is labelled by
$\bar{u}$. We abuse notation and denote by $\inv_{\map}(v)$ and
$\inv_{\map}(f)$ the mirror image in $\map'$ of a vertex $v$ of a face
$f$ in $\map$. Finally, we define $\bar{\ell}$ by
\begin{equation*}
  \bar{\ell}(\inv_{\map}(v)) = \ell(v)\text{ for any vertex $v$ of } \map.
\end{equation*}
\begin{lemma}\label{lem:suitably-mirror}
  The constructed map $(\bar{\map}, \bar{\ell})$ is a suitably labelled map.
\end{lemma}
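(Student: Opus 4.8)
The plan is to observe that passing to the mirror map changes only the orientation of $\map$, not its underlying graph, so that the two requirements of Definition~\ref{def:suitably-labelled} for $(\bar{\map}, \bar{\ell})$ are inherited from $(\map, \ell)$ through the canonical correspondence $v \mapsto \inv_{\map}(v)$.

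First I would make precise that $\inv_{\map}$ is a graph isomorphism from the underlying graph of $\map$ onto that of $\bar{\map}$. At the level of permutations this is a short computation: $\bar{\map}$ is described by $(\bar{\alpha}_{\map}^{-1}\bar{\varphi}_{\map}^{-1}, \bar{\alpha}_{\map})$, and using the identities $\bar{\sigma}_{1}\bar{\sigma}_{2} = \overline{\sigma_{2}\sigma_{1}}$ and $\bar{\sigma}^{-1} = \overline{\sigma^{-1}}$ recorded just after Definition~\ref{def:reverse-permutation}, together with the relation linking $\varphi_{\map}$, $\sigma_{\map}$ and $\alpha_{\map}$, one checks that the vertices of $\bar{\map}$ are exactly the $\inv_{\map}$-images of the vertices of $\map$, with $\inv_{\map}(v)$ carrying the mirror images $\inv_{\map}(h)$ of the half-edges $h$ incident to $v$ (in reversed cyclic order), and that $\alpha_{\bar{\map}} = \bar{\alpha}_{\map}$ pairs $\inv_{\map}(h)$ with $\inv_{\map}(h')$ whenever $\alpha_{\map}$ pairs $h$ with $h'$. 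Hence an edge of $\map$ joining vertices $v$ and $w$ corresponds to an edge of $\bar{\map}$ joining $\inv_{\map}(v)$ and $\inv_{\map}(w)$, so $v \mapsto \inv_{\map}(v)$ is a bijection of vertex sets preserving adjacency. In particular this shows that $\bar{\ell}$, defined by $\bar{\ell}(\inv_{\map}(v)) = \ell(v)$, is a well-defined vertex labelling of $\bar{\map}$.

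Then I would read off the two conditions of Definition~\ref{def:suitably-labelled}. Since $v \mapsto \inv_{\map}(v)$ is a bijection on vertices and $\bar{\ell}(\inv_{\map}(v)) = \ell(v)$, the set of labels occurring on $(\bar{\map}, \bar{\ell})$ coincides with the one occurring on $(\map, \ell)$, so $\min_{v'} \bar{\ell}(v') = \min_{v} \ell(v) = 0$. And if an edge of $\bar{\map}$ joins $\inv_{\map}(v)$ to $\inv_{\map}(w)$, the corresponding edge of $\map$ joins $v$ to $w$, whence $|\bar{\ell}(\inv_{\map}(v)) - \bar{\ell}(\inv_{\map}(w))| = |\ell(v) - \ell(w)| \leq 1$, because $(\map, \ell)$ is suitably labelled. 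This yields the claim.

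The point requiring care — though it is genuinely routine — is the bookkeeping with the bar operation of Definition~\ref{def:reverse-permutation}: confirming that $\alpha_{\bar{\map}} = \bar{\alpha}_{\map}$ really transports the vertex--edge incidences of $\map$ to those of $\bar{\map}$, i.e.\ that $\inv_{\map}$ is simultaneously compatible with the vertex, edge and face structures. There is no conceptual obstacle: the lemma is a formal consequence of the fact that mirroring is a relabelling that reverses orientation while fixing the graph and the vertex labels, and the very same computation is what one needs in order to realise $\inv_{\map}$ as a deck transformation of an orientation covering in Section~\ref{sec:maps-orient-cover}.
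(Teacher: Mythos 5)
Your proposal is correct and follows essentially the same route as the paper: both observe that $\inv_{\map}$ is a graph isomorphism between the underlying graphs of $\map$ and $\bar{\map}$ preserving vertex labels, from which the two defining conditions of a suitable labelling transfer immediately. You merely spell out the permutation-level bookkeeping that the paper's one-line proof leaves implicit.
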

\begin{proof}
  The mirror construction descends to a bijection between the
  underlying graphs of $\map$ and $\bar{\map}$ that preserves the
  labelling of the vertices. The fact that $(\map, \ell)$ is a suitably
  labelled map implies the result.
\end{proof}

Note that the vertex permutation of $\map'$ is
\begin{equation}\label{eq:vertex-mirror}
  \sigma_{\bar{\map}} = \alpha_{\bar{\map}}^{-1}\varphi_{\bar{\map}}^{-1} = \bar{\alpha}_{\map}^{-1}\bar{\varphi}_{\map}^{-1} = \left( \bar{\varphi}_{\map}\bar{\alpha}_{\map} \right)^{-1} = \left( \overline{\alpha_{\map}\varphi_{\map}} \right)^{-1} = \bar{\alpha}_{\map}\left( \overline{\varphi_{\map}\alpha_{\map}} \right)^{-1}\bar{\alpha}_{\map}^{-1} = \bar{\alpha}_{\map}\bar{\sigma}_{\map}\bar{\alpha}_{\map}^{-1}.
\end{equation}

\subsubsection{Gluing along a face}
\label{sec:gluing-along-face}

The last construction we define is how to glue a map with boundary to
its mirror map, along their distinguished faces. We use the two
previous constructions. Fix a suitably labelled map
$(\map_{0}, \ell_{0})$ and a good path $\bm{g_{0}}$ in $\map_{0}$. We use
Construction \ref{constr:open-slit} to obtain a suitably labelled map $(\map, \ell)$ with
boundary face $f$. Let $(\bar{\map}, \bar{\ell})$ be the mirror map of
$(\map, \ell)$. It is a map with boundary face $\inv_{\map}(f)$. Let
$\bm{g}$ be a choice of boundary of $f$ in $\map$ such that
$\bm{g} = (g_{i})_{i\in[4l]}$ is a good loop. There is a canonical way to
glue $\map$ and $\bar{\map}$ along $f$. The labels of the half-edges
of the boundary of $\map$ and $\bar{\map}$ were constructed to be the
same. There is a natural way to identify an edge on the boundary of
$\map$ to an edge on the boundary of $\bar{\map}$.

We define the boundary permutation
\begin{equation*}
  \alpha_{\partial f} = \prod_{i=1}^{l}\cycle{g_{2i-1}, \overline{g_{2l-2i+2}}}\cycle{g_{2i}, \overline{g_{2l-2i+1}}}.
\end{equation*}

We then notice that the labels of the half-edges that are not on the
boundary are integers in $\map$ and barred integers in $\bar{\map}$.
This means that $\varphi_{\map}$ and $\varphi_{\bar{\map}}$ have
disjoint support, and that two cycles in
$\Cyc(\alpha_{\map}) \setminus \Cyc(\partial_{\map})$ and
$\Cyc(\alpha_{\bar{\map}}) \setminus \Cyc(\partial_{\map})$ have
disjoint support as well. Finally, we have
\begin{equation*}
  \Cyc(\alpha_{\map}) \cap \Cyc(\alpha_{\bar{\map}}) = \Cyc(\alpha_{\partial f}).
\end{equation*}

\begin{figure}[ht]
  \centering
  \includegraphics[width=0.6\textwidth]{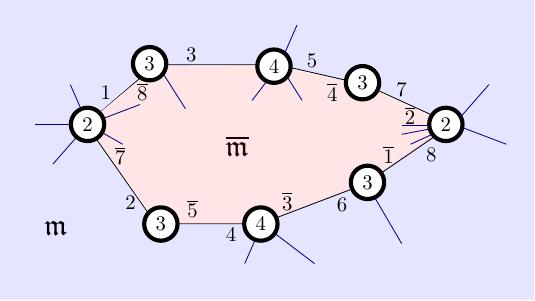}
  \caption{\label{fig:glue-mirror} Map obtained by gluing a map to its mirror map.}
\end{figure}

We thus define the permutations
\begin{equation*}
  \begin{split}
    \varphi_{\map\sqcup\bar{\map}} &= \varphi_{\map}\varphi_{\bar{\map}}\\
    \alpha_{\map\sqcup\bar{\map}} &= \alpha_{\map}\alpha_{\bar{\map}}\alpha_{\partial f}\\
    \sigma_{\map\sqcup\bar{\map}} &= \alpha_{\map\sqcup\bar{\map}}^{-1}\varphi_{\map\sqcup\bar{\map}}^{-1},
  \end{split}
\end{equation*}
The pair of permutations
$(\sigma_{\map\sqcup\bar{\map}}, \alpha_{\map\sqcup\bar{\map}})$ defines a map.
By construction, the maps with boundary $(\map, f)$ and
$(\bar{\map}, \inv_{\map}(f))$ are embedded in $\map\sqcup\bar{\map}$.
Furthermore, every face (and hence vertex) of $\map\sqcup\bar{\map}$
can be seen as being part of either $\map$ or $\bar{\map}$ (or both,
in the case of vertices). Thus, for every vertex $v$ of
$\map\sqcup \bar{\map}$ we set
\begin{equation*}
  \ell_{\map\sqcup\bar{\map}}(v) = \begin{cases}
    \ell(v) &\text{ if $v$ is part of } \map\\
    \bar{\ell}(v) &\text{ if $v$ is part of } \bar{\map}.
  \end{cases}
\end{equation*}

Any boundaries of $\map$ and $\bar{\map}$ are made of the same
half-edges (possibly in a different cyclic order). They form a loop
$\bm{g}_{\map\sqcup\bar{\map}}$. It can be chosen to be a good loop
since the boundary of $\map$ is a good loop by Lemma
\ref{lem:open-slit-suitable}.

\begin{lemma}\label{lem:glue-planar}
  The map $(\map\sqcup\bar{\map}, \ell_{\map\sqcup\bar{\map}})$ is a
  suitably labelled map. It is orientable and its genus is twice the
  genus of $\map$.
\end{lemma}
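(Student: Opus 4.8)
The plan is to verify the three assertions in order. Suitability and orientability are essentially bookkeeping on the permutation data built in this subsection, while the genus statement follows from a Euler characteristic computation; the only point requiring genuine care is making the gluing identifications precise.

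\emph{Suitable labelling.} Every vertex of $\map\sqcup\bar{\map}$ comes from a vertex of $\map$ or of $\bar{\map}$ (a boundary vertex from both), and on the overlap the two labellings agree, since $\bar{\ell}$ is defined by $\bar{\ell}(\inv_{\map}(v))=\ell(v)$; hence $\ell_{\map\sqcup\bar{\map}}$ is well defined. Every edge of $\map\sqcup\bar{\map}$ is a non-boundary edge of $\map$, a non-boundary edge of $\bar{\map}$, or one of the cycles of $\alpha_{\partial f}$; in all three cases both endpoints lie in $\map$ or both lie in $\bar{\map}$, so Lemmas \ref{lem:open-slit-suitable} and \ref{lem:suitably-mirror} give that the labels differ by at most $1$ across it. Finally $\ell$ already attains the value $0$ on $\map$ — the labelling is copied from $\ell_{0}$ in Construction \ref{constr:open-slit}, which attains $0$ — and $\bar{\ell}$ takes the same set of values, so $\min\ell_{\map\sqcup\bar{\map}}=0$. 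Hence $(\map\sqcup\bar{\map},\ell_{\map\sqcup\bar{\map}})$ is a suitably labelled map.

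\emph{Orientability.} By construction $\map\sqcup\bar{\map}$ is the map associated to the pair $(\sigma_{\map\sqcup\bar{\map}},\alpha_{\map\sqcup\bar{\map}})$, and $\alpha_{\map\sqcup\bar{\map}}$ is a fixed-point-free involution: its cycles are the non-boundary edges of $\map$, the non-boundary edges of $\bar{\map}$, and the cycles of $\alpha_{\partial f}$, three families with pairwise disjoint supports, each itself a fixed-point-free involution. The group $\langle\sigma_{\map\sqcup\bar{\map}},\alpha_{\map\sqcup\bar{\map}}\rangle$ acts transitively because $\map$ and $\bar{\map}$ are connected and share their boundary half-edges. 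A connected map encoded by such a pair of permutations is orientable (the Edmonds correspondence recalled before Construction \ref{constr:hypermap-to-permutations}), so $\map\sqcup\bar{\map}$ is orientable. (Geometrically, gluing a map to its mirror along matching boundary circles reverses orientation once, so the two local orientations agree on the glued surface.)

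\emph{Genus.} I would argue topologically. Opening a slit does not change the underlying closed surface: along a good path of length $l$, Construction \ref{constr:open-slit} creates $l-1$ new vertices, $l$ new edges and one new face, so $V-E+F$ is unchanged and $\map$, realized as a closed surface with $f$ an honest face, has the same genus $g$ as $\map_{0}$; likewise for $\bar{\map}$. Deleting the open disk $f$ from the surface of $\map$ yields a genus-$g$ surface with one boundary circle, of Euler characteristic $1-2g$, and similarly for $\bar{\map}$; the surface carrying $\map\sqcup\bar{\map}$ is obtained by gluing these two surfaces-with-boundary along their common boundary circle, whose Euler characteristic is $0$. Hence $\chi(\map\sqcup\bar{\map})=(1-2g)+(1-2g)-0=2-4g$, and since $\map\sqcup\bar{\map}$ is connected and orientable this forces its genus to be $2g$, twice the genus of $\map$. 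Equivalently one counts directly: $\map\sqcup\bar{\map}$ has $2V_{0}-2$ vertices, $2E_{0}$ edges and $2F_{0}$ faces, where $V_{0},E_{0},F_{0}$ are those of $\map_{0}$ and $V_{0}-E_{0}+F_{0}=2-2g$, giving $\chi=2-4g$.

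The main obstacle is the combinatorial content behind that last count: verifying that $\varphi_{\map\sqcup\bar{\map}}=\varphi_{\map}\varphi_{\bar{\map}}$ together with $\sigma_{\map\sqcup\bar{\map}}=\alpha_{\map\sqcup\bar{\map}}^{-1}\varphi_{\map\sqcup\bar{\map}}^{-1}$ exactly identifies each boundary vertex of $\map$ with its mirror, identifies the $2l$ pairs of boundary edges, and merges the two boundary faces into the interior faces of the opposite copy — so that no spurious extra vertices or faces remain and the glued surface is indeed the one described in the topological argument. This is what the disjoint-support remarks preceding the lemma and the identity $\Cyc(\alpha_{\map})\cap\Cyc(\alpha_{\bar{\map}})=\Cyc(\alpha_{\partial f})$ are there to guarantee, and I would spell out these cycle computations carefully.
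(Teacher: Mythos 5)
Your proof is correct and follows essentially the same route as the paper: suitability by observing that every edge lies wholly in $\map$ or wholly in $\bar{\map}$ (so the inherited labelling inherits the Lipschitz condition), and the genus statement by an Euler-characteristic count around the boundary loop. Your write-up is more explicit than the paper's — in particular you separately address orientability (which the paper leaves implicit, since the construction is phrased entirely in terms of a pair of permutations, and any such pair yields an orientable map via the Edmonds correspondence) and you give two formulations of the genus count (surfaces-with-boundary via inclusion–exclusion of Euler characteristics, and a direct vertex/edge/face tally using $2V_0-2$, $2E_0$, $2F_0$); both agree with the paper's shorter argument, which just notes that the simple loop contributes equal vertex and edge counts and invokes Euler's formula.
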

\begin{proof}
  Each edge of $\map\sqcup\bar{\map}$ can be seen as being part of
  either $\map$ or $\bar{\map}$ (or both). Hence, if an edge $e$
  between vertices $v$ and $v'$ can be seen as being part of, say,
  $\map$, we have
  \begin{equation*}
    \left| \ell_{\map\sqcup\bar{\map}}(v) - \ell_{\map\sqcup\bar{\map}}(v') \right| = \left| \ell(v) - \ell(v') \right| \leq 1.
  \end{equation*}
  Furthermore, the minimum label of a vertex in $\map\sqcup\bar{\map}$ is $0$.

  The vertices and edges that are not
  part of this loop are part of exactly one of the embedded maps
  $\map$ and $\bar{\map}$. The vertices and edges part of this loop
  are part of both $\map$ and $\bar{\map}$. As it is a simple loop,
  the number of vertices and edges that are part of
  $\bm{g}_{\map\sqcup\bar{\map}}$. Considering that $\map$ and
  $\bar{\map}$ have the same genus, Euler formula implies that the
  genus of $\map\sqcup\bar{\map}$ is twice the genus of $\map$.
\end{proof}

The mirror map $\inv_{\map}$ allows us to define an involution $\inv$
on the set of labels of the half-edges of $\map\sqcup\bar{\map}$. It
is defined as follows. Let $h$ be a half-edge in
$\map\sqcup\bar{\map}$ with label $i = \lambda_{\map\sqcup\bar{\map}}(h)$. We set
\begin{equation*}
  \inv(i) = \begin{cases}
    \lambda_{\map\sqcup\bar{\map}}(\inv_{\map}(h)) &\text{ if $h$ is in }\map\\
    \lambda_{\map\sqcup\bar{\map}}(\inv_{\map}^{-1}(h)) &\text{ if $h$ is in }\bar{\map}\\
  \end{cases} = \bar{i}.
\end{equation*}

\begin{lemma}\label{lem:prop-inv}
  The mapping $\inv$ is an orientation-reversing matching (in the
  sense of Definition \ref{def:orientation-reversing}).
\end{lemma}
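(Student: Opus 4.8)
The plan is to verify the two conditions of Definition~\ref{def:orientation-reversing} directly for the matching $\inv$ on the half-edge labels of $\map\sqcup\bar{\map}$. That $\inv$ is itself a matching without fixed point is immediate from its definition, since it exchanges each label with its barred counterpart, so $\inv^{2}=\Id$ and $\inv(i)\neq i$ for every label $i$. The geometric content is that $\inv$ is the involution swapping the two mirror copies $\map$ and $\bar{\map}$; everything else is bookkeeping with the permutations of Sections~\ref{sec:mirror-map} and~\ref{sec:gluing-along-face}.

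For the reversing relations of item~\ref{item:reversing}, I will use that $\inv$ interchanges the half-edge labels belonging to $\map$ with those belonging to $\bar{\map}$, that the mirror map satisfies $\alpha_{\bar{\map}}=\bar{\alpha}_{\map}$ and $\varphi_{\bar{\map}}=\bar{\varphi}_{\map}$ (a consequence of the identities $\overline{\sigma_{1}\sigma_{2}}=\bar{\sigma}_{2}\bar{\sigma}_{1}$, $\overline{\sigma^{-1}}=\bar{\sigma}^{-1}$ of Definition~\ref{def:reverse-permutation} together with the convention $\varphi=\sigma^{-1}\alpha^{-1}$), and that conjugation by $\inv$ turns barring into inversion. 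Together these give $\inv\,\varphi_{\map}\,\inv=\varphi_{\bar{\map}}^{-1}$, $\inv\,\varphi_{\bar{\map}}\,\inv=\varphi_{\map}^{-1}$ and likewise for $\alpha$. The boundary matching $\alpha_{\partial f}=\prod_{i=1}^{l}\cycle{g_{2i-1}, \overline{g_{2l-2i+2}}}\cycle{g_{2i}, \overline{g_{2l-2i+1}}}$ is handled by a direct check: $\inv$ carries the pair of transpositions indexed by $i$ to the pair indexed by $l-i+1$, so $\inv\,\alpha_{\partial f}\,\inv=\alpha_{\partial f}$. Substituting into $\varphi_{\map\sqcup\bar{\map}}=\varphi_{\map}\varphi_{\bar{\map}}$ and $\alpha_{\map\sqcup\bar{\map}}=\alpha_{\map}\alpha_{\bar{\map}}\alpha_{\partial f}$ — using that the half-edges split as those belonging to $\map$ alone, those belonging to $\bar{\map}$ alone, and those on the gluing boundary, so the factors may be conjugated separately — yields $\inv\,\varphi_{\map\sqcup\bar{\map}}\,\inv=\varphi_{\map\sqcup\bar{\map}}^{-1}$ and $\inv\,\alpha_{\map\sqcup\bar{\map}}\,\inv=\alpha_{\map\sqcup\bar{\map}}^{-1}$.

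For item~\ref{item:no-fixed-cycle}, I show that $\inv$ genuinely moves every cycle. A cycle of $\varphi_{\map\sqcup\bar{\map}}$ is a cycle of $\varphi_{\map}$ or of $\varphi_{\bar{\map}}$, hence supported on the half-edges of a single copy; since $\inv$ exchanges the two copies, $(\inv\,\pi\,\inv)^{-1}$ is supported on the other copy and cannot equal $\pi$. The same argument covers a cycle $\pi'$ of $\alpha_{\map\sqcup\bar{\map}}$ unless $\pi'$ is one of the boundary transpositions, in which case the computation above shows $(\inv\,\pi'\,\inv)^{-1}$ is the transposition indexed by $l-i+1$, equal to $\pi'$ only when $4i=2l+3$, which never occurs. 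For a cycle $\pi''$ of $\sigma_{\map\sqcup\bar{\map}}$, the relations of item~\ref{item:reversing} already imply (as in Remark~\ref{rem:involution-no-fixed}) that $(\inv\,\pi''\,\inv)^{-1}=\alpha_{\map\sqcup\bar{\map}}\,\rho\,\alpha_{\map\sqcup\bar{\map}}$ for a unique vertex-cycle $\rho$, determined by $\Supp\rho=(\alpha_{\map\sqcup\bar{\map}}\circ\inv)(\Supp\pi'')$; the copy dichotomy again settles vertices off the gluing loop, and for a vertex on it one uses that $\alpha_{\map\sqcup\bar{\map}}\circ\inv$ restricts on the boundary labels to the index reversal $g_{j}\mapsto g_{2l+1-j}$, which is fixed-point-free.

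The step demanding the most care is exactly this behaviour on the gluing seam: one must rule out that a vertex, edge, or face lying on the loop along which $\map$ and $\bar{\map}$ are glued is fixed by the induced involution. This is where the hypothesis that the chosen path $\bm{g}$ is a \emph{good} path (Definition~\ref{def:good-path}) enters — the condition $v_{0}\neq v_{l}$ forbids the two ends of the slit opened in Construction~\ref{constr:open-slit} from being identified, and the restriction on the local maxima of $\ell_{\bm{g}}$ pins down the cyclic structure of the vertices created along the slit, so that the reversal $g_{j}\mapsto g_{2l+1-j}$ really does govern the involution there. Once this is checked, both conditions of Definition~\ref{def:orientation-reversing} hold; together with Remark~\ref{rem:involution-no-fixed} and Lemma~\ref{lem:glue-planar} this is precisely what lets one regard $\map\sqcup\bar{\map}$ as the orientation double covering of a map on a non-orientable surface, as needed for the rest of Section~\ref{sec:many-betw-suit}.
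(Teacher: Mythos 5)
Your proof follows essentially the same strategy as the paper: first verify the two commutation relations of item~\ref{item:reversing} from the compatibility between barring and the mirror/gluing permutations, observing $\inv\varphi_{\map}\inv = \varphi_{\bar{\map}}^{-1}$, $\inv\alpha_{\map}\inv = \alpha_{\bar{\map}}$ and $\inv\alpha_{\partial f}\inv = \alpha_{\partial f}$, then use a case analysis by support (copy of $\map$, copy of $\bar{\map}$, or gluing seam) to rule out fixed cycles. Your parity check on the boundary transpositions ($4i = 2l+3$ has no integer solution) is a valid and somewhat more explicit version of what the paper merely asserts with ``we can then check.''

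The place where you are less precise than the paper is the last step, for a vertex cycle $\pi''$ lying on the gluing loop. You correctly observe that $\alpha_{\map\sqcup\bar{\map}}\circ\inv$ acts on the boundary integer labels as the fixed-point-free index reversal $g_j\mapsto g_{2l+1-j}$, but that alone does not immediately forbid $\Supp\pi''$ from being invariant under $\alpha_{\map\sqcup\bar{\map}}\circ\inv$ (a fixed-point-free permutation can still preserve a set). The paper closes this with the additional observation that $\Supp\pi''\cap\Supp\alpha_{\partial f}\cap\N^{*}$ is a \emph{singleton} $\{i\}$, and then the index reversal would force $\inv\alpha_{\map\sqcup\bar{\map}}(i) = i$, contradiction. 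You should supply (or at least cite) this uniqueness of the unbarred boundary label in the support of a seam-vertex cycle. Finally, your closing paragraph attributes the argument to the good-path hypothesis; that hypothesis does underlie the earlier constructions, but the paper's proof of this particular lemma does not invoke it directly, so this framing is speculative rather than load-bearing.
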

\begin{proof}
  By definition of $\bar{\alpha}_{\map}$ (recall Definition
  \ref{def:reverse-permutation}), we have
  \begin{equation*}
    \inv\varphi_{\map}\inv = \varphi_{\bar{\map}}^{-1} \quad \text{ and } \quad \inv \alpha_{\map} \inv = \bar{\alpha}_{\map}^{-1} = \alpha_{\bar{\map}}.
  \end{equation*}
  We also have $\inv \alpha_{\partial f} \inv = \alpha_{\partial f}$. This gives
  \begin{equation*}
    \inv\varphi_{\map\sqcup\bar{\map}} = \varphi_{\map\sqcup\bar{\map}}^{-1}\inv \quad \text{ and } \quad \inv \alpha_{\map\sqcup\bar{\map}}^{-1} = \alpha_{\map\sqcup\bar{\map}} \inv.
  \end{equation*}
  Using this, we have
  \begin{equation*}
    \inv \sigma_{\map\sqcup\bar{\map}}^{-1}\inv = \inv \varphi_{\map\sqcup\bar{\map}}\alpha_{\map\sqcup\bar{\map}} \inv = \varphi_{\map\sqcup\bar{\map}}^{-1}\alpha_{\map\sqcup\bar{\map}} = \alpha_{\map\sqcup\bar{\map}} \sigma_{\map\sqcup\bar{\map}}\alpha_{\map\sqcup\bar{\map}}.
  \end{equation*}

  By construction, the involution sends the label of a half-edge $h$
  to the label of another half-edge $h'$ which is neither incident to
  the same vertex, nor incident to the same face, nor part of the same
  edge. It is easy to see for the faces and the edges: the conjugation
  by $\inv$ replace all the elements of a cycle by their barred
  versions. The cycle of the faces have support in either the integers
  or the barred integers, it is also the case for the edges that are
  not on $\bm{g}_{\map\sqcup\bar{\map}}$. We can then check that no
  cycle $\pi \in \alpha_{\partial f}$ satisfies $\inv \pi \inv = \pi$. For the
  vertices, assume that there exists a cycle
  $\pi \in \Cyc(\sigma_{\map\sqcup\bar{\map}})$ such that
  $(\alpha_{\map\sqcup\bar{\map}}\inv\sigma_{\map\sqcup\bar{\map}}\alpha_{\map\sqcup\bar{\map}})^{-1} = \pi$.
  Necessarily, the vertex corresponding to this cycle is on
  $\bm{g}_{\map\sqcup\bar{\map}}$. If it is not the case the support
  of $\pi$ is in the integers or the barred integers, without the
  elements of the support of $\alpha_{\partial f}$. We can then proceed as
  for the faces. Let $i$ be the only element in
  $\Supp \pi \cap \Supp \alpha_{\partial f} \cap \N^{*}$. We have that
  $\pi^{-1}(i) \neq i$ (\(\pi\) is incident to at least two edges) and
  thus
  $\inv \alpha_{\map\sqcup\bar{\map}}(i) = \inv \alpha_{\partial f}(i) \in \Supp \pi\cap \Supp \alpha_{\partial f} \cap \N^{*}$.
  However, this set is the singleton $\left\{ i \right\}$ and
  \begin{equation*}
    \inv \alpha_{\map\sqcup\bar{\map}}(i) = i.
  \end{equation*}
  The permutation $\inv\alpha_{\map\sqcup\bar{\map}}$ has no fixed point, we have reached a contradiction.
\end{proof}

Let us sum up what has been achieved so far. Starting from a suitably
labelled map $(\tilde{\map}, \ell)$ an a good path $\bm{\tilde{g}}$,
we produced a suitably labelled map with boundary
$(\hat{\map}_{+}, \ell_{+})$. We glue this map to its mirror image
$\hat{\map}_{-}$ to obtain a new suitably labelled map
$(\hat{\map}, \hat{\ell})$ in which both $\hat{\map}_{+}$ and
$\hat{\map}_{-}$ are embedded. The map $(\hat{\map}, \hat{\ell})$ is equipped with a
good loop $\bm{\hat{g}}$. Finally, thanks to Lemma
\ref{lem:prop-inv}, we can use the construction of Section
\ref{sec:maps-orient-cover} to see $\hat{\map}$ as being a map on the
orientation covering of a non-orientable map $\map$.

\begin{prop}\label{prop:bij-first-constr}
  The mapping just described, which associates, to a suitably labelled
  map $(\tilde{\map}, \tilde{\ell})$ equipped with a good path
  $\bm{\tilde{g}}$, the suitably labelled map $(\hat{\map}, \hat{\ell})$,
  is injective.
\end{prop}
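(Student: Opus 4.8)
The plan is to establish injectivity by constructing a left inverse of the composite mapping $\Theta\colon(\tilde{\map},\tilde{\ell},\bm{\tilde{g}})\mapsto(\hat{\map},\hat{\ell})$, unwinding in reverse order the three operations out of which it is built: the gluing of Section \ref{sec:gluing-along-face}, the mirror of Section \ref{sec:mirror-map}, and the slit-opening Construction \ref{constr:open-slit}. By construction the half-edges of $\hat{\map}$ carry labels in a set of the shape $L_{0}\sqcup\bar{L_{0}}$ with $L_{0}\subset\N^{*}$, and the idea is to show that $(\hat{\map},\hat{\ell})$ alone determines, in turn: the orientation-reversing matching $\inv\colon i\mapsto\bar{i}$; the good loop $\bm{\hat{g}}$ and the two embedded copies $\hat{\map}_{+}$, $\hat{\map}_{-}=\overline{\hat{\map}_{+}}$ meeting along it; and finally, from $\hat{\map}_{+}$, the triple $(\tilde{\map},\tilde{\ell},\bm{\tilde{g}})$.

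First I would recover $\inv$ and $\bm{\hat{g}}$. The matching $\inv$ is intrinsic to $\hat{\map}$ since, by Lemma \ref{lem:prop-inv}, it is orientation-reversing; via Proposition \ref{prop:covering-no} this realizes $\hat{\map}$ canonically as the orientation covering of the non-orientable map it determines. The crucial observation is that the $2l$ edges of $\bm{\hat{g}}$ --- the cycles of $\alpha_{\partial f}=\prod_{i=1}^{l}\cycle{g_{2i-1},\overline{g_{2l-2i+2}}}\cycle{g_{2i},\overline{g_{2l-2i+1}}}$ --- are precisely the \emph{mixed} edges of $\hat{\map}$, those carrying one unbarred and one barred half-edge label: the interior edges of $\hat{\map}_{+}$ are the non-path edges of $\tilde{\map}$, with two unbarred labels; the interior edges of $\hat{\map}_{-}$ are their mirrors, with two barred labels; and every edge of $\alpha_{\partial f}$ is mixed. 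Hence the set of good-loop edges is read off from $\hat{\map}$; by Definition \ref{def:map-with-boundary} and Lemma \ref{lem:open-slit-suitable} it forms a simple loop, and, splitting the label set into the unbarred half $L_{0}$ and the barred half $\bar{L_{0}}$, it identifies $\hat{\map}_{+}$ as the side whose interior half-edges are unbarred (concretely, the restriction of the permutations of $\hat{\map}$ to $L_{0}\cup\{\overline{g_{j}}:j\in[2l]\}$).

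Next I would reverse the mirror-and-glue step by retaining only $\hat{\map}_{+}$, together with its marked boundary face $f$ (whose incident edges form the good loop found above), its boundary loop --- a good loop by Lemma \ref{lem:open-slit-suitable} --- and the inherited vertex labelling $\ell_{+}$; discarding $\hat{\map}_{-}=\overline{\hat{\map}_{+}}$ loses nothing. Finally I would invert Construction \ref{constr:open-slit} on $(\hat{\map}_{+},f,\ell_{+})$: delete the barred boundary half-edges, merge each pair of split vertices $\cycle{u_{1},\ldots,u_{d},g_{2i+1},\overline{g_{2l-2i+2}}}$, $\cycle{v_{1},\ldots,v_{d'},g_{2i},\overline{g_{2l-2i-1}}}$ back into $\cycle{g_{2i},u_{1},\ldots,u_{d},g_{2i+1},v_{1},\ldots,v_{d'}}$, recombine each pair of split edges $\{g_{2i-1},\overline{g_{2l-2i+2}}\}$ and $\{g_{2i},\overline{g_{2l-2i+1}}\}$ into $\{g_{2i-1},g_{2i}\}$, and strip the extra barred half-edge off the two end vertices; $\ell_{+}$ then restricts to $\tilde{\ell}$. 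The two vertices of $\bm{\hat{g}}$ that were not split are exactly the endpoints $v_{0}=\vertex(g_{1})$, $v_{l}=\vertex(g_{2l})$ of $\bm{\tilde{g}}$, and the cyclic datum of the face $f$ --- governed by the explicit shuffle $\tilde{\varphi}=\cycle{\overline{g_{2l-1}},\overline{g_{2l-3}},\ldots,\overline{g_{1}},\overline{g_{2}},\overline{g_{4}},\ldots,\overline{g_{2l}}}$ of Construction \ref{constr:open-slit} --- recovers the ordered sequence $(g_{1},\ldots,g_{2l})$. This returns $(\tilde{\map},\tilde{\ell},\bm{\tilde{g}})$ and proves that $\Theta$ is injective.

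The step I expect to be the main obstacle is this last one: verifying that the mixed-edge locus really is a single simple \emph{good} loop, and that its combinatorics determine $\bm{\tilde{g}}$ as an \emph{oriented} path and not merely as an unoriented one. A superficial symmetry argument suggests a possible $2$-to-$1$ collapse under $\bm{\tilde{g}}\mapsto\bm{\tilde{g}}^{-1}$; excluding it amounts to checking that opening the slit along $\bm{\tilde{g}}^{-1}$ does not produce the mirror of opening it along $\bm{\tilde{g}}$, which in turn comes down to comparing the resulting face cycles $\tilde{\varphi}$ and observing that they differ. This is exactly the place where the asymmetric bookkeeping of Construction \ref{constr:open-slit} --- the splitting of $v_{1},\ldots,v_{l-1}$ but not of $v_{0},v_{l}$, and the precise interleaving defining $\tilde{\varphi}$ --- has to be used rather than a homotopy-level argument.
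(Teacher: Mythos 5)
Your approach is essentially the paper's: both construct a left inverse of the slit--mirror--glue composite by reading off $\hat{\map}_{+}$ from the barred/unbarred label distinction and then closing the slit. The paper phrases the closing step as ``glue together the two edges incident to each vertex of minimal label on the boundary, repeatedly,'' rather than by inverting the permutation bookkeeping of Construction \ref{constr:open-slit}, but the two descriptions perform the same operation. Your identification of the boundary loop as the set of mixed edges (one barred, one unbarred half-edge), and of $\hat{\map}_{+}$ as the side whose interior half-edges are unbarred, is exactly the paper's ``unique embedded map whose non-boundary faces are exactly the ones with labels in the integers.''

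The one place your argument goes astray is the proposed resolution of the potential $\bm{\tilde{g}}\mapsto\bm{\tilde{g}}^{-1}$ collapse at the end. You suggest the two directions are separated because the resulting boundary face cycles $\tilde{\varphi}$ differ --- but $\tilde{\varphi}$ is in fact invariant under path reversal. Substituting $g'_{i}=g_{2l+1-i}$ into $\tilde{\varphi}'=\cycle{\overline{g'_{2l-1}},\overline{g'_{2l-3}},\ldots,\overline{g'_{1}},\overline{g'_{2}},\ldots,\overline{g'_{2l}}}$ produces $\cycle{\overline{g_{2}},\overline{g_{4}},\ldots,\overline{g_{2l}},\overline{g_{2l-1}},\ldots,\overline{g_{1}}}$, which is the same cyclic word as $\tilde{\varphi}$ read starting at $\overline{g_{2}}$; one can check the split intermediate vertex cycles are likewise reversal-invariant. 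What actually carries the orientation is the cyclic order at the two unsplit endpoints: for $\bm{\tilde{g}}$ the cycle at $v_{0}$ reads $\cycle{g_{1},u_{1},\ldots,u_{d},\overline{g_{2l-1}}}$, whereas opening along $\bm{\tilde{g}}^{-1}$ gives $\cycle{\overline{g_{2l-1}},u_{1},\ldots,u_{d},g_{1}}$ at the same vertex, and these differ cyclically as soon as $d\geq 1$ (similarly at $v_{l}$). So the discriminant is the vertex cycles at $v_{0},v_{l}$, not $\tilde{\varphi}$. (The paper's proof is itself silent on this point, and in the extreme degenerate case $d=0$ at both endpoints --- e.g.\ $\tilde{\map}$ a single edge with $l=1$ --- the two orientations genuinely collapse; this does not affect the downstream theorem, which uses only the canonical leftmost geodesic, but it means the Proposition as literally stated needs the endpoint vertex cycles, not the face cycle, to do the work you attribute to $\tilde{\varphi}$.)
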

\begin{proof}
  This follows from the fact that there is a left inverse to this
  mapping which we now describe. From $(\hat{\map}, \hat{\ell})$ we can
  recover the embedded map $(\hat{\map}_{+}, \hat{\ell}_{+})$: it is the
  unique embedded map whose non-boundary faces are exactly the ones
  with labels in the integers. Then, we glue the edges along the
  boundary face together. We glue together the two edges incident to
  each vertex of minimal label on the boundary. We do this repeatedly
  until there is no boundary face. The labels erased during this
  procedure are barred integers that where added in the first step
  (when opening the slit)
\end{proof}

The construction above depended on a choice of good path
$\bm{\tilde{g}}$ in $\tilde{\map}$. We now explain how to choose it in
a canonical way.

\subsection{Choosing a path in a map}
\label{sec:choosing-path-map}

We now explain how to choose a path $\bm{\tilde{g}}$ in a suitably
labelled map $(\tilde{\map}, \tilde{\ell})$, and characterize the image
of this path in the glued map $(\hat{\map}, \hat{\ell})$. We will show
that when considering a map on an orientation covering, there is a
canonical choice of loop, which we call equilibrium loop. Starting
from this Section, we assume that $\tilde{\map}$ is planar (and thus,
so is $\hat{\map}$).

\subsubsection{Local roots and leftmost paths}
\label{sec:local-roots-leftmost}

\begin{definition}\label{def:root}
  Let $\map$ be a suitably labelled map with labelled half-edges. The
  half-edge $h^{*}$ with minimal label among those attached to a
  vertex of label $0$ is said to be the \textbf{root}. The vertex $v^{*}$ it is
  attached to is the \textbf{root vertex}.

  A \textbf{local root} at a vertex $v$ is the choice of a half-edge incident
  to $v$.
\end{definition}

The notion of a local root allows us to define an ordering of the
half-edges at a vertex.
\begin{definition}\label{def:order-he}
  Let $v$ be a vertex with a local root $h$. Let
  $h = h_{1}, h_{2}, \ldots, h_{d}$ be the half-edges around $v$ in
  the clockwise order. We say that $h_{i}$ is to the left of
  $h_{j}$ if $i < j$.
\end{definition}

This ordering of the half-edges defines an ordering of the paths
starting at a vertex equipped with a local root.
\begin{definition}\label{def:left-right}
  Consider two paths $\bm{g} = (g_{i})_{1 \leq i \leq 2l}$ and
  $\bm{g'} = (g'_{i})_{1 \leq i \leq 2l'}$ with same starting vertex
  $v = \vertex(g_{1}) = \vertex(g'_{1})$. Assume that $v$ is equipped
  with a local root $h$. By convention, set $g_{0} = g'_{0} = h$. If
  there exists $i$, the first index such that $g_{2i+1} \neq g'_{2i+1}$
  then taking $g_{2i} = g_{2i}'$ to be the local root at
  $v' = \vertex(g_{2i}) = \vertex(g'_{2i})$, we say that $\bm{g}$ is
  at the left of $\bm{g'}$ if $g_{2i+1}$ is to the left of
  $g'_{2i+1}$. If there are no such $i$, then the shortest of the two
  paths is said to be to the left of the other.
\end{definition}
Note that this ordering of the paths defines a total order of the
paths started at a locally rooted map.

\begin{definition}\label{def:geodesic}
  A geodesic between two vertices $v$ and $v'$ is a path of shortest
  length (for the graph distance) between $v$ and $v'$.
\end{definition}

\begin{figure}[ht]
  \centering
  \includegraphics[width=0.25\textwidth]{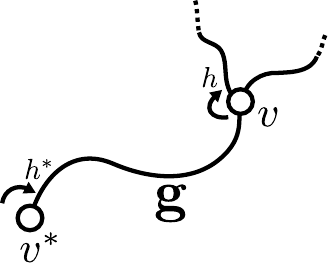}
  \caption{\label{fig:choose-local-root} Choice of a local root as in Construction \ref{constr:local-rooting}.}
\end{figure}

\begin{constr}\label{constr:local-rooting}
  Consider a map $\map$ with labelled half-edges. Let us explain how
  the root $h^{*}$ at the root vertex $v^{*}$ induces a choice of
  local root for each vertex $v$ in $\map$. Among the geodesics from
  $v^{*}$ to $v$, there is a leftmost geodesic
  $\bm{g} = (g_{i})_{1 \leq i \leq 2d}$. We choose the local root at $v$ to
  be the half-edge following $g_{2d}$ in the clockwise orientation
  around $v$. See Figure \ref{fig:choose-local-root}
\end{constr}

Furthermore, the (global) root allows to order the vertices.
\begin{definition}\label{def:left-vertex}
  Let $v^{*}$ be the root vertex, and $v, v'$ two distinct vertices.
  Let $\bm{g}$ and $\bm{g'}$ be respectively the leftmost geodesic
  from $v^{*}$ to $v$ and from $v^{*}$ to $v'$. We say that $v$ is to
  the left of $v'$ if $\bm{g}$ is to the left of $\bm{g'}$.
\end{definition}

\subsubsection{Leftmost good geodesics}
\label{sec:leftm-good-geod}

Consider a planar map $(\tilde{\map}, \tilde{\ell}) \in \Suitably_{n}$ with
two local minima, $v^{*}$ and $v^{\circ}$. Assume that $v^{*}$ is
the root vertex, and thus $0 = \tilde{\ell}(v^{*}) \leq \tilde{\ell}(v^{\circ})$.
We now distinguish a path $\bm{\tilde{g}}$ in $\tilde{\map}$. To do so
we introduce another notion.
\begin{definition}\label{def:good-path}
  A \textbf{good geodesic} starting from a vertex $v$ is a path from $v$ to a
  distinct vertex $v'$ with $\tilde{\ell}(v) = \tilde{\ell}(v')$, of minimal
  length among the paths from $v$ to a distinct vertex with label
  $\tilde{\ell}(v)$.
\end{definition}
%
We define $\bm{\tilde{h}}$ to be the leftmost geodesic from $v^{*}$ to
$v^{\circ}$, with respect to the ordering given by the global root
at $v^{*}$. As $v^{\circ}$ is a local minimum, there is a unique
vertex $v^{\bullet} \neq v^{\circ}$ encountered by $\bm{\tilde{h}}$ such that
$\tilde{\ell}(v^{\circ}) = \tilde{\ell}(v^{\bullet})$. We may have
$v^{\bullet} = v^{*}$ if $\tilde{\ell}(v^{\circ}) = 0$. We denote by
$\bm{\tilde{g}}$ the subpath of $\bm{\tilde{h}}$ from $v^{\bullet}$ to
$v^{\circ}$.

\begin{lemma}\label{lem:chosen-good-g}
  The path $\bm{\tilde{g}}$ -- up to reorienting it from $v^{\circ}$ to
  $v^{\bullet}$ -- is a good path and a good geodesic from $v^{\circ}$.
\end{lemma}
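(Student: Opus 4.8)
The plan is to establish the two claims of Lemma \ref{lem:chosen-good-g} separately: (i) that $\bm{\tilde{g}}$ (possibly after reversal) is a good path in the sense of Definition \ref{def:good-path}, and (ii) that it is a good geodesic from $v^{\circ}$ in the sense of the later Definition \ref{def:good-path} (good geodesic). For (ii), I would argue as follows. The path $\bm{\tilde{h}}$ is the leftmost geodesic from $v^{*}$ to $v^{\circ}$, so labels along it are weakly decreasing from $v^{*}$-side... wait, actually they need not be monotone since $\tilde{\ell}(v^{*})=0$ is the global minimum but $v^{\circ}$ may carry a positive label. The key point is that along any geodesic in a suitably labelled map, consecutive labels differ by $0$ or $\pm 1$, and a geodesic of length $d$ between vertices whose labels differ by $d$ must be strictly monotone. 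Since $v^{\bullet}$ is the last vertex on $\bm{\tilde{h}}$ before $v^{\circ}$ with $\tilde{\ell}(v^{\bullet})=\tilde{\ell}(v^{\circ})$, the subpath $\bm{\tilde{g}}$ from $v^{\bullet}$ to $v^{\circ}$ has endpoints of equal label; I would show it realizes the minimal length among paths joining $v^{\circ}$ to a distinct vertex of the same label. If there were a strictly shorter such path $\bm{p}$ from $v^{\circ}$ to some $w$ with $\tilde{\ell}(w)=\tilde{\ell}(v^{\circ})$, I would splice it into $\bm{\tilde{h}}$ — replacing the tail $\bm{\tilde{g}}$ by a route through $w$ — and derive either a shorter geodesic from $v^{*}$ to $v^{\circ}$ (contradicting geodesicity of $\bm{\tilde{h}}$) or a geodesic of the same length that is further to the left (contradicting leftmost-ness). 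The careful bookkeeping here — making sure the spliced path is still a valid simple path and comparing it correctly in the left-order — is the main obstacle.

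For (i), I would verify the conditions of Definition \ref{def:good-path} for $\bm{\tilde{g}}=(g_{i})_{1\le i\le 2l}$ with $v_{0}=v^{\bullet}$, $v_{i}=\vertex(g_{2i})$, $v_{l}=v^{\circ}$. First, $v_{0}=v^{\bullet}\neq v^{\circ}=v_{l}$ by construction, and $\ell_{\bm{\tilde{g}}}(0)=\tilde{\ell}(v^{\bullet})=\tilde{\ell}(v^{\circ})=\ell_{\bm{\tilde{g}}}(l)$. Next I must show $\ell_{\bm{\tilde{g}}}$ has exactly two local minima, at the endpoints. Since $\bm{\tilde{g}}$ is a subpath of a geodesic $\bm{\tilde{h}}$ starting at the global minimum $v^{*}$, and since $v^{\bullet}$ was chosen to be the \emph{last} same-label vertex before $v^{\circ}$, no interior vertex of $\bm{\tilde{g}}$ has label $\le \tilde{\ell}(v^{\circ})$ — otherwise, because labels on a geodesic move in unit steps, some interior vertex would have label exactly $\tilde{\ell}(v^{\circ})$, contradicting the choice of $v^{\bullet}$ (or contradicting that $v^{\circ}$ is a local minimum and the path reaches it). Hence all interior labels are $\ge \tilde{\ell}(v^{\circ})+1$, so the only local minima of $\ell_{\bm{\tilde{g}}}$ are the two endpoints. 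It remains to show the profile has at most two local maxima, and if two, at consecutive positions. This is where I would use that $\bm{\tilde{g}}$ is a good geodesic (part (ii)): if the profile went up, down by a full step to some interior value, and up again, one could extract a strictly shorter path between two distinct vertices of equal label (the two "shoulder" vertices at that intermediate level), contradicting the good-geodesic minimality. So the label profile rises, possibly plateaus at the top for one step, and descends — exactly the good-path shape.

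Finally, the reversal remark: Definition \ref{def:good-path} (good path) is symmetric under $\bm{g}\mapsto \bm{g}^{-1}$ in all its requirements (two local minima at the ends with equal label, one or two consecutive local maxima), so "up to reorienting from $v^{\circ}$ to $v^{\bullet}$" is harmless and is only inserted so that the distinguished endpoint matches the conventions of the subsequent gluing construction. I expect step (ii) — the splicing argument that $\bm{\tilde{g}}$ is genuinely length-minimal among same-label paths out of $v^{\circ}$ — to be the delicate part, both because one must handle the interaction with the leftmost-geodesic tie-breaking and because one must ensure the surgically modified path remains simple; the "at most two consecutive local maxima" claim in (i) is then a comparatively short corollary of that minimality.
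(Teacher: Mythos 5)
Your proposal for part (ii) has the right general shape (replace the tail of $\bm{\tilde{h}}$ by a detour through $w$ and contradict geodesicity), but it omits the one fact that makes the argument go through, and your argument for part (i) has a genuine gap. Both issues trace back to the same missing ingredient: you never invoke the structural identity, established in Section~\ref{sec:expr-terms-dist} following \cite{bouttier_twopoint_2014}, that in a suitably labelled map the label of every vertex is exactly its ``distance'' $d_v=\min_{u^{*}\in V^{\min}}\bigl(d(u^{*},v)+\ell(u^{*})\bigr)$, which here reads $\tilde{\ell}(v)=\min\bigl(d(v^{*},v),\,d(v^{\circ},v)+\tilde{\ell}(v^{\circ})\bigr)$. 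The paper's proof is a two-line application of this formula.

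For (i), you correctly argue that no interior vertex of $\bm{\tilde{g}}$ can carry a label $\le\tilde{\ell}(v^{\circ})$, but the remaining step --- ruling out an interior dip --- does not work as written. If the profile dipped to some interior value $c>\tilde{\ell}(v^{\circ})$, the ``shoulder'' vertices you extract would both have label $c$, not $\tilde{\ell}(v^{\circ})$; the good-geodesic minimality of $\bm{\tilde{g}}$ only constrains paths from $v^{\circ}$ to \emph{another vertex of label $\tilde{\ell}(v^{\circ})$}, so it says nothing about a short path between two level-$c$ vertices. In fact, the absence of a dip is not a consequence of good-geodesic minimality at all: the paper instead writes the label at the $i$-th vertex of $\bm{\tilde{g}}$ explicitly as $\min\bigl(\tilde{\ell}(v^{\circ})+i,\ \tilde{\ell}(v^{\circ})+\#\bm{\tilde{g}}-i\bigr)$ (using that $\bm{\tilde{g}}$ lies on a geodesic from $v^{*}$ through $v^{\bullet}$ to $v^{\circ}$, so $d(v^{*},\cdot)$ and $d(v^{\circ},\cdot)$ are linear along it), and this expression is manifestly unimodal with a one- or two-step plateau at the top, which is precisely Definition~\ref{def:good-path}.

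For (ii), the step you leave vague --- how to route from $v^{*}$ to $w$ --- is exactly where the same formula is needed. Since $w$ is either $v^{*}$ itself or not a local minimum, there is a geodesic from $w$ with strictly decreasing labels to $v^{*}$ (it cannot terminate at $v^{\circ}$ since that would force $w=v^{\circ}$), of length $\tilde{\ell}(w)=\tilde{\ell}(v^{\circ})$. Concatenating this with $\bm{p}^{-1}$ gives a path from $v^{*}$ to $v^{\circ}$ of length $\tilde{\ell}(v^{\circ})+\#\bm{p}<\tilde{\ell}(v^{\circ})+\#\bm{\tilde{g}}=\#\bm{\tilde{h}}$, contradicting that $\bm{\tilde{h}}$ is a geodesic --- no appeal to leftmost-ness or to keeping the spliced path simple is required. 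Your worry about the left-order comparison is a red herring here; leftmost-ness is used later, in Lemma~\ref{lem:h-tilde-leftmost-geod} and Proposition~\ref{prop:g-tile-eq-loop}, but not in this lemma.
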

\begin{proof}
  The $\tilde{\ell}(\tilde{g}_{2i})$ of $\tilde{g}_{2i}$ is
  $\min(\tilde{\ell}(v^{\circ}) + i, \tilde{\ell}(v^{\circ}) + \# \bm{\tilde{g}} - i)$
  as $\bm{\tilde{g}}$ is a geodesic. This expression immediately shows
  that $\bm{\tilde{g}}$ is a good path.

  Assume that there is a vertex $u'$ with label $\ell(u') = \ell(v^{\circ})$ and a path $\bm{h'}$ between
  $v^{\circ}$ and $u'$ such that the length of $\bm{h'}$ is strictly
  smaller than $\bm{\tilde{g}}$. As $u'$ is either $v^{*}$ or not a
  local minimum, there is a path of length
  $\tilde{\ell}(u') = \tilde{\ell}(v^{\circ})$ between $v^{*}$ and $u'$. We
  can thus construct a path strictly shorter than $\bm{\tilde{h}}$
  between $v^{*}$ and $v^{\circ}$. This contradicts the fact that
  $\bm{\tilde{h}}$ is a geodesic.
\end{proof}

\subsubsection{Equilibrium loops}
\label{sec:equilibrium-loops}

Fix a suitably labelled map $(\hat{\map}, \hat{\ell})$, equipped with an
orientation-reversing matching $\inv$ that preserves the labels, i.e.\
$\hat{\ell}\circ \inv = \hat{\ell}$. In this section, we explain how to
choose in a unique way a good loop in $\hat{\map}$. The construction
is depicted on Figure \ref{fig:constr-eq-loop}. We denote by $v^{*}$
the root of $\hat{\map}$ and by $\bar{v}^{*} = \inv(v^{*})$ its image
by the involution.

\begin{constr}\label{constr:equilibrium-loop}
  Let $\bm{\hat{h}}$ be the leftmost geodesic from $v^{*}$ to
  $\bar{v}^{*}$, and $\inv(\bm{\hat{h}})$ be the path obtained from
  $\bm{\hat{h}}$ by applying $\inv$ to each of its half-edges. Let $k$
  be the number of vertices in $\bm{\hat{h}}$ that are also in
  $\inv(\bm{\hat{h}})$. Since $\inv(v^{*}) = \bar{v}^{*}$, we have
  $k \geq 2$. Let $u_{1}= v^{*}, u_{2}, \ldots, u_{k} = \bar{v}^{*}$ be these
  vertices, in the order they are encountered by $\bm{\hat{h}}$. Note
  that for all $i \in [k]$, $\inv(u_{i})$ is also both in $\bm{\hat{h}}$ and
  $\inv(\bm{\hat{h}})$, and $\inv$ does not have fixed point on the
  set of vertices. Hence, $k$ is even.

  The loop we construct is
  \begin{equation*}
    \bm{\hat{g}_{\eq}} = \bm{\hat{h}}\vert_{u_{k/2} \to u_{k/2+1}} \sqcup \inv ( \bm{\hat{h}})\vert_{u_{k/2+1} \to u_{k/2}}.
  \end{equation*}
\end{constr}
\begin{figure}[ht]
  \centering
  \includegraphics[width=0.6\textwidth]{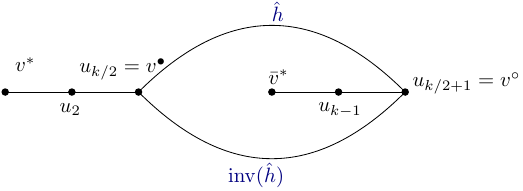}
  \caption{\label{fig:constr-eq-loop} Construction of the equilibrium loop.}
\end{figure}

\begin{definition}[Equilibrium loop]\label{def:equilibrium-loop}
  The loop $\bm{\hat{g}_{\eq}}$ of Construction \ref{constr:equilibrium-loop} is called the
  equilibrium loop.
\end{definition}

Let us give some properties of the loop $\bm{\hat{g}_{eq}}$.
\begin{definition}[Invariant loop]
  Let $\bm{g}$ be a simple loop of even length. We say it is invariant if for all
  $i \in [\# \bm{g}]$,
  \begin{equation*}
    \inv_{\hat{\map}} \left( \vertex(g_{2i-1}) \right) = \vertex(g_{\# \bm{g}+2i -1}).
  \end{equation*}
\end{definition}

\begin{lemma}\label{lem:h-inv-good-loop}
  The loop equilibrium loop $\bm{\hat{g}_{eq}}$ is a simple,
  invariant, good loop.
\end{lemma}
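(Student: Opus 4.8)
The plan is to verify the three properties (simple, invariant, good) in that order; the first two follow quickly from the symmetry forced by $\inv$, and essentially all of the real work is in goodness.

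First I would record the structural input. By Remark~\ref{rem:involution-no-fixed}, $\inv$ descends to a fixed-point-free involution $\inv_{\hat{\map}}$ of the vertices, edges and faces of $\hat{\map}$ which is a graph automorphism, hence an isometry for the graph distance; since moreover $\hat{\ell}\circ\inv = \hat{\ell}$ it preserves labels. Consequently $\inv(\bm{\hat{h}})$, obtained from the leftmost geodesic $\bm{\hat{h}}$ from $v^{*}$ to $\bar{v}^{*}$ by applying $\inv$ half-edge by half-edge, is a geodesic from $\bar{v}^{*} = \inv(v^{*})$ to $v^{*}$. Write $\bm{\hat{h}}$ as the vertex sequence $w_{0} = v^{*}, w_{1}, \dots, w_{m} = \bar{v}^{*}$, so that the position $j$ of $w_{j}$ equals $d(v^{*}, w_{j})$. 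If $w_{j}$ is one of the common vertices $u_{1} = v^{*}, \dots, u_{k} = \bar{v}^{*}$ then $\inv(w_{j})$ also lies on $\bm{\hat{h}}$, and because $\inv$ is an isometry exchanging $v^{*}$ and $\bar{v}^{*}$ we get $d(v^{*}, \inv(w_{j})) = d(\bar{v}^{*}, w_{j}) = m - j$; thus $\inv$ sends the common vertex at position $j$ to the one at position $m - j$. As the positions $j_{1} < \dots < j_{k}$ of $u_{1}, \dots, u_{k}$ are strictly increasing and stable under $\inv$, this forces $\inv(u_{a}) = u_{k+1-a}$ for every $a$, hence $\inv(u_{k/2}) = u_{k/2+1}$, and writing $p = j_{k/2} < q = j_{k/2+1}$ for their positions we get $p + q = m$. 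Set $L = q - p$, $\bm{g}_{1} = \bm{\hat{h}}\vert_{u_{k/2}\to u_{k/2+1}}$, $\bm{g}_{2} = \inv(\bm{\hat{h}})\vert_{u_{k/2+1}\to u_{k/2}}$, so that $\bm{\hat{g}_{\eq}} = \bm{g}_{1}\sqcup\bm{g}_{2}$ has edge-length $2L$.

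For simplicity: $\bm{g}_{1}$ is a subpath of the geodesic $\bm{\hat{h}}$, hence simple, with vertex set $\{w_{p}, \dots, w_{q}\}$, and $\bm{g}_{2}$ is a subpath of the geodesic $\inv(\bm{\hat{h}})$, hence simple, with vertex set $\{\inv(w_{p}), \dots, \inv(w_{q})\}$. A vertex lying on both halves lies on $\bm{\hat{h}}$ and on $\inv(\bm{\hat{h}})$, hence is some $u_{c}$; its position along $\bm{\hat{h}}$ then lies in $[p, q] = [j_{k/2}, j_{k/2+1}]$, and since the $j_{i}$ are strictly increasing this forces $c \in \{k/2, k/2+1\}$. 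So the two halves meet exactly at $u_{k/2}$ and $u_{k/2+1}$, and $\bm{\hat{g}_{\eq}}$ is a simple loop. For invariance: by construction $\bm{g}_{2} = \inv(\bm{g}_{1})$, that is, the second half of the loop is the $\inv$-image, half-edge by half-edge, of the first half (the endpoints match because $\inv(u_{k/2}) = u_{k/2+1}$); reading off the vertices at odd positions yields $\inv_{\hat{\map}}(\vertex(g_{2i-1})) = \vertex(g_{2L+2i-1})$ for $1 \leq i \leq L$, which is the required identity, and since $\hat{\ell}\circ\inv = \hat{\ell}$ the two halves have the same label profile, $\ell_{\bm{g}_{1}} = \ell_{\bm{g}_{2}}$. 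In view of the definition of a good loop in Definition~\ref{def:good-path}, it then only remains to show that $\bm{g}_{1}$ (equivalently $\bm{g}_{2}$) is a good path.

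For goodness — the crux — I must check that $\bm{g}_{1}$ satisfies Definition~\ref{def:good-path}: its endpoints $u_{k/2}$ and $u_{k/2+1} = \inv(u_{k/2})$ are distinct (as $p < q$) and carry the same label $c := \hat{\ell}(u_{k/2}) = \hat{\ell}(u_{k/2+1})$, and the profile $i \mapsto \hat{\ell}(w_{p+i})$ on $\{0, 1, \dots, L\}$ should have exactly two local minima, at $i = 0$ and $i = L$ with equal value, and one or two consecutive local maxima. The plan is to pin this profile down completely, namely to show $\hat{\ell}(w_{p+i}) = c + \min(i, L-i)$, which is manifestly of the required unimodal shape. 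The inequality $\hat{\ell}(w_{p+i}) \leq c + \min(i, L-i)$ is immediate since $\bm{g}_{1}$ is a geodesic between two vertices of label $c$ and labels vary by at most $1$ along an edge. For the matching lower bound I would argue exactly as in the proof of Lemma~\ref{lem:chosen-good-g}: $\bm{g}_{1}$ is a sub-arc of the \emph{leftmost} geodesic $\bm{\hat{h}}$ issued from the root $v^{*}$, which is a global minimum ($\hat{\ell}(v^{*}) = 0$); the argument there, applied here to all of $\bm{\hat{h}}$ (a leftmost geodesic between two label-$0$ vertices), gives $\hat{\ell}(w_{j}) = \min(j, m - j)$ along $\bm{\hat{h}}$, and restricting to $j \in [p, q]$ with $p + q = m$ (so that $c = \hat{\ell}(w_{p}) = p$) yields $\hat{\ell}(w_{p+i}) = c + \min(i, L-i)$. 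The main obstacle is precisely this point: establishing rigorously that a sub-arc of the leftmost geodesic from the global minimum, taken between two vertices of the same label, has the monotone profile $c + \min(i, L-i)$. This is the only place where more than the $\inv$-symmetry intervenes — it is where the leftmost property of $\bm{\hat{h}}$ and the planarity hypothesis are genuinely used — and it is where I would concentrate the bulk of the effort.
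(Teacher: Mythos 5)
Your structure matches the paper's: simplicity and invariance are handled the same way (subpaths of geodesics are simple and meet only at the chosen endpoints; the second arc is by construction the $\inv$-image of the first), and your derivation that $\inv(u_a) = u_{k+1-a}$ via the position formula $d(v^*, \inv(u_a)) = m - j_a$ is a nice rigorous expansion of what the paper only asserts implicitly. For goodness, you correctly isolate the key fact — the label profile along $\bm{\hat{h}}$ is the tent $\hat{\ell}(w_j) = \min(j, m-j)$ — and this is indeed what the paper is gesturing at when it says the maximum along the geodesic is attained once or twice at consecutive vertices.

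The gap is in the final step, and you misdiagnose its source. You write that establishing the tent profile is ``where the leftmost property of $\bm{\hat{h}}$ and the planarity hypothesis are genuinely used,'' but neither is needed here. What is needed — and is used implicitly by the paper as well — is that $v^*$ and $\bar v^* = \inv(v^*)$ are the \emph{only} local minima of $\hat{\ell}$, both with label $0$ (this holds in both applications: the glued map and the lifted $\RP^2$ map). Under this hypothesis one gets, for any vertex $u$, that $\hat{\ell}(u) = d(u, \{v^*, \bar v^*\})$: the inequality $\hat{\ell}(u) \leq d(u, v^*)$ (and likewise for $\bar v^*$) is the Lipschitz bound for suitable labellings, while the reverse inequality $\hat{\ell}(u) \geq d(u, \{v^*, \bar v^*\})$ follows by iterating descent to a strictly lower-labelled neighbour until a local minimum is reached. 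Since $\bm{\hat{h}}$ is a geodesic, $d(v^*, w_j) = j$ and $d(\bar v^*, w_j) = m - j$, giving the tent profile directly; restricting to $[p, q]$ with $p + q = m$ then yields $\hat{\ell}(w_{p+i}) = p + \min(i, L-i)$ as you wanted, and the $L$ odd/even dichotomy gives exactly ``one local maximum or two consecutive ones.'' Planarity and the leftmost property only become essential in Lemmas~\ref{lem:h-tilde-leftmost-geod} and Proposition~\ref{prop:g-tile-eq-loop}, not here.
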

\begin{proof}
  The fact that $\bm{\hat{g}_{eq}}$ is simple follows by construction:
  both $\bm{\hat{h}}$ and $\inv(\bm{\hat{h}})$ are simple paths as
  they are geodesics. Furthermore, we chose the paths so that the only
  vertices that are both in
  $\bm{\hat{h}}\vert_{u_{k/2} \to u_{k/2+1}}$ and
  $\inv(\bm{\hat{h}})\vert_{u_{k/2+1} \to u_{k/2}}$ are
  their endpoints.

  The fact that $\bm{\hat{g}_{\eq}}$ is invariant also follows by
  construction: $\inv( \bm{\hat{h}})\vert_{u_{k/2+1} \to u_{k/2}}$ is
  obtained from $\bm{\hat{h}}\vert_{u_{k/2} \to u_{k/2+1}}$ by
  applying $\inv$ to each of its half-edges.

  Finally, the fact that $\bm{\hat{g}_{\eq}}$ is good follows from the fact that
  \begin{equation*}
    \hat{\ell}(u_{k/2}) = \hat{\ell} \circ \inv (u_{k/2}) = \hat{\ell}(u_{k/2 + 1}),
  \end{equation*}
  and from the fact that $\bm{\hat{h}}$ and
  $\inv( \bm{\hat{h}})$ are geodesics: the maximum of
  their labels may be attained only once, or twice at consecutive
  vertices.
\end{proof}
By the Jordan curve theorem, $\bm{\hat{g}_{\eq}}$ separates
$\hat{\map}$ into two embedded maps, whose boundary is
$\bm{\hat{g}_{\eq}}$. We denote by $\hat{\map}_{+}$ the embedded map
containing the face incident to the global root, and by
$\hat{\map}_{-}$ the other embedded map.
\begin{lemma}\label{lem:balanced}
  Let $v^{\bullet} = u_{k/2}$ and $v^{\circ} = u_{k/2 + 1}$ in
  Construction \ref{constr:equilibrium-loop}. Let
  $v_{1}, \ldots, v_{d}$ the neighbors of $v^{\circ}$ in
  $\hat{\map}_{+}$. We have for all $i \in [d]$ that
  \begin{equation*}
    \hat{\ell}(v_{i}) \geq \hat{\ell}(v^{\circ}).
  \end{equation*}
\end{lemma}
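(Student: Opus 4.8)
The plan is to deduce Lemma~\ref{lem:balanced} from the fact that $\bm{\hat{h}}$ is the \emph{leftmost} geodesic from $v^{*}$ to $\bar{v}^{*}$, together with the good-loop property of $\bm{\hat{g}_{\eq}}$; concretely, I will show that $v^{\circ}$ cannot have a neighbour of smaller label on the $\hat{\map}_{+}$ side. Write $\ell_{0} := \hat{\ell}(v^{\circ})$. By Lemma~\ref{lem:h-inv-good-loop}, $\bm{\hat{g}_{\eq}}$ is a good loop; its natural splitting into $P_{1} := \bm{\hat{h}}\vert_{v^{\bullet}\to v^{\circ}}$ and $\inv(\bm{\hat{h}})\vert_{v^{\circ}\to v^{\bullet}}$ realises $P_{1}$ as a good path, so $\hat{\ell}(v^{\bullet}) = \ell_{0}$ and, the final step of a good path being strictly decreasing, both loop-neighbours of $v^{\circ}$ have label $\ell_{0}+1$. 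Since adjacent vertices of a suitably labelled map have labels differing by at most $1$, it suffices to rule out a neighbour $v_{i}$ of $v^{\circ}$ in $\hat{\map}_{+}$ with $\hat{\ell}(v_{i}) = \ell_{0}-1$, and I argue by contradiction, fixing such a $v_{i}$.

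First I would localise the geodesic witnessing $\hat{\ell}(v_{i})$ to the side $\hat{\map}_{+}$. As recalled after Theorem~\ref{thm:bijection-labelled-mobiles}, in a suitably labelled map one has $\hat{\ell}(v) = \min_{m}\bigl(d(m,v) + \hat{\ell}(m)\bigr)$ over local minima $m$, the minimum being attained along a geodesic with strictly decreasing labels. Applied at $v_{i}$ this yields a local minimum $m_{0}$ of $\hat{\map}$ and a geodesic $\gamma$ from $v_{i}$ to $m_{0}$ all of whose labels are $\le \hat{\ell}(v_{i}) = \ell_{0}-1$. Because $\bm{\hat{g}_{\eq}}$ is a good loop with minimal label $\ell_{0}$, the path $\gamma$ meets $\bm{\hat{g}_{\eq}}$ nowhere; as $\hat{\map}$ is planar and $\bm{\hat{g}_{\eq}}$ is a simple loop separating it into $\hat{\map}_{+}$ and $\hat{\map}_{-}$, the path $\gamma$ stays on the $v_{i}$-side, i.e.\ in $\hat{\map}_{+}$, and $m_{0}\in\hat{\map}_{+}$ (hence $m_{0}$ is also a local minimum of $\hat{\map}_{+}$). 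Appending the edge $v_{i}v^{\circ}$ produces a geodesic $\gamma'$ from $m_{0}$ to $v^{\circ}$ realising $\hat{\ell}(v^{\circ}) = d(m_{0},v^{\circ})+\hat{\ell}(m_{0})$, lying in the closure of $\hat{\map}_{+}$ and entering $v^{\circ}$ through a half-edge on the $\hat{\map}_{+}$ side.

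It remains to contradict the minimality of $\bm{\hat{h}}$. Since $v^{\circ}$ is a common vertex of $\bm{\hat{h}}$ and $\inv(\bm{\hat{h}})$ it lies on $\bm{\hat{h}}$, and $\bm{\hat{h}}\vert_{v^{*}\to v^{\circ}}$ is in fact the leftmost geodesic from $v^{*}$ to $v^{\circ}$: a competitor strictly to its left, concatenated with $\bm{\hat{h}}\vert_{v^{\circ}\to\bar{v}^{*}}$, would be a walk of length $d(v^{*},\bar{v}^{*})$ --- hence automatically a geodesic --- and strictly to the left of $\bm{\hat{h}}$. Thus in the leftmost-geodesic tree $T$ rooted at $v^{*}$ the parent of $v^{\circ}$ is the vertex of $P_{1}$ preceding $v^{\circ}$, a vertex of $\bm{\hat{g}_{\eq}}$ of label $\ell_{0}+1$; moreover the ancestors of $v^{\circ}$ in $T$ that precede $v^{\bullet}$ lie in the interior of $\hat{\map}_{+}$ (they form the part of $\bm{\hat{h}}$ joining $v^{*}\in\hat{\map}_{+}$ to the loop), while the $T$-path from $v^{\bullet}$ to $v^{\circ}$ is $P_{1}$, which runs along $\bm{\hat{g}_{\eq}}$. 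Combining the geodesic $\gamma'$ (which reaches $v^{\circ}$ from within $\hat{\map}_{+}$, through the $\hat{\map}_{+}$ arc at $v^{\circ}$) with the appropriate portion of $T$, and comparing left-to-right at $v^{\circ}$ using Definition~\ref{def:left-right} and Construction~\ref{constr:local-rooting}, one obtains a geodesic from $v^{*}$ to $v^{\circ}$ --- and thence to $\bar{v}^{*}$ --- lying strictly to the left of $\bm{\hat{h}}$, a contradiction. Hence $\hat{\ell}(v_{i})\ge \ell_{0}$ for every neighbour $v_{i}$ of $v^{\circ}$ in $\hat{\map}_{+}$.

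The hard part is this last comparison: showing that a geodesic entering $v^{\circ}$ from the $\hat{\map}_{+}$ side, and staying inside $\hat{\map}_{+}$, must be strictly to the left of the detour $\bm{\hat{h}}$ makes around $P_{1}$ before entering $\hat{\map}_{-}$. This forces one to combine the Jordan separation by $\bm{\hat{g}_{\eq}}$ with the precise ``leftmost'' conventions and to track the cyclic position of the half-edge $v^{\circ}v_{i}$ relative to the local root at $v^{\circ}$; morally $\bm{\hat{h}}$ would not have circled the loop to reach $v^{\circ}$ had a route of no greater length through $\hat{\map}_{+}$ existed, but this intuition has to be converted into an inequality between two geodesics in the total order of Definition~\ref{def:left-right}.
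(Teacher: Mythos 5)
Your opening two paragraphs are correct and track the paper closely: assuming a neighbour $v_i$ of $v^{\circ}$ in $\hat{\map}_{+}$ with $\hat{\ell}(v_i) = \ell_0 - 1$, the geodesic witnessing its label has all labels below $\ell_0$, cannot touch the loop $\bm{\hat{g}_{\eq}}$ (whose minimum label is $\ell_0$), hence stays in $\hat{\map}_{+}$ and terminates at $v^{*}$, and appending the edge $v_i v^{\circ}$ gives a path of length $\ell_0 = \hat{\ell}(v^{\circ})$ from $v^{*}$ to $v^{\circ}$. The problem is what you do with this path. You attempt a comparison in the leftmost order of Definition~\ref{def:left-right} and then acknowledge that the decisive left-to-right comparison at $v^{\circ}$ is not actually carried out, so the proposal is incomplete --- but the whole detour is also unnecessary.

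What you are missing is a pure length count, not an ordering argument. Because $v^{\bullet} = \inv(v^{\circ})$ strictly precedes $v^{\circ}$ along $\bm{\hat{h}}$ and $\inv$ is an isometry, $d(\bar{v}^{*}, v^{\circ}) = d(v^{*}, v^{\bullet}) < d(v^{*}, v^{\circ})$; hence $\hat{\ell}(v^{\circ}) = \min\bigl(d(v^{*}, v^{\circ}), d(\bar{v}^{*}, v^{\circ})\bigr) = d(\bar{v}^{*}, v^{\circ}) < d(v^{*}, v^{\circ})$. Your constructed path therefore has length $\hat{\ell}(v^{\circ}) < d(v^{*}, v^{\circ}) = \#\bm{\hat{h}}\vert_{v^{*}\to v^{\circ}}$, and gluing it to $\bm{\hat{h}}\vert_{v^{\circ}\to\bar{v}^{*}}$ produces a walk from $v^{*}$ to $\bar{v}^{*}$ strictly shorter than $\bm{\hat{h}}$, contradicting that $\bm{\hat{h}}$ is a geodesic. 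This is exactly the paper's one-line conclusion and requires no appeal to leftmostness. In fact your own third paragraph already asserts, via the label formula, that $d(m_0, v^{\circ}) = \hat{\ell}(v^{\circ})$ with $m_0 = v^{*}$, which is flatly incompatible with the strict inequality just derived: the contradiction is already on the page, but instead of recognising it you launch into a comparison of two paths that you implicitly treat as having equal length, a situation that never arises.
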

\begin{proof}
  Assume that there exists $v'$ in $\hat{\map}_{+}$, a neighbor of
  $v^{\circ}$, with $\hat{\ell}(v') = \hat{\ell}(v^{\circ}) - 1$. The only
  minima of vertex labels in $\hat{\map}$ are attained at $v^{*}$ and
  $\bar{v}^{*}$. Hence, there is a geodesic with strictly decreasing
  vertex labels, of length $\hat{\ell}(v')$, from $v'$ to one of $v^{*}$
  or $\bar{v}^{*}$. Since the labels are strictly decreasing, the
  geodesic may not cress the curve $\bm{\hat{g}_{\eq}}$, so the
  geodesic goes to $v^{*}$. Hence, we can construct a path of length
  $\ell(v^{\circ})$ to $v^{*}$. This contradicts the fact that
  $\bm{\hat{h}}$ is a geodesic.
\end{proof}

Let us now take $(\hat{\map}, \hat{\ell})$ to be the glued map,
constructed from $(\tilde{\map}, \tilde{\ell})$ and the leftmost good
geodesic $\bm{\tilde{g}}$ as defined in Section \ref{sec:leftm-good-geod}. The path
$\bm{\tilde{g}}$ corresponds to a good loop $\bm{\hat{g}}$ in
$\hat{\map}$. We may identify edges of $\tilde{\map}$ that are not on
$\bm{\tilde{g}}$, to edges in the embedded map $\hat{\map}_{+}$. In
particular,
\begin{equation*}
  \bm{\tilde{h}}\vert_{v^{*} \to v^{\bullet}}
\end{equation*}
can be seen as a path in $\hat{\map}_{+}$. The path $\bm{\hat{g}}$ can
be written $\bm{\hat{g}} = \bm{\hat{g}_{1}}\sqcup \bm{\hat{g}_{2}}$,
with $\bm{\hat{g}_{1}}$ starting at $v^{\bullet}$. We can thus see $\bm{\tilde{h}}$ as being embedded in $\hat{\map}_{+}$:
\begin{equation*}
  \bm{\tilde{h}} = \bm{\tilde{h}}\vert_{v^{*} \to v^{\bullet}} \sqcup \bm{\hat{g}_{1}} \quad \text{ in } \hat{\map}_{+}.
\end{equation*}

\begin{prop}\label{prop:g-tile-eq-loop}
  The loop $\bm{\hat{g}}$ is the equilibrium loop $\bm{\hat{g}_{\eq}}$
  of $\hat{\map}$.
\end{prop}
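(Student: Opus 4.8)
The plan is to make the leftmost geodesic $\bm{\hat{h}}$ of $\hat{\map}$ completely explicit and feed it into Construction~\ref{constr:equilibrium-loop}. Write $a=\tilde{\ell}(v^{\circ})$, $m=\#\bm{\tilde{g}}$, and $\bm{h}_{+}=\bm{\tilde{h}}\vert_{v^{*}\to v^{\bullet}}$, so that $\bm{\tilde{h}}=\bm{h}_{+}\sqcup\bm{\tilde{g}}$ in $\tilde{\map}$. Since $v^{\bullet}$ is the only vertex of $\bm{\tilde{h}}$ other than $v^{\circ}$ with label $a$, the labels along $\bm{h}_{+}$ run $0,1,\dots,a$ strictly increasingly, so $\bm{h}_{+}$ is a geodesic of length $a$; being a subpath of the geodesic $\bm{\tilde{g}}$ (Lemma~\ref{lem:chosen-good-g}), $\bm{\tilde{g}}$ is isometrically embedded in $\tilde{\map}$ and $d_{\tilde{\map}}(v^{*},v^{\circ})=a+m$. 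I will use the structure recorded just before the statement: $\bm{\hat{g}}$ splits $\hat{\map}$ into $\hat{\map}_{+}$ (which is $\tilde{\map}$ cut along $\bm{\tilde{g}}$ and contains the root $v^{*}$) and its mirror $\hat{\map}_{-}$; the matching $\inv$ swaps $\hat{\map}_{\pm}$, fixes labels, and sends $v^{*}$ to $\bar{v}^{*}$ and $v^{\bullet}$ to $v^{\circ}$; and $\bm{\hat{g}}=\bm{\hat{g}_{1}}\sqcup\bm{\hat{g}_{2}}$ with $\bm{\hat{g}_{2}}=\inv(\bm{\hat{g}_{1}})$, where $\bm{\hat{g}_{1}}$ is the copy of $\bm{\tilde{g}}$ in $\hat{\map}_{+}$ running $v^{\bullet}\to v^{\circ}$ and $\bm{\tilde{h}}=\bm{h}_{+}\sqcup\bm{\hat{g}_{1}}$ there.

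The central claim is that the leftmost geodesic of $\hat{\map}$, from $v^{*}$ to $\bar{v}^{*}$, is
\begin{equation*}
  \bm{\hat{h}} = \bm{h}_{+}\sqcup\bm{\hat{g}_{1}}\sqcup\inv(\bm{h}_{+})^{-1},
\end{equation*}
a path $v^{*}\to v^{\bullet}\to v^{\circ}\to\bar{v}^{*}$ of length $2a+m$. I would first settle the distance $d_{\hat{\map}}(v^{*},\bar{v}^{*})=2a+m$. The inequality $\le$ is this path. For $\ge$, note that cutting a planar map along a geodesic does not shorten graph distances (the obvious graph epimorphism identifies the two copies of each split vertex maps), so $\bm{h}_{+},\bm{\hat{g}_{1}}$ survive isometrically in $\hat{\map}_{+}$, whence by a few triangle inequalities $d_{\hat{\map}_{+}}(v^{*},w)=a+\delta_{\bm{\hat{g}}}(v^{\bullet},w)$ for $w\in\bm{\hat{g}}$, with $\delta_{\bm{\hat{g}}}$ the along-the-loop distance. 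By mirror symmetry the same identity gives $d_{\hat{\map}_{-}}(w,\bar{v}^{*})=d_{\hat{\map}_{+}}(\inv(w),v^{*})=a+\delta_{\bm{\hat{g}}}(v^{\circ},w)$. Any path $v^{*}\to\bar{v}^{*}$ crosses $\bm{\hat{g}}$; breaking it at its first and last vertices $x,y$ on $\bm{\hat{g}}$, the three pieces lie in $\overline{\hat{\map}_{+}}$, in $\hat{\map}$, and in $\overline{\hat{\map}_{-}}$, so its length is at least $\bigl(a+\delta_{\bm{\hat{g}}}(v^{\bullet},x)\bigr)+\delta_{\bm{\hat{g}}}(x,y)+\bigl(a+\delta_{\bm{\hat{g}}}(v^{\circ},y)\bigr)\ge 2a+\delta_{\bm{\hat{g}}}(v^{\bullet},v^{\circ})=2a+m$, once one knows $\bm{\hat{g}}$ itself is isometrically embedded in $\hat{\map}$ — which follows by straightening any excursion of a shortest path into $\hat{\map}_{-}$ using that $\hat{\map}_{-}$ is the mirror of $\hat{\map}_{+}$. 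With the length pinned, I would identify $\bm{\hat{h}}$ by transporting the ``leftmost path'' order of $\tilde{\map}$ through the cut: the local root at $v^{*}$ is unchanged, so the leftmost geodesic follows $\bm{\tilde{h}}=\bm{h}_{+}\sqcup\bm{\hat{g}_{1}}$ until it is forced off $\hat{\map}_{+}$ at $v^{\circ}$, and since $\inv$ reverses orientation its continuation into $\hat{\map}_{-}$ must be the $\inv$-image of the leftmost continuation in $\hat{\map}_{+}$, namely $\inv(\bm{h}_{+})^{-1}$.

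Granting this form of $\bm{\hat{h}}$, the rest is bookkeeping in Construction~\ref{constr:equilibrium-loop}. From $\inv^{2}=\Id$ and $\inv(\bm{\hat{g}_{1}})=\bm{\hat{g}_{2}}$ we get $\inv(\bm{\hat{h}})=\inv(\bm{h}_{+})\sqcup\bm{\hat{g}_{2}}\sqcup\bm{h}_{+}^{-1}$, running $\bar{v}^{*}\to v^{\circ}\to v^{\bullet}\to v^{*}$. Writing $\bm{h}_{+}=(v^{*}=x_{0},x_{1},\dots,x_{a}=v^{\bullet})$, the vertices lying on both $\bm{\hat{h}}$ and $\inv(\bm{\hat{h}})$ are exactly $x_{0},\dots,x_{a}$ together with $\inv(x_{0}),\dots,\inv(x_{a})$: the interiors of the two halves $\bm{\hat{g}_{1}},\bm{\hat{g}_{2}}$ of the simple loop $\bm{\hat{g}}$ are disjoint, and the $x_{j}$ and $\inv(x_{j})$ are all distinct because $\inv$ has no fixed vertex and swaps the interiors of $\hat{\map}_{\pm}$. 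In the order met by $\bm{\hat{h}}$ these common vertices are $u_{1}=v^{*}=x_{0},\dots,u_{a+1}=x_{a}=v^{\bullet},\ u_{a+2}=\inv(x_{a})=v^{\circ},\dots,u_{2a+2}=\inv(x_{0})=\bar{v}^{*}$, so $k=2a+2$ and $u_{k/2}=v^{\bullet}$, $u_{k/2+1}=v^{\circ}$. Therefore
\begin{equation*}
  \bm{\hat{g}_{\eq}} = \bm{\hat{h}}\vert_{v^{\bullet}\to v^{\circ}}\sqcup\inv(\bm{\hat{h}})\vert_{v^{\circ}\to v^{\bullet}} = \bm{\hat{g}_{1}}\sqcup\bm{\hat{g}_{2}} = \bm{\hat{g}},
\end{equation*}
which is the claim.

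The main obstacle is the second paragraph. The distance computation is comparatively clean once framed around the two inputs above (cutting along a geodesic never creates shortcuts, and the mirror symmetry of $\hat{\map}$), but I expect the genuinely delicate point to be proving that $\bm{h}_{+}\sqcup\bm{\hat{g}_{1}}\sqcup\inv(\bm{h}_{+})^{-1}$ is the \emph{leftmost} geodesic: this requires following the order on paths through the opening of the slit, the mirror operation, and the gluing — in particular checking that the root of $\hat{\map}$ is the one inherited from $\tilde{\map}$ and that $\hat{\map}_{-}$ contributes only geodesics lying to the right of those of $\hat{\map}_{+}$.
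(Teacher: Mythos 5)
Your plan is in the same spirit as the paper's: compute the distance $d_{\hat{\map}}(v^*,\bar{v}^*)$, show that the leftmost geodesic $\bm{\hat{h}}$ coincides with $\bm{\tilde{h}}=\bm{h}_{+}\sqcup\bm{\hat{g}_{1}}$ up to $v^{\circ}$, then read off the equilibrium loop from Construction~\ref{constr:equilibrium-loop}. The distance computation is also handled in the paper (as the ``claim'' that $\bm{\tilde{h}'}=\bm{\tilde{h}}\sqcup\inv(\bm{\tilde{h}}\vert_{v^*\to v^\bullet})$ is a geodesic, proved via the two inequalities \eqref{eq:proof-geq-1}--\eqref{eq:proof-geq-2}); your crossing argument through the Jordan curve is a reasonable alternative, though it silently uses that cutting along a geodesic is $1$-Lipschitz and that $\bm{\hat{g}}$ is isometrically embedded, both of which deserve a line of justification rather than parenthetical assertion.

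The genuine gap is the identification of the leftmost geodesic, and you flag it yourself as the obstacle but then dispose of it in one sentence: ``the leftmost geodesic follows $\bm{\tilde{h}}$ until it is forced off $\hat{\map}_+$ at $v^\circ$, and ... its continuation must be $\inv(\bm{h}_{+})^{-1}$.'' Neither half of this is obvious, and the second half is a strictly stronger claim than what is actually needed: the paper only establishes $\bm{\hat{h}}\vert_{v^*\to v^\circ}=\bm{\tilde{h}}$ and is silent about the continuation past $v^\circ$. The real content is (i) Lemma~\ref{lem:h-tilde-leftmost-geod}, which shows $\bm{\tilde{h}}$ is the leftmost geodesic $v^*\to v^\circ$ in $\hat{\map}$ itself, not merely in $\hat{\map}_+$ — and its proof is a four-case excursion analysis, because a candidate geodesic can dive into $\hat{\map}_-$, tie in length, and still lose on leftmost-ness only after surgery; and (ii) the contradiction argument in the Proposition's proof, which uses the claim about $\bm{\tilde{h}'}$ together with Lemma~\ref{lem:h-tilde-leftmost-geod} to force the divergence vertex $u$ to coincide with $v^\circ$. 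Your one-line appeal to ``$\hat{\map}_-$ contributes only geodesics lying to the right'' is precisely the statement that the case analysis in Lemma~\ref{lem:h-tilde-leftmost-geod} proves, so it cannot be taken as a given. As written, the proposal names the right target but does not close it.
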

The proof relies on the following lemma concerning $\bm{\tilde{h}}$.
\begin{lemma}\label{lem:h-tilde-leftmost-geod}
  The path $\bm{\tilde{h}}$ is the leftmost geodesic in $\hat{\map}$
  between $v^{*}$ and $v^{\circ}$.
\end{lemma}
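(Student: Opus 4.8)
The plan is the following. As explained just above, $\bm{\tilde{h}}$ is realised inside $\hat{\map}$ as the path $\bm{\tilde{h}}\vert_{v^{*}\to v^{\bullet}}\sqcup\bm{\hat{g}_{1}}$ lying in the embedded submap $\hat{\map}_{+}$, running from the root $v^{*}$ to $v^{\circ}$ and having length $|\bm{\tilde{h}}| = d_{\tilde{\map}}(v^{*},v^{\circ})$. Two tools will be used throughout. First, closing the slit (the left inverse of Construction \ref{constr:open-slit}, as in the proof of Proposition \ref{prop:bij-first-constr}) gives a graph morphism $q_{+}\colon\hat{\map}_{+}\to\tilde{\map}$ which is the identity away from $\bm{\tilde{g}}$, sends the two copies in $\hat{\map}_{+}$ of each interior vertex of $\bm{\tilde{g}}$ to that vertex, and fixes $v^{*}$, $v^{\bullet}$ and $v^{\circ}$; write $v^{\bullet}=x_{0},x_{1},\ldots,x_{l}=v^{\circ}$ for the vertices of $\bm{\tilde{g}}$, which, being a subpath of the geodesic $\bm{\tilde{h}}$, satisfy $d_{\tilde{\map}}(x_{i},x_{j})=|i-j|$. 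Second, by Lemma \ref{lem:prop-inv} the involution $\inv$ is a graph automorphism of $\hat{\map}$ exchanging $\hat{\map}_{+}$ with $\hat{\map}_{-}=\inv(\hat{\map}_{+})$; it exchanges the two sides of the slit and the two endpoints $v^{\bullet}$, $v^{\circ}$ of $\bm{\tilde{g}}$, so $q_{+}\circ\inv$ is a graph morphism $\hat{\map}_{-}\to\tilde{\map}$ sending a vertex of the equator $\bm{\hat{g}}$ that $q_{+}$ maps to $x_{i}$ to the vertex $x_{l-i}$. I must prove (a) that $\bm{\tilde{h}}$ is a geodesic of $\hat{\map}$ and (b) that it is the leftmost one.

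For (a), the bound $d_{\hat{\map}}(v^{*},v^{\circ})\le|\bm{\tilde{h}}|$ is immediate. For the reverse, let $\bm{p}$ be any path from $v^{*}$ to $v^{\circ}$ in $\hat{\map}$ and cut it along the equator: write $\bm{p}=\bm{p}_{1}\sqcup\cdots\sqcup\bm{p}_{k}$ with each $\bm{p}_{m}$ a path lying entirely in $\hat{\map}_{+}$ or entirely in $\hat{\map}_{-}$, consecutive pieces lying in different submaps and meeting at equator vertices $a_{1},\ldots,a_{k-1}$; set $q_{+}(a_{m})=x_{\iota_{m}}$. Pushing the pieces forward by $q_{+}$ or by $q_{+}\circ\inv$ according as they lie in $\hat{\map}_{+}$ or $\hat{\map}_{-}$, and using that graph morphisms do not increase distances together with $d_{\tilde{\map}}(x_{i},x_{j})=|i-j|$ and the antipodal identity above (which makes a piece in $\hat{\map}_{-}$ obey the same bound as one in $\hat{\map}_{+}$), one obtains $|\bm{p}_{1}|\ge d_{\tilde{\map}}(v^{*},x_{\iota_{1}})$, $|\bm{p}_{m}|\ge|\iota_{m-1}-\iota_{m}|$ for $1<m<k$, and $|\bm{p}_{k}|\ge l-\iota_{k-1}$. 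Adding these, bounding the telescoping middle sum below by $|\iota_{1}-\iota_{k-1}|$, and then using $d_{\tilde{\map}}(v^{*},x_{\iota_{1}})\ge d_{\tilde{\map}}(v^{*},v^{\circ})-(l-\iota_{1})$ (because $x_{\iota_{1}}$ lies on the geodesic $\bm{\tilde{h}}$ from $v^{*}$ to $v^{\circ}$), one gets $|\bm{p}|\ge d_{\tilde{\map}}(v^{*},v^{\circ})=|\bm{\tilde{h}}|$. The degenerate cases $k=1$ and $\tilde{\ell}(v^{\circ})=0$ (so that $v^{*}=v^{\bullet}$ lies on the equator) are covered by exactly the same estimate.

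For (b), note that on $\hat{\map}_{+}$ away from $\bm{\tilde{g}}$ — in particular near $v^{*}$ — the morphism $q_{+}$ is a local isomorphism preserving cyclic orders and taking the root of $\hat{\map}$ to the root of $\tilde{\map}$, so it identifies the left-to-right order of the paths issued from $v^{*}$ that stay in $\hat{\map}_{+}$ with the corresponding order in $\tilde{\map}$. As $\bm{\tilde{h}}$ is the leftmost geodesic of $\tilde{\map}$ from $v^{*}$ to $v^{\circ}$ and, by (a), $q_{+}$ maps geodesics of $\hat{\map}_{+}$ to geodesics of $\tilde{\map}$, every geodesic of $\hat{\map}$ from $v^{*}$ to $v^{\circ}$ that remains in $\hat{\map}_{+}$ is at least the realisation of $\bm{\tilde{h}}$ in the left order; and any geodesic that leaves $\hat{\map}_{+}$ must cross the equator into $\hat{\map}_{-}$, which by the choice of $\bm{\tilde{g}}$ as the leftmost good geodesic (Section \ref{sec:leftm-good-geod}) and of which side of the slit is mirrored in the gluing of Section \ref{sec:cutt-gluing-suit} lies to the right of $\bm{\hat{g}_{1}}$, and is therefore also to the right of the realisation of $\bm{\tilde{h}}$. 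Hence $\bm{\tilde{h}}$ is the leftmost geodesic from $v^{*}$ to $v^{\circ}$ in $\hat{\map}$.

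The metric estimate (a) is routine once $q_{+}$, $q_{+}\circ\inv$ and the antipodal identity are set up. The main obstacle is (b): one has to check carefully that $q_{+}$ transports the left order faithfully through the duplicated vertices of the slit, and that an excursion of a geodesic into $\hat{\map}_{-}$ is genuinely \emph{to the right} of $\bm{\hat{g}_{1}}$, which is exactly where the specific choices of Sections \ref{sec:cutt-gluing-suit} and \ref{sec:leftm-good-geod} — the leftmost good geodesic and the orientation of the mirror gluing — enter.
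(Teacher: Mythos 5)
Your part (a) is a clean and self-contained argument: the push-forward under the closing map $q_{+}$ on $\hat{\map}_{+}$ and under $q_{+}\circ\inv$ on $\hat{\map}_{-}$, together with the antipodal identity and a telescoping estimate, shows directly that $\bm{\tilde{h}}$ is a geodesic of $\hat{\map}$. This packaging is genuinely tidier than the way the geodesic property is established inside the paper's single contradiction scheme, and it is correct.

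Part (b), however, contains a genuine gap, which you yourself identify as ``the main obstacle'' and then do not close. You argue that any geodesic that leaves $\hat{\map}_{+}$ crosses the equator into $\hat{\map}_{-}$, that $\hat{\map}_{-}$ lies to the right of $\bm{\hat{g}_{1}}$, and that such a geodesic is ``therefore also to the right of $\bm{\tilde{h}}$.'' That inference does not follow. The left/right order between two paths from $v^{*}$ is decided at their \emph{first} divergence vertex, which need not lie on the equator: a competing geodesic $\bm{\tilde{h}'}$ can branch off \emph{to the left} of $\bm{\tilde{h}}$ at an interior vertex of $\hat{\map}_{+}$ and only afterwards excurse into $\hat{\map}_{-}$. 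The side of the slit that the excursion later uses is then irrelevant for the ordering, so ``$\hat{\map}_{-}$ is to the right of $\bm{\hat{g}_{1}}$'' cannot by itself rule out a geodesic strictly to the left of $\bm{\tilde{h}}$. What one actually needs — and what the paper's proof supplies — is a \emph{reflection} argument: take the segment of the competitor after its first meeting $u'$ with the equator, apply $\inv$ (and splice in the appropriate arc $\bm{\hat{g}_{u'+}}$ of the equator) to fold it back into $\hat{\map}_{+}$ without increasing its length, and then show the resulting path contradicts either the geodesic property of $\bm{\tilde{h}}$ in $\hat{\map}_{+}$ or its leftmostness. The paper does this via the paths $\bm{\tilde{h}^{(1)}}$ and $\bm{\tilde{h}^{(2)}}$ and a case analysis on whether the first excursion from $u$ enters $\hat{\map}_{+}$ or $\hat{\map}_{-}$ and on whether inequality \eqref{eq:ineq-pf-lem-geod} is strict. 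A secondary, smaller issue is that your claim that $q_{+}$ ``identifies the left-to-right order'' of paths in $\hat{\map}_{+}$ with that in $\tilde{\map}$ is only obvious at vertices off the equator; at a vertex on $\bm{\hat{g}}$ the corner structure is halved under $q_{+}$, so this too needs an argument. In short: part (a) is fine and a nice simplification, but part (b) as written asserts the crucial ``can't cross left'' step rather than proving it, and it is precisely this step that requires the reflection-and-case-analysis mechanism of the paper's proof.
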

For the proofs of both Lemma \ref{lem:h-tilde-leftmost-geod} and Proposition \ref{prop:g-tile-eq-loop}, we use the following paths. Given a vertex $u$ in $\bm{\hat{g}}$, we set
\begin{equation*}
  \bm{\hat{g}_{u-}} = \begin{cases}
    \bm{\hat{g}_{1}}\vert_{v^{\bullet} \to u} &\text{ if $u$ is in } \bm{\hat{g}_{1}}\\
    \bm{\hat{g}_{2}^{-1}}\vert_{v^{\bullet} \to u} &\text{ if $u$ is in } \bm{\hat{g}_{2}},\\
  \end{cases}
  \quad \text{ and } \quad \bm{\hat{g}_{u+}} = \begin{cases}
    \bm{\hat{g}_{1}}\vert_{u \to v^{\circ}} &\text{ if $u$ is in } \bm{\hat{g}_{1}}\\
    \bm{\hat{g}_{2}^{-1}}\vert_{u \to v^{\circ}} &\text{ if $u$ is in } \bm{\hat{g}_{2}}.\\
  \end{cases}
\end{equation*}
Note that
$\# \bm{\hat{g}_{u'-}} + \# \bm{\hat{g}_{u'-}} = \# \bm{\hat{g}_{1}} = \# \bm{\hat{g}_{2}}$.

\begin{proof}[Proof of Lemma \ref{lem:h-tilde-leftmost-geod}]
  Notice first that $\bm{\tilde{h}}$ is the leftmost geodesic in
  $\hat{\map}_{+}$ between $v^{*}$ and $v^{\circ}$. Let $\bm{\tilde{h}'}$
  be the leftmost good geodesic in $\hat{\map}$ between $v^{*}$ and
  $v^{\circ}$. We assume that $\bm{\tilde{h}} \neq \bm{\tilde{h}'}$. We show
  that in that case, we can construct a path in $\hat{\map}_{+}$ that
  is either shorter or to the left of $\bm{\tilde{h}}$. This will
  contradict the fact that $\bm{\tilde{h}}$ is the leftmost good
  geodesic.

  Let $u$ be the last vertex
  such that
  \begin{equation*}
    \bm{\tilde{h}}\vert_{v^{*} \to u} = \bm{\tilde{h}'}\vert_{v^{*} \to u},
  \end{equation*}
  and $u'$ be the first vertex strictly after $u$ along
  $\bm{\tilde{h}'}$ that is in $\bm{\hat{g}}$. Since we assumed that
  $\bm{\tilde{h}} \neq \bm{\tilde{h}'}$, we have that $u \neq v^{\circ}$
  and $u'$ is well-defined.

  We consider the path $\bm{\tilde{h}'}\vert_{v^{*}\to u'}$. Since $\bm{\tilde{h}'}$ is a geodesic,
  \begin{equation}\label{eq:ineq-pf-lem-geod}
    \# \bm{\tilde{h}'}\vert_{v^{*}\to u'} \leq \# \left( \bm{\tilde{h}}\vert_{v^{*}\to v^{\bullet}} \sqcup \bm{\hat{g}_{u'-}} \right).
  \end{equation}
  There are two situations, depending on whether
  $\bm{\tilde{h}'}\vert_{u \to u'}$ is contained in $\hat{\map}_{+}$
  or $\hat{\map}_{-}$. Assume we are in the former case, as depicted in Figure \ref{fig:geodesic-plus}.
  \begin{figure}[htbp]
    \centering
    \includegraphics[width=0.7\textwidth]{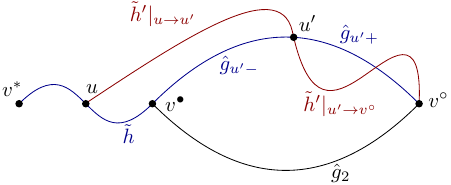}
    \caption{\label{fig:geodesic-plus} The case where $\bm{\tilde{h}'}\vert_{u \to u'}$ is in $\hat{\map}_{+}$.}
  \end{figure}

  If the inequality \eqref{eq:ineq-pf-lem-geod} is
  strict, then
  \begin{equation*}
    \bm{\tilde{h}'}\vert_{v^{*}\to u'}\sqcup \bm{\hat{g}_{u'+}}
  \end{equation*}
  is strictly shorter than $\bm{\tilde{h}}$, this contradict the fact
  that $\bm{\tilde{h}}$ is a geodesic in $\hat{\map}_{+}$. Thus, we assume there is
  equality in \eqref{eq:ineq-pf-lem-geod}. If
  $\bm{\tilde{h}'}\vert_{v^{*}\to u'}$ is at the left of
  $\bm{\tilde{h}}$, then
  \begin{equation*}
    \bm{\tilde{h}'}\vert_{v^{*} \to u'} \sqcup \bm{\hat{g}_{u'+}}
  \end{equation*}
  is a geodesic form $v^{*}$ to $v^{\circ}$, contained in
  $\hat{\map}_{+}$, at the left of $\bm{\tilde{h}}$. This is a
  contradiction. If $\bm{\tilde{h}'}\vert_{v^{*}\to u'}$ is at the
  right of $\bm{\tilde{h}}$, then $\bm{\tilde{h}'}$ is at the right of $\bm{\tilde{h}}$ and necessarily $\# \bm{\tilde{h}'}\vert_{u' \to v^{\circ}} < \# \bm{\hat{g}_{u'+}}$. We may thus define
  \begin{equation*}
    \bm{\tilde{h}^{(1)}} = \bm{\tilde{h}}\vert_{v^{*} \to v^{\bullet}} \sqcup \bm{\hat{g}_{u'-}} \sqcup \bm{\tilde{h}'}\vert_{u' \to v^{\circ}}.
  \end{equation*}
  It is a geodesic from $v^{*}$ to $v^{\circ}$, at the left of
  $\bm{\tilde{h}}'$. This contradicts the fact that $\bm{\tilde{h}'}$
  is the leftmost geodesic from $v^{*}$ to $v^{\circ}$. Thus,
  $\bm{\tilde{h}'}\vert_{u \to u'}$ cannot be in $\hat{\map}_{+}$.

  We now assume that $\bm{\tilde{h}'}\vert_{u \to u'}$ is in
  $\hat{\map}_{-}$. It implies in particular that $u$ is in
  $\bm{\hat{g}_{1}}$. This situation is depicted in Figures \ref{fig:geodesic-minus-less} and \ref{fig:geodesic-minus-eq}.

  \begin{figure}[htbp]
    \centering
    \begin{minipage}{0.48\textwidth}
        \centering
        \includegraphics[width=0.95\textwidth]{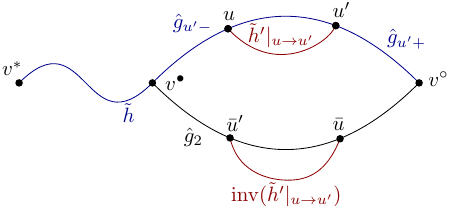} 
        \caption{\label{fig:geodesic-minus-less} The case where $\bm{\tilde{h}'}\vert_{u \to u'}$ is in $\hat{\map}_{-}$, and $u'$ is in $\bm{\hat{g}}_{1}$.}
    \end{minipage}\hfill
    \begin{minipage}{0.48\textwidth}
        \centering
        \includegraphics[width=0.95\textwidth]{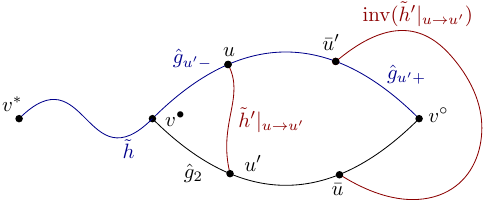} 
        \caption{\label{fig:geodesic-minus-eq} The case where
          $\bm{\tilde{h}'}\vert_{u \to u'}$ is in $\hat{\map}_{-}$,
          and $u'$ is in $\bm{\hat{g}}_{2}$.}
    \end{minipage}
\end{figure}

  The inequality \eqref{eq:ineq-pf-lem-geod} holds also in this case. If \eqref{eq:ineq-pf-lem-geod} is strict, then
  \begin{equation*}
    \bm{\tilde{h}^{(2)}} = \bm{\tilde{h}}\vert_{v^{*} \to v^{\bullet}} \sqcup \inv \left( \bm{\tilde{h}'}\vert_{v^{\bullet} \to u'}\sqcup \bm{\hat{g}_{u'+}} \right)
  \end{equation*}
  is a path from $v^{*}$ to $v^{\circ}$, contained in
  $\hat{\map}_{+}$, and strictly shorter than $\bm{\tilde{h}}$. This
  contradicts the fact that $\bm{\tilde{h}}$ is a geodesic in
  $\hat{\map}_{+}$. Hence, \eqref{eq:ineq-pf-lem-geod} is an equality. In that case, if $u'$
  is in $\bm{\hat{g}_{1}}$, we see that
  \begin{equation*}
    \# \bm{\tilde{h}}\vert_{v^{*} \to u'} = \# \bm{\tilde{h}'}\vert_{v^{*} \to u'},
  \end{equation*}
  and that $\bm{\tilde{h}}$ is at the left of $\bm{\tilde{h}'}$ at
  $u$. We can thus construct a path of the same length as
  $\bm{\tilde{h}'}$ that is to the left of $\bm{\tilde{h}'}$. This is
  a contradiction. If $u'$ is in $\bm{\hat{g}_{2}}$, then
  $\bm{\tilde{h}^{(2)}}$ is at the left of $\bm{\tilde{h}}$, is of the
  same length, and is contained in $\hat{\map}_{+}$. This contradicts
  the fact that $\bm{\tilde{h}}$ is the leftmost geodesic from $v^{*}$
  to $v^{\circ}$.

  We thus necessarily have that $\bm{\tilde{h}} = \bm{\tilde{h}'}$.
\end{proof}
\begin{proof}[Proof of Proposition \ref{prop:g-tile-eq-loop}]
  Lemma \ref{lem:h-tilde-leftmost-geod} implies that $\bm{\tilde{h}}$
  is the leftmost geodesic from the root vertex $v^{*}$ to $v^{\circ}$.

  Consider now the leftmost geodesic $\bm{\hat{h}}$ from $v^{*}$ to
  $\bar{v}^{*} = \inv(v^{*})$. Let $u$ be the last vertex such that
  \begin{equation*}
    \bm{\tilde{h}}\vert_{v^{*} \to u} = \bm{\hat{h}}\vert_{v^{*} \to u}.
  \end{equation*}
  Assume that $u \neq v^{\circ}$ and let $u' \neq u$ be the last vertex along
  $\bm{\hat{h}}$ that is both in $\bm{\hat{g}}$ and
  $\bm{\hat{h}}$.

  \begin{figure}[ht] \centering
    \includegraphics[width=0.6\textwidth]{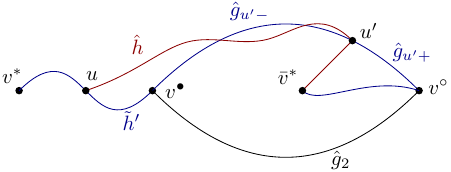}
    \caption{\label{fig:proof-g-geq} The different paths involved in the proof of Proposition \ref{prop:g-tile-eq-loop}.}
  \end{figure}

  We define the path
  \begin{equation*}
    \bm{\tilde{h}'} = \bm{\tilde{h}} \sqcup \inv(\bm{\tilde{h}}\vert_{v^{*} \to v^{\bullet}}).
  \end{equation*}
  This is a path from $v^{*}$ to $\bar{v}^{*}$.

  We claim that $\bm{\tilde{h}'}$ is a geodesic from $v^{*}$ to
  $\bar{v}^{*}$. Assuming the claim for now, we observe that
  $\bm{\hat{h}}$ is at the left of $\bm{\tilde{h}'}$. Thus, the path
  \begin{equation*}
    \bm{\hat{h}'} = \bm{\hat{h}}\vert_{v^{*} \to u'} \sqcup \bm{\hat{g}_{u'+}}
  \end{equation*}
  is at the left of $\bm{\tilde{h}'}$ as well. However,
  \begin{equation*}
    \# \bm{\hat{h}'} = \# \bm{\hat{h}}\vert_{v^{*} \to u'} + \# \bm{\hat{g}_{u'+}} \leq \# \left( \bm{\tilde{h}}\vert_{v^{*} \to v^{\bullet}} \sqcup \bm{\hat{g}_{u'-}}\right) + \# \bm{\hat{g}_{u'+}} = \# \bm{\tilde{h}}.
  \end{equation*}
  Hence, $\bm{\hat{h}'}$ is a geodesic from $v^{*}$ to $v^{\circ}$ on
  the left of $\bm{\tilde{h}}$. As $\bm{\tilde{h}}$ is the leftmost
  geodesic from $v^{*}$ to $v^{\circ}$, we have
  $\bm{\tilde{h}} = \bm{\hat{h}'}$. This implies that
  $\bm{\tilde{h}}\vert_{v^{*} \to u'} = \bm{\hat{h}}\vert_{v^{*} \to u'}$.
  This can only be satisfied if $u = u' = v^{\circ}$, i.e.\ if
  $\bm{\tilde{h}} = \bm{\hat{h}}\vert_{v^{*} \to v^{\circ}}$. This
  immediately implies that $v^{\bullet}$ is part of $\bm{\hat{h}}$ and
  \begin{equation*}
    \bm{\tilde{h}}\vert_{v^{\bullet} \to v^{\circ}} = \bm{\hat{h}}\vert_{v^{\bullet} \to v^{\circ}}.
  \end{equation*}
  Hence, we deduce that $\bm{\hat{g}} = \bm{\hat{g}_{\eq}}$.

  We now prove the claim. We proceed in two parts: we first prove that
  \begin{equation}\label{eq:proof-geq-1}
    \# \bm{\hat{h}}\vert_{v^{*} \to u'} = \# \left( \bm{\tilde{h}}\vert_{v^{*} \to v^{\bullet}} \sqcup \bm{\hat{g}_{u'-}}\right),
  \end{equation}
  and then that
  \begin{equation}\label{eq:proof-geq-2}
    \# \bm{\hat{h}}\vert_{u' \to \bar{v}^{*}} = \# \left( \bm{\hat{g}_{u'+}}\sqcup \bm{\tilde{h}'}\vert_{v^{\circ}\to \bar{v}^{*}}\right).
  \end{equation}
  This will imply the claim:
  \begin{equation*}
    \# \bm{\hat{h}}
    = \# \bm{\hat{h}}\vert_{v^{*} \to u'} + \# \bm{\hat{h}}\vert_{u' \to \bar{v}^{*}}
    = \# \bm{\tilde{h}}\vert_{v^{*} \to v^{\bullet}} + \underbrace{\# \bm{\hat{g}_{u'-}} + \# \bm{\hat{g}_{u'+}}}_{= \# \bm{\hat{g}_{1}}} + \#\bm{\tilde{h}'}\vert_{v^{\circ}\to \bar{v}^{*}}
    = \# \bm{\tilde{h}'}.
  \end{equation*}

  To prove \eqref{eq:proof-geq-1}, we notice that since $\bm{\hat{h}}$
  is a geodesic,
  \begin{equation*}
    \# \bm{\hat{h}}\vert_{v^{*} \to u'} \leq \# \left( \bm{\tilde{h}}\vert_{v^{*} \to v^{\bullet}} \sqcup \bm{\hat{g}_{u'-}}\right).
  \end{equation*}
  However, if the inequality were strict, we could construct
  \begin{equation*}
    \bm{\hat{h}^{(1)}} = \bm{\hat{h}}\vert_{v^{*} \to u'} \sqcup \bm{\hat{g}_{u'+}},
  \end{equation*}
  a path from $\bm{v^{*}}$ to $v^{\circ}$ that is strictly shorter than
  $\bm{\tilde{h}}$. This would contradict the fact that
  $\bm{\tilde{h}}$ is a geodesic.

  To prove \eqref{eq:proof-geq-2}, we proceed similarly: $\bm{\hat{h}}$
  being a geodesic implies
  \begin{equation*}
    \# \bm{\hat{h}}\vert_{u' \to \bar{v}^{*}} \leq \# \left( \bm{\hat{g}_{u'+}} \sqcup \bm{\hat{h}}\vert_{v^{\bullet}\to\bar{v}^{*}}\right).
  \end{equation*}
  If the inequality were strict, the path
  \begin{equation*}
    \bm{\hat{h}^{(2)}} = \inv \left( \bm{\hat{g}_{u'-}} \sqcup \bm{\hat{h}}\vert_{u'\to\bar{v}^{*}} \right)
  \end{equation*}
  would be a path from $v^{*}$ to $v^{\circ}$ that is strictly shorter than $\bm{\tilde{h}}$.
  This conclude the proof of the claim.
\end{proof}

\subsection{The mapping for maps on $\RP^2$}
\label{sec:mapping-maps-rp2}

We can now give the full construction.
\begin{constr}\label{constr:suitably-to-no}
  Consider a suitably labelled map $(\tilde{\map}, \tilde{\ell})$ with
  two local minima. Choose a path $\bm{\tilde{g}}$ as in Section
  \ref{sec:leftm-good-geod}. Construct the glued map
  $(\hat{\map}, \hat{\ell})$ as in Section \ref{sec:cutt-gluing-suit}.
  This map comes equipped with an orientation-reversing matching
  $\inv$. Using the construction of Section
  \ref{sec:maps-orient-cover}, we see $\hat{\map}$ as a map on the
  orientation covering of a map $\map$ on the projective plane
  $\RP^{2}$. This map is naturally pointed: the pointed vertex is the
  common image of the two vertices labelled by $0$.
\end{constr}

We now give the inverse construction. The idea is as follows: we can
choose canonically a loop -- the equilibrium loop -- in the orientable
double covering of $\map$. We can then contract the loop to obtain a
suitably labelled map with two local minima.

\begin{constr}[Lifting the map]\label{constr:lift-map-eq}
  Let $\map$ be a non-orientable map on the projective plane $\RP^{2}$
  which is flag-labelled by $\lambda\colon\Flag_{\map} \to [n]$ and
  with vertex profile $\theta\bar{\theta}$. We assume that $\map$ is
  pointed, i.e.\ that there is a distinguished vertex $v$ in $\map$.
  We label the vertices of $\map$ by their geodesic distance to $v$,
  giving a suitable labelling $\ell$ of $\map$. Using the bijection of
  Proposition \ref{prop:covering-no}, we construct a half-edge
  labelled map $\hat{\map}$, equipped with an orientation-reversing
  matching $\inv = \rho_{\map}$. This is the orientation covering map
  of $\map$. The labelling of the vertices of $\map$ induces a
  labelling of the vertices of $\hat{\map}$: for every vertex
  $\hat{v}$ of $\hat{\map}$, there is a unique image vertex $v$ (by
  the projection from the orientation covering) in $\map$. We set
  \begin{equation*}
    \hat{\ell}(\hat{v}) = \ell(v).
  \end{equation*}
  The map $(\hat{\map}, \hat{\ell})$ is a suitably labelled map: the
  minimum of the labels is 0, and the difference between the labels of
  two vertices connected by an edge is at most one since every edge in
  $\hat{\map}$ is in the preimage of an edge in $\map$. In
  $\hat{\map}$, we choose $\bm{\hat{g}_{\eq}}$ to be the unique
  equilibrium loop in $\hat{\map}$ (associated with $\inv$).
\end{constr}

Before constructing the map $\tilde{\map}$, we exchange some of the
labels in $\hat{\map}$.
\begin{constr}[Flipping the labels]\label{constr:label-flipping}
  By the Jordan curve theorem, the equilibrium loop
  $\bm{\hat{g}_{\eq}}$ separates $\hat{\map}$ into two embedded maps:
  $\hat{\map}_{+}$ containing the root face, and $\hat{\map}_{-}$ the
  other one. Each face in $\hat{\map}$ corresponds to a cycle of $\theta$
  or $\bar{\theta}$. All the labels of a face are either in $[n]$ in the
  first case or in $[\bar{n}]$ in the second case. For each face $f$
  in $\hat{\map}_{+}$, if the labels of the half-edges incident to $f$
  (i.e.\ such that $f$ is at their left) are in $[\bar{n}]$, exchange
  their label with the one obtained by applying $\inv$. Similarly,
  change the labels of the half-edges in a face of $\hat{\map}_{-}$
  with the one obtained by applying $\inv$ if they are in $[n]$. This
  mapping that flips the labels is $2^{c(\theta) - 1}$-to-$1$, as each pair
  of faces in $\hat{\map}$, except the one of the root face, may be
  flipped.
\end{constr}

Once the labels are exchanged we can glue $\bm{\hat{g}_{\eq}}$ to
itself to obtain a suitably labelled map with two local minima.
\begin{constr}[Closing the slit]\label{constr:close-slit}
  Consider now the map $\hat{\map}_{+}$ with boundary $\bm{\hat{g}}$,
  embedded in $\hat{\map}$. We glue the boundary to itself to remove
  the boundary face. The good loop $\bm{\hat{g}}$ can be written as
  the concatenation of two good paths
  \begin{equation*}
    \bm{\hat{g}} = \bm{\hat{g}_{1}} \sqcup \bm{\hat{g}_{2}}.
  \end{equation*}
  Let $v^{\bullet}$ and $v^{\circ}$ be the first and last vertex of
  $\bm{\hat{g}_{1}}$, they are the two minima of these good paths. We
  glue the two paths together, identifying the vertices as follows:
  for each $i = 1, \ldots, \# \bm{\hat{g}_{1}}-1$ there are exactly two vertices at
  distance $i$ to $v^{\bullet}$, we identify them. Denote by
  $\tilde{\map}$ the resulting map.
\end{constr}
Note that thanks to Construction \ref{constr:label-flipping} the face permutations of
$\tilde{\map}$ is $\theta$.

\begin{lemma}\label{lem:minima}
  The resulting map $\tilde{\map}$ has exactly two local minima.
\end{lemma}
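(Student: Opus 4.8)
The plan is to identify two explicit local minima of \(\tilde\map\) and then check that no other vertex qualifies. The two minima will be the root vertex \(v^{*}=u_{1}\) of \(\hat\map\) and the vertex \(v^{\circ}=u_{k/2+1}\) from Construction \ref{constr:equilibrium-loop}; they are always distinct, since \(u_{1},\dots,u_{k}\) are distinct and \(k/2+1>1\). First I would describe the vertex set of \(\tilde\map\): by Construction \ref{constr:close-slit} it consists of the non-boundary vertices of \(\hat\map_{+}\) together with the vertices \(w_{0}=v^{\bullet},w_{1},\dots,w_{L-1},w_{L}=v^{\circ}\) of the contracted loop (\(L=\#\bm{\hat g_{1}}\)), where for \(0<i<L\) the vertex \(w_{i}\) is the identification of the two vertices at distance \(i\) from \(v^{\bullet}\) along \(\bm{\hat g}\); these carry the common label \(\ell_{\bm{\hat g_{1}}}(i)\), so \(\tilde\ell\) is well defined and retains minimum \(0\), attained at \(v^{*}\). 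Since \(v^{*}\) is incident to the root face it lies in \(\hat\map_{+}\); it meets \(\bm{\hat g}\) only in the degenerate case \(v^{*}=v^{\bullet}\); in all cases \(v^{*}\) survives in \(\tilde\map\), and being of label \(0\) it is a global, hence local, minimum.

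Before treating the other vertices I would record two facts about \(\hat\map\). As \(\hat\ell\) is the pullback to \(\hat\map\) of the graph-distance-to-the-pointed-vertex labelling of \(\map\), the only local minima of \(\hat\map\) are the two preimages \(v^{*}\) and \(\bar v^{*}=\inv(v^{*})\) of the pointed vertex, both of label \(0\): any other vertex projects to a non-pointed vertex of \(\map\), which has a strictly smaller-label neighbour, and lifting the corresponding edge produces such a neighbour in \(\hat\map\). Moreover, by Lemma \ref{lem:h-inv-good-loop} the equilibrium loop \(\bm{\hat g}\) is invariant under \(\inv\), so \(\inv\) interchanges its two sides \(\hat\map_{+}\) and \(\hat\map_{-}\) and sends \(v^{\bullet}\) to \(v^{\circ}\); hence \(\bar v^{*}\) lies in \(\hat\map_{-}\), off the loop unless \(v^{*}=v^{\bullet}\), in which case \(\bar v^{*}=v^{\circ}\). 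Combining the first fact with Lemma \ref{lem:balanced} then shows \(v^{\circ}\) is a local minimum of \(\tilde\map\): all neighbours of \(v^{\circ}\) in \(\hat\map_{+}\) have label \(\ge\hat\ell(v^{\circ})\), and every neighbour of \(v^{\circ}\) in \(\tilde\map\) is the image of such a vertex, with the same label.

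It then remains to exclude every other vertex, in three cases. (i) If \(x\) is a non-boundary vertex of \(\hat\map_{+}\) with \(x\neq v^{*}\), then \(x\notin\{v^{*},\bar v^{*}\}\), so \(x\) has a neighbour \(z\) in \(\hat\map\) with \(\hat\ell(z)<\hat\ell(x)\); since \(x\) is off \(\bm{\hat g}\), both \(z\) and the edge \(xz\) lie in \(\hat\map_{+}\) and survive in \(\tilde\map\) (with \(z\) possibly identified with a vertex of the same label), so the image of \(x\) is not a local minimum. (ii) If \(0<i<L\), then \(\bm{\hat g_{1}}\) is a good path (Lemma \ref{lem:h-inv-good-loop}, Definition \ref{def:good-path}), so \(i\) is not a local minimum of \(\ell_{\bm{\hat g_{1}}}\), and one of the path neighbours \(w_{i-1},w_{i+1}\) of \(w_{i}\) in \(\tilde\map\) has strictly smaller label. (iii) For \(w_{0}=v^{\bullet}\): if \(v^{\bullet}=v^{*}\) there is nothing to prove; otherwise \(v^{\bullet}\notin\{v^{*},\bar v^{*}\}\), so \(m:=\hat\ell(v^{\bullet})\ge 1\) and \(v^{\bullet}\) has a neighbour \(y\) in \(\hat\map\) with \(\hat\ell(y)=m-1\). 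This \(y\) must lie in \(\hat\map_{+}\), for otherwise \(\inv(y)\) would be a neighbour of \(\inv(v^{\bullet})=v^{\circ}\) in \(\hat\map_{+}\) of label \(m-1=\hat\ell(v^{\circ})-1\), contradicting Lemma \ref{lem:balanced}; and since \(\ell_{\bm{\hat g_{1}}}(1)\ge m\), the edge \(v^{\bullet}y\) is not a loop edge, so it survives and \(v^{\bullet}\) has a smaller-label neighbour in \(\tilde\map\). Together, (i)--(iii) give that the only local minima of \(\tilde\map\) are \(v^{*}\) and \(v^{\circ}\).

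The step I expect to be the real obstacle is (iii): when \(\hat\ell(v^{\circ})>0\) the vertex \(v^{\bullet}\) is a genuine ``extra'' candidate, and one must be sure that cutting \(\hat\map\) down to the embedded submap \(\hat\map_{+}\) did not discard its only descending edge — which is precisely where the \(\inv\)-symmetry of \(\bm{\hat g}\) and the balance property of Lemma \ref{lem:balanced} are needed. The remaining delicacy is pure bookkeeping: tracking which vertices and edges of \(\hat\map_{+}\) persist through Construction \ref{constr:close-slit}, and dealing with the degenerate case \(v^{\bullet}=v^{*}\), in which both minima lie on the equilibrium loop.
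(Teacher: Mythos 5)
Your proof is correct, and the overall structure (identify $v^{*}$ and $v^{\circ}$ as the two minima, show they are minima via label $0$ and via Lemma~\ref{lem:balanced}, then rule out interior vertices of $\hat{\map}_{+}$, intermediate boundary vertices, and $v^{\bullet}$) matches the paper's strategy. The one place where you genuinely diverge from the paper is in step (iii), excluding $v^{\bullet}$. The paper simply asserts that $v^{\bullet}$ ``is connected to the root and may not be a local minimum,'' implicitly relying on the portion of the leftmost geodesic $\bm{\hat{h}}$ between $v^{*}$ and $v^{\bullet}$ lying in $\hat{\map}_{+}$ and thus surviving Construction~\ref{constr:close-slit} -- a point the paper does not spell out. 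You instead take a cleaner route: you show that any descending neighbour $y$ of $v^{\bullet}$ cannot lie strictly in $\hat{\map}_{-}$, because $\inv$ sends $v^{\bullet}$ to $v^{\circ}$ and $\hat{\map}_{-}$ to $\hat{\map}_{+}$, so $\inv(y)$ would be a strictly smaller-labelled neighbour of $v^{\circ}$ in $\hat{\map}_{+}$, contradicting Lemma~\ref{lem:balanced}; combined with $\ell_{\bm{\hat{g}_{1}}}(1) \geq m$ this forces $y$ to be an interior vertex of $\hat{\map}_{+}$ that survives the gluing. This uses only the invariance of the equilibrium loop and Lemma~\ref{lem:balanced}, avoiding any appeal to where the pre-$v^{\bullet}$ part of the geodesic sits relative to the loop, and it actually fills in the step that the paper leaves vague. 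Your treatment of the degenerate case $v^{\bullet} = v^{*}$ (which forces $k=2$ and $v^{\circ} = \bar{v}^{*}$) is also correctly separated out.
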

\begin{proof}
  In $\hat{\map}^{+}$ there may be local minima only at the root, or
  on the boundary of $\hat{\map}^{+}$. The two possible vertices where
  there might be minima are $v^{\bullet}$ and $v^{\circ}$. However, by
  construction, one of them, say $v^{\bullet}$, is connected to the
  root and may not be a local minimum. Lemma \ref{lem:balanced},
  however, implies that $v^{\circ}$ is a local minimum in
  $\tilde{\map}$, as the vertices that may be of lower label that it
  get removed in Construction \ref{constr:close-slit}.
\end{proof}

We can now state the main theorem of this section.
\begin{theorem}\label{thm:bijection-1/2}
  The previous construction made of the tree steps given in
  Constructions \ref{constr:lift-map-eq}, \ref{constr:label-flipping},
  and \ref{constr:close-slit}, gives a $2^{\#\theta - 1}$-to-$1$
  mapping between the set of pointed labelled maps on the projective
  plane with face profile given by $\theta\bar{\theta}$, and the set of suitably
  labelled maps with two local minima and face profile $\theta$.
\end{theorem}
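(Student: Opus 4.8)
The plan is to prove that the map described in Constructions~\ref{constr:lift-map-eq}, \ref{constr:label-flipping}, and~\ref{constr:close-slit} is a well-defined $2^{\#\theta-1}$-to-$1$ correspondence by exhibiting its behaviour as a composition. First I would verify that each of the three steps is well-defined on the relevant class of objects. Construction~\ref{constr:lift-map-eq} uses the bijection of Proposition~\ref{prop:covering-no} to pass from a pointed flag-labelled map on $\RP^{2}$ to its orientation covering $\hat{\map}$ equipped with the orientation-reversing matching $\inv = \rho_{\map}$; the vertex labelling by distance to the marked vertex is $\inv$-invariant, so it descends to $\hat{\map}$ and makes it a suitably labelled map. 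The equilibrium loop $\bm{\hat{g}_{\eq}}$ is then canonically determined by Construction~\ref{constr:equilibrium-loop} and is a simple, invariant, good loop by Lemma~\ref{lem:h-inv-good-loop}. The label-flipping step~\ref{constr:label-flipping} is a $2^{c(\theta)-1}$-to-$1$ operation by construction, since the faces of $\hat{\map}$ come in $\inv$-pairs indexed by the cycles of $\theta$, and all pairs except the one containing the root face may independently have their labels flipped. Finally Construction~\ref{constr:close-slit} glues $\hat{\map}_{+}$ to itself along $\bm{\hat{g}}$; Lemma~\ref{lem:minima} shows the resulting map $\tilde{\map}$ has exactly two local minima, and the flipping ensures $\varphi_{\tilde{\map}} = \theta$, so $\tilde{\map} \in \Suitably_{2}(\theta)$.

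Next I would show surjectivity and compute the fibre size by constructing a right inverse and identifying the fibres precisely. The right inverse is exactly Construction~\ref{constr:suitably-to-no}: given $(\tilde{\map}, \tilde{\ell}) \in \Suitably_{2}(\theta)$, one roots it at the label-$0$ vertex of smallest-labelled incident half-edge, chooses $\bm{\tilde{g}}$ to be the subpath of the leftmost geodesic from $v^{*}$ to $v^{\circ}$ between $v^{\bullet}$ and $v^{\circ}$ (Section~\ref{sec:leftm-good-geod}), opens a slit (Construction~\ref{constr:open-slit}), glues to the mirror map (Section~\ref{sec:gluing-along-face}), and reads off a pointed map on $\RP^{2}$ via Lemma~\ref{lem:prop-inv} and Proposition~\ref{prop:covering-no}. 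The crucial compatibility statement is Proposition~\ref{prop:g-tile-eq-loop}: the good loop $\bm{\hat{g}}$ obtained by opening the slit along the leftmost good geodesic coincides with the equilibrium loop $\bm{\hat{g}_{\eq}}$ of the glued map. This is what guarantees that Construction~\ref{constr:close-slit}, which contracts the equilibrium loop, is a genuine left inverse to the slit-opening procedure; combined with Proposition~\ref{prop:bij-first-constr} (injectivity of the slit-opening map) and Proposition~\ref{prop:covering-no} (bijectivity of the covering correspondence), this shows that the composite $\Suitably_{2}(\theta) \to \{\text{maps on } \RP^{2}\}$ is injective, hence every fibre of the forward map is non-empty.

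To pin down the fibre size, I would argue that two elements of the fibre over a fixed $(\tilde{\map}, \tilde{\ell}) \in \Suitably_{2}(\theta)$ differ precisely by a choice made in Construction~\ref{constr:label-flipping}. Starting from $(\tilde{\map}, \tilde{\ell})$, the right-inverse construction produces a canonical $\RP^{2}$-map together with a canonical lift $\hat{\map}$ carrying a canonical labelling in which non-boundary faces over $\hat{\map}_{+}$ are labelled by integers and those over $\hat{\map}_{-}$ by barred integers. Every preimage under the forward map must, after applying the (inverse of) the closing-slit construction, return an $\hat{\map}$ whose equilibrium loop separates it into $\hat{\map}_{\pm}$ with $\tilde{\map}$ recoverable, and by Proposition~\ref{prop:g-tile-eq-loop} the equilibrium loop is forced; what is \emph{not} forced is the label ($[n]$ versus $[\bar n]$) assigned to each $\inv$-pair of faces other than the root pair. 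There are $c(\theta)$ such pairs in total and one is fixed by the root convention, giving exactly $2^{c(\theta)-1} = 2^{\#\theta-1}$ choices, each producing a distinct flag-labelled $\RP^{2}$-map. Assembling: the forward map is surjective, and every fibre has size $2^{\#\theta-1}$.

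The main obstacle I expect is the verification that the right-inverse construction really lands in $\Suitably_{2}(\theta)$ and is inverse-compatible step by step — specifically, reconciling the leftmost-good-geodesic choice of $\bm{\tilde{g}}$ in $\tilde{\map}$ with the intrinsic equilibrium loop of $\hat{\map}$. That is exactly the content of Proposition~\ref{prop:g-tile-eq-loop}, whose proof rests on the delicate leftmost-geodesic surgery of Lemma~\ref{lem:h-tilde-leftmost-geod}; once that identification is in hand, the rest is bookkeeping about how labels and matchings transport through Proposition~\ref{prop:covering-no} and Construction~\ref{constr:open-slit}. A secondary subtlety is checking that the label-flipping genuinely yields \emph{distinct} flag-labelled maps (not accidentally isomorphic ones) and that flipping the root pair would not produce anything new, which follows because the root face and the marked vertex break all residual symmetry.
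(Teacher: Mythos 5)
Your proposal is correct and takes essentially the same approach as the paper: you factor the mapping $\Phi$ as the $2^{\#\theta-1}$-to-$1$ label-flipping (Construction \ref{constr:label-flipping}) composed with a bijection given by the covering/cut-and-glue constructions, and you identify the same load-bearing ingredients — Proposition \ref{prop:covering-no}, Proposition \ref{prop:bij-first-constr}, and above all Proposition \ref{prop:g-tile-eq-loop} establishing that the opened-slit loop is the equilibrium loop, which makes the right inverse (Construction \ref{constr:suitably-to-no}) compatible with the forward map. The only minor slip is a phrasing issue at the surjectivity step: what yields non-empty fibres is the right-inverse identity $\Phi \circ \Psi = \mathrm{id}$ itself, not the injectivity of $\Psi$; the conclusion you draw is nonetheless correct.
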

\begin{proof}
  Denote by $\Phi$ this mapping. Lemma \ref{lem:minima} implies that this construction
  gives a well-defined map to the set of suitably labelled map with
  two local minima. The fact that the face profile remains $\theta$ is a
  consequence of the construction. In particular, the faces are not
  modified during the cutting and gluing. The mapping $\Phi$ can be seen
  as the composition of a mapping $\Phi_{1}$, $2^{\#\theta - 1}$-to-$1$, that
  flips the labels, see Construction \ref{constr:label-flipping}, and a bijection $\Phi_{2}$ that
  consists in cutting the orientable double cover and gluing the sides
  appropriately. The fact that $\Phi_{2}$ is a bijection follows from
  Propositions \ref{prop:covering-no}, Proposition \ref{prop:bij-first-constr}, and the fact that given an
  orientable double cover, there is a unique equilibrium loop by
  Construction \ref{constr:equilibrium-loop}, along which we cut. This is the same curve we
  construct in the reverse construction by Proposition \ref{prop:g-tile-eq-loop}.
\end{proof}
Theorem \ref{thm:bijection-1/2} allows us to conclude the proof of
Corollary \ref{corol:leading-orders}.
\begin{proof}[Proof of Corollary \ref{corol:leading-orders}]
  We proved in Proposition \ref{prop:sub-S2} that:
  \begin{equation*}
    N^{l-2-n/2}\kappa_{l}(\bm{n}) = \left( \frac{2}{\beta} \right)^{l-1}\#\Maps_{0}(\theta(\bm{n})) + \left( \frac{2}{\beta} \right)^{l-1}\frac{1}{N}\left(\frac{2}{\beta} - 1\right)\frac{\#\Suitably_{2}(\theta(\bm{n}))}{n/2-l+1} + \order{\frac{1}{N^{2}}}.
  \end{equation*}
  Denote by $\Maps_{1/2}(\theta(\bm{n}))$ the set of edge-labelled
  maps on $\RP^{2}$ with face profile $\theta(\bm{n})$. Theorem
  \ref{thm:bijection-1/2} implies:
  \begin{equation*}
    \left( 1 + n/2 - l \right)\# \Maps_{1/2}(\theta(\bm{n})) = 2^{l-1}\#\Suitably_{2}(\theta(\bm{n})),
  \end{equation*}
  as $\left( 1 + n/2 - l \right)$ is the number of choice of a marked vertex in a map on $\RP^{2}$ with $l$ faces and $n/2$ edges. Hence,
  \begin{equation*}
    N^{l-2-n/2}\kappa_{l}(\bm{n}) = \left( \frac{2}{\beta} \right)^{l-1}\#\Maps_{0}(\theta(\bm{n})) + \left( \frac{2}{\beta} \right)^{l-1}\frac{1}{2^{l-1}N}\left(\frac{2}{\beta} - 1\right)\# \Maps_{1}(\theta(\bm{n})) + \order{\frac{1}{N^{2}}},
  \end{equation*}
  which is the wanted result.
\end{proof}

\appendix
\section{The limit $\beta \to \infty$ and the roots of Hermite polynomials}
\label{sec:limit-beta-inf}

Let us take the limit $\beta \to \infty$ in \eqref{eq:cumulant-distance-Bernoulli}. We get for all $n \geq 2$ even,
\begin{equation}\label{eq:cumulant-beta-inf}
  \kappa_{1}(n) = \sum_{q+r+s = n/2-l+1}\frac{(-1)^{q}B_{r}}{s+1}\binom{r+s}{r} N^{s+1} \left< e_{q} \right>_{\theta, l-1}.
\end{equation}
The terms of the expansion are linear combinations of expectations of
product of distances in planar maps with one face of degree $n$: with
Remark \ref{rem:prop-suitably} in mind, we see that since $\# V_{\map}^{\min} \geq 1$, we
have $l + \# V_{\map}^{\min} - (l-1) \geq 2$. Necessarily,
$\# V_{\map}^{\min} = 1$ and the genus must be zero.

If we take $l = 1$, the case of trees, we can push the computation
further. We have that
\begin{equation}\label{eq:tri-Hermite}
  T_{\infty}^{N} \coloneq \lim_{\beta \to \infty}T_{\beta}^{N} = \begin{pmatrix}
    0 & \sqrt{N - 1} & 0 & 0 & 0 & \dots \\
    \sqrt{N - 1} & 0 & \sqrt{N - 2} & 0 & 0 & \dots  \\
    0 & \sqrt{N - 2} & 0 & \ddots & \ddots & \dots  \\
    \vdots & \ddots & \ddots & \ddots & \sqrt{2} & 0 \\
    0 & \cdots & 0 &  \sqrt{2} & 0 & \sqrt{1}\\
    0 & \cdots & 0 & 0 & \sqrt{1} & 0\\
  \end{pmatrix}.
\end{equation}
The eigenvalues of $T_{\infty}^{N}$ are the roots of the Hermite
polynomials $\He_{N}$ defined by
\begin{equation}\label{eq:def-hermite}
  \He_{N}(x) \coloneq (-1)^{N}\ee^{x^{2}/2}\left( \frac{\dd}{\dd x} \right)^{N} \ee^{-x^{2}/2} \text{ for } N \geq 1.
\end{equation}
In particular, if we denote by
$h_{1,N} \leq h_{2,N} \leq \cdots \leq h_{N, N}$ the roots of
$\He_{N}$. We have that
\begin{equation}\label{eq:hermite-cumulant}
  p_{n}(h_{1, N}, \ldots, h_{N, N}) = \kappa_{1}(n),
\end{equation}
for all $N \geq 1$. This is easily seen from the fact that the
characteristic polynomial of $T_{\infty}^{N}$ satisfies
\begin{equation*}
  \begin{cases}
    &\det(z - T_{\infty}^{1}) = z\\
    &\det(z - T_{\infty}^{N + 1}) = z\det(z - T_{\infty}^{N}) - N\det(z - T_{\infty}^{N-1}),
  \end{cases}
\end{equation*}
the same induction equations as for $\left( \He_{N} \right)_{N \geq 1}$
(see for instance \cite[Section 3.2.2]{anderson_introduction_2010} for many properties of the
Hermite polynomials).

We give one application of \eqref{eq:hermite-cumulant}. The two leading orders of
$p_{n}(h_{1, N}, \ldots, h_{N, N})$ are given by
\cite{kornyik_wigner_2016}:
\begin{equation*}
  N^{-n/2 -1}p_{n}(h_{1, N}, \ldots, h_{N, N}) = \Cat_{n/2} - \frac{1}{N} \left( 2^{n-1} - \binom{n-1}{n/2} \right) + \order{\frac{1}{N^{2}}}.
\end{equation*}
We recover that the number of planar trees with $n/2$ edges is the
Catalan number $\Cat_{n/2}$, and obtain that
\begin{equation*}
  \#\Suitably_{2}(\cycle{1, 2, \ldots, n}) = \frac{n}{2}\left( 2^{n-1} - \binom{n-1}{n/2} \right),
\end{equation*}
or equivalently
\begin{equation*}
  \sum_{q+r = 1}\frac{(-1)^{q}B_{r}}{n/2}\binom{r+n/2-1}{r}\left< e_{q} \right>_{\theta, 0} = \frac{1}{2} \left< 1 \right>_{\theta, 0} - \frac{2}{n} \left< d \right>_{\theta, 0} =  - \left( 2^{n-1} - \binom{n-1}{n/2} \right),
\end{equation*}
that is:
\begin{equation*}
  \frac{\left< d \right>_{\theta, 0}}{\left< 1 \right>_{\theta, 0}} = \frac{2^{n-2}n}{\Cat_{n/2}(n/2 + 1)} - \frac{n^{2}}{8(n/2 + 1)} \sim \frac{1}{2}\sqrt{\frac{\pi}{8}} n^{3/2} \text{ as } n \to \infty.
\end{equation*}
The above quantity is the average distance between two (distinguished)
uniform vertices in a tree. This is related to the expectation of the
area under a Brownian excursion
$\E \mathcal{B}_{\text{ex}} = \sqrt{\pi/8}$, see \cite{janson_brownian_2007}.

\printbibliography

\end{document}